\documentclass[a4paper,10pt]{amsart}
\usepackage{amsmath,amsfonts,amssymb, amsthm, color}

\usepackage{epsfig}
\usepackage{verbatim}

\renewcommand{\deg}{\mathsf{deg}}

\DeclareMathOperator{\Aut}{\mathsf{Aut}}

\DeclareMathOperator{\ev}{\mathsf{ev}}

\newcommand{\Ext}{\operatorname{Ext}}
\newcommand{\Gal}{\operatorname{Gal}}
\newcommand{\End}{\operatorname{End}}
\newcommand{\Ind}{\operatorname{Ind}}
\newcommand{\Spec}{\operatorname{Spec}}
\newcommand{\GL}{\operatorname{GL}}
\newcommand{\SL}{\operatorname{SL}}
\newcommand{\Pic}{\operatorname{Pic}}
\newcommand{\Bun}{\operatorname{Bun}}
\newcommand{\Res}{\operatorname{Res}}
\newcommand{\K}{\operatorname{K}}
\newcommand{\Cone}{\operatorname{Cone}}
 %trace

%for side notes

\parindent0ex  % removes the indent that appears in a new line after a blank line

\def\Hom{\mathrm{Hom}}
\def\ZZ{\mathbb{Z}}
\def\PP{\mathbb{P}}
\def\TT{\mathbb{T}}
\def\Q{\mathbb{Q}}
\def\ff{\mathbb{F}}
\def\E{\boldsymbol{\mathcal{E}}}

\def\HH{\mathbf{H}}
\def\DH{\mathbf{DH}}
\def\U{\mathbf{U}}
\def\UU{\boldsymbol{\mathcal{E}}}
\def\tUU{\widetilde{\UU}}

\def\N{\mathbb{N}}

\def\q{\mathbf{q}}

\def\C{\mathbb{C}}
\def\AA{\mathbb{A}}
\def\OO{\mathbb{O}}
\def\sl{\mathfrak{sl}}

\def\Cc{\mathsf{C}}
\def\O{\mathcal{O}}
\def\F{\mathcal{F}}
\def\G{\mathcal{G}}
\def\H{\mathcal{H}}
\def\V{\mathcal{V}}

\def\x{\mathbf{x}}
\def\y{\mathbf{y}}
\def\z{\mathbf{z}}
\def\p{\mathbf{p}}
\def\o{\mathbf{o}}
\def\fqb{\overline{\mathbb{F}}_q}

\def\kk{\mathbf{k}}

\def\ll{\mathbf{l}}

\def\Z{\mathbf{Z}}
\def\s{\sigma}

\def\P{\mathcal{P}}
\def\tP{\widetilde{\mathcal{P}}}
\def\qqb{\overline{\mathbb{Q}}}
\def\ds{\displaystyle}

\def\Coh{\mathsf{Coh}}
\def\Frob{\mathrm{Frob}}
\def\frob{\mathsf{frob}}
\def\rk{\mathsf{rk}}
\def\gcd{\mathrm{g.c.d.}}
\def\lcm{\mathrm{l.c.m.}}
\def\AF{\mathsf{AF}}
\def\AFc{\mathsf{AF}^{\mathrm{cusp}}}

\def\Tor{\mathcal{T}\!or}
\def\Norm{\mathsf{Norm}}
\def\Fr{\mathrm{Fr}}
\def\Conv{\mathrm{\mathbf{Conv}}}

\newcommand{\He}{\mathcal{H}}

\newcommand{\wX}[1]{\widehat{\Pic^0(X_{#1})}} %the character group of the PIC_n rational
\newcommand{\wXF}[1]{\widetilde{\Pic^0(X_{#1})}} %same quotient byfrob

\def\f{\mathsf{f}}
\def\t{\mathsf{t}}
\def\ov{\overline}
\def\L{\mathcal{L}}

\def\D{\Delta}

\def\vvec{\mathrm{vec}}
\def\tor{\mathrm{tor}}
\def\ss{\mathrm{ss}}
\def\trho{{\tilde\rho}}
\def\tsigma{{\tilde\sigma}}

\def\txi{{\tilde\xi}}

\def \ot{\otimes}
\def\<{\langle}
\def\>{\rangle}

\newtheorem{thm}{\bf{Theorem}}[section]
\newtheorem{lemma}[thm]{Lemma}
\newtheorem{cor}[thm]{Corollary}
\newtheorem{prop}[thm]{Proposition}

\theoremstyle{remark}
\newtheorem{rem}[thm]{Remark}
\theoremstyle{definition}
\newtheorem{defin}[thm]{Definition}

\newcommand{\kA}{\mathcal{A}}

\newcommand{\kD}{\mathcal{D}}
\newcommand{\kE}{\mathcal{E}}
\newcommand{\kF}{\mathcal{F}}
\newcommand{\kG}{\mathcal{G}}
\newcommand{\kH}{\mathcal{H}}

\newcommand{\kL}{\mathcal{L}}
\newcommand{\kP}{\mathcal{P}}

\newcommand{\kM}{\mathcal{M}}

\newcommand{\kV}{\mathcal{V}}
\newcommand{\kW}{\mathcal{W}}

\numberwithin{equation}{section}

\input xy
\xyoption {all} 
%\CompileMatrices

\setcounter{tocdepth}{1} %hides the subsections in the table of contents
\setlength{\unitlength}{10pt}

\setcounter{section}{-1}

\begin{document}

\title{Cusp Eigenforms and the Hall Algebra of an Elliptic Curve}
\author{Dragos Fratila}

\begin{abstract}
We give an explicit construction of the cusp eigenforms on an elliptic curve defined over a finite field using the theory of Hall algebras and the Langlands correspondence for function fields and $\GL_n$. As a consequence we obtain a description of the Hall algebra of an elliptic curve as an infinite tensor product of simpler algebras. We prove that all these algebras are specializations of a universal spherical Hall algebra (as defined and studied in~\cite{BS} and \cite{SV1}).
\end{abstract}

\maketitle

\tableofcontents

\section{Introduction}

\subsection{} The cusp eigenforms are the building blocks in the theory of automorphic forms. 
The celebrated Langlands correspondence for function fields and $\GL_n$, proved by Drinfel'd~\cite{Dr1} for $n=2$ and Laffforgue~\cite{Laff} for general $n$, puts in bijection the cusp eigenforms and the irreducible $l$-adic representations of the Galois group of the function field. Moreover, through this correspondence invariants from the automorphic side (the Hecke eigenvalues) match invariants from the Galois side (Frobenius eigenvalues). 
This allows one to deduce important information about a Galois representation by studying its automorphic counterpart and vice versa.

\vspace{0.1in}

This work is concerned with the unramified cusp eigenforms on an elliptic curve. The goal is to describe in detail the cusp eigenforms using the theory of Hall algebras. As a by-product we obtain that the Hall algebra of an elliptic curve is isomorphic (as a bialgebra) to an infinite tensor product of specializations of a universal spherical Hall algebra which was defined by Burban and Schiffmann in~\cite{BS}, as well as a  strengthening of the multiplicity 1 theorem conjectured by Kapranov in~\cite{Kap}. Moreover, as a corollary of the structure of the Hall algebra coupled with recent work of Schiffmann~\cite{S4} for the spherical Hall algebra, we can answer a question asked by Kapranov~[loc.cit] regarding higher functional equations satisfied by (unramified) Eisenstein series for $\GL_n$ over the function field of an elliptic curve.

\vspace{0.1in}

\subsection{} Let $X$ be a smooth, projective, geometrically irreducible curve defined over a finite field $\ff_q$. The study of the Hall algebra $\HH_X$ of coherent sheaves on $X$ was initiated by M. Kapranov in~\cite{Kap}. Using a classical observation of A. Weil we can express the space of vector bundles of rank $n$ on $X$ as the double coset space:
\[
\GL_n(K_X)\backslash \GL_n(\AA_X)/\GL_n(\OO_X)
\]
where $K_X, \AA_X,\OO_X$ are the function field, the ad\`eles respectively the integer ad\`eles of $X$. The elements of the vector bundle part $\HH_X^\vvec$ of the Hall algebra are non-ramified automorphic forms (with finite support) on the curve $X$. The multiplication is given by parabolic induction and the comultiplication by parabolic restriction. The classical notion of a cusp form in number theory translates into a very nice condition in terms of the coproduct of the Hall algebra. A fundamental problem in the theory of Hall algebras is to understand the (Hopf) algebra $\HH_X$ and to relate it to quantum groups. The importance of cusp forms for the Hall algebra business stems from the fact that they are a (minimal) set of generators for $\HH_X$.
\vspace{0.1in}

The classical Hecke operator associated to a point $x\in X$ on automorphic forms is, in the language of Hall algebras, given by multiplication with the characteristic function of the corresponding torsion sheaf. These Hecke operators naturally form a subalgebra inside $\HH_X$ which is the Hall algebra of the category of torsion sheaves on the curve. We will call this subalgebra the global Hecke algebra and we denote it by $\HH_X^\tor$. We have an isomorphism of algebras $\HH_X\simeq \HH_X^\vvec \ltimes \HH_X^\tor$.

\vspace{0.1in}

The space of cusp forms is stable under the Hecke operators. The elements of the eigenspaces of this action are called cusp eigenforms. The multiplicity 1 theorem (\cite{P-S, Sh}) says that the action of the Hecke algebra $\HH_X^\tor$ on the space of cusp forms is diagonalizable and that moreover every eigenspace is of dimension 1. In other words, for every character of $\HH_X^\tor$ there is at most one eigenvector (up to multiplication by scalars) corresponding to it. 
\vspace{0.1in}

In the seminal paper \cite{Kap} M. Kapranov considered the Eisenstein series associated to these cusp eigenforms and proved some quadratic functional equations satisfied by them as well as formulas for their coproduct. These functional equations in turn give relations between the coefficients of the Eisenstein series in the Hall algebra. 

\vspace{0.1in}

For the projective line $\PP^1_{\ff_q}$, due to a theorem of Grothendieck~\cite{Gr}, there are no cusp forms of rank $>1$. In this case Kapranov proved (see also~\cite{BK} for a more elementary approach) that the relations given by the functional equations suffice to give a complete description of the Hall algebra $\HH_{\PP^1_{\ff_q}}$: it is isomorphic to $U_v(\widehat{\mathfrak{n}})\ltimes \HH_{\PP^1_{\ff_q}}^\tor$ where $U_v(\widehat{\mathfrak{n}})$ is the nilpotent part (in Drinfel'd's new realization) of the quantum group $U_v(\widehat\sl_2)$ and $v=q^{-1/2}$.

\vspace{0.1in}

When $X$ is of genus $>0$ the functional equations are no longer enough to give a presentation of $\HH_X$. To simplify the problem we can define a new algebra $\U_X$, called the spherical Hall algebra, as being the subalgebra of $\HH_X$ generated by the coefficients of the simplest cusp eigenforms: the characteristic functions of the connected components of the Picard group, together with the characteristic functions of the set of torsion sheaves of (a fixed) degree $d$ for all $d\ge 1$. For instance, when $X=\PP^1_{\ff_q}$ we have that $\U_X$ is isomorphic to $U_v^+(\widehat{\sl_2})$ (in Drinfel'd's new realization).

The aim of this paper is to study the cusp eigenforms on an elliptic curve $X$ using some twisted versions of the spherical Hall algebra $\U_X$. The approach turns out to be fruitful for both theories: it enables us to write down explicitly all the (unramified) cusp eigenforms on $X$ and to give a presentation of the whole Hall algebra $\HH_X$.

\subsection{} For $X$ an elliptic curve there are several remarkable results that describe the spherical Hall algebra and connect it to  objects from different fields of mathematics. We shortly review them below for a motivational purpose.

\vspace{0.1in}

I. Burban and O. Schiffmann have proved in \cite{BS} that the Drinfel'd double $\DH_X$ of the Hall algebra  $\HH_X$ admits a natural $\SL_2(\ZZ)$ action and they have used this action to give a combinatorial presentation of $\U_X$ in terms of convex paths in the rank 2 lattice of integers. They have also proved that the spherical Hall algebra is a two parameter flat deformation of the algebra of diagonally symmetric functions $\C[x_1,x_2,\dots,y_1,y_2,\dots]^{\mathfrak{S}_\infty}$.

\vspace{0.1in}

In a more geometric direction, O. Schiffmann has constructed the algebra $\U_X$ (or rather a completion of it) as the (trace  \`a la Grothendieck's ''faisceaux-fonctions correspondence`` of the) Grothendieck group of a certain category of perverse sheaves on the moduli stack of coherent sheaves on $X$. This category of perverse sheaves is proved to be exactly the category generated by the simple factors of the principal Eisenstein sheaves as defined by Laumon in~\cite{Lau} for an elliptic curve.

\vspace{0.1in}

In the paper \cite{SV1} it was proved that the spherical Hall algebra is isomorphic to the stable limit of the spherical DAHA of $\GL_n$ when $n\to \infty$. Inspired by this result the authors of~[loc.cit.] exhibit a strong relationship between the symmetric Macdonald polynomials and the spherical Hall algebra and are able to give a geometric construction of the Macdonald polynomials as the traces of certain Eisenstein sheaves on the moduli space of semistable vector bundles on $X$. 

\vspace{0.1in}

Another interesting result is the connection between the spherical Hall algebra $\U_X$ and the equivariant $K$-theory of the Hilbert scheme of $\AA^2$. In \cite{SV2} Schiffmann and Vasserot proved that the convolution algebra in the equivariant $K$-theory of the Hilbert scheme of points of $\AA^2$ is isomorphic to the spherical Hall algebra and hence, by their previous result, to the stable limit of spherical DAHA for $\GL_n$. 

\vspace{0.1in}

We have already mentioned that if the genus of the curve is bigger than 0 then the quadratic functional equations for the Eisenstein series are no longer sufficient to give a presentation of the Hall algebra. In his paper~\cite{Kap} Kapranov asked what are the other functional equations satisfied by the Eisenstein series that enable us to give a presentation of the Hall algebra. The same problem can be stated for the spherical Hall algebra. In the case of an elliptic curve an answer to the problem was given by Schiffmann in~\cite{S4}. He proves that there is a cubic functional equation for the Eisenstein series and that this relation coupled with the quadratic functional equations provide a presentation of the spherical Hall algebra.

\subsection{} Let us describe the results of this paper in more detail. A word of warning is necessary: for the purpose of this introduction we will simplify some notations and we will be a bit lax about some technical details. References to precise statements and definitions in the text are made.

\vspace{0.1in}

From now on $X$ will denote an elliptic curve over a finite field $\ff_q$.

Let $\Coh(X)$ be the category of coherent sheaves on $X$. To every coherent sheaf $\kF$ on $X$ we associate its class in the numerical Grothendieck group $K_0'(X):=\ZZ^2$ given by $\ov\kF:=(\rk(\kF),\deg(\kF))$. 

For every $\nu\in\Q\cup\{\infty\}$ we denote by $\Cc_{\nu}$ the full subcategory of $\Coh(X)$ of semistable sheaves of slope $\nu$. It is an abelian category stable by extensions. From Atiyah's theorem (see~\cite{At}) the categories $\Cc_{\nu}$ are all equivalent and hence their Hall algebras are all isomorphic. 

For a class $\alpha=(r,d)\in\K_0'(X)$ let us define the following element:
\[
 \mathbf{1}_\alpha^{\ss}:=\sum_{\substack{\kF\in\Cc_{\nu}\\\ov\kF = \alpha}} [\kF]
\]
the characteristic function of the semistable sheaves of class $\alpha$, where $\nu:=d/r$.

The spherical Hall algebra $\U_X^+$ is defined to be the subalgebra of $\HH_X$ generated by the elements $\mathbf{1}_{(1,d)}^{\ss},d\in\ZZ$ and $\mathbf{1}_{(0,d)},d\in\ZZ_+$. 
The function $\sum_{d\in\ZZ} \mathbf{1}_{(1,d)}^\ss$ is the simplest example of a cusp eigenform. It can be proved that for any $\x\in\Z^+$ the elements $\mathbf{1}_{\x}^\ss$ are contained in $\U_X^+$.

We can define similar subalgebras for any cusp eigenform and then study them and see how they interact inside the whole Hall algebra. This is roughly the path taken in~\cite{Kap}. We will pursue a different direction inspired by~\cite{BS}. We will define subalgebras of $\HH_X$, called twisted spherical Hall algebras, by suitably twisting the generators of $\U_X^+$ with characters of the Picard groups. It will turn out that these subalgebras correspond exactly to the cusp eigenforms which we were looking for. Moreover, this approach gives us explicit formulas for all these cusp eigenforms.

\vspace{0.1in}

To this end, we define another set of generators $T_\x$ of $\U_X^+$ by the following generating series:
\[
 1+\sum_{r\ge 1}\mathbf{1}_{r\alpha}^\ss z^r = \exp\left(\sum_{r\ge1} \frac{T_{r\alpha}}{[r]}z^r\right)
\]
where $\alpha = (p,q)\in\Z^+$ is such that $\gcd(p,q)=1$.

It is clear that the elements $T_{\alpha},\alpha\in\Z^+$ generate the algebra $\U_X^+$. 

In \cite{BS} it was defined an action of the group $\SL_2(\ZZ)$ on the Drinfel'd double $\DH_X$ of the Hall algebra $\HH_X$. The newly defined generators $T_{\alpha}$ of $\U_X^+$ enjoy a symmetry property:
\[
 \gamma\cdot T_{\alpha} = T_{\gamma\cdot\alpha}
\]
where $\gamma\in\SL_2(\ZZ)$ and $\alpha\in\Z^+$ are such that $\gamma\cdot\alpha\in\Z^+$.

Since for torsion sheaves we have a well defined notion of support we can look at $T_{(0,d),x}$, the part of $T_{(0,d)}$ supported on the closed point $x\in X$. It turns out that $T_{(0,d)} = \sum_x T_{(0,d),x}$. Now using the above mentioned $\SL_2(\ZZ)$ action we can define similar elements $T_{\alpha,x}$ for any $\alpha\in\Z^+$ and any closed point $x\in X$. We still have the relation
\[
 T_{\alpha} = \sum_{x\in X}T_{\alpha,x}.
\]

It is not difficult to prove that the elements $T_{\alpha,x}$ for $\alpha\in \Z^+$ and $x\in X$ generate the \textit{whole} Hall algebra $\HH_X$.

Fix a positive integer $n\ge 1$. For a character of the Picard group $\Pic^0(X_n)$, where $X_n:=X\times\Spec(\ff_{q^n})$, we
define a twisted average of the elements $T_{\alpha,x}, x\in X$, where $\alpha\in n\Z^+$, by the formula:
\[
T_{\alpha}^{\trho} = \sum_{x\in X}\trho(x)T_{\alpha,x}.
\]
For precise definitions and notations see Section~\ref{S:twisted spherical}.

To a character $\rho\in\wX{n}$ we can associate the subalgebra $\U_X^{\trho,+}$ of $\HH_X$ generated by the elements $T_{\alpha}^\trho$ when $\alpha\in n\Z^+$. We call this algebra the twisted spherical Hall algebra (for a precise definition see Section~\ref{SS:def of twisted spherical}). If $n=1$ and $\rho$ is the trivial character of $\Pic^0(X)$ then we get the spherical Hall algebra as defined in \cite{BS}. 

% It is easy to see that the algebras $\U_X^{\trho,+}$ when $\rho$ runs over all the characters in $\cup_{n\ge 1}\wX{n}$ generate the Hall algebra $\HH_X$. 

We call a character $\rho\in\wX{n}$ primitive if it has a maximal orbit under the Galois group $\Gal(\ff_{q^n}/\ff_q)$. We denote the union over all $n\ge 1$ of the primitive characters by $\kP$.
The Langlands correspondence for elliptic curves tells us that, roughly, the cusp eigenforms of rank $n$ are parametrized by the primitive characters of $\Pic^0(X_n)$ (actually by their Galois orbits). One of our main results states the following 
\begin{thm}[see Theorem~\ref{T:cusp forms}]\,\qquad
\begin{enumerate}
 \item The automorphic form
\[
 T_n^\trho:=\sum_{d\in\ZZ} T_{(n,nd)}^\trho
\]
is a cusp eigenform for every primitive character $\rho\in\wX{n}$ 
\item This gives essentially\footnote{up to a $\C^\times$ action} all the cusp eigenforms on the elliptic curve $X$.
\end{enumerate}
\end{thm}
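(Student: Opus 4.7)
The plan is to verify the two claims in turn.

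\textbf{Eigenform condition (1).} The Hecke operators are essentially the elements $T_{(0,d'),y}$ for closed points $y \in X$ and $d'\ge 1$. To show $T_n^{\trho}$ is a Hecke eigenfunction it suffices to compute the action of each $T_{(0,d'),y}$ on $T_n^{\trho}$. By the $\SL_2(\ZZ)$-action of \cite{BS} applied to the structure constants inside $\HH_X$, the product $T_{(0,d'),y} \cdot T_{(n,nd),x}$ reduces to contributions controlled by how the closed point $x \in X_n$ (where $T_{(n,nd),x}$ is supported) projects to $y$ under $X_n \to X$, producing an explicit scalar involving values of $\trho$ at the fiber above $y$. Summing over $x$ weighted by $\trho(x)$ then yields an eigenvalue $\lambda_y$ that is a trace of $\trho$ along that fiber, giving $T_{(0,d'),y}\cdot T_n^{\trho} = \lambda_y \cdot T_n^{\trho}$.

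\textbf{Cuspidality (1).} Cuspidality is the vanishing of the parabolic restriction $\Delta(T_n^{\trho})$ on all proper Levi components. Using general coproduct formulas in the $T$-basis, $\Delta(T_{(n,nd),x})$ is a sum over slope-preserving decompositions $(n,nd) = (n_1,n_1 d)+(n_2,n_2 d)$ of cross-terms indexed by characters that factor through a norm map $\Pic^0(X_n) \to \Pic^0(X_{n_i})$. Twisting by $\trho$ and summing over $x$ converts each such cross-term into an inner product of $\trho$ against a character pulled back from $\Pic^0(X_{n_i})$. Primitivity of $\rho$ — maximal Galois orbit, equivalently the condition that $\rho$ does not factor through any such norm — forces each of these inner products to vanish, giving cuspidality.

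\textbf{Exhaustion (2).} The multiplicity one theorem of Piatetski-Shapiro and Shalika says that the space of rank-$n$ unramified cusp eigenforms is a multiplicity-free module over $\HH_X^{\tor}$. By the Langlands correspondence for $\GL_n$ over $K_X$ and class field theory, irreducible $n$-dimensional unramified $\ell$-adic representations of $\Gal(\overline{K_X}/K_X)$ are in bijection with $\Gal(\fqn/\fq)$-orbits of primitive characters $\rho \in \wX{n}$. A direct count of such Galois orbits matches the dimension of the rank-$n$ cusp eigenform space, so it suffices to show that distinct orbits $[\rho]\ne[\rho']$ produce distinct Hecke eigensystems; this falls out of the explicit formula for $\lambda_y$ from part (1), since the collection $\{\lambda_y\}_y$ determines the Galois orbit of $\rho$ via Chebotarev. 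The undetermined $\C^{\times}$ factor in the statement absorbs both the choice of Galois representative within an orbit and the normalization of the $T$-generators.

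\textbf{Main obstacle.} The genuinely delicate step is the cuspidality computation: tracking exactly which slope-preserving cross-terms appear in $\Delta(T_{(n,nd),x})$, identifying the characters of the smaller Picard groups that dress them, and matching these precisely with the norm-factorization description of non-primitive characters so that primitivity produces the total cancellation. This is where the whole twisting construction earns its keep, and where the coproduct formulas for the $T$-generators must be combined carefully with their $\SL_2(\ZZ)$-equivariance.
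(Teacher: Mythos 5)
There is a genuine gap, and it sits exactly where you flag the ``main obstacle'': your cuspidality argument presupposes ``general coproduct formulas in the $T$-basis'' for the elements $T_{(n,nd),x}$, but no such formulas are available a priori. These elements are defined by transporting the torsion generators $T_{(0,r),x}$ with the $\SL_2(\ZZ)$-action of Burban--Schiffmann, and that action is an action by \emph{algebra} automorphisms of $\DH_X$ only --- it does not intertwine the coproduct. So you cannot read off $\Delta(T_{(n,nd),x})$ from the (primitive) coproduct of the torsion generators, and in the paper the formula for $\Delta(T_{(n,0)}^{\trho})$ is in fact one of the \emph{outputs} of the argument (Step $3_n$), derived only after cuspidality is known and using cuspidality essentially (via Lemma~\ref{L:Delta(cusp)=cusp}). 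The paper instead proves cuspidality by the orthogonality criterion of Proposition~\ref{P:cusp perp}: $T_{(n,0)}^{\trho}$ is cuspidal iff it is orthogonal to $\HH_X^{\le n-1}$, and since the Green form \emph{is} $\SL_2(\ZZ)$-invariant, this pairing can be rotated entirely into the torsion part, where $T_{(0,n)}^{\trho}$ is a primitive element of the coalgebra and the pairing against nontrivial products vanishes for formal reasons (plus orthogonality of distinct characters). A second, related gap is that this orthogonality argument requires knowing that $\HH_X^{\le n-1}$ in the relevant graded piece is spanned by monomials in the lower-rank $T$'s, i.e.\ the full structure theory (generation and PBW for the twisted spherical algebras) in ranks $<n$; the paper's proof is an interlocking induction on $n$ (Steps $1_n$--$6_n$), and your proposal treats each rank independently, leaving you with no way to know that $\{T_{(n,nd)}^{\tsigma}\}_{\tsigma\in\P_n}$ even spans $\AFc_{n,nd}$.

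Your treatment of the eigenform property also inverts the paper's logic in a way that would be hard to carry out directly: the product $T_{(0,d'),y}\cdot T_{(n,nd),x}$ cannot be reduced to a torsion computation by $\SL_2(\ZZ)$ (the symplectic form of the two classes is nonzero, so no $\gamma$ makes both torsion), and computing it from structure constants is essentially the hard part of the whole theory. The paper goes the other way: the Langlands correspondence (Theorem~\ref{T:Frobenius action}) hands you a cusp eigenform $f_{\trho}$ with explicitly known Hecke eigenvalues; one expands $f_{\trho,d}=\sum_{\tsigma}c_{\trho,\tsigma}T_{(n,nd)}^{\tsigma}$ in the basis furnished by the induction, and a single well-chosen pairing against $\O_X(2x_0)^{\oplus n}$ (Lemma~\ref{L:action of O_x on T} versus Corollary~\ref{C:Hecke eigenvalues}) yields a matrix identity $CA=\alpha^{-1}A$ with $A$ of full rank, forcing $f_{\trho}$ to be proportional to $T_n^{\trho}$. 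Your exhaustion argument (multiplicity one plus counting Galois orbits) is in the right spirit and close to the paper's use of Theorem~\ref{T:Frobenius action}, but it too rests on the unproven claims above.
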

As an immediate consequence we obtain that the algebra $\HH_X$ is generated by the subalgebras $\U_X^{\trho,+}$ for $\rho\in\kP$.

Our second main result concerns the structure of $\U_X^{\trho,+}$ and of $\HH_X$. Along the lines of~\cite{BS} we prove:
\begin{thm}[see Theorem~\ref{T:str of Hall}]\quad
\begin{enumerate}
 \item The algebra $\U_X^{\trho,+}$, for $\rho\in\wX{n}$ a primitive character, has a presentation of the following form (see Subsection~\ref{ss:presentation twisted spherical} for a precise definition):

it is generated by $\{t_\x\mid \x\in\Z^+\}$ modulo the following relations:
\begin{enumerate}
	\item If $\x,\x'$ are proportional then \[
[t_\x,t_{\x'}]=0
\]
	\item If $\x,\y$ are such that $\delta(\x)=1$ and that the triangle $\Delta_{\x,\y}$ has no interior lattice point then
	\[
[t_\y,t_\x]=\epsilon_{\x,\y}c_{n\delta(\y)}\frac{\theta_{\x+\y}}{n(\nu^{-1}-\nu)}
\]
	where the elements $\theta_\z$ are defined by equating the coefficients of the following two series:	
	\[
\sum_{i\ge 0}\theta_{i\z_0}s^i=\exp\left(n(\nu^{-1}-\nu)\sum_{i\ge 1}t_{i\x_0}s^i\right)
\]
	for any $\x_0=(p,q)\in\Z^+$ such that $\gcd(p,q)=1$.
\end{enumerate}

\item The twisted spherical Hall algebras $\U_X^{\trho,+},\rho\in\kP,$ centralize each other and moreover we have an isomorphism of bialgebras
\[
\HH_X\simeq  \underset{\tilde\rho\in\P}{{\bigotimes}'} \U_X^{\trho,+}
\]
where the $\bigotimes'$ stands for restricted tensor product.
\end{enumerate}
\end{thm}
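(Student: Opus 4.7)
The plan for part (1) is to follow the two-step strategy of Burban--Schiffmann~\cite{BS}. First, I would verify the two families of relations in $\U_X^{\trho,+}$ directly. Relation (a) for proportional classes amounts to the fact that the $T_{r\alpha}^\trho$ all lie inside the Hall algebra of the Atiyah-equivalent semistable subcategory $\Cc_\nu$ (with $\nu$ the slope of $\alpha$), where, as in~\cite{BS}, the subalgebra generated by slope-$\nu$ elements is commutative; twisting by $\trho$ respects the slope decomposition and so transfers the statement. For relation (b), I would invoke a twisted analog of the $\SL_2(\ZZ)$-equivariance of~\cite{BS} in order to reduce the commutator $[T_\y^\trho, T_\x^\trho]$ with $\delta(\x)=1$ and $\Delta_{\x,\y}$ empty of interior lattice points, to the situation where $\x\in\ZZ_{>0}\cdot(0,1)$ is a torsion class. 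In this reduced form, the commutator is computed by the parabolic induction/restriction calculation on the elliptic curve, and the eigenvalue data of $\rho$ enters as the constant $c_{n\delta(\y)}$.

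To see that these relations suffice, I would build, as in~\cite{BS}, a convex-path PBW basis of the abstractly presented algebra: ordered monomials attached to convex lattice paths in the plane span the abstract algebra using only (a)--(b), and their images in $\U_X^{\trho,+}$ are then shown to be linearly independent via a slope-filtration together with the nondegeneracy of the Hopf pairing on $\HH_X$. This is essentially a twisted repetition of the argument of~\cite{BS}, modulo keeping track of the character data.

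For part (2), the core is the centralization of $\U_X^{\trho,+}$ and $\U_X^{\tsigma,+}$ for distinct primitive characters $\rho,\sigma$. Using $\SL_2(\ZZ)$-equivariance, this reduces to a commutation statement inside the torsion Hall algebra $\HH_X^\tor$, which factors as a restricted tensor product over closed points $x\in X$ of local Hall algebras of nilpotent representations of the cyclic quiver at $x$. The twisted averages $T_{(0,k)}^\trho$ and $T_{(0,k)}^{\tsigma}$ are explicit linear combinations of such local generators with coefficients $\trho(x),\tsigma(x)$, and the vanishing of their commutator then becomes the orthogonality of distinct primitive characters on Galois orbits in $\Pic^0(X_n)$. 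Surjectivity of the natural algebra map $\bigotimes_\trho' \U_X^{\trho,+}\to\HH_X$ then follows from Fourier inversion: the generators $T_{\alpha,x}$ of $\HH_X$ furnished by the cusp eigenform theorem can be written as linear combinations of the twisted averages $T_\alpha^\trho$ over $\rho\in\kP$.

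The hardest step will be injectivity of the above map together with the compatibility of the coproduct needed for the bialgebra isomorphism. For injectivity, the natural strategy is to compare bi-graded Hilbert series with respect to the $K_0'(X)$-grading: the presentation from part (1) yields a Hilbert series of $\U_X^{\trho,+}$ depending only on $\deg(\rho)$, and the weighted restricted product over $\rho\in\kP$ should reproduce the Hilbert series of $\HH_X$ coming from Atiyah's classification and the factorization of $\HH_X^\tor$ over closed points. For the coproduct, the cusp eigenform $T_n^\trho$ is by definition primitive for $\Delta$ (this is what cuspidality means), and the remaining generators $T_{n\alpha}^\trho$ are obtained from it via $\SL_2(\ZZ)$, so that Green's formula together with the centralization already established should imply $\Delta(\U_X^{\trho,+})\subset \U_X^{\trho,+}\otimes\U_X^{\trho,+}$ up to torsion contributions which are themselves block-diagonal in $\rho$.
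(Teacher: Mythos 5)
Your proposal is missing the paper's central input: the Langlands correspondence and the multiplicity-one theorem. The actual proof is a delicate induction on the rank $n$ that interleaves the structure theorem with the classification of cusp eigenforms: one first shows (via the explicit Frobenius eigenvalue computation of Theorem~\ref{T:Frobenius action}, Corollary~\ref{C:Hecke eigenvalues}, the pairing of Lemma~\ref{L:action of O_x on T}, and a matrix argument) that $T_n^\trho$ \emph{is} the cusp eigenform attached to $\trho$, and only then does Proposition~\ref{P:action of T_0N on cusp eign} give the commutator $[T_{(0,N)}^{\tsigma},T_{(n,0)}^{\trho}]$ and hence the constant $c_{n\delta(\y)}$ in relation (b). Your sketch asserts that ``the eigenvalue data of $\rho$ enters as the constant'' without any mechanism for computing that eigenvalue; for a primitive character of degree $n>1$ the element $T_{(n,0)}^\trho$ is a nontrivial linear combination of bundle classes and there is no direct Hall-algebra computation available as in the untwisted case of \cite{BS}. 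Relatedly, your reduction of the centralization statement in part (2) ``to a commutation statement inside $\HH_X^\tor$ via $\SL_2(\ZZ)$-equivariance'' fails: two generators of different slopes cannot simultaneously be rotated into torsion classes, and the mixed-slope commutators are exactly the nontrivial content. The paper handles them through the vanishing of the Hecke action of $T_{(0,N)}^{\tsigma}$ on the cusp eigenform $T_n^\trho$ for $\tsigma\neq\Norm_n^N(\trho)$ --- character orthogonality does appear, but only after pairing a torsion Hecke operator against a rank-$n$ eigenform, not inside the torsion algebra.

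A second concrete error: cuspidality does \emph{not} mean primitivity for $\Delta$. It means $\Delta(f)\in\HH_X^{\vvec}\ot\HH_X^{\tor}+\HH_X^{\tor}\ot\HH_X^{\vvec}$, and the coproduct of $T_{(n,0)}^\trho$ has a nontrivial torsion tail $\sum_d\theta_d^\trho\ot T_{(n,-nd)}^\trho$ whose precise determination (Step $3_n$: showing $\log\theta^\trho(s)$ involves only $T_{(0,nd)}^{\Norm_n^{nd}(\trho)}$ with coefficient $n(v^{-1}-v)$) is what makes $\U_X^{\trho,+}$ a subcoalgebra, feeds the generation Lemma~\ref{L:generators for the twisted spherical}, and fixes the $\theta_{\x+\y}$ appearing in relation (b). For injectivity in part (2) the paper does not compare Hilbert series but uses the PBW decomposition to reduce to the slope-$\infty$ component, where the claim becomes the algebraic independence of a linearly independent family of primitive elements in a commutative cocommutative bialgebra. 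Your Hilbert-series route is not obviously wrong, but as written it presupposes the PBW basis you would need anyway, and it does not address the bialgebra compatibility. Finally, note that parts (1) and (2) cannot be proved independently of the cusp-form theorem: the paper needs Steps $5_{m}$, $6_{m}$ for $m<n$ even to know that the $T_{(n,0)}^\trho$ span the degree-zero cusp forms of rank $n$.
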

\vspace{0.1in}

\begin{rem}
The same description of the Hall algebra $\HH_X$ has been obtained recently in \cite{KSV} using shuffle algebras.
\end{rem}
\vspace{0.1in}

In Section 3.8 of~\cite{Kap}, Kapranov posed the following problem: find a presentation of the Hall algebra in terms of the homogeneous parts of the cusp eigenforms and the torsion sheaves appearing in the formula for the coproduct of these cusp forms. In our context and with the above notations the question is to find a presentation of the algebra $\U_X^{\trho,+}$ but only in terms of the generators $T_{(n,\pm nd)}^\trho, T_{(0,nd)}^\trho,d\in\ZZ_+$. Otherwise saying, it asks for the higher rank relations satisfied by the residues of the Eisenstein series associated to the cusp eigenform $T_n^\trho$. 

In \cite{S4} an answer to the above question for the spherical Hall algebra $\U_X$ and $X$ an elliptic curve is given. Our presentation of $\U_X^{\trho,+}$ and $\HH_X$ coupled with the results of~\cite{S4} give a complete answer to the above question for the whole Hall algebra in the case of elliptic curves.

\subsection{} Let us outline the contents of the paper. 
In Section 1 we introduce the notations and we recall Atiyah's theorem about vector bundles on elliptic curves. In Section 2 we define the Hall algebra $\HH_X$ of $X$ and its Drinfel'd double $\DH_X$. Then we recall the action of $\SL_2(\ZZ)$ on $\DH_X$. We recall de notion of a cuspidal element in the Hall algebra and the Hecke operators. These are analogous to the number theoretical situation. In Section 3 we define the twisted spherical Hall algebras which will be the main objects of study in this work. The purpose of Section 4 is to give a refresher on automorphic forms and Rankin-Selberg L-functions as well as to state some of the results of~\cite{Kap} relevant to our situation. We follow closely the presentation given in~\cite{Kap} to which we refer for full details. In Section 5 we state our main results. In Section 6 and Section 7, the technical heart of the paper, we work out the Langlands correspondence and the actions of the Hecke operators for elliptic curves as well as some formulas for the Hecke operators in the Hall algebra. In Section 8, using the formulas previously found in Section 6 and Section 7, we prove our main results. We included in the Appendix the proofs of some lemmas which didn't fit within the body of the article.
%\subsection*{Acknowledgements} abcabcabcabc \todo{to write}

\section{Notations and Atiyah's theorem}
In this section we begin by fixing the notations we will use throughout. Then we recall Atiyah's classification of vector bundles on an elliptic curve and the theorems of Kuleshov, Geigle-Lenzing.
\subsection{}\label{ss:beginning} 
We fix once and for all an isomorphism of fields between $\C$ and $\qqb_l$, where $l\neq p$, which is the identity on $\overline{\Q}$.

We set $\Z:=\ZZ^2$, $\Z^*:=\Z-\{(0,0)\}$ and $\Z^\pm:=\pm\{(q,p)\in\Z: q>0\textrm{ or } q=0\textrm{ and }p\ge0\}$. For a point $\x=(q,p)\in\Z$ we put $\delta(\x)=\gcd(q,p)$. 

If $\lambda=(\lambda_1,\dots,\lambda_n)$ is a partition then we denote by $l(\lambda)=n$ its length and by $|\lambda|=\sum_i\lambda_i$ its size.

If $\nu\in\C^*-\{\pm1\}$ then we define the $\nu$-integers by the usual formula 
\[
[r]_\nu = \frac{\nu^r-\nu^{-r}}{\nu-\nu^{-1}}.
\]
If $\nu=v=q^{-1/2}$ then we denote $[r]_\nu$ simply by $[r]$.

For a finite abelian group $G$ we denote by $\widehat{G}$ its group of characters. 

\subsection{}
The letter $q$ will denote a power of a prime number $p$, $\kk=\ff_q$ the finite field with $q$ elements, $X$ an elliptic curve over $\ff_q$ (i.e. a smooth projective curve of genus 1 with a rational point) which is geometrically irreducible. We will fix the "origin" of the curve to be a rational point $x_0\in X(\ff_q)$. For a closed point $x\in X$ we will denote by $\O_{X,x}$ the ring of regular functions at $x$, by $\kk(x)$ its residue field, by $\O_x$ the torsion sheaf supported at $x$ whose stalk at $x$ is $\kk(x)$ and set $q_x=\#\kk(x)$.

We will denote by $\Coh(X)$ the category of coherent sheaves on $X$. For an extension of finite fields $\ff_{q^n}$ of $\ff_q$ we will denote by $X_n$ the fibered product $X\times_{\Spec(\ff_q)}\Spec(\ff_{q^n})$. By a sheaf we will always mean a coherent sheaf. Since we will only deal with (the group) $\Ext^1$ we will denote it simply by $\Ext$.

\vspace{0.1in}

As we fixed the origin we have (by Riemann-Roch) a bijective application $X(\ff_{q^n})=X_n(\ff_{q^n})\to \Pic^0(X_n)$ given by $x\mapsto \O_{X_n}(x-x_0)$. Therefore we can transport the group structure from $\Pic^0(X_n)$ to $X(\ff_{q^n})$ and $x_0$ will be the neutral element. Moreover, the inclusions $X(\ff_{q^m})\subseteq X(\ff_{q^n})$ for $m|n$ are compatible with the group structure. To avoid confusion with the addition of divisors, if $x,y$ are points in $X(\ff_{q^n})$, we will denote their sum in the group law by $x\oplus y\in X(\ff_{q^n})$.

\vspace{0.1in}
On the scheme $X$ we have the Frobenius endomorphism $\Fr_X$ which is the identity at the level of (topological) points and the raising at the $q$-th power at the level of functions. 

If $R$ is a $\ff_q$-algebra then this Frobenius acts on the $R$-points $X(R)$ of $X$ by composition: $\Fr_X\circ \ov{x}_R: \Spec{R} \stackrel{\ov{x}_R}{\longrightarrow} X\stackrel{\Fr_X}{\longrightarrow} X$. 
In particular for $R=\ff_{q^n}$ we get an action of the Frobenius on the set $X(\ff_{q^n})$ of  $\ff_{q^n}$-points of $X$ and moreover this action is compatible with the group structure of $X(\ff_{q^n})$. If $m|n$ are positive integers then we have an obvious identification $X(\ff_{q^m})=X(\ff_{q^n})^{\Fr_X^{m}}$. In particular, $X(\ff_{q^n})=X(\ov\ff_q)^{\Fr_X^n}$.

If we denote by $|X|$ the closed points of the scheme $X$ then we have an identification of sets $|X|\simeq X(\ov\ff_q)/\Fr_X$ where the quotient means that two $\ov\ff_q$-points of $X$ are identified if they have the same orbit under the Frobenius action $\Fr_X$. Similarly we have $|X_n|\simeq X(\ov\ff_q)/\Fr_X^n$. These two equalities allow us to define an action of $\Fr_X$ at the level of closed points of $X_n$. We denote this action by $\frob_n:|X_n|\to |X_n|$. It is clear that $|X_n|/\frob_n=|X|$.

\vspace{0.1in}

\subsection{}The Grothendieck group $\K_0(X)$ of $\Coh(X)$ is isomorphic to $\ZZ\oplus\Pic(X)$ and the isomorphism is given by $\kF\mapsto (\rk(\kF),\det(\kF))$. Moreover if we compose this morphism with the one sending a line bundle to its degree we get a group homomorphism $\K_0(X)\to\K_0'(X):=\ZZ^2$ given by $\kF\mapsto (\rk(\kF),\deg(\kF))$. We will call $\K_0'(X)$ the numerical Grothendieck group and for a sheaf $\kF$ we will denote by $\ov\kF$ its image in this group.

On $\K_0(X)$ we have the Euler bilinear form
\[\<\kF,\kG\>:=\dim\Hom(\kF,\kG)-\dim\Ext(\kF,\kG).\]

Since the canonical sheaf of an elliptic curve is trivial the Serre duality gives $\dim\Hom(\kF,\kG)=\dim\Ext(\kG,\kF)$. In particular we see that the Euler form is skew-symmetric.

The kernel of the map $\K_0(X)\mapsto \K'_0(X)$ is given by the radical of this bilinear form. Therefore the Euler form descends to a bilinear form on the numerical Grothendieck group which reads (by the Riemann-Roch formula):
\[\<(r_1,d_1),(r_2,d_2)\>=r_1d_2-r_2d_1.\]

\subsection{}\label{ss:frobenius on characters} 
Recall the Frobenius endo-morphism $\Fr_X$ of $X$. By extension of scalars we can define an endomorphism $\Fr_{X,n}:X_n\to X_n$ of $X_n$ which we will call the Frobenius of $X_n$ relative to $X$.

If $\Phi_n:\Pic^0(X_n)\to X(\ff_{q^n})$ is the isomorphism we fixed at the beginning (see Section \ref{ss:beginning}) then we have a commutative diagram:
\[\xymatrix{
\Pic^0(X_n)\ar[d]^{\Phi_n} & \Pic^0(X_n)\ar[l]_{\Fr_{X,n}^*}\ar[d]^{\Phi_n}\\
X(\ff_{q^n})\ar[r]_{\Fr_X} & X(\ff_{q^n})
}\]

The Frobenius $\Fr_{X,n}^*$ acts (by duality) on each group $\wX{n}$ and we will denote this action simply by $\Fr_{X,n}$. We will also denote by $\wXF{n}$ the quotient $\wX{n}/\Fr_{X,n}$.

If $n,m$ are positive integers such that $m|n$ then we define the relative norm maps $\Norm_m^n:\Pic(X_n)\to \Pic(X_m)$ by 
\[\Norm_m^n(\L):=\bigotimes_{i=0}^{n/m-1} (\Fr_{X,n}^*)^{mi}(\L).\]

The fact that the map is well defined follows from Galois descent: namely if we have a line bundle on $\ov X$ such that it is isomorphic to its Frobenius conjugate then the line bundle descends to a line bundle on $X$. For a proof see for example Lemma 1.2.1 in \cite{HN}.

By dualising we obtain relative norm maps between the character groups for which we will use the same notation 
\[\Norm_m^n:\wX{m}\to\wX{n}\]
hoping that it will not cause any confusion.

\vspace{0.1in}

Recall that we fixed a rational point $x_0$ on $X$ and therefore for any integers $n\ge 1$ and $d\in\ZZ$ we can identify canonically $\Pic^d(X_n)\equiv \Pic^0(X_n)$. This allows us to extend trivially any character $\rho_n$ of $\Pic^0(X_n)$ to a character of $\Pic(X_n)$ by putting $\rho_n(\O_{X_n}(x_0))=1$. Unless otherwise specified we will view the characters of $\Pic^0(X_n)$ as characters of $\Pic(X_n)$.

Set $\widetilde{X}:=\coprod_n \wXF{n}$. For a character $\tilde\rho\in\widetilde{X}$ we will say that it is a character of degree $n$ if $\trho\in\wXF{n}$.
Sometimes we will abuse the language and call the elements of $\widetilde{X}$ characters even if they are actually \emph{orbits} of characters.

\subsection{} For a sheaf $\F$ on $X$ we denote its slope by $\mu(\F)=\deg(\F)/\rk(\kF)$. We say that a sheaf $\F$ on $X$ is semistable (resp. stable) if for any subsheaf $0\subsetneq \G\subsetneq\F$ we have $\mu(\G)\le \mu(\F)$ (resp. $\mu(\G)<\mu(\F)$). For $\mu\in\Q\cup \{\infty\}$ we'll denote by $\Cc_\mu$ the full subcategory of $\Coh(X)$ of semistable sheaves of slope $\mu$. It is easy to see that $\Cc_\mu$ is an abelian category, closed under extensions and the simple objects are exactly the stable sheaves. Notice that $\Cc_\infty$ is the category of torsion sheaves. It follows from the definitions and from Serre duality that if $\mu>\nu$ then $\Hom(\Cc_\mu,\Cc_\nu)=0$ and $\Ext(\Cc_\nu,\Cc_\mu)=0$.

Atiyah gave a precise description of the category of sheaves on $X$, namely:

\begin{thm}[\cite{At}] Any sheaf on $X$ can be written essentially uniquely as a direct sum of semistable sheaves. For any $\mu\in\Q$ we have an exact equivalence of categories between $\Cc_\mu$ and $\Cc_\infty$. 
\end{thm}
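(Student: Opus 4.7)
For the decomposition statement, the plan is to start from the Harder--Narasimhan filtration, which exists for any coherent sheaf on any smooth projective curve: given $\F\in\Coh(X)$, there is a unique filtration $0=\F_0\subsetneq \F_1\subsetneq\cdots\subsetneq \F_n=\F$ with $\F_i/\F_{i-1}\in\Cc_{\mu_i}$ and $\mu_1>\mu_2>\cdots>\mu_n$. The key elliptic curve input is the vanishing $\Ext(\Cc_\nu,\Cc_\mu)=0$ for $\mu>\nu$, which follows from Serre duality together with the triviality of the canonical sheaf: $\Ext(\G,\F)\simeq\Hom(\F,\G)^*=0$ when $\mu(\F)>\mu(\G)$. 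I would then argue inductively on $n$ that the HN filtration splits: $\F_n/\F_{n-1}$ sits in the lowest slope piece, so the extension class in $\Ext(\F_n/\F_{n-1},\F_{n-1})$ vanishes (here $\F_{n-1}$ is an iterated extension of semistable pieces of strictly greater slope, and one reduces the Ext-vanishing to that of the graded pieces). Uniqueness then follows from uniqueness of the HN filtration combined with the Krull--Schmidt property inside each $\Cc_{\mu_i}$.

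For the equivalence $\Cc_\mu\simeq\Cc_\infty$, I would first reduce to the case $\mu=0$ by noting that tensoring by a line bundle $\L$ of degree $d$ is an exact autoequivalence of $\Coh(X)$ which restricts to an exact equivalence $\Cc_\mu\xrightarrow{\sim}\Cc_{\mu+d}$, so only the class of $\mu$ modulo $\ZZ$ matters. For rational $\mu=q/p$ with $\gcd(p,q)=1$ and $p>0$, one uses an element $\gamma\in\SL_2(\ZZ)$ sending $(p,q)$ to $(0,1)$; such $\gamma$ lifts to an autoequivalence of $D^b(X)$ coming from combinations of line bundle twists and the Fourier--Mukai transform with kernel the normalized Poincar\'e bundle $\P$ on $X\times\Pic^0(X)$ (using our chosen origin $x_0$ to identify $X\simeq\Pic^0(X)$). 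Thus the core case to settle is $\Cc_0\simeq\Cc_\infty$, which I would handle by showing directly that the Fourier--Mukai functor $\Phi_\P:D^b(X)\to D^b(X)$ restricts (up to a cohomological shift) to a $t$-exact equivalence between the abelian subcategory of degree $0$ semistable bundles and the abelian subcategory of torsion sheaves.

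The main obstacle, as I see it, lies precisely in this last step: verifying that $\Phi_\P$ is $t$-exact with respect to these subcategories. Concretely, one needs that for $\F\in\Cc_0$ the hypercohomology sheaves $R^i\Phi_\P(\F)$ are concentrated in a single degree with the image a torsion sheaf, and conversely; the standard route goes via the Weil--Index theorem for abelian varieties, computing $H^*(X,\F\otimes \L)$ for all $\L\in\Pic^0(X)$ using that $\chi(\F\otimes\L)=0$ and the semistability of $\F$. Once this is in place, the exchange of rank and degree under $\Phi_\P$ together with the line bundle twists assemble into the equivalence for arbitrary rational $\mu$, completing the proof.
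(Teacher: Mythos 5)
This theorem is quoted from Atiyah and is not proved in the paper; the paper only points to three routes: Atiyah's original argument (extended to finite fields), the Kuleshov and Lenzing--Meltzer mutation approach via the universal extension functor $\tau\mapsto\V(\tau)$ recalled just below the statement, and the Seidel--Thomas spherical twists $T_{\O_X},T_{\O_{x_0}}$ generating $\widetilde\SL_2(\ZZ)$ recalled in Section 2. Your plan is a correct outline and is closest in spirit to the third route. For the splitting of the Harder--Narasimhan filtration your argument (Serre duality with trivial canonical sheaf giving $\Ext(\Cc_\nu,\Cc_\mu)=0$ for $\mu>\nu$, then induction) is exactly what the paper itself uses in passing when it sketches why cusp forms are supported on semistable bundles, and the uniqueness via HN plus Krull--Schmidt is fine. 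For the equivalences, your use of the Poincar\'e Fourier--Mukai transform together with line-bundle twists is essentially the $\widetilde\SL_2(\ZZ)$-action of Seidel--Thomas (note $T_{\O_X}$ is, up to shift, the Poincar\'e transform and $T_{\O_{x_0}}$ is $-\otimes\O(x_0)$), whereas Kuleshov's mutation argument is more elementary and, as the paper emphasizes, more directly algorithmic.

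The one place your plan is under-specified is the final assembly. Settling only the ``core case'' that $\Phi_\P$ carries $\Cc_0$ to $\Cc_\infty[-1]$ is not enough: to transport $\Cc_{q/p}$ to $\Cc_\infty$ you write $\gamma$ as a word in the two generators, and at each intermediate stage the sheaf lies in some $\Cc_{\nu_i}$ with $\nu_i$ in general neither $0$ nor $\infty$; you therefore need the full statement that $\Phi_\P$ sends $\Cc_\nu$ to a shift of $\Cc_{S\cdot\nu}$ for \emph{every} $\nu\in\Q\cup\{\infty\}$. This is no harder than the case you treat --- for $\nu>0$ one has $H^1(\F\otimes\L)=\Hom(\F,\L)^*=0$ for all $\L\in\Pic^0(X)$ by semistability, so $\F$ satisfies IT$_0$, and symmetrically for $\nu<0$; the case $\nu=0$ is the genuinely WIT$_1$ one you describe --- but it must be stated and proved in this generality, together with the verification that the image is again semistable (e.g.\ because stable sheaves go to shifts of stable sheaves and $\Cc_{S\cdot\nu}$ is closed under extensions). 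With that addition the argument is complete; over $\ff_q$ one should also either work with the Picard scheme $\Pic^0_{X/\ff_q}$ identified with $X$ via $x_0$, or descend from $\ov X$ as the paper does in Lemma~\ref{L:dual of V(tau)}.
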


The proof of Atiyah also provides an algorithm to compute these equivalences. His proof is for an algebraically closed field of characteristic zero but his methods can be extended to the finite field case. 

We would like to recall here a different (but of the same flavour) algorithm (or proof) given by Kuleshov~\cite{Kul} in the case of elliptic curves and by Lenzing-Meltzer~\cite{LM} in the case of weighted projective lines of genus 1 using mutations.

Let $\tau$ be a torsion sheaf of degree $n$ and let $\O$ denote the trivial line bundle on $X$. We will denote by $\V(\tau)$ the vector bundle which is the "universal extension" of $\tau$ by $\O$. This means that $V(\tau)$ fits in an exact sequence of the form:
\[0\to\O\ot\Ext(\tau,\O)^*\to \V(\tau)\to\tau\to 0\] and the class of $\V(\tau)$ in $\Ext(\tau,\O\ot\Ext(\tau,\O)^*) = \End(\Ext(\tau,\O))$ is the identity. 

For every $\mu\in \Q$ we have obvious equivalences of categories  $\Cc_\mu\to\Cc_{\mu+1}$ given by $\V\mapsto \V\ot_\O \O(x_0)$.

\begin{thm}[Kuleshov, Geigle-Lenzing]
The correspondence $\tau\mapsto \V(\tau)\ot \O(lx_0)$ from $\Cc_\infty$ to $\Cc_{l+1}$ gives an exact equivalence of categories for every $l\in\ZZ$. 
\end{thm}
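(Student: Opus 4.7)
The plan is to verify that $\V: \Cc_\infty \to \Cc_{l+1}$ is an exact equivalence. The case of general $l$ reduces to $l=0$ by composing with the exact self-equivalence $-\otimes\O(lx_0)$. For $\tau \in \Cc_\infty$ of degree $n$, the vanishing $\Hom(\tau,\O) = 0$ combined with Riemann-Roch gives $\dim \Ext(\tau,\O) = n$, so $\V(\tau)$ has rank $n$ and degree $n$, i.e.\ slope $1$. That $\V(\tau)$ is semistable can be verified by reducing via the additive structure of $\V$ (inherited from $\Ext(\tau_1\oplus\tau_2,\O) = \Ext(\tau_1,\O)\oplus\Ext(\tau_2,\O)$) to the case of an indecomposable torsion sheaf, for which $\V(\tau)$ is indecomposable of the form $F_r\otimes L$ with $L$ a line bundle of degree $1$, hence semistable by Atiyah's classification of $\Cc_1$.

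For functoriality, a morphism $f: \tau \to \tau'$ induces a map of extension data via $\Ext(-,\O)^*$, and the universal property of $\V(\tau')$ produces a unique lift $\V(f):\V(\tau) \to \V(\tau')$ making the two defining extensions commute. For exactness on a short exact sequence $0 \to \tau_1 \to \tau_2 \to \tau_3 \to 0$ in $\Cc_\infty$, the functor $\Ext(-,\O)^*$ is exact (since $\Hom(\tau_i,\O) = 0$), and the snake lemma applied to the comparison of the three defining extensions yields the required sequence $0 \to \V(\tau_1) \to \V(\tau_2) \to \V(\tau_3) \to 0$.

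For full faithfulness, the universal extension property of $\V(\tau)$, spelled out via the $\Hom(-,\O)$-long exact sequence of its defining extension, amounts to $\Hom(\V(\tau),\O) = 0$ together with the connecting morphism $\Hom(\O^n,\O) \to \Ext(\tau,\O)$ being an isomorphism. Dualising via Serre duality (and using $\omega_X = \O$), this translates into $\Hom(\O,\O^{n'}) \xrightarrow{\sim} \Hom(\O,\V(\tau'))$, i.e.\ every global section of $\V(\tau')$ lies in the trivial sub-bundle $\O^{n'}$. Apply $\Hom(\V(\tau),-)$ to the defining extension of $\V(\tau')$ and use $\Hom(\V(\tau),\O) = 0$ to embed $\Hom(\V(\tau),\V(\tau'))$ into $\Hom(\V(\tau),\tau')$; chasing this inside the $\Hom(-,\tau')$-long exact sequence of the defining extension of $\V(\tau)$, together with the sections statement (which forces any lift $\tilde\varphi:\V(\tau)\to\V(\tau')$ to send $\O^n$ into $\O^{n'}$), identifies the image with $\Hom(\tau,\tau')$, and naturality identifies the map with $f \mapsto \V(f)$.

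For essential surjectivity, given $\V \in \Cc_1$ of rank $r$, Riemann-Roch combined with $H^1(\V) = \Hom(\V,\O)^\vee = 0$ (by semistability and Serre duality) gives $\dim \Hom(\O,\V) = r$. The key claim is that the evaluation map $\ev: \O^r \to \V$ is injective with torsion cokernel $\tau$; granting this, the extension $0 \to \O^r \to \V \to \tau \to 0$ is automatically the universal one, because the connecting map $\Hom(\O^r,\O) \to \Ext(\tau,\O)$ is a map between $r$-dimensional spaces whose kernel is $\Hom(\V,\O) = 0$, hence an isomorphism; thus $\V \cong \V(\tau)$. The main obstacle I expect is the injectivity of $\ev$, which I plan to handle by invoking Atiyah's theorem to reduce to the indecomposable case $\V = F_r\otimes L$ with $L$ of degree $1$, and then inducting on $r$ along the filtration of $F_r\otimes L$ whose successive quotients are copies of $L$: since $\Hom(\O,L)$ is one-dimensional and $\Ext(\O,L) = 0$, each filtration step contributes exactly one new section while preserving both injectivity of the partial evaluation map and torsion-ness of its cokernel.
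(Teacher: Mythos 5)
The paper does not actually prove this statement: it is quoted from Kuleshov and Lenzing--Meltzer, and the only strategy the paper hints at is the one sketched immediately afterwards, namely realizing the equivalences $\Cc_\infty\simeq\Cc_\mu$ as restrictions of Seidel--Thomas spherical twists, i.e.\ Fourier--Mukai autoequivalences of $\kD^b(\Coh(X))$, which yield exactness and the equivalence property for free once one checks compatibility with the slope subcategories. Your direct, abelian-categorical argument is therefore a genuinely different route, and its main steps check out: the reduction to $l=0$; the rank/degree count via $\Hom(\tau,\O)=0$ and Riemann--Roch; functoriality and exactness from the exactness of $\Ext(-,\O)^*$ on torsion sheaves plus the nine lemma; full faithfulness from $\Hom(\V(\tau),\O)=0$ and the fact that all global sections of $\V(\tau')$ lie in $\O^{\oplus n'}$ (both are Serre-dual consequences of the connecting map $\Hom(\O^{\oplus n},\O)\to\Ext(\tau,\O)$ being an isomorphism, since it is cup product with the identity extension class); and essential surjectivity via the evaluation map together with your criterion that an extension of $\tau$ by $\O^{\oplus r}$ with bijective connecting map is the universal one. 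What your route buys is an explicit quasi-inverse ($\V\mapsto\mathrm{coker}(\ev)$) without derived-category machinery; what it costs is a heavier reliance on Atiyah's explicit classification of indecomposables.

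The one step you must tighten is the semistability of $\V(\tau)$. After reducing to $\tau$ indecomposable you assert that $\V(\tau)$ is ``indecomposable of the form $F_r\ot L$'', but indecomposability of $\V(\tau)$ is not free at that point: the natural source for it is $\End(\V(\tau))\simeq\End(\tau)$ being local, i.e.\ precisely the full faithfulness you only establish afterwards. A direct slope estimate does not rescue this: for a subsheaf $W\subseteq\V(\tau)$ of rank $s<n$ one only gets $\deg W\le\deg(W\cap\O^{\oplus n})+\deg\tau\le n$, hence $\mu(W)\le n/s$, which does not rule out $\mu(W)>1$. The fix is purely a reordering: prove full faithfulness first (it nowhere uses semistability), deduce that $\V(\tau)$ is indecomposable for $\tau$ indecomposable, and then semistability is automatic on an elliptic curve because the Harder--Narasimhan filtration splits, so every indecomposable sheaf is semistable. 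With that reordering your argument is complete.
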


In fact the above construction can be adapted to include all the slopes $\mu\in\Q$ but since we won't use it here we refer the interested reader to the original papers \cite{Kul}, \cite{LM}. See also Proposition 8.3 in \cite{S2} and Section 1 in \cite{S3}.

\section{Hall algebra of $X$}\label{def Hall alg} In this Section we will define the Hall algebra of $X$, its Drinfel'd double, the $\SL_2(\ZZ)$ action, the cuspidal elements and the Hecke operators. For a nice account of the theory of Hall algebras one can consult the paper \cite{S1}.

\subsection{} Fix a square root $v$ of $q^{-1}$. Let $\HH_X$ be the $\C$-vector space which has a basis given by $\{[\F]\}$ where $\F$ runs through the isomorphism classes of objects in $\Coh(X)$. 

\vspace{0.1in}

To a triple $(\F,\G,\H)$ of sheaves on $X$ we associate the number $P_{\F,\G}^{\H}$ of exact sequences 
\[0\to\G\to\H\to\F\to0.\]

For a sheaf $\kF$ we denote by $a_\kF$ the cardinal of its automorphism group.

On the vector space $\HH_X$ we define following \cite{Rin1} and \cite{Gr} an associative product

\[[\kF]\cdot[\kG]:=v^{-\<\kF,\kG\>}\sum_{\kH}\frac{P_{\kF,\kG}^{\kH}}{a_\kF a_\kG}[\kH],\]
a coassociative coproduct
\[\Delta([\kH]):=\sum_{\kF,\kG}v^{-\<\kF,\G\>}\frac{P_{\F,\G}^{\H}}{a_\H}[\F]\ot[\G]\]
a counit
\[\varepsilon{[\kF]}:=\delta_{\F,0}\]
and a hermitian form
\[([\F],[\G]):=\delta_{\F,\G}\frac1{a_\F}\]
which is in fact a non degenerate Hopf pairing, i.e. we have $(ab,c)=(a\ot b,\Delta(c))$.

The vector space $\HH_X$ endowed with these operations is a topological bialgebra. Topological here means that the coproduct doesn't actually take values in the tensor product $\HH_X\ot\HH_X$  but rather in some completion of it (see \cite{BS} Section 2.3 or \cite{S1} Lecture 1 for details).

\vspace{0.1in}
A more geometric way of defining the Hall algebra is as follows. Consider the space $\kF un_0(\kM_X,\C)$ of $\C$-valued functions with finite support on the set $\kM_X$ of isomorphism classes of objects of $\Coh(X)$. 
The set $\kM_X$ should be viewed as some "moduli" space for the objects of $\Coh(X)$. This space of functions identifies naturally as a vector space with $\HH_X$. We can endow $\kF un_0(\kM_X,\C)$ with a convolution product: namely let $f,g:\kM_X\to\C$ be two functions with finite support. Then we define:
\[
(f\star g)(\kF) = \sum_{\kG\subseteq\kF}v^{-\<\kF/\kG,\kG\>}f(\kF/\kG)g(\kG).
\]
It is easy to see that, using the identification of $\HH_X$ with $\kF un_0(\kM_X,\C)$, we obtain the same structure of algebra on $\HH_X$ as defined in the previous paragraph.

The coalgebra structure is defined (or proved to be, see \cite{S1} Proposition 1.5) in this new setting as:
\[
\Delta(f)(\kF,\kG) = v^{\<\kF,\kG\>-2\dim(\Ext(\kF,\kG))} \sum_{\xi\in\Ext^1(\kF,\kG)}f(\mathrm{cone}(\xi)[-1]).
\]

We will use these two definitions of the Hall algebra interchangeably depending on which one is more adapted to the given situation.

\vspace{0.1in}
The Hall algebra $\HH_X$ has a natural grading over the numerical Grothendieck group $\K_0'(X)=\ZZ^2$ given by $\HH_X[\alpha]:=\bigoplus_{\ov\F=\alpha}K[\F]$.

We will denote by $\HH_X^\tor:=\bigoplus_{d\ge 0}\HH_X[0,d]$ the sub algebra (in fact sub bialgebra) of torsion sheaves and by $\HH_X^\vvec:=\bigoplus_{(r,d)\in\ZZ^2,r>0}\HH_X^\vvec[r,d]$ the sub algebra of vector bundles. Observe though that $\HH_X^\vvec$ is not a sub coalgebra of $\HH_X$. We denote by $\pi^\vvec$ the projection map $\HH_X\to\HH_X^\vvec$.

\subsection{} Recall that when we have a bialgebra endowed with a Hopf pairing we can construct its Drinfel'd double. We will review here its definition in our context and we invite the interested reader to take a look at the books \cite{Jo} or \cite{KRT} for more details and proofs.

We use Sweedler's notation for the coproduct, namely if $a\in \HH_X$ we denote
\[\Delta(a)=a_{(1)}\ot a_{(2)}\]
the summation being understood.

We take two copies of $\HH_X$ which we will denote, in order to avoid confusion, by $\HH_X^+$ and $\HH_X^-$.

The Drinfel'd double of $\HH_X$  is defined as the quotient of the free product algebra $\HH_X^+ * \HH_X^-$ by the relations
\[b_{(1)}^-a_{(2)}^+(b_{(2)},\overline{a_{(1)}})=a_{(1)}^+b_{(2)}^-(b_{(1)},\overline{a_{(2)}})\]
where $(\,\,,\,\,)$ is the Green product and the overline means complex conjugation\footnote{the Drinfel'd double is defined for a bilinear Hopf pairing so we needed to make our sesquilinear Green form bilinear by conjugating the second term.} of the coefficients.

It can be proved that we have an isomorphism of vector spaces $\DH_X\simeq \HH_X^+\otimes \HH_X^-$.

\vspace{0.1in}

Seidel and Thomas ~\cite{Se-To} have constructed, using the Fourier-Mukai transform, an action of the braid group $B_3$ on the bounded  derived category $\mathcal{D}^b(\Coh(X))$. Moreover, in our case, this action is compatible with Atiyah's classification of coherent sheaves on $X$. This means that we can obtain all the above equivalences $\Cc_\infty\simeq \Cc_\mu$ as restrictions of certain Fourier-Mukai transforms. We will briefly review their construction and refer the reader to their paper~\cite{Se-To} for full details.

Let $\kE$ be a spherical object of the derived category $\kD^b(\Coh(X))$, i.e. $\kE$ satisfies $\Hom(\kE,\kE)=\Hom(\kE,\kE[1])=\kk$. Seidel and Thomas considered the functor $T_\kE:\kD^b(\Coh(X))\to\kD^b(\Coh(X))$ defined by
\[
 T_\kE(\kF) = \mathrm{cone}(\mathrm{ev}:\mathrm{RHom}(\kE,\kF)\otimes_\kk\kE \to \kF).
\]

The functor $T_\kE$ is an exact equivalence of categories for any spherical object $\kE$ (\cite{Se-To} Proposition 2.10). By \cite{Se-To}, Lemma 3.2 the functor $T_\kE$ is isomorphic to the Fourier-Mukai transform  with kernel 
\[
\mathrm{cone}(\kE^\vee\boxtimes\kE\to\O_\D)\in\kD^b(\Coh(X\times X)). 
\]
Observe that the objects $\O_X,\O_{x_0}$ are spherical and hence the functors $T_{\O_X},T_{\O_{x_0}}$ provide autoequivalences of the category $\kD^b(\Coh(X))$. These two equivalences satisfy a braid relation (see~\cite{Se-To}, Proposition 2.13):
\[
 T_{\O_{x_0}}T_{\O_X}T_{\O_{x_0}} = T_{\O_X}T_{\O_{x_0}}T_{\O_X}.
\]

\begin{prop}[\cite{Se-To}, \cite{BS} Proposition 1.2]
Let $\Phi:=T_{\O_{x_0}}T_{\O_X}T_{\O_{x_0}}$. Then $\Phi^2 = i^*[1]$ where $i$ is an involution of the curve $X$.
\end{prop}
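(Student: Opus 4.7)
The plan is to prove the identity in two stages: first show via a $K$-theoretic argument that $\Phi^2$ and the shift $[1]$ induce the same action on $\K_0'(X)$, and then use explicit computations of $\Phi$ on a handful of generators, together with the classification of autoequivalences of $\kD^b(\Coh X)$, to identify $\Phi^2[-1]$ with the pullback by an involution.

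For the first stage I would use the formula $[T_\kE(\F)] = [\F] - \<\kE, \F\>[\kE]$ together with the Euler form $\<(r_1, d_1), (r_2, d_2)\> = r_1 d_2 - r_2 d_1$ from Section~1. In the basis $([\O_X], [\O_{x_0}])$ of $\K_0'(X) = \ZZ^2$, this gives matrices $\begin{pmatrix} 1 & -1 \\ 0 & 1 \end{pmatrix}$ and $\begin{pmatrix} 1 & 0 \\ 1 & 1 \end{pmatrix}$ for $T_{\O_X}$ and $T_{\O_{x_0}}$ respectively, and multiplying yields $[\Phi] = \begin{pmatrix} 0 & -1 \\ 1 & 0 \end{pmatrix}$. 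Hence $[\Phi^2] = -\id$, which equals the action of the shift $[1]$ on $\K_0'(X)$, so $\Psi := \Phi^2 \circ [-1]$ acts trivially there.

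For the second stage I would first evaluate $\Phi$ on $\O_X$ by unwinding the three cones: $T_{\O_{x_0}}(\O_X) = \O_X(x_0)$, $T_{\O_X}(\O_X(x_0)) = \O_{x_0}$, and $T_{\O_{x_0}}(\O_{x_0}) = \O_{x_0}$, so $\Phi(\O_X) = \O_{x_0}$; a symmetric computation gives $\Phi(\O_{x_0}) = \O_X[1]$, whence $\Psi(\O_X) = \O_X$. Next, for a skyscraper $\O_x$ with $x \neq x_0$, one has $T_{\O_{x_0}}(\O_x) = \O_x$ (because $\mathrm{RHom}(\O_{x_0}, \O_x) = 0$); the middle twist $T_{\O_X}$ on a torsion sheaf yields a shift of the universal extension $\V(\O_x)$ from Section~1; composing with the last twist returns a skyscraper, which one identifies as $\O_{\ominus x}[1]$, where $\ominus$ denotes the inverse in the group law of $X(\ff_q)$ with origin $x_0$. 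Combined with the classification of autoequivalences of $\kD^b(\Coh X)$ for an elliptic curve (which are, up to shift, of the form $f^* \otimes \L$ with $f \in \Aut(X)$ and $\L \in \Pic^0(X)$), the equalities $\Psi(\O_X) = \O_X$ and $\Psi(\O_x) = \O_{\ominus x}$ identify $\Psi$ with the pullback by the negation involution $i$, completing the proof.

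The hard part is this last identification: the computation of the middle twist on a general skyscraper is somewhat delicate and uses the Kuleshov--Geigle--Lenzing construction from Section~1 in an essential way. A more conceptual alternative would be to use the braid relation to rewrite $\Phi^2 = (T_{\O_X} T_{\O_{x_0}})^3$, recognize $T_{\O_X} T_{\O_{x_0}}$ (up to shift and line bundle twist) as the Fourier--Mukai transform with Poincar\'e kernel on $X$, and invoke Mukai's theorem $\mathcal{F}^2 = (-1)^*[-1]$ for elliptic curves.
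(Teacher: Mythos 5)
The paper itself offers no proof of this proposition: it is imported verbatim from Seidel--Thomas and from \cite{BS}, Proposition 1.2, so your argument can only be judged on its own terms. Your first stage is correct: $[T_\kE(\kF)]=[\kF]-\<\kE,\kF\>[\kE]$, the two matrices are right, $[\Phi]=\left(\begin{smallmatrix}0&-1\\1&0\end{smallmatrix}\right)$, and $[\Phi^2]=-\id$ agrees with the action of $[1]$. The evaluations $\Phi(\O_X)=\O_{x_0}$ and $\Phi(\O_{x_0})=\O_X[1]$ are also correct.

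The genuine gap is in your treatment of a general skyscraper $\O_x$, $x\neq x_0$. After $T_{\O_{x_0}}(\O_x)=\O_x$ and $T_{\O_X}(\O_x)=\kW[1]$ with $\kW=\ker\bigl(\O_X\otimes\Hom(\O_X,\O_x)\to\O_x\bigr)\simeq\V(\O_x)^\vee$ (note: the \emph{dual} of the universal extension, of slope $-1$, not $\V(\O_x)$ itself), the outer twist $T_{\O_{x_0}}$ produces the universal extension of $\O_{x_0}^{\oplus\deg x}$ by $\kW$, shifted by $[1]$; this is a shifted semistable vector bundle of rank $\deg x$ and degree $0$, not a skyscraper. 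Your own K-theoretic computation forces this: $[\Phi](0,\deg x)=(-\deg x,0)$. So $\Phi(\O_x)$ is never a skyscraper for $x\neq x_0$, and to return to torsion sheaves you must carry out all six twists of $\Phi^2$; the identification of the resulting point with $\ominus x$ (which is essentially Lemma~\ref{L:dual of V(tau)}, $\V(\tau)^\vee\otimes\O(2x_0)\simeq\V(\ominus\tau)$) is precisely the content you have skipped. A second, smaller problem: the ``classification of autoequivalences'' you invoke is false as stated --- $\Aut(\kD^b(\Coh(X)))$ for an elliptic curve contains the Fourier--Mukai transforms and surjects onto $\SL_2(\ZZ)$; what you actually need is the Bondal--Orlov-type statement that an equivalence sending every skyscraper to a skyscraper is, up to shift, of the form $f_*(-\otimes\L)$. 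Your alternative sketch --- use the braid relation to write $\Phi^2=(T_{\O_X}T_{\O_{x_0}})^3$, identify the relevant composite with the Poincar\'e Fourier--Mukai transform up to twist and shift, and invoke Mukai's relation $\mathcal{F}\circ\mathcal{F}=(-1)^*[-1]$ --- is in substance how the cited sources argue, and is the cleaner way to close the gap.
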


By the above the group generated by $T_{\O_X},T_{\O_{x_0}},[1]$ in $\Aut(\kD^b(\Coh(X)))$ is isomorphic to the universal covering group $\widetilde\SL_2(\ZZ)$ of $\SL_2(\ZZ)$ which is given by the unique non trivial central extension of $\SL_2(\ZZ)$ by $\ZZ=\<[1]\>$.

In~\cite{BS} Section 3 Schiffmann and Burban proved the following important result:

\begin{thm}[\cite{BS} Corollary 3.10] The $\widetilde\SL_2(\ZZ)$ action on $\kD^b(\Coh(X))$ descends to an action of $\SL_2(\ZZ)$ by algebra automorphisms on $\DH_X$.
\end{thm}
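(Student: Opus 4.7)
The goal is to transport the $\widetilde{\SL_2(\ZZ)}$-action on $\kD^b(\Coh(X))$ to $\DH_X$, check that it is by algebra automorphisms, and verify that the central generator $[1]$ of the kernel $\widetilde{\SL_2(\ZZ)} \twoheadrightarrow \SL_2(\ZZ)$ acts trivially on $\DH_X$ (so that the action descends). The essential difficulty is that a Seidel--Thomas twist $T_\kE$ sends a coherent sheaf to a genuine two-term complex, whereas $\DH_X$ only sees objects of $\Coh(X)$; the Drinfel'd double is precisely what lets one split such a complex into a positive and a negative part.

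\textbf{Step 1 (definition of the action).} For any $\Phi\in\langle T_{\O_X},T_{\O_{x_0}}\rangle$ let $\gamma_\Phi\in\SL_2(\ZZ)$ be its action on $\K_0'(X)=\ZZ^2$. Using Atiyah's theorem, decompose any coherent sheaf $\kF=\bigoplus_\mu \kF_\mu$ with $\kF_\mu\in\Cc_\mu$. Since $\Phi$ is a derived equivalence whose action on classes is $\gamma_\Phi$, it maps $\Cc_\mu$ either into $\Cc_{\gamma_\Phi\cdot\mu}$ or into $\Cc_{\gamma_\Phi\cdot\mu}[1]$, depending on the sign of the transformed rank. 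Write accordingly $\Phi(\kF)=\kF^+\oplus \kF^-[1]$ with $\kF^\pm\in\Coh(X)$ and set
\[
\Phi\cdot [\kF]^+ := v^{c(\kF)}\,[\kF^+]^+\cdot [\kF^-]^-\in \DH_X,
\]
where $c(\kF)$ is a quadratic normalising exponent in $\langle\cdot,\cdot\rangle$ chosen so that the Green pairing is preserved. Extend symmetrically on $\HH_X^-$. For $[1]\in\widetilde{\SL_2(\ZZ)}$ one sets $[1]\cdot[\kF]^\pm=[\kF]^\mp$.

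\textbf{Step 2 (algebra automorphism).} One must check that the formula respects (i) the Hall product on each of $\HH_X^\pm$ and (ii) the Drinfel'd double cross-relations. Both reduce to the fact that any derived equivalence induces isomorphisms on $\Hom$ and $\Ext^1$ and sends short exact sequences to distinguished triangles. Concretely, a Hall product $[\kF]\cdot[\kG]$ is a weighted sum over extensions of $\kF$ by $\kG$; after applying $\Phi$, each extension becomes a distinguished triangle, which one decomposes into its positive/negative coherent cohomology and matches against the product in $\DH_X$. The choice of $c(\kF)$ makes the Euler-form twists match. The double relations then follow because they are expressed through the Hopf pairing, which $\Phi$ preserves.

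\textbf{Step 3 (descent to $\SL_2(\ZZ)$).} The braid relation $T_{\O_{x_0}}T_{\O_X}T_{\O_{x_0}}=T_{\O_X}T_{\O_{x_0}}T_{\O_X}$ and the proposition $\Phi^2=i^*[1]$ are inherited automatically on $\DH_X$, giving an action of $\widetilde{\SL_2(\ZZ)}$. To descend, one checks that $[1]$ acts as the identity on $\DH_X$: by construction $[1]\cdot[\kF]^+=[\kF]^-$ and using $\Phi^4=[2]$ together with the identification $\kF^{++}=\kF$, $\kF^{--}=\kF$ on the level of double decompositions, two applications of the swap compose to the identity, so the action factors through $\widetilde{\SL_2(\ZZ)}/\langle[1]\rangle=\SL_2(\ZZ)$.

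\textbf{Main obstacle.} The delicate point is Step 2: one must control how a derived equivalence acts on \emph{pairs} of sheaves — in particular when $\Phi(\kF)$ has a nontrivial negative part and $\Phi(\kG)$ has a nontrivial positive part, extensions of $\kF$ by $\kG$ produce distinguished triangles whose cohomology mixes both degrees. Disentangling these contributions against the Drinfel'd double cross-relations, with the correct $v$-power normalisation, is the heart of the argument and is where the choice of $c(\kF)$ and the Hopf pairing compatibility must be verified simultaneously.
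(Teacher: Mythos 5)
The paper does not actually prove this statement: it is recalled verbatim from \cite{BS} (Corollary 3.10), with the later generalisation credited to \cite{Cr}, so the only thing to assess is your sketch on its own merits — and it has genuine gaps exactly where the real work lies. In Step 1, the inference ``since $\Phi$ acts on classes by $\gamma_\Phi$, it maps $\Cc_\mu$ into $\Cc_{\gamma_\Phi\cdot\mu}$ or $\Cc_{\gamma_\Phi\cdot\mu}[1]$'' is a non sequitur: the action on $\K_0'(X)$ says nothing about preservation of semistability up to shift. That the twist functors $T_{\O_X},T_{\O_{x_0}}$ have this property is a nontrivial input, and for a general word in the generators the shifts accumulate (each generator shifts $\Cc_\mu$ by $0$ or $1$, and different slopes accumulate different totals), so the image of an arbitrary sheaf need not be of the form $\kF^+\oplus\kF^-[1]$; the action must be defined on generators and extended by composition, not by your uniform formula. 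More seriously, Step 2 — the entire content of the theorem — is asserted rather than proved. Compatibility of $[\kF]\mapsto v^{c(\kF)}[\kF^+]^+\cdot[\kF^-]^-$ with the Hall product does not ``reduce to the fact that any derived equivalence induces isomorphisms on $\Hom$ and $\Ext^1$'': one must match a weighted count of extensions of $\kF$ by $\kG$ against a product in $\DH_X$ computed through the cross-relations $b^-_{(1)}a^+_{(2)}(b_{(2)},\ov{a_{(1)}})=a^+_{(1)}b^-_{(2)}(b_{(1)},\ov{a_{(2)}})$, and the assertion that these agree (with the correct $v$-powers) \emph{is} the theorem. Saying the double relations ``follow because they are expressed through the Hopf pairing, which $\Phi$ preserves'' is circular, since preserving the pairing and each half's product separately does not control an automorphism that mixes the two halves.

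Step 3 is internally inconsistent. You decree $[1]\cdot[\kF]^{\pm}=[\kF]^{\mp}$ and then conclude that ``$[1]$ acts as the identity on $\DH_X$''; a swap of the two tensor factors of $\DH_X\simeq\HH_X^+\ot\HH_X^-$ is certainly not the identity. Moreover $[1]$ acts on $\K_0'(X)$ by $-\mathrm{I}$, so it is not the element whose triviality is at stake. What descent actually requires is that the induced automorphisms satisfy the extra relation defining $\SL_2(\ZZ)$ as a quotient of the braid group, namely $(\widetilde T_{\O_{x_0}}\widetilde T_{\O_X}\widetilde T_{\O_{x_0}})^4=\id$ on $\DH_X$; via $\Phi^2=i^*[1]$ this amounts to checking that the double shift $[2]=\Phi^4$ induces the identity automorphism. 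That is the verification your sketch needed to isolate and carry out, and it is not delivered by ``two applications of the swap compose to the identity.''
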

In the sequel we will exploit extensively this symmetry of the Hall algebra.

\vspace{0.1in}

\begin{rem}Recently, T. Cramer~\cite{Cr} has extended this result by proving more generally that any derived auto-equivalence of a hereditary category induces an automorphism of the Drinfel'd double of the Hall algebra.
\end{rem}

\subsection{} In this paragraph we define the notion of cuspidality for elements in the Hall algebra. This notion is equivalent to the usual definition of cusp automorphic forms in number theory. We will prove some basic results about these elements.

\begin{defin} An element $f\in\HH_X[r,d]^{\vvec}, r>0$, is cuspidal if for any vector bundles $\kV,\kW$ on $X$ we have $(f,[\kV]\cdot[\kW])=0$.
\end{defin}

Observe that $[\kL]$ is cuspidal for any line bundle $\kL$.

The above condition is equivalent to $\Delta(f)\in \HH_X^{\vvec}\ot\HH_X^{\tor}+\HH_X^{\tor}\ot\HH_X^{\vvec}$.

\begin{prop}\label{P:cusps generate H_X} The algebra $\HH_X$ is generated by $\HH_X^\tor$ together with the set of cuspidal elements.
\end{prop}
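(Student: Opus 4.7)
By the semidirect product decomposition $\HH_X \simeq \HH_X^{\vvec} \ltimes \HH_X^{\tor}$ recalled in the introduction, the multiplication map $\HH_X^{\vvec} \otimes \HH_X^{\tor} \to \HH_X$ is a linear isomorphism. Consequently, any subalgebra containing both $\HH_X^{\tor}$ and $\HH_X^{\vvec}$ is all of $\HH_X$. Let $B$ denote the subalgebra generated by $\HH_X^{\tor}$ together with the cuspidal elements. The proposition therefore reduces to proving $\HH_X^{\vvec} \subseteq B$, which I would establish by induction on the rank $r \ge 1$.

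The base case $r = 1$ is immediate: any product $[\V_1] \cdot [\V_2]$ of vector bundles with positive ranks lies in total rank $\geq 2$, so the Green pairing of a line bundle class $[\L]$ with any such product vanishes for trivial degree reasons. Hence every line bundle is automatically cuspidal, and $\HH_X^{\vvec}[1, d] \subseteq B$ for all $d$.

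For the inductive step at rank $r \ge 2$, assume $\HH_X^{\vvec}[r', d'] \subseteq B$ for all $r' < r$ and all $d'$. Let
\[
I[r,d] \;:=\; \sum_{\substack{r_1+r_2 = r \\ r_1, r_2 > 0}} \HH_X^{\vvec}[r_1,d_1] \cdot \HH_X^{\vvec}[r_2,d_2] \;\subseteq\; \HH_X^{\vvec}[r,d],
\]
and let $C[r,d]$ be the cuspidal part, which by definition equals the orthogonal complement of $I[r,d]$ inside $\HH_X^{\vvec}[r,d]$ with respect to the Green pairing. By the inductive hypothesis, $I[r,d] \subseteq B$, and $C[r,d] \subseteq B$ by construction of $B$. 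Thus the inductive step comes down to the decomposition
\[
\HH_X^{\vvec}[r,d] \;=\; C[r,d] + I[r,d].
\]

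The key input for this decomposition is that the Green pairing is positive-definite Hermitian on $\HH_X^{\vvec}[r,d]$, being diagonal with positive entries $1/a_{\F}$ in the basis of isomorphism classes. Given $f \in \HH_X^{\vvec}[r,d]$ with finite support $S$, only finitely many isomorphism classes of pairs $(\V_1, \V_2)$ can yield a non-zero Hall number $P_{\V_1,\V_2}^{\F}$ for some $\F \in S$ (any such $\V_2$ is a subsheaf of one of the finitely many $\F \in S$). The obstruction to the cuspidality of $f$ is therefore encoded in a finite-dimensional linear system, and I would solve it by Gram--Schmidt: restrict to the finite-dimensional subspace of $I[r,d]$ spanned by the relevant products $[\V_1] \cdot [\V_2]$, and take the orthogonal projection of $f$ onto it to produce $g \in I[r,d]$. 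The main obstacle is then verifying that $f - g$ is orthogonal not just to this finite subspace but to all of $I[r,d]$; this follows because the diagonality of the Green form ensures that any product $[\V_1]\cdot[\V_2]$ whose expansion misses $S$ pairs trivially with both $f$ and $g$. Once this is in place, $f - g \in C[r,d]$ and $g \in I[r,d]$ yield the required decomposition, closing the induction.
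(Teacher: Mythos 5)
Your overall architecture --- reduce to $\HH_X^{\vvec}$ via the semidirect product decomposition, then induct on rank and prove $\HH_X^{\vvec}[r,d]=C[r,d]+I[r,d]$ --- is a legitimate alternative to the paper's argument (the paper instead takes a nonzero element of minimal rank in the orthogonal complement of the subalgebra generated by cuspidals and torsion sheaves and derives a contradiction from non-degeneracy of the Green form). However, your proof of the decomposition has a genuine gap, concentrated in the finiteness claim. It is false that for a fixed vector bundle $\kF$ only finitely many isomorphism classes of pairs $(\kV_1,\kV_2)$ give $P_{\kV_1,\kV_2}^{\kF}\neq 0$: a rank-$r$ bundle has sub-bundles of rank $r_2<r$ of arbitrarily negative degree (e.g.\ $\O_X(-D)\hookrightarrow\O_X^{\oplus 2}$ for every effective divisor $D$ of degree $\ge 2$, with locally free quotient), so within the fixed graded piece $[r,d]$ the obstruction to cuspidality of $f$ involves infinitely many products $[\kV_1]\cdot[\kV_2]$, one family for each splitting $d=d_1+d_2$. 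There is consequently no finite-dimensional linear system, and the Gram--Schmidt projection is not available. The subsequent patch also fails on its own terms: for a product whose expansion misses the support $S$ of $f$, the pairing with $g$ need not vanish, because $g$ is a combination of ``relevant'' products whose supports typically extend beyond $S$ and can meet the support of the ``irrelevant'' product there. In general, for a subspace $W$ of an infinite-dimensional inner-product space one does not have $V=W+W^{\perp}$, so positive-definiteness alone cannot close the induction.

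The decomposition you want is nevertheless true, but its proof needs two inputs specific to the elliptic curve that your argument never invokes. First, since the Harder--Narasimhan filtration splits and consecutive HN factors admit no morphisms or extensions in the wrong direction, the class of any non-semistable bundle is, up to a nonzero scalar, the product of the classes of its HN factors and hence already lies in $I[r,d]$; correspondingly, cuspidal elements are supported on semistable bundles. Second, by Atiyah's theorem there are only finitely many semistable bundles of fixed rank and degree, so modulo the non-semistable classes one really is in a finite-dimensional inner-product space, where the orthogonal decomposition is automatic (one must still check that the truncation of an element of $I[r,d]$ to its semistable support stays in $I[r,d]$, which follows from the first point). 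Some reduction of this kind is also what implicitly underlies the paper's appeal to non-degeneracy; without it, your inductive step does not go through.
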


\begin{proof} Denote by $B$ the subalgebra generated by the cuspidals and by the torsion sheaves and by $B^\perp$ its orthogonal in $\HH_X$ with respect to Green's scalar product. Since this scalar product is non-degenerate it suffices to prove that $B^\perp=0$.

Let $f\in B^\perp, f\neq 0$ be of smallest rank. As $f$ is not cuspidal there exist vector bundles $\kV,\kW$ of smaller rank such that $(f,\kV\cdot\kW)\neq 0$. But since $f$ was chosen to be of minimal rank in $B^\perp$ it follows that $\kV$ and $\kW$ are in $B$ and therefore $f\not\in B^\perp$. Contradiction.
\end{proof}

\vspace{0.1in} For each integer $r\ge 1$ denote by $\HH_X^{\le r}$ (resp. $\HH_X^{<r}$) the subalgebra of $\HH_X$ generated by the torsion sheaves plus all the vector bundles of degree $\le r$ (resp. $<r$). 

\begin{prop}\label{P:cusp perp} An element $f\in\HH_X[r,d]$ is a cusp form if and only if $f\in(\HH_X^{< r})^{\perp}$, where $\perp$ means the orthogonal with respect to Green's form.
\end{prop}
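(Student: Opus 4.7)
The plan is to exploit the equivalent characterization of cuspidality via the coproduct, namely that $f$ is cuspidal iff $\Delta(f)\in\HH_X^\vvec\otimes\HH_X^\tor+\HH_X^\tor\otimes\HH_X^\vvec$ (stated in the text immediately after the definition), together with the Hopf compatibility $(ab,c)=(a\otimes b,\Delta(c))$ and the fact that the Hall pairing respects the $\K_0'(X)$-grading. Non-matching bigrade components pair to zero automatically.

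For the forward implication (cuspidal $\Rightarrow f\in(\HH_X^{<r})^{\perp}$), I would take an arbitrary $g\in\HH_X^{<r}\cap\HH_X[r,d]$ and write it as a linear combination of monomials $a_1\cdots a_k$ in the generators of $\HH_X^{<r}$ (torsion sheaves and vector bundles of rank $<r$). Since the total rank is $r>0$ and every vector-bundle generator has rank strictly less than $r$, each monomial must contain at least two vector-bundle factors. Placing the cut between the first and the second such factor, one obtains a bracketing $h_1\cdot h_2$ with $\rk(h_1)>0$ and $\rk(h_2)>0$. The identity $(f,h_1 h_2)=(\Delta(f),h_1\otimes h_2)$ then gives zero: every summand of $\Delta(f)$ has one tensor slot in $\HH_X^\tor$ (rank $0$), which is orthogonal by grading to $h_i$ of positive rank.

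For the reverse implication, I would simply observe that for any vector bundles $\V,\W$ the quantity $(f,[\V][\W])$ vanishes automatically by grading unless $\rk(\V)+\rk(\W)=r$ and $\deg(\V)+\deg(\W)=d$; in that case both $\rk(\V)$ and $\rk(\W)$ lie strictly between $0$ and $r$, so $[\V]$ and $[\W]$ are generators of $\HH_X^{<r}$, hence $[\V][\W]\in\HH_X^{<r}[r,d]$, and $(f,[\V][\W])=0$ by hypothesis.

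The main subtlety I anticipate in the forward direction is that the intermediate factors $h_1,h_2$ obtained by absorbing torsion generators next to a single vector-bundle generator need not lie in $\HH_X^\vvec$; their expansion generally involves coherent sheaves with nontrivial torsion submodule. One therefore cannot reduce the computation to the defining cuspidality condition $(f,[\V][\W])=0$ on pure vector bundles alone. It is essential to pass through the coproduct characterization, since the vanishing is then driven uniformly by the rank grading on one tensor slot of $\Delta(f)$ and is insensitive to whether $h_1,h_2$ have torsion contributions. With this observation the remaining bookkeeping is routine.
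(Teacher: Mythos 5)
Your argument is correct, and your reverse implication coincides with the paper's. The forward implication is where you genuinely diverge: the paper disposes of it in one sentence --- cuspidality gives $(f,[\kV]\cdot[\kW])=0$ for vector bundles of rank $<r$, ``by linearity it follows that $f\perp\HH_X^{<r}$'' --- which silently skips exactly the difficulty you flag. Indeed $\HH_X^{<r}[r,d]$ is \emph{not} spanned by two-fold products of vector bundles: a monomial such as $[\tau]\cdot[\kV]\cdot[\kW]$ with $\tau$ a nonzero torsion sheaf and $\rk\kV+\rk\kW=r$ is supported partly on sheaves with torsion (the split extension always contributes), so it cannot be reduced to the defining condition on pure vector bundles. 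Your route --- cut each monomial after its first vector-bundle factor so that both halves have rank strictly between $0$ and $r$, then kill $(\Delta(f),h_1\ot h_2)$ using the rank grading and the fact that every summand of $\Delta(f)$ has a tensor slot in $\HH_X^\tor$ --- is a complete proof of what the paper merely asserts, and it is the right mechanism (it is insensitive to torsion contributions in $h_1,h_2$). The price is that you rest the whole forward direction on the nontrivial implication of the equivalence ``$f$ cuspidal $\Leftrightarrow$ $\Delta(f)\in\HH_X^\vvec\ot\HH_X^\tor+\HH_X^\tor\ot\HH_X^\vvec$'', which the paper states immediately after the definition but never proves (it is the standard fact that vanishing of constant terms along maximal parabolics forces vanishing along all parabolics); citing it is legitimate within the paper's logic, but be aware that the burden of proof has been relocated there rather than eliminated. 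Finally, a caveat that applies to you and to the paper equally: the proposition is stated for $f\in\HH_X[r,d]$ while cuspidality is defined only for $f\in\HH_X[r,d]^\vvec$, so the converse strictly also requires checking that an element of $(\HH_X^{<r})^{\perp}$ of positive rank is supported on vector bundles; neither proof addresses this point.
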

\begin{proof}
If $f$ is a cusp form of rank $r$ then, by the definition, we have $(f,\kV\cdot\kW)=0$ for any $\kV,\kW$ vector bundles of rank $<r$. By linearity it follows that $f\perp\HH_X^{<r}$.

Conversely, let $f\in\HH_X[r,d]\cap(\HH_X^{<r})^{\perp}$. Then $f$ is orthogonal to any element in $\HH_X^{<r}$, in particular it is orthogonal to any product $\kV\cdot\kW$ where $\kV,\kW$ are vector bundles of rank $<r$. Therefore $f$ is a cusp form.
\end{proof}

\begin{lemma}\label{L:Delta(cusp)=cusp}
Let $f\in \HH_X[r,d]$ be a cusp form. Write \[
\Delta(f) = f\ot 1+\sum_{i,l} \theta_{i,l}\ot f_{i,l}
\]
where $\theta_{i,l}\in \HH_X[0,l]$ and $f_{i,l}\in\HH_X[r,d-l]$ such that $\theta_{i,l}$ are mutually orthogonal (w.r.t. the Green form). Then the $f_{i,l}$ are cusp forms.
\end{lemma}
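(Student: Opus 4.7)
The natural strategy is to use the characterization of cusp forms given in Proposition \ref{P:cusp perp}, namely $f_{i,l}$ is cuspidal if and only if $f_{i,l} \perp \HH_X^{<r}[r,d-l]$, and then exploit the Hopf pairing property $(xy,z) = (x\otimes y, \Delta(z))$ together with the orthogonality of the $\theta_{i,l}$'s. The key observation is that multiplying a torsion element $\theta_{i,l}$ by an element $a$ of $\HH_X^{<r}$ lands again in $\HH_X^{<r}$, so testing $f$ against such products uses the fact that $f$ itself is cuspidal.

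\textbf{Main steps.} First, fix indices $(i,l)$ and pick an arbitrary element $a \in \HH_X^{<r}[r,d-l]$; by Proposition \ref{P:cusp perp} it suffices to prove $(f_{i,l},a)=0$. Second, note that $\theta_{i,l}\in \HH_X^\tor \subset \HH_X^{<r}$ and $a\in \HH_X^{<r}$, so the product $\theta_{i,l}\cdot a$ lies in $\HH_X^{<r}[r,d]$; hence, since $f$ is cuspidal, we have $(f,\theta_{i,l}\cdot a) = 0$. Third, apply the Hopf pairing property to rewrite
\[
0 \;=\; (f,\theta_{i,l}\cdot a) \;=\; (\Delta(f),\theta_{i,l}\otimes a).
\]
Fourth, substitute the given decomposition of $\Delta(f)$:
\[
(\Delta(f),\theta_{i,l}\otimes a) = (f,\theta_{i,l})(1,a) + \sum_{j,m}(\theta_{j,m},\theta_{i,l})(f_{j,m},a).
\]
The term $(1,a)$ vanishes because $a$ has strictly positive rank $r$ and therefore has no component on the zero sheaf. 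Fifth, invoke the hypothesis that the $\theta_{j,m}$ are mutually orthogonal: only the diagonal term $j=i$, $m=l$ survives, giving
\[
0 \;=\; (\theta_{i,l},\theta_{i,l})\,(f_{i,l},a).
\]
Finally, since the Green form is positive on the basis of isomorphism classes one has $(\theta_{i,l},\theta_{i,l})\neq 0$ (we may assume $\theta_{i,l}\neq 0$, else the term does not appear), and we conclude $(f_{i,l},a)=0$. As $a$ was arbitrary in $\HH_X^{<r}[r,d-l]$, Proposition \ref{P:cusp perp} gives that $f_{i,l}$ is a cusp form.

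\textbf{Expected difficulty.} The argument is essentially bookkeeping once one spots the right test element $\theta_{i,l}\cdot a$. The only mild subtlety is making sure that $\HH_X^{<r}$ is indeed a subalgebra (it is, by definition), so that $\theta_{i,l}\cdot a\in\HH_X^{<r}$ and one can legitimately apply the cuspidality of $f$. A second minor point to watch is the sesquilinearity of Green's form: the conjugation only affects scalars and does not disturb the vanishing conclusion, since we only need the identity $(f_{i,l},a)=0$ up to a nonzero multiplicative factor. No deeper input is required; in particular the $\SL_2(\ZZ)$ action, Atiyah's theorem, or any explicit formula for $\Delta$ are not needed for this lemma.
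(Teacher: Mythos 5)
Your proof is correct and follows essentially the same route as the paper's: both arguments pair $\Delta(f)$ against $\theta_{i,l}\otimes a$, use the Hopf property of Green's form together with the mutual orthogonality of the $\theta_{i,l}$ to isolate $\|\theta_{i,l}\|^2(f_{i,l},a)$, and conclude from the cuspidality of $f$ (via Proposition~\ref{P:cusp perp}). The only difference is presentational — the paper argues by contradiction while you argue directly — so nothing of substance is gained or lost.
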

\begin{proof}
We will proceed by contradiction. Suppose there existed some $i_0,d_0$ such that $f_{i_0,l_0}$ would not be cuspidal. Then there would exist $a,b\in \HH_X^{>0}$ such that $(f_{i_0,l_0},a\cdot b)\neq 0$ 

Therefore we have that $(\Delta(f),\theta_{i_0,l_0}\ot a\cdot b) = \|\theta_{i_0,l_0}\|^2 (f_{i_0,l_0},a \cdot b)\neq 0$. But by using the Hopf property of Green's form we get that $(\Delta(f),\theta_{i_0,l_0}\cdot a\ot b)\neq 0$ which is a contradiction with the cuspidality of $f$.

\end{proof}
%\subsection{} 
%The Hall algebra $\HH_X$ admits a completion $\widehat\HH_X$ that is an %actual bialgebra. For details we refer the reader to Section 2 of %\cite{BS}. We will recall here the definition of this completion.

\subsection{Hecke operators}
In this Subsection we introduce the action of the Hecke operators and we state some of their properties.

\subsection{} Recall that $\HH_X^\tor$ is the Hall algebra of torsion sheaves on $X$ and that $\HH_X^\vvec$ is the subalgebra of $\HH_X$ that consists of functions supported on the vector bundles.

\begin{defin} The Hecke operators are given by the application $\H:\HH_X^\tor\otimes\HH_X^\vvec \to \HH_X^\vvec$ which is defined by:
\[
\H_\tau(f)(\V'):=\sum_{\substack{\V\subseteq\V'\\\V'/\V\simeq\tau}}f(\V)
\]
where $\tau\in\HH_X^\tor$ and $f\in\HH_X^\vvec$.
\end{defin}
We call $\H_\tau$ the Hecke operator associated to $\tau\in\HH_X^\tor$ and $\HH_X^\tor$ the algebra of Hecke operators.

As $\HH_X$ is an associative algebra we see that the Hecke operators make $\HH_X^\vvec$ a left $\HH_X^\tor$-module. It is the structure of this module (or rather of a completion of it) that is very important in the classical Langlands correspondence. The determination of its structure as a module over the algebra of Hecke operators is equivalent to the understanding of the automorphic side in the Langlands correspondence.

\vspace{0.1in}

\begin{prop}\label{P:Hecke action and mult with torsion} If $\tau\in\HH_X[0,d]\subset \HH_X^\tor$ and if $f\in\HH_X[n,d']^\vvec$ then
\[\H_\tau(f)=v^{-rd}\pi^\vvec(\tau\cdot f)\in\HH_X[r,d+d']^\vvec\]
where the multiplication is made in the Hall algebra $\HH_X$.
\end{prop}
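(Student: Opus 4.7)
The plan is to unwind both sides directly as functions on isomorphism classes of coherent sheaves, using the convolution description of $\HH_X$ recalled in the text. For any coherent sheaf $\V'$,
\[
(\tau\cdot f)(\V')=\sum_{\V\subseteq\V'}v^{-\<\V'/\V,\V\>}\,\tau(\V'/\V)\,f(\V).
\]
First I would identify which summands actually contribute. Since $\tau$ is supported on torsion sheaves of class $(0,d)$ and $f$ on vector bundles of class $(n,d')$, nonvanishing requires $\V$ to be a vector bundle of class $(n,d')$ and $\V'/\V$ to be torsion of degree $d$. In particular, when $\V'$ is itself a vector bundle, its class must be $(n,d+d')$.

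The crucial observation is that on every such contributing pair the Euler pairing evaluates to a constant,
\[
\<\V'/\V,\V\>=\<(0,d),(n,d')\>=-nd,
\]
independently of the chosen subbundle $\V$. Hence the scalar $v^{nd}$ factors out of the sum and, after extending linearly in $\tau$ from characteristic functions of isomorphism classes, we obtain
\[
(\tau\cdot f)(\V')=v^{nd}\sum_{\substack{\V\subseteq\V'\\ \V'/\V\simeq\tau}}f(\V)=v^{nd}\,\H_\tau(f)(\V')
\]
for every vector bundle $\V'$, where the right-most sum is interpreted as the linear extension in $\tau$.

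Finally I would apply $\pi^\vvec$ to promote this pointwise identity to an equality in $\HH_X^\vvec$. The projection simply discards those summands of $\tau\cdot f$ supported on sheaves with nonzero torsion (which can occur among extensions of $\tau$ by a vector bundle $\V$ that fail to be locally free), and $\H_\tau(f)$ never sees such classes in the first place. Rearranging yields $\H_\tau(f)=v^{-nd}\pi^\vvec(\tau\cdot f)$, which is the stated identity (the $r$ in the formula of the proposition being the common rank $n$ of $f$ and of $\H_\tau(f)$). There is no serious obstacle; the only subtlety worth flagging is that $\<(0,d),(n,d')\>=-nd$ is independent of $\V$, which is exactly what permits the clean extraction of the $v$-prefactor from the convolution sum.
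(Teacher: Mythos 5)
Your proof is correct and is exactly the computation the paper leaves implicit: the paper's proof consists of the single remark that the identity "follows immediately by the definition of the product in the Hall algebra and by the formula for the Hecke operators," and your unwinding of the convolution formula, the constancy of $\<(0,d),(n,d')\>=-nd$ on all contributing pairs, and the role of $\pi^\vvec$ in discarding the non-locally-free extensions supplies precisely the missing details (including the observation that the $r$ in the statement is the rank $n$).
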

\begin{proof}
This follows immediately by the definition of the product in the Hall algebra and by the above formula for the Hecke operators.
\end{proof}

Recall that a primitive element in a coalgebra $(C,\Delta,\epsilon)$ is an element $x\in C$ that satisfies $\Delta(x)=x\ot 1+1\ot x$.

For the primitive elements of $\HH_X^\tor$ we actually have a nicer description of the associated Hecke operator:

\begin{lemma}\label{L:Hecke op primitive torsion} Let $\tau\in\HH_X[0,d]$ be a primitive element and let $f\in \HH_X^\vvec[r,d']$. Then we have
\[\H_\tau(f) = v^{-rd}[\tau,f]\]
where the commutator is taken in the Hall algebra $\HH_X$.
\end{lemma}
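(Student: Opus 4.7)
By Proposition~\ref{P:Hecke action and mult with torsion} we already have $\H_\tau(f) = v^{-rd}\pi^\vvec(\tau\cdot f)$, so the lemma is equivalent to the identity
\[
\pi^\vvec(\tau\cdot f) \;=\; \tau\cdot f - f\cdot\tau,
\]
i.e.\ the non-vector-bundle part of $\tau\cdot f$ coincides with $f\cdot\tau$. By non-degeneracy of Green's form, I verify this by pairing both sides with each basis element $[\kH]$. For $\kH$ a vector bundle the identity is trivial: the convolution formula for $f\star\tau$ looks for a torsion subsheaf of $\kH$, which must be zero, so $(f\cdot\tau,[\kH])=0$, while $\pi^\vvec(\tau\cdot f)$ agrees with $\tau\cdot f$ on vector-bundle basis elements.

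Assume $\kH$ is not a vector bundle, and write $\kH=\kT\oplus\kV$ with $\kT\ne 0$ torsion of degree $t$. Then $(\pi^\vvec(\tau\cdot f))(\kH)=0$, so we must establish $(\tau\cdot f)(\kH)=(f\cdot\tau)(\kH)$. A direct application of the convolution formula yields $(f\cdot\tau)(\kH)=v^{-rt}f(\kV)\tau(\kT)$, which is nonzero only when $t=d$: the only torsion subsheaf of $\kH$ with vector-bundle quotient is $\kT$ itself, and $\tau(\kT)$ vanishes unless $\deg\kT=d$. For the left-hand side, I parametrize the rank-$r$ torsion-free subsheaves $\kG\subseteq\kH$ of degree $d'$ by pairs $(\kG_0,g)$, where $\kG_0\subseteq\kV$ is a rank-$r$ subsheaf of degree $d'$ and $g\in\Hom(\kG_0,\kT)$ --- the subsheaf $\kG$ being the graph $\{(g(x),x)\mid x\in\kG_0\}$. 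The quotient $\kH/\kG$ is the extension of $\kV/\kG_0$ by $\kT$ whose class is $\partial g\in\Ext^1(\kV/\kG_0,\kT)$, for $\partial$ the connecting homomorphism of $0\to\kG_0\to\kV\to\kV/\kG_0\to 0$, so that
\[
(\tau\cdot f)(\kH) \;=\; v^{rd}\sum_{\kG_0\subseteq\kV} f(\kG_0)\sum_{g\in\Hom(\kG_0,\kT)}\tau\bigl(\mathrm{ext}(\partial g)\bigr).
\]

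The decisive step combines two ingredients. First, on a curve one has $\Ext^1(\kV,\kT)=H^1(\kV^\vee\otimes\kT)=0$, since $\kV^\vee\otimes\kT$ is torsion and hence acyclic, so the $\Hom/\Ext^1$ long exact sequence shows $\partial$ is surjective whenever $\kV/\kG_0\ne 0$. Second, the primitivity $\Delta\tau=\tau\ot 1+1\ot\tau$ combined with the Hall-algebra coproduct formula from Section~\ref{def Hall alg} gives, for any pair $(\kA,\kB)$ of nonzero sheaves,
\[
\sum_{\xi\in\Ext^1(\kA,\kB)}\tau\bigl(\mathrm{ext}(\xi)\bigr) \;=\; 0.
\]
Applied with $\kA=\kV/\kG_0$ and $\kB=\kT$, this forces $\sum_{g}\tau(\mathrm{ext}(\partial g))=|\ker\partial|\sum_{\xi\in\Ext^1(\kV/\kG_0,\kT)}\tau(\mathrm{ext}(\xi))=0$ for every $\kG_0\subsetneq\kV$. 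Only the contribution of $\kG_0=\kV$ survives, which by degree matching requires $t=d$; in this case $\kH/\kG=\kT$ for every $g\in\Hom(\kV,\kT)$, $|\Hom(\kV,\kT)|=q^{rd}$ by Riemann-Roch for torsion sheaves on a curve, and with $v=q^{-1/2}$ the contribution $v^{rd}\cdot q^{rd}\cdot f(\kV)\tau(\kT)=v^{-rd}f(\kV)\tau(\kT)=v^{-rt}f(\kV)\tau(\kT)$ matches $(f\cdot\tau)(\kH)$ exactly. The main conceptual obstacle is securing the primitivity identity $\sum_\xi\tau(\mathrm{ext}(\xi))=0$; once it and the vanishing $\Ext^1(\kV,\kT)=0$ are in hand, the remainder is careful $v$-twist and Riemann-Roch bookkeeping.
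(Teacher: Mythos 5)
Your proof is correct, but it takes a genuinely different route from the paper's. The paper argues by duality: after the reduction via Proposition~\ref{P:Hecke action and mult with torsion} it suffices to show that $[\tau,\kV]$ is supported on vector bundles, so it writes an arbitrary non--locally-free sheaf as (a nonzero multiple of) a product $\kV'\cdot\tau'$ and kills the pairing $([\tau,\kV],\kV'\cdot\tau')$ in a few lines using the Hopf property of Green's form together with $\Delta(\tau)=\tau\ot 1+1\ot\tau$. You instead compute both convolutions $\tau\cdot f$ and $f\cdot\tau$ directly on a sheaf $\kH=\kT\oplus\kV$, parametrize the full-rank locally free subsheaves by graphs $(\kG_0,g)$ with $g\in\Hom(\kG_0,\kT)$, and use primitivity in its unpacked form $\sum_{\xi\in\Ext(\kA,\kB)}\tau(\mathrm{ext}(\xi))=0$ together with the surjectivity of the connecting map $\partial$ (coming from $\Ext(\kV,\kT)=0$) to annihilate every stratum with $\kG_0\subsetneq\kV$; the surviving stratum $\kG_0=\kV$ is then matched to $f\cdot\tau$ by the count $|\Hom(\kV,\kT)|=q^{rd}$. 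The two arguments rest on the same essential input --- primitivity of $\tau$ read through the coproduct --- but deploy it on opposite sides of the Green pairing: the paper's version is shorter and purely formal, whereas yours is more explicit and actually proves the sharper identity $\tau\cdot f-\pi^\vvec(\tau\cdot f)=f\cdot\tau$, identifying the torsion-supported part of $\tau\cdot f$ exactly rather than merely showing the commutator has none; it also makes explicit the easy but necessary fact that $\pi^\vvec(f\cdot\tau)=0$, which the paper uses implicitly.
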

\begin{proof}
By the previous proposition all we need to prove is that $[\tau,f]=\pi^\vvec(\tau\cdot f)$, or in other words that the element $[\tau,f]$ of the Hall algebra is supported on the vector bundles. It is enough to do this for $f$ of the form $f=\V$ where $\V$ is a vector bundle of rank $n$ and degree $d'$.

Let $\F$ be a coherent sheaf of rank $n$ which is not a vector bundle. We want to prove that $\<[\tau,\V],\F\>=0$. We can write $\F = v^{d}\V'\cdot \tau'$ for some vector bundle $\V'$ a (nontrivial) torsion sheaf $\tau'$ and an integer $d\in\ZZ$.

Write $\Delta(\V):=\V\ot 1+\sum c_{\F',\kW'}\F'\otimes\kW'$ where $\kW'$ are nonzero vector bundles and $\F'$ are coherent sheaves.

By the Hopf property of the Green pairing we have
\begin{eqnarray*}
\<[\tau,\V],\F\> & = & v^{d}\<[\tau\ot 1+1\ot\tau,\Delta(\V)],\V'\ot\tau'\>\\
& = & v^{d}\<[\tau\ot 1+1\ot\tau,\V\ot 1],\V'\ot\tau'\>+\\
& & +v^{d}\sum c_{\F',\kW'}\<[\tau\ot 1+1\ot \tau,\F'\ot\kW'],\V'\ot\tau'\>\\
& = & v^{d}\<[1\ot\tau,\V\ot 1],\V'\ot\tau'\>\\
& = & 0
\end{eqnarray*}
where we used the fact that $\tau'$ is a nonzero torsion sheaf and that $\kW'$ are nonzero vector bundles.
\end{proof}

\begin{cor} Let $\tau_1,...,\tau_s\in\HH_X^\tor$ be primitive elements of degree $d_1,...,d_s$ and let $\V$ be a vector bundle of rank $r$. Then 
\[
\H_{\tau_1\cdot...\cdot\tau_s}(\V)=v^{-r(d_1+...+d_s)}[\tau_1,[\tau_2,....,[\tau_s,\V]...]]. 
\]

\end{cor}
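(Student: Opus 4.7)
The plan is a straightforward induction on $s$, with the base case $s=1$ being exactly Lemma~\ref{L:Hecke op primitive torsion}. The starting point is the observation, noted just before Proposition~\ref{P:Hecke action and mult with torsion}, that $\H$ makes $\HH_X^\vvec$ into a left $\HH_X^\tor$-module, so that
\[
\H_{\tau_1\cdot\tau_2\cdots\tau_s} \;=\; \H_{\tau_1}\circ\H_{\tau_2}\circ\cdots\circ\H_{\tau_s}.
\]
The induction then amounts to iterating Lemma~\ref{L:Hecke op primitive torsion} once for each factor.

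For the inductive step, I would write $\H_{\tau_1\cdots\tau_s}(\V) = \H_{\tau_1}\bigl(\H_{\tau_2\cdots\tau_s}(\V)\bigr)$ and invoke the hypothesis to identify the inner factor with
\[
v^{-r(d_2+\ldots+d_s)}\bigl[\tau_2,[\tau_3,\ldots,[\tau_s,\V]\ldots]\bigr].
\]
The crucial point needed to apply Lemma~\ref{L:Hecke op primitive torsion} one more time is that this intermediate element still lies in $\HH_X^\vvec[r,\,\deg(\V)+d_2+\ldots+d_s]$, with the \emph{same} rank $r$ as $\V$. This is immediate: each operator $\H_{\tau_i}$ lands in $\HH_X^\vvec$ by definition, and preserves the rank because $\tau_i\in\HH_X[0,d_i]$ has rank zero, so the Euler-form shift contributes nothing to the rank grading. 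Applying Lemma~\ref{L:Hecke op primitive torsion} with $\tau=\tau_1$ on this intermediate element now produces the extra factor $v^{-rd_1}$ and wraps the outer commutator with $\tau_1$, yielding the claimed formula.

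There is no real obstacle; the whole argument is a bookkeeping exercise. The only mild care needed is to keep track of the rank through each iteration (so that Lemma~\ref{L:Hecke op primitive torsion} applies with the same $r$ at every step) and to verify that the scalar factors $v^{-rd_j}$ accumulate correctly into $v^{-r(d_1+\ldots+d_s)}$. Both are transparent consequences of $\tau_i\in\HH_X[0,d_i]$ and do not require any further input beyond the cited lemma and the module structure of $\HH_X^\vvec$ over $\HH_X^\tor$.
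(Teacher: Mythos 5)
Your proposal is correct and is essentially the paper's own argument: the paper likewise reduces to $\He_{\tau_1\cdots\tau_s}=\He_{\tau_1}\circ\cdots\circ\He_{\tau_s}$ via associativity of the Hall product (the module structure) and then iterates Lemma~\ref{L:Hecke op primitive torsion}, which is stated for arbitrary elements of $\HH_X^\vvec[r,d']$ and hence applies to each intermediate term. Your extra remarks on rank bookkeeping are accurate but routine; nothing further is needed.
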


\begin{proof}
From the associativity of the product in the Hall algebra and the previous Lemma we have $\He_{\tau_1\cdot\tau_2} = \He_{\tau_1}\circ\He_{\tau_2}$.
\end{proof}

\section{Twisted spherical Hall algebras}\label{S:twisted spherical}
We define in this Section a generalization of the spherical Hall algebra as introduced and studied in \cite{BS}. We will call them twisted spherical Hall algebras and they will be one of the main objects of study in this work.

\subsection{} Let us begin by recalling some properties of the classical Hall algebra (i.e. the Hall algebra of finite modules over a discrete valuation ring).

If $\ll$ is a finite field we denote by $u$ a square root of $\#\ll^{-1}$. Denote by $\kA_\ll$ the category of finite modules over the discrete valuation ring $A:=\ll[[t]]$. There exists a unique (up to isomorphism) indecomposable module of length $r$, denoted $I_{(r)}$, which is defined as the quotient $A/t^rA$. For a partition $\lambda=(\lambda_1,\dots,\lambda_n)$ we denote by $I_\lambda:=I_{(\lambda_1)}\oplus\dots\oplus I_{(\lambda_n)}$. The collection $\{I_{\lambda}\}_\lambda$ where $\lambda$ runs over all partitions is a complete collection of representatives for the isomorphism classes of objects of $\kA_\ll$. The Hall algebra of the category $\kA_\ll$ is discussed at full length in the book \cite{Mac}, Chap. II and Chap. III. Let us denote by $\Lambda_t$ the Macdonald's ring of symmetric functions over $\C[t^{\pm1}]$ and by $e_\lambda$ (resp. $p_\lambda$) the elementary (resp. power-sum) symmetric functions.

We summarize the properties we need about $\HH_{\kA_\ll}$ in the following proposition:

\begin{prop}[\cite{Mac}]\label{P:classical Hall & Macd} The assignment $I_{(1^r)}\mapsto u^{r(r-1)}e_r$ extends to a bialgebra isomorphism $\Psi_\ll:\HH_{\kA_\ll}\to\Lambda_t|_{t=u^2}$. Set $F_r:=\Psi_\ll^{-1}(p_r)$. Then 
	\begin{enumerate}
		\item $F_r = \ds\sum_{|\lambda|=r}n_u(l(\lambda)-1)I_{\lambda}\quad\text{where}\quad n_u(l) = \prod_{i=1}^l(1-u^{-2i})$
		\item $\Delta(F_r) = F_r\ot 1+1\ot F_r$, and $\{F_r, r\in\N\}$ constitutes a basis of primitives in the coalgebra $\HH_{\kA_\ll}$.		
		\item $\ds (F_r,F_s) = \delta_{r,s}\frac{r u^r}{u^{-r}-u^r}$.
	\end{enumerate}
\end{prop}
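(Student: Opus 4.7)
The plan is to reduce all three parts to the classical identification of $\HH_{\kA_\ll}$ with the ring of symmetric functions (Steinitz--Hall--Macdonald, see \cite{Mac}, Chap.\ II--III), tracking carefully the scalar $u^{r(r-1)}$ that our twisted multiplication convention introduces.

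For (1), I would first record that $\HH_{\kA_\ll}$ is a commutative bialgebra (finite-length modules over a DVR have a symmetric Euler form, so the $v$-twist is innocuous, and commutativity of the untwisted product is classical), and that the elements $[I_{(1^r)}]$ are algebraically independent generators by a standard triangularity argument in dominance order: $[I_{(1^{\lambda_1})}]\cdots[I_{(1^{\lambda_k})}]$ equals a nonzero scalar multiple of $[I_{\lambda^t}]$ plus terms indexed by partitions strictly dominating $\lambda^t$. Since $\Lambda=\C[e_1,e_2,\dots]$ is a free polynomial algebra, the assignment $\Psi_\ll$ extends to an algebra isomorphism. To check compatibility with coproducts I would compute $\Delta([I_{(1^r)}])$ directly by counting $\ll$-subspaces of $I_{(1^r)}\cong \ll^r$ (Gaussian binomial coefficients), and observe that the powers of $u$ from the twist together with the scalar $u^{r(r-1)}$ collapse these Gaussian binomials into the ordinary binomials appearing in $\Delta(e_r)=\sum_{i+j=r}e_i\otimes e_j$. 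The explicit formula for $F_r$ then follows by pulling back the Newton identity
\[
\sum_{r\ge 0} e_r z^r = \exp\left(\sum_{r\ge 1}(-1)^{r-1}\frac{p_r}{r}z^r\right)
\]
via $\Psi_\ll^{-1}$: expanding the exponential on the Hall algebra side and computing via iterated products of $[I_{(1)}]$'s produces the coefficient $n_u(l(\lambda)-1)$ as a ratio of Hall numbers to automorphism orders, with the telescoping product $\prod_{i=1}^{l-1}(1-u^{-2i})$ emerging naturally.

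Parts (2) and (3) are then short. Primitivity of $F_r$ transports directly from primitivity of $p_r$ in $\Lambda$ via the bialgebra isomorphism, and since the primitive subspace of $\Lambda$ in degree $r$ is one-dimensional (spanned by $p_r$), the family $\{F_r\}_{r\ge 1}$ forms a basis of primitives. For the Green pairing, primitivity combined with the Hopf compatibility $(ab,c)=(a\otimes b,\Delta(c))$ immediately yields $(F_r,F_s)=0$ when $r\ne s$. For the diagonal case I would either compute directly the sum $\sum_{|\lambda|=r}n_u(l(\lambda)-1)^2/a_{I_\lambda}$ using Macdonald's explicit formula for $a_{I_\lambda}$, or identify Green's form with the Macdonald inner product $\langle\cdot,\cdot\rangle_t$ at $t=u^2$ (up to a degree-dependent rescaling by $u^{2r}$) and apply $\langle p_r,p_r\rangle_t=r/(1-t^r)$; either route yields $ru^r/(u^{-r}-u^r)$ after simplification.

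The main obstacle is bookkeeping of scalars: one must verify that the $u$-powers appearing in the twisted product, twisted coproduct, Gaussian binomials, and specialization of Macdonald's parameter $t$ all align consistently with the designated $u^{r(r-1)}$ in $\Psi_\ll$. Everything else is either a standard result from Macdonald's book or a short generating-function manipulation.
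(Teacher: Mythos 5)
Your proposal is correct and follows the same route as the paper, whose proof of this proposition consists entirely of citing Macdonald (III.7 Ex.\ 2 for the formula for $F_r$, I.5 Ex.\ 25 for the primitives, and III.4 (4.11) for the pairing); you are simply unpacking that reduction to the classical Hall algebra--symmetric functions dictionary and checking that the normalizations $u^{r(r-1)}$ and $t=u^2$ make the scalars match. The bookkeeping you flag does work out (e.g.\ $\Delta([I_{(1^r)}])=\sum_{i+j=r}u^{2ij}[I_{(1^i)}]\otimes[I_{(1^j)}]$ and $i(i-1)+j(j-1)+2ij=r(r-1)$, and the rescaling $t^{r}\cdot r/(1-t^r)=r/(q^r-1)=ru^r/(u^{-r}-u^r)$), so no further comment is needed.
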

\begin{proof} The proofs may be found in \cite{Mac} III.7 Ex. 2, I.5 Ex. 25 and III.4(4.11).
\end{proof}

\subsection{}\label{S:def of T_r,d}
Let $x$ be a closed point of $X$. Denote by $\kk_x$ its reside field. We will denote by $|x|$ the degree of $x$, which by definition is the degree of the field extension $\kk_x/\kk$. 
Consider the category $\Tor_x$ of torsion sheaves on $X$ supported at $x$. We have an equivalence of categories $\Tor_x\simeq \kA_{\kk_x}$ which provides us with an isomorphism of bialgebras 
$\Psi_{\kk_x}:\HH_{\Tor_x}\to\Lambda_t|_{t=v^{2|x|}}$. 

For $r\in\N$ and $x\in X$ we define the elements $T_{(0,r),x}\in\HH_X$ by the formula
\[T_{(0,r),x}:=\left\{\begin{array}{ll}0&
\text{if}\quad |x|\not| \,r\\
\ds\frac{[r]}{r}|x|\Psi^{-1}_{\kk_x}(p_{\frac{r}{|x|}})& \text{if}\quad |x|\mid r
\end{array}\right..
\]
Explicitly, using Proposition \ref{P:classical Hall & Macd} (1), we have
\[
T_{(0,r),x}:=\frac{[r]|x|}{r}\sum_{|\lambda| = r/|x|}n_{u_x}(l(\lambda)-1)\O_x^{(\lambda)}.
\]

Recall that we have an action of $\SL_2(\ZZ)$ on the algebra $\DH_X$. For any $\x\in\ZZ^2-\{0\}$ we define the elements $T_{\x,x}$ by translating the above $T_{(0,r),x}$'s using this action. 
More precisely, for $\gamma\in\SL_2(\ZZ)$ we define $T_{\gamma\cdot(0,r),x} := \gamma\cdot T_{(0,r),x}$.

\vspace{0.1in}

\begin{defin}\label{D:Trho}
For a character $\trho\in\wXF{n}$ and a closed point $x\in X$ we define \[\trho(x):=\frac1n\sum_{i=0}^{n-1}\rho((\Fr_{X,n}^*)^i(\O_{X_n}(x')))=\frac1{d}\sum_{i=0}^{d-1}\rho((\Fr_{X,n}^*)^i(\O_{X_n}(x')))\]
where $x'\in X_n$ is a closed point that sits above $x$ and $d=\gcd(|x|,n)$. 
\end{defin}
This definition does not depend on the representative of $\trho$ nor on the point $x'$ we chose since it is an averaging over all these possible choices.

\begin{defin}
For a character $\trho\in\widetilde X$ (see Paragraph~\ref{ss:frobenius on characters}) and for a point $\x\in\Z^*$ we define the element
\[
T_{\x}^{\trho}:=\sum_{x\in X} \trho(x)T_{\x,x}.
\]
\end{defin}

\begin{prop} The set $\{T_{\x}^{\trho} \mid \x\in\Z^+, \trho \in \wXF{d}, d = \delta(\x)\}$, generates the Hall algebra $\HH_X$. Similarly, the set $\{T_{\x}^{\trho} \mid \x\in\Z^*,\, \trho \in \wXF{d},\, d = \delta(\x)\}$ generates the double Hall algebra $\DH_X$.
\end{prop}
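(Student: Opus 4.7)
The plan is to reduce to the already-announced fact that $\{T_{\x,x} : \x \in \Z^+,\, x \in X\}$ generates $\HH_X$, and then to recover each $T_{\x,x}$ as a $\C$-linear combination of the $T_\x^\trho$ by Fourier inversion on the finite abelian group $\Pic^0(X_n)$. The $\DH_X$ statement will follow by applying the same argument separately for $\x \in \Z^+$ and $\x \in \Z^-$, together with the fact that $\DH_X$ is generated as an algebra by $\HH_X^+$ and $\HH_X^-$.

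First I would re-prove the pointwise generation claim. Under the isomorphism $\Psi_{\kk_x}$ of Proposition~\ref{P:classical Hall & Macd}, the elements $\{T_{(0,r),x} : |x|\mid r\}$ correspond to nonzero scalar multiples of the power sums $p_{r/|x|}$, which generate Macdonald's ring $\Lambda_t$; hence they generate $\HH_{\Tor_x}$. Torsion sheaves supported at distinct closed points have vanishing $\Ext$ between them and therefore commute in $\HH_X$, so $\HH_X^\tor \simeq \bigotimes_{x \in |X|} \HH_{\Tor_x}$, which is generated by the collection $\{T_{(0,r),x}\}$. For a general $\x \in \Z^+$ with $\delta(\x) = n$, choose $\gamma \in \SL_2(\ZZ)$ such that $\gamma \cdot (0,n) = \x$; by definition $T_{\x,x} = \gamma \cdot T_{(0,n),x}$, and translating $\HH_X^\tor$ by $\gamma$ produces the Hall subalgebra of the semistable category $\Cc_{\mu(\x)}$. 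Combining Atiyah's theorem (unique decomposition of a coherent sheaf into semistables) with Proposition~\ref{P:cusps generate H_X} (cusp forms together with torsion generate $\HH_X$) then yields the claimed generation.

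The core step is the Fourier inversion. Fix $\x \in \Z^+$ with $\delta(\x) = n$. The nonzero elements $T_{\x,x}$ are indexed by $\Fr_X$-orbits on the finite abelian group $G := \Pic^0(X_n) \cong X(\ff_{q^n})$, while the elements $T_\x^\trho$ are indexed by $\Fr_{X,n}$-orbits on $\hat G = \wX{n}$. By Brauer's permutation lemma these two orbit sets have the same cardinality, so the matrix $M_{\trho,\,x} := \trho(x)$ is square. Unpacking Definition~\ref{D:Trho}, one sees that $M$ encodes, up to a diagonal normalization by orbit sizes, the change of basis on the space of $\Fr_X$-invariant functions on $G$ between the indicators of orbits and the Galois-averaged characters; this change of basis is invertible by the orthogonality relations for characters of a finite abelian group. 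Inverting $M$ expresses each $T_{\x,x}$ as an explicit $\C$-linear combination of the $T_\x^\trho$, which closes the argument.

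The main obstacle I anticipate is the bookkeeping of the averaging factors $1/n$ and $1/|x|$ appearing in Definition~\ref{D:Trho}: one has to check that after incorporating these weights the matrix $M$ really stays nondegenerate, i.e.\ that no Galois orbit of characters contributes a vanishing row. Once this is verified, the passage to $\DH_X$ requires nothing new beyond observing that the whole discussion is symmetric under the sign flip $\Z^+ \leftrightarrow \Z^-$.
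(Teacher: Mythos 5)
Your proposal is correct and follows essentially the same route as the paper's proof: Fourier analysis on the finite abelian group $\Pic^0(X_n)$ to pass between the point-supported elements $T_{\x,x}$ and the character-twisted averages $T_\x^\trho$, the $\SL_2(\ZZ)$ action to move from slope $\infty$ to arbitrary slope, and Atiyah's theorem to reduce generation of $\HH_X$ to the semistable subalgebras. The paper merely compresses the inversion step into the remark that any function on a finite commutative group is a linear combination of characters, whereas you spell out the orbit-counting and nondegeneracy of the matrix $(\trho(x))$ explicitly.
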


\begin{proof} We only prove the second part, the first one being a consequence. Let's denote for the moment by $B$ the subalgebra defined above.

It's clear that $T_{(0,d)}^{\rho_d}, d>0,\rho_d\in\wXF{d}$ generate the Hall algebra of the torsion sheaves because of the fact that, in general, any function on a finite commutative group can be written as a linear combination of characters.

Now using the action of the $\SL_2(\ZZ)$ on $\DH_X$ and its compatibility with the definition of $T_\x^{\tilde\rho}$ we see that any semistable sheaf is in the algebra $B$ and since the Hall algebra is generated by the semistable sheaves we see that $\DH_X\subseteq B$.
\end{proof}

\subsection{} In this paragraph we take a small detour to give a useful description of the primitive elements of the (bi)algebra of Hecke operators. We will use this description in the course of the proof of our main results.

\begin{lemma} A basis for the primitive elements in the Hall algebra is given by $T_{(0,d),x},x\in X,d\ge 1$. Moreover, these elements generate the Hall algebra of torsion sheaves.
\end{lemma}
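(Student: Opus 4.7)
The plan is to reduce everything to the classical situation via a tensor product decomposition over the closed points of $X$. First I would observe that every torsion sheaf $\tau$ on $X$ has a canonical direct-sum decomposition $\tau = \bigoplus_{x \in |X|} \tau_x$ with $\tau_x$ supported at $x$, and that $\Hom(\tau_x, \tau_y) = \Ext(\tau_x, \tau_y) = 0$ whenever $x \neq y$. This forces both the Hall product and the Hall coproduct on $\HH_X^{\tor}$ to respect the grouping by support: any extension of $\tau_x$ by $\tau_y$ for $x \neq y$ is split, and conversely any short exact sequence of torsion sheaves splits in a way compatible with the support decomposition. Hence one gets a bialgebra isomorphism
\[
\HH_X^{\tor} \;\simeq\; \underset{x \in |X|}{{\bigotimes}'} \HH_{\Tor_x},
\]
where the restricted tensor product picks out, in each factor, the subspace spanned by isomorphism classes (equivalently: in each degree all but finitely many factors contribute the identity).

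Next, for each closed point $x$, I would invoke Proposition~\ref{P:classical Hall & Macd} to get a bialgebra isomorphism $\Psi_{\kk_x}: \HH_{\Tor_x} \xrightarrow{\sim} \Lambda_t|_{t = v^{2|x|}}$. By the very definition of $T_{(0,r),x}$ in Section~\ref{S:def of T_r,d}, the element $T_{(0,r),x}$ is (for $|x|\mid r$) a nonzero scalar multiple of $\Psi_{\kk_x}^{-1}(p_{r/|x|})$, and is zero otherwise. Since the power sums $\{p_k : k \ge 1\}$ form a basis of the space of primitive elements of $\Lambda$ and generate $\Lambda$ as a commutative algebra, the family $\{T_{(0,r),x} : r \in |x|\, \N_{>0}\}$ is simultaneously a basis of $\mathrm{Prim}(\HH_{\Tor_x})$ and a set of algebra generators of $\HH_{\Tor_x}$.

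The last step is a purely formal fact about restricted tensor products of connected graded bialgebras: if $A = \bigotimes'_i A_i$ with each $A_i$ connected (one-dimensional in degree $0$, generated by the unit), then $\mathrm{Prim}(A) = \bigoplus_i \mathrm{Prim}(A_i)$. Indeed, applying the coproduct to an element $a = \sum a_{i_1} \otimes \cdots \otimes a_{i_k}$ and comparing homogeneous components, the primitivity condition forces all but one tensor factor to sit in degree $0$, i.e.\ to be $1$. Combining this with the previous step yields both assertions of the lemma at once: primitives are exactly spanned by the $T_{(0,d),x}$, and since each tensor factor is generated by its primitives, so is $\HH_X^{\tor}$.

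The only genuinely delicate point is the first step, namely checking that the comultiplication of $\HH_X$ (which involves a sum over extensions, not just sub-objects) restricts compatibly with the support decomposition. This is standard but has to be carried out with some care, using that $\Ext(\tau_x, \tau_y)=0$ for $x \neq y$ so that all relevant extensions split as a direct sum of extensions at each closed point; I expect this to be the main (mild) obstacle.
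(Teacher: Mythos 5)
Your proposal is correct and follows essentially the same route as the paper: decompose $\HH_X^{\tor}$ as a restricted tensor product over closed points, identify each factor with Macdonald's ring via Proposition~\ref{P:classical Hall & Macd} so that $T_{(0,r),x}$ becomes a nonzero multiple of a power sum, and use that primitives of the tensor product are supported at a single point. The paper's proof is just a terser version of the same argument (it simply asserts that a primitive element must be supported at one point), so your extra care with the coproduct's compatibility with the support decomposition is a fleshing-out rather than a deviation.
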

\begin{proof} First of all it is clear that a primitive element should be supported on a single point. Then the statement follows from Proposition \ref{P:classical Hall & Macd} and the fact that the power-sum functions are a basis for the primitive elements of the Macdonald's symmetric functions algebra and that they generate it as an algebra. To conclude we only need to observe that we can write $\HH_X^\tor$ as a (commutative) restricted tensor product $\HH_X^\tor\simeq \bigotimes'_{x\in X}\HH_{X,x}^\tor$ where $\HH_{X,x}^\tor$ is the Hall algebra of torsion sheaves supported at $x$.
\end{proof}

\begin{cor}\label{C:basis primitive torsions} A basis for the primitive elements in the Hall algebra is also given by $\{T_{0,d}^{\tilde\rho},d\ge 1,\tilde\rho\in\wXF{d}\}$
\end{cor}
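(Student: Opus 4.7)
My plan is to express $T_{(0,d)}^{\trho}$ in the basis of primitives furnished by the previous lemma and then verify that the resulting change-of-basis matrix is invertible, by recognising it (up to a harmless diagonal rescaling) as the finite Fourier transform on $X(\ff_{q^d})$ restricted to Frobenius-invariant subspaces.

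By the previous lemma $\{T_{(0,d),x}:x\in|X|,\,|x|\mid d\}$ is a basis for the degree-$d$ primitives in $\HH_X$ (the condition $|x|\mid d$ is forced since $T_{(0,d),x}=0$ otherwise). Unwinding Definition~\ref{D:Trho},
\[
T_{(0,d)}^{\trho}\;=\;\sum_{\substack{x\in|X|\\ |x|\mid d}}\trho(x)\,T_{(0,d),x},
\]
so it suffices to show that the matrix $M^{(d)}:=(\trho(x))$, with rows indexed by $\trho\in\wXF{d}$ and columns by closed points with $|x|\mid d$, is an invertible square matrix. Squareness follows from the isomorphism $\Phi_d$ of Section~\ref{ss:frobenius on characters}, which puts $\Fr_X$-orbits on $X(\ff_{q^d})$ in bijection with $\Fr_{X,d}^*$-orbits on $\Pic^0(X_d)$, combined with the standard Pontryagin-duality identity $|G^{\sigma^k}|=|\widehat{G}^{\widehat{\sigma}^k}|$ for every $k$ (which via Burnside yields equal numbers of orbits on $G$ and on $\widehat{G}$).

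Transporting the averaging formula in Definition~\ref{D:Trho} through $\Phi_d$ and using the normalisation $\rho(\O_{X_d}(x_0))=1$, one rewrites
\[
\trho(x)\;=\;\frac{1}{|x|}\sum_{\eta\in O_\xi}\rho(\eta),
\]
where $\xi\in X(\ff_{q^d})$ is any lift of $x$ and $O_\xi$ is its $\Fr_X$-orbit. After clearing the invertible diagonal factor $1/|x|$, the entries of $M^{(d)}$ take the form $\widehat{u_O}(\rho)$ with $u_O:=\mathbf{1}_O\in\C[X(\ff_{q^d})]$ and $\widehat{(\cdot)}$ the finite Fourier transform of the finite abelian group $X(\ff_{q^d})$. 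A short calculation shows that the Fourier transform sends $\Fr_X$-invariants to invariants under the dual $\Fr_{X,d}$-action on $\widehat{X(\ff_{q^d})}$, so the $N$ functions $\widehat{u_O}$ form a basis of that $N$-dimensional invariant subspace. Since evaluation of a Frobenius-invariant function at a chosen representative of each $\Fr_{X,d}$-orbit is an isomorphism onto $\C^N$, the matrix $M^{(d)}$ is invertible.

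The only point requiring some bookkeeping is to verify that $\trho(x)$ is genuinely well defined on $\wXF{d}$ (independence of representative of $\trho$) and that the dual of the $\Fr_X$-action on $X(\ff_{q^d})$ matches the $\Fr_{X,d}$-action on $\widehat{\Pic^0(X_d)}$; both are immediate from the commutative square of Section~\ref{ss:frobenius on characters} and a reindexing of the averaging sum. Once those compatibilities are in place, the proof reduces to the pure linear-algebra argument above.
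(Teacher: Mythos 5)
Your proof is correct and follows the same route as the paper's (one-line) argument, which simply invokes the fact that characters of a finite abelian group form a basis for the functions on it; you have supplied the Frobenius-equivariant bookkeeping — orbit counting via Burnside and the identity $|G^{\sigma^k}|=|\widehat G^{\widehat\sigma^k}|$, plus the invertibility of the orbit-averaged character matrix — that the paper leaves implicit. No gaps.
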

\begin{proof} This statement is clear because the characters of an abelian group are a basis for the space of functions on this group.
\end{proof}

\subsection{} We introduce the notion of primitivity for characters and state some of their basic properties.

\begin{defin} We call a character $\rho_n\in\wX{n}$ primitive (of degree $n$) if its orbit under the Frobenius $\Fr_{X,n}$ is of maximal cardinal, i.e. if it is of cardinal $n$. 
\end{defin}

We'll denote the set of primitive characters of degree $n$ modulo the action of Frobenius by $\kP_n$ and the set of all primitive characters modulo the Frobenius by $\P:=\coprod_{n\ge 1} \P_n$.

Observe that all the characters of degree 1, i.e. those which are in $\wX{}=\wXF{}$, are primitive.
The following lemma is proved in the Appendix (see Lemma~\ref{L:proof prim char}).

\begin{lemma}\label{L:prim char} A character $\rho\in\wX{n}$ is primitive if and only if there does not exist a character $\chi\in\wX{d},\, d<n,\, d|n$,  such that $\rho=\Norm_d^n(\chi)$.
\end{lemma}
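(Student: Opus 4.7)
The plan is to rephrase primitivity as a Frobenius-stabilizer condition and then identify the $\Fr_{X,n}^d$-fixed characters with the image of the norm map, using Lang--Steinberg.

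First I would observe that the stabilizer of $\rho\in\wX{n}$ inside $\langle\Fr_{X,n}\rangle\simeq\ZZ/n\ZZ$ is cyclic, of the form $\langle\Fr_{X,n}^{d_0}\rangle$ for the smallest $d_0\mid n$ with $\Fr_{X,n}^{d_0}\rho=\rho$, so the orbit has cardinality $n/d_0$. Hence $\rho$ is primitive iff $d_0=n$, iff no proper divisor $d\mid n$ satisfies $\Fr_{X,n}^d\rho=\rho$. This reduces the lemma to the equality
\[
\wX{n}^{\Fr_{X,n}^d}=\Norm_d^n(\wX{d})\qquad\text{for every }d\mid n,\ d<n.
\]

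The inclusion $\supseteq$ is a direct check: if $\rho=\chi\circ\Norm_d^n$ for some $\chi\in\wX{d}$, then for $\kL\in\Pic^0(X_n)$,
\[
(\Fr_{X,n}^d\rho)(\kL)=\chi\!\left(\bigotimes_{i=0}^{n/d-1}(\Fr_{X,n}^*)^{di+d}\kL\right)=\chi\!\left(\bigotimes_{j=0}^{n/d-1}(\Fr_{X,n}^*)^{dj}\kL\right)=\rho(\kL),
\]
using $(\Fr_{X,n}^*)^n=\id$ on $\Pic^0(X_n)$ to reindex. For the reverse inclusion I would transport via the isomorphism $\Phi_n$ of Paragraph~\ref{ss:beginning} to the group $X(\ff_{q^n})$, where $\Fr_{X,n}^*$ corresponds to $\Fr_X$ and $\Norm_d^n$ becomes the trace $x\mapsto\sum_{i=0}^{n/d-1}\Fr_X^{di}x$. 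Lang--Steinberg applied to $X$ viewed as a connected commutative algebraic group over $\ff_{q^d}$ yields surjectivity of $\Norm_d^n:X(\ff_{q^n})\twoheadrightarrow X(\ff_{q^d})$; combined with the vanishing of the Herbrand quotient for the cyclic group $\langle\Fr_X^d\rangle$ acting on the finite module $X(\ff_{q^n})$, this produces the exact sequence
\[
0\to X(\ff_{q^d})\to X(\ff_{q^n})\xrightarrow{1-\Fr_X^d} X(\ff_{q^n})\xrightarrow{\Norm_d^n} X(\ff_{q^d})\to 0.
\]
Dualizing, $\Norm_d^n:\wX{d}\to\wX{n}$ is injective with image the annihilator of $(1-\Fr_X^d)X(\ff_{q^n})$, which by the $\phi$-invariance computation above is precisely $\wX{n}^{\Fr_{X,n}^d}$. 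Combining the two inclusions finishes the proof.

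The main obstacle is the Lang--Steinberg input: surjectivity of $\Norm_d^n$ on $\ff_{q^n}$-points and, dually, the identification of its kernel with $(1-\Fr_X^d)X(\ff_{q^n})$. Once that is in hand, everything else is formal bookkeeping with characters of finite abelian groups.
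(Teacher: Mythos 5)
Your proof is correct and is essentially the paper's argument: both hinge on the surjectivity of the norm map on points (the paper's Lemma~\ref{L:norm-surjective}, your Lang--Steinberg input) together with a duality/counting identification of $\mathrm{Im}(\Norm_d^n)$ with the $\Fr_{X,n}^d$-fixed characters. The only difference is presentational — you treat all divisors $d\mid n$ at once via the four-term exact sequence and Pontryagin duality, whereas the paper first reduces to $d=1$ by extension of scalars and then compares cardinalities of $\mathrm{Im}(\hat N)$ and $\wX{n}^{\Fr_{X,n}}$.
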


\begin{cor}\label{C:prim char} If $\rho\in\wX{n}$ then either it is primitive or there exists a primitive character $\chi\in\wX{d},d|n,d<n$, such that $\rho=\Norm_d^n(\chi)$.
\end{cor}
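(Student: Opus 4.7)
The plan is to deduce this from Lemma~\ref{L:prim char} by taking a minimal $d$ and using the transitivity of the norm maps.

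First, if $\rho$ is primitive, there is nothing to prove. Otherwise, by Lemma~\ref{L:prim char}, the set
\[
S_\rho := \{\, d \mid d\,|\,n,\ d<n,\ \rho \in \Norm_d^n(\wX{d})\,\}
\]
is non-empty. Let $d_0$ be the minimum of $S_\rho$ and pick $\chi \in \wX{d_0}$ with $\rho = \Norm_{d_0}^n(\chi)$. The claim is that $\chi$ is primitive; this will finish the proof.

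Suppose for contradiction that $\chi$ is not primitive. Applying Lemma~\ref{L:prim char} to $\chi \in \wX{d_0}$ yields a divisor $d'\mid d_0$ with $d' < d_0$ and a character $\chi' \in \wX{d'}$ such that $\chi = \Norm_{d'}^{d_0}(\chi')$. Since $d' \mid d_0 \mid n$, we can then write
\[
\rho \;=\; \Norm_{d_0}^n(\chi) \;=\; \Norm_{d_0}^n\bigl(\Norm_{d'}^{d_0}(\chi')\bigr) \;=\; \Norm_{d'}^n(\chi'),
\]
where the last equality is the transitivity of relative norms for the tower $X_{d'} \subset X_{d_0} \subset X_n$ (applied on $\Pic^0$ and then dualised to characters). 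Since $d' < d_0$, this shows $d' \in S_\rho$, contradicting the minimality of $d_0$.

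The main thing to verify is the transitivity identity $\Norm_{d_0}^n \circ \Norm_{d'}^{d_0} = \Norm_{d'}^n$ used above; this is a direct consequence of the definition given in Paragraph~\ref{ss:frobenius on characters}, since
\[
\bigotimes_{i=0}^{n/d_0 - 1} (\Fr_{X,n}^*)^{d_0 i}\!\Bigl(\bigotimes_{j=0}^{d_0/d' - 1} (\Fr_{X,d_0}^*)^{d' j}(\L)\Bigr) \;=\; \bigotimes_{k=0}^{n/d' - 1} (\Fr_{X,n}^*)^{d' k}(\L),
\]
using that $\Fr_{X,d_0}^*$ and $\Fr_{X,n}^*$ agree on line bundles pulled back appropriately. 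Everything else is a one-step application of Lemma~\ref{L:prim char} combined with minimal-counterexample bookkeeping, and no genuine obstacle should arise.
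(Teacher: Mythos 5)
Your proof is correct and follows essentially the same route as the paper: choose the minimal $d$ for which $\rho$ lies in the image of $\Norm_d^n$, and use Lemma~\ref{L:prim char} together with transitivity of the norm maps to derive a contradiction if the resulting $\chi$ were not primitive. The only difference is that you spell out the transitivity identity $\Norm_{d_0}^n\circ\Norm_{d'}^{d_0}=\Norm_{d'}^n$, which the paper leaves implicit.
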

\begin{proof}
We apply the previous Lemma and we take the smallest possible $d$ which satisfies the property that there exists $\chi\in\wX{d}$ such that $\rho = \Norm_d^n(\chi)$. Then if $\chi$ is not primitive, by the same Lemma, there exists a character $\chi'\in\wX{d'}$ where $d'<d$ such that $\chi = \Norm_{d'}^d(\chi')$. This contradicts the minimality of $d$.
\end{proof}

\subsection{}\label{SS:def of twisted spherical} We now have all the ingredients to define the twisted spherical Hall algebras.
\begin{defin} Let $n\ge 1$ and $\trho\in\kP_n$ be a primitive character. We define the algebra $\U_{X}^{\trho}$, called the twisted spherical Hall algebra of $X$ and character $\trho$, as being the subalgebra of $\DH_X$ generated by the elements:
\[\left\{T_{n\x}^{\Norm_{n}^{n\delta(\x)}(\trho)},\, \x\in\Z^*\right\}.\]

We define similarly the positive (resp. negative) part $\U_X^{\trho,+}$ (resp. $\U_X^{\trho,-}$) by requiring in the above definition that $\x\in\Z^+$ (resp. $\x\in\Z^-$).
\end{defin}

Observe that if $n=1$ and $\trho=1\in\wXF{}$ the trivial character, then  the above definition specializes to the spherical Hall algebra $\U_X$ as defined in \cite{BS} Section 4 or as considered implicitly in \cite{Kap} Section 3.8 and Section 5.

\subsection{}\label{ss:presentation twisted spherical}
In this subsection we recall the combinatorial description of the spherical Hall algebra as discovered in \cite{BS} Section 5.

We need to introduce first some notations. For two points $\x,\y\in\Z^*$ which are not proportional we denote by $\epsilon_{\x,\y}:=\mathrm{sign}(\det(\x,\y))\in\{\pm1\}$ and by $\Delta_{\x,\y}$ the triangle formed by the vectors $\o,\x,\x+\y$ where $\o=(0,0)$ is the origin.

We arrived now at the definition of the universal spherical Hall algebra (see also \cite{BS} Section 6).
\begin{defin}\label{D:entire form} Fix $\sigma,\ov\sigma\in\C^*$ with $\sigma,\ov\sigma\not\in\{\pm1\}$ and set $\nu:=(\sigma\ov\sigma)^{-1/2}$ and
\[c_{i}:=(\sigma^{i/2}-\sigma^{-i/2})(\ov\sigma^{i/2}-\ov\sigma^{-i/2})[i]_\nu/i.\]

Let $\E_{\sigma,\ov\sigma}^n$ to be the $\C$-algebra generated by $\{t_\x\mid \x\in\Z^*\}$ modulo the following relations:
\begin{enumerate}
	\item If $\x,\x',\o$ are collinear then \[[t_\x,t_{\x'}]=0\]
	\item If $\x,\y$ are such that $\delta(\x)=1$ and that $\Delta_{\x,\y}$ has no interior lattice point then
	\[[t_\y,t_\x]=\epsilon_{\x,\y}c_{n\delta(\y)}\frac{\theta_{\x+\y}}{n(\nu^{-1}-\nu)}\]
	where the elements $\theta_\z$ are defined by equating the coefficients of the following two series:
	
	\[\sum_{i\ge 0}\theta_{i\z_0}s^i=\exp\left(n(\nu^{-1}-\nu)\sum_{i\ge 1}t_{i\x_0}s^i\right)\]
	for any $\x_0\in\Z^*$ such that $\delta(\x_0)=1$.
\end{enumerate}
\end{defin}

The algebra $\E_{\sigma,\ov\sigma}^n$ comes equiped with a natural $\SL_2(\ZZ)$ action given by $\gamma\cdot t_{\x} = t_{\gamma\cdot\x}$ where $\gamma\in\SL_2(\ZZ)$ and $\x\in\Z^*$.
\vspace{0.1in}

Let us give a more detailed description of the algebra $\UU_{\s,\ov\s}^n$ following Section 5 of~\cite{BS}. 

Recall that that we denoted by $\Z=\ZZ^2$ and by $\o=(0,0)$ the origin of $\Z$. By a path in $\Z$ we understand a sequence $\p=(\x_1,\dots,\x_r)$ of non-zero elements of $\Z$ which we represent graphically as the polygonal line in $\Z$ that joins the points $\o,\x_1,\x_1+\x_2,\dots,\x_1+\dots+\x_r$. Let $\widehat{\x\y}\in [0,2\pi)$ denote the angle between the segments $\o\x$ and $\o\y$. We will call a path $\p=(\x_1,\dots,\x_r)$ convex if $\widehat{\x_1\x_2}\le\widehat{\x_1\x_3}\le\dots\le\widehat{\x_1\x_r}<2\pi$. Put $L_0:=\mathbb{N}(0,-1)$ and let $\Conv'$ be the collection of all convex paths $\p=(\x_1,\dots,\x_r)$ satisfying $\widehat{\x_1L_0}\ge\dots\ge\widehat{\x_r L_0}$. Two convex paths $\p$ and $\q$ in $\Conv'$ are said to be equivalent if $\p$ is obtained by permuting several segments of $\q$ of the same slope. For example the path $((0,1),(0,2),(1,3))$ is equivalent to the path $((0,2),(0,1),(1,3))$. We denote by $\Conv$ the set of equivalence classes of paths in $\Conv'$ and we will call the elements of $\Conv$ simply paths. We introduce the positive paths $\Conv^+$ and the negative paths $\Conv^-$ as the paths $\p=(\x_1,\dots,\x_r)\in\Conv$ such that $\widehat{\x_r L_0}\ge\pi$ respectively $\pi>\widehat{\x_1 L_0}$. By concatenating paths we obtain an identification $\Conv \equiv \Conv^+\times\Conv^-$.

Fix an integer $n\ge 1$. To a path $\p=(\x_1,\dots,x_r)\in\Conv$ we associate the element $t_{\p}\in\UU_{\sigma,\ov\sigma}^n$ defined by
\[
 t_\p:=t_{\x_1}\cdot\dots\cdot t_{\x_r}.
\]
Observe that this is a well defined element of $\UU_{\sigma,\ov\sigma}^n$ due to the relation (1).

In~\cite{BS} Lemma 5.6 it is proved that $\{t_\p\mid \p\in\Conv\}$ is a $\C$-basis for the algebra $\UU_{\sigma,\ov\sigma}^1$ when $\sigma,\ov\sigma$ are specialized to be the eigenvalues of the Frobenius on the first \'etale cohomology group of $X$.

With the above choice of $\sigma$ and $\ov\sigma$ one of the main results of \cite{BS} is:

\begin{thm}[\cite{BS} Theorem 5.4] The assignment $\Omega:\t_\x\mapsto T_\x$ for $\x\in\Z$ extends to an algebra isomorphism 
\[\Omega:\E_{\sigma,\ov\sigma}^1\to \U_X\ot_K\C.\]
\end{thm}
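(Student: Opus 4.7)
The plan is to establish that $\Omega$ is a well-defined surjective algebra homomorphism by checking the two defining relations of $\E_{\s,\ov\s}^1$ hold in $\U_X$, and then to obtain injectivity by matching bases.

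First I would verify relation (1). If $\x,\x'$ are collinear with $\o$, choose $\gamma\in\SL_2(\ZZ)$ mapping their common direction to $(0,1)$; since the $\SL_2(\ZZ)$-action is by algebra automorphisms of $\DH_X$, commutativity of $T_\x$ and $T_{\x'}$ reduces to commutativity of $T_{(0,r)}$ and $T_{(0,r')}$ in $\HH_X^\tor$. But $\HH_X^\tor\simeq \bigotimes'_{x\in X}\HH_{\Tor_x}$, and each tensorand is a polynomial ring in the power sums $p_i$ by Proposition~\ref{P:classical Hall & Macd}, hence commutative. Since $T_{(0,r),x}$ is (up to a scalar) the preimage of $p_{r/|x|}$ under $\Psi_{\kk_x}$, the elements $T_{(0,r)}$ and $T_{(0,r')}$ commute.

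Relation (2) is the technical core. Using $\SL_2(\ZZ)$-equivariance $\gamma\cdot T_\x=T_{\gamma\cdot\x}$, we may reduce to a normal form, e.g.\ $\x=(1,d)$ with $\delta(\x)=1$ and $\y=(0,r)$, so the triangle $\Delta_{\x,\y}$ has no interior lattice point. For such pairs, $T_{(0,r)}$ is primitive in $\HH_X^\tor$, so Lemma~\ref{L:Hecke op primitive torsion} identifies the commutator $[T_{(0,r)},T_{(1,d)}]$ with a Hecke operator applied to a line bundle, which can be computed directly: summing the Hall-algebra product over closed points of $X$ produces the point count $|X(\ff_{q^r})|=q^r+1-\s^r-\ov\s^r$ via the Weil conjectures, and this is precisely what the scalar $c_r$ in Definition~\ref{D:entire form} encodes (expanding $(\s^{r/2}-\s^{-r/2})(\ov\s^{r/2}-\ov\s^{-r/2})$ reproduces the Frobenius trace). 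The right-hand side $\theta_{\x+\y}$ is identified with the characteristic function $\mathbf{1}^\ss_{\x+\y}$ of semistable sheaves of class $\x+\y$, using the exponential relation between $\sum_i \mathbf{1}^\ss_{i\alpha}z^i$ and $\sum_i T_{i\alpha}z^i/[i]$ (which is the same relation defining $\theta_{i\z_0}$ in Definition~\ref{D:entire form} once $n=1$). Once normalizations are matched, the relation holds in $\U_X$.

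Surjectivity of $\Omega$ is immediate since the $T_\x$ generate $\U_X$ by definition. For injectivity I would invoke \cite{BS} Lemma~5.6 supplying the basis $\{t_\p\mid \p\in\Conv\}$ of $\E_{\s,\ov\s}^1$ and prove that the images $\{T_\p\mid \p\in\Conv\}$ are linearly independent in $\U_X$; this can be done $\K_0'(X)$-grading-wise, using that $\U_X$ contains all $\mathbf{1}^\ss_\x$ (expandable in the $T_\p$'s via the exponential formula) together with Atiyah's classification to compute dimensions of the graded pieces and match them against the cardinality of paths in $\Conv$ with fixed endpoint.

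The main obstacle is the verification of relation (2): one must simultaneously track the combinatorics of lattice triangles without interior lattice points, the Hall-algebra product of a torsion sheaf with a vector bundle, and the arithmetic of $X$ packaged in the Frobenius eigenvalues $\s,\ov\s$. The $\SL_2(\ZZ)$-symmetry collapses these commutators to a single manageable case, but turning the coefficient identity into the precise exponential/$\theta_\z$ form is a delicate Hopf-algebra bookkeeping, which is precisely what the analysis in \cite{BS} carries out.
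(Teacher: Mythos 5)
The paper does not prove this statement itself---it is quoted verbatim from \cite{BS}, Theorem 5.4---so your sketch is being measured against the argument given there, and in outline you have reproduced it: verify the relations of $\E^1_{\sigma,\ov\sigma}$ inside $\U_X$ using the $\SL_2(\ZZ)$-symmetry and Hecke/point-count computations, then play the convex-path spanning set of $\E^1_{\sigma,\ov\sigma}$ off against a linearly independent family in $\U_X$. Two of your steps, however, would fail as written.

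First, not every instance of relation (2) is $\SL_2(\ZZ)$-equivalent to the pair $\x=(1,d)$, $\y=(0,r)$. A matrix $\gamma$ with $\gamma\y=(0,\delta(\y))$ sends $\x$ to a vector whose first coordinate is $\det(\x,\y)/\delta(\y)$, and Pick's theorem translates the no-interior-point condition (with $\delta(\x)=1$) into $|\det(\x,\y)|=\delta(\y)+\delta(\x+\y)-1$; hence your normal form is attained exactly when $\delta(\x+\y)=1$. The instances with $\delta(\x+\y)>1$ (for example $\x=(1,0)$, $\y=(1,2)$, where the triangle has no interior point but $\det(\x,\y)=2$), which are precisely the relations producing the new elements $\theta_{\z}$ with $\delta(\z)>1$ on the right-hand side, are not covered by the Hecke-operator computation and must be derived from the basic commutators $[T_{(0,r)},T_{(1,d)}]$ by an induction on $\delta(\x+\y)$; this is the content of Section 5 of \cite{BS}, and the same induction appears in the present paper in the proof of Lemma~\ref{L:generators for the twisted spherical}.

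Second, the injectivity argument cannot run through ``Atiyah's classification to compute dimensions of the graded pieces'' of $\U_X$: the space $\U_X[\alpha]$ is a proper subspace of $\HH_X[\alpha]$ whose dimension is not obtained by counting (semistable) sheaves of class $\alpha$ --- already $\dim\U_X[(0,d)]$ is the number of partitions of $d$, far smaller than the number of torsion sheaves of degree $d$. What is actually needed is the linear independence of $\{T_\p\mid\p\in\Conv\}$, and this is supplied by the PBW-type statement that the multiplication map from the ordered restricted tensor product of the slope subalgebras $\U_X^{(\nu)}$ onto $\U_X$ is an isomorphism of vector spaces, combined with the algebraic independence of the primitive elements $T_{l\x_0}$ inside each (commutative, cocommutative) slope subalgebra; compare Proposition~\ref{P:PBW for twisted sph}, which is the twisted analogue used in Step $4_n$. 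With these two repairs your outline coincides with the proof in \cite{BS}.
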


This isomorphism obviously intertwines the action of $\SL_2(\ZZ)$ on the two algebras.

\vspace{0.1in}

One of the goals of this article is to extend the above result to all the twisted spherical Hall algebras and use it to study the cusp eigenforms on $X$. It will turn out that each algebra $\U_X^\trho$ is in fact the subalgebra generated by the coefficients of a cusp eigenform and some torsion sheaves naturally associated to this cusp form. As a byproduct we obtain a description of the entire Hall algebra $\HH_X$: it is isomorphic to the infinite\footnote{restricted and commutative} tensor product of all the twisted spherical Hall algebras.

\section{Automorphic forms and Rankin-Selberg $L$-functions}

We will review here the basic setup of (non-ramified) automorphic forms and recall the definition of Rankin-Selberg $L$-functions. This Section is based on the Sections 2 and 3 of \cite{Kap}. We invite the reader to take a look at [op.cit.] for a more thorough discussion. Throughout we will only work over an elliptic curve but everything makes sense for an arbitrary smooth projective curve.

\vspace{0.1in}

\subsection{} We denote by $K_X$ the field of rational functions on $X$, by $\AA_X$ the ring of ad\`eles on $X$ and by $\OO_X$ the integer ad\`eles.

By a theorem of A. Weil we know that the set $\Bun_n(X)$ can be identified with the double quotient $\GL_n(K_X)\backslash\GL_n(\AA_X)/\GL_n(\OO_X)$.

A non-ramified automorphic form of rank $n$ on $X$ is a $\C$-valued function on $\Bun_n(X)$. Since we only deal with non-ramified automorphic forms we will suppress the adjective "non-ramified". 
We denote the vector space of automorphic forms of rank $n$ on $X$ by $\AF_n$. Observe that the automorphic forms with finite support are elements of the Hall algebra $\HH_X$.

\vspace{0.1in}

We say that an automorphic form $f$ of rank $n$ is a cusp form if for any proper parabolic subgroup $P\le \GL_n$ and for any $g\in \GL_n(\AA_X)$ we have
\[
 \int_{U_P(K_X)\backslash U_P(\AA_X)} f(ug)du=0
\]
where $U_P$ is the unipotent radical of $P$.

In the language of Hall algebras this definition can be restated as follows:
an automorphic form $f$ of rank $n$ is a cusp form if for any non-zero fiber bundles $\kV,\kW$ of ranks $n',n''<n$ we have that 
\[\sum_{\xi\in\Ext(\kV,\kW)}f(\Cone(\xi)[-1])=0\]
where the notations are the same as for the Hall algebra.

\vspace{0.1in}

Let $\AFc_n$ denote the set of cusp forms of rank $n$ and by $\AFc_{n,d}$ the set of cusp forms of rank $n$ and degree $d$.
We have the following important proposition:

\begin{prop} Every function $f\in\AFc_{n,d}$ has finite support and the space $\AFc_{n,d}$ is finite dimensional.
\end{prop}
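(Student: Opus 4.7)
The plan is to follow the classical Harder reduction-theoretic strategy: first show that a cusp form $f\in\AFc_{n,d}$ vanishes on vector bundles that are ``too unstable'' in the Harder--Narasimhan sense, then invoke the standard finiteness of isomorphism classes of bundles of fixed rank and degree with bounded instability.

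For the vanishing, given a bundle $\kU$ of rank $n$ and degree $d$, let $\kU_1\subsetneq\kU$ be the maximal destabilizing subsheaf and $\kQ=\kU/\kU_1$. Since $\rk(\kU_1),\rk(\kQ)<n$, the cuspidality relation applied to $(\kV,\kW)=(\kQ,\kU_1)$ gives
\[
\sum_{\xi\in\Ext(\kQ,\kU_1)} f\bigl(\Cone(\xi)[-1]\bigr)=0.
\]
Grouping by isomorphism class of the middle term $\kE$ this rewrites as a linear relation
\[
\sum_{\kE} P_{\kQ,\kU_1}^{\kE}\,\frac{a_{\kU_1}\,a_{\kQ}}{a_{\kE}}\,f(\kE)=0,
\]
in which $\kU$ itself appears with a controlled nonzero coefficient (by uniqueness of the maximal destabilizing subsheaf, the pair $\kU_1\hookrightarrow\kU\twoheadrightarrow\kQ$ contributes nontrivially). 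One then verifies that the other middle terms $\kE$ have strictly smaller ``instability'' with respect to an appropriate partial order, for instance the dominance order on the HN polygon or the invariant $\sum_i(\mu_i-\bar\mu)^2$ evaluated on the HN slopes. Induction on this well-founded invariant forces $f(\kU)=0$ as soon as the slope spread $\mu_{\max}(\kU)-\mu_{\min}(\kU)$ exceeds a threshold $C=C(n,d,X)$.

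For the finiteness step, the set of isomorphism classes of vector bundles of rank $n$ and degree $d$ satisfying $\mu_{\max}-\mu_{\min}\le C$ is finite. This is standard reduction theory: the possible HN types (tuples of ranks and degrees of semistable subquotients with prescribed total rank, total degree and bounded slope spread) are finite in number, and for each such type the moduli stack of bundles with that HN type is of finite type over $\ff_q$ and hence carries finitely many $\ff_q$-points, since the moduli stack of semistable bundles of fixed numerical invariants on $X$ is of finite type. Combined with the vanishing step this shows that every $f\in\AFc_{n,d}$ has finite support, and consequently $\AFc_{n,d}$ injects into the finite-dimensional space of $\C$-valued functions on this finite set of isomorphism classes.

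The main obstacle is the inductive step above: one must exhibit a well-founded ``instability'' invariant that strictly decreases when passing from $\kU$ to the other middle terms of the extensions $0\to\kU_1\to\kE\to\kQ\to 0$, and this decrease must be genuine and not merely non-strict. A single application of cuspidality at the first HN step is typically insufficient to solve for $f(\kU)$ directly, so one needs to iterate over several choices of sub-quotient decomposition of $\kU$ coming from deeper steps of the HN filtration, with careful bookkeeping of HN polygons to guarantee termination of the induction.
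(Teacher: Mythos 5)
Your overall strategy (vanishing on unstable bundles plus finiteness of the remaining support) is the right one, but the proof as written has a genuine gap, and you acknowledge it yourself: the inductive step on an ``instability'' invariant is never carried out. That step is in fact problematic as formulated --- the other middle terms $\kE$ of extensions $0\to\kU_1\to\kE\to\kQ\to0$ have Harder--Narasimhan polygon lying on or below that of $\kU_1\oplus\kQ$, but nothing in your argument rules out distinct $\kE$'s sharing the same polygon as $\kU$, so the proposed induction on the polygon does not obviously terminate with $f(\kU)=0$. The missing idea, which collapses the whole difficulty, is Serre duality: since $\omega_X\simeq\O_X$ on an elliptic curve, $\Ext(\kQ,\kU_1)\simeq\Hom(\kU_1,\kQ)^*=0$ because every HN slope of $\kU_1$ (the maximal destabilizing subsheaf) strictly exceeds every HN slope of $\kQ$. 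Hence the HN filtration splits, the sum in the cuspidality condition has the \emph{single} term $\xi=0$ with middle term $\kU_1\oplus\kQ\simeq\kU$, and one reads off $f(\kU)=0$ directly for \emph{every} unstable $\kU$ --- no threshold $C(n,d,X)$, no iteration, no bookkeeping of polygons. (This is also how the general reduction-theoretic argument runs: the gap $\mu(\kU_1)-\mu(\kQ)>2g-2$ forces $\Ext(\kQ,\kU_1)=0$; for $g=1$ that condition is vacuous.)

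With the vanishing established on all unstable bundles, your finiteness step should likewise be simplified and corrected: what is needed is only that there are finitely many \emph{semistable} bundles of rank $n$ and degree $d$ on $X$, which is immediate from Atiyah's classification (via the equivalence $\Cc_{d/n}\simeq\Cc_\infty$, these are counted by torsion sheaves of fixed degree). Appealing to finite-type moduli stacks of bundles with bounded slope spread is both heavier than necessary and tied to the unproven threshold $C$ from your first step.
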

This result for an arbitrary curve $X$ is a consequence of Harder's reduction theory. See for example \cite{Ha1,Ha2}. However, in our case, $X$ an elliptic curve, the proof is not difficult and we can sketch it here. It is easy to see that the cusp forms are supported on the semistable sheaves (because the Harder-Narasimhan filtration splits and this in turn because the canonical sheaf of $X$ is trivial) and from Atiyah's theorem we have a classification of these. In particular, for integers $n\ge 1, d\in \ZZ$ we know that there are only a finite number of semistable sheaves of rank $n$ and degree $d$. This forces the vector space $\AFc_{n,d}$ to be finite dimensional and any cusp form to have finite support.
\vspace{0.1in}

\subsection{} Let $\tau\in\Tor_X$ be a torsion sheaf and $n\ge 1$. The Hecke operator  $\He_\tau:\AF_n\to\AF_n$ associated to $\tau$ is defined by:
\[
 \He_\tau(f)(\kV) = \sum_{\substack{\kV'\subseteq \kV\\\kV/\kV'\simeq\tau}}f(\kV').
\] 
where $f\in\AF_n$.

Observe that this is the same definition as in the Hall algebra context.

We have an action of the (commutative) Hecke algebra $\HH_X^\tor$ on the vector space $\AF_n$. It is not difficult to see that the Hecke operators preserve in fact the space of cusp forms $\AFc_n$. Moreover the multiplicity one theorem of Shalika and Piatetski-Shapiro (\cite{P-S, Sh}) tells us how this action decomposes into irreducibles:

\begin{thm}[\cite{P-S, Sh}]
The action of the Hecke algebra $\HH_X^\tor$ on the space $\AFc_n$ is diagonalisable and every eigenspace appears with multiplicity 1.
\end{thm}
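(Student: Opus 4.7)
I would split the assertion into two parts: (i) simultaneous diagonalizability of the $\HH_X^\tor$-action on $\AFc_n$, and (ii) multiplicity one of each eigenspace, and handle them separately.

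For (i), the first step is to reduce to finite-dimensional linear algebra. The degree decomposition $\AFc_n=\bigoplus_{d\in\ZZ}\AFc_{n,d}$ is preserved by each homogeneous Hecke operator $\He_\tau$; each summand is finite-dimensional by the preceding proposition; and $\HH_X^\tor\simeq\bigotimes'_{x\in X}\HH_{X,x}^\tor$ is commutative. So it suffices to show that each individual $\He_\tau$ acts diagonalizably on each $\AFc_{n,d}$. The natural tool is the restriction of Green's Hermitian form, which is positive definite on $\AFc_{n,d}$ because cusp forms have finite support and $([\F],[\G])=\delta_{\F,\G}/a_\F>0$. Using the Hopf property $(\tau\cdot f,g)=(\tau\ot f,\Delta g)$ together with Lemma~2.4 (which says that in $\Delta g=g\ot 1+\sum_i\theta_i\ot g_i$ the $g_i$ are again cuspidal), one shows that the adjoint of $\He_\tau$ preserves $\AFc_{n,d}$ and is itself a Hecke operator $\He_{\tau^*}$ for a suitable anti-involution $\tau\mapsto\tau^*$ on $\HH_X^\tor$. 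The family $\{\He_\tau\}$ is then a commuting family of normal operators on a finite-dimensional Hermitian space, hence simultaneously diagonalizable.

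For (ii), I would exploit the special geometry of the elliptic curve. By Atiyah's theorem, any cusp form of rank $n$ is supported on a single slope component $\Cc_{d/n}$, and the Kuleshov--Geigle--Lenzing equivalences yield an exact equivalence $\Cc_{d/n}\simeq\Cc_\infty$ for any coprime pair. Under such an equivalence, a cusp form becomes a $\C$-valued function on torsion sheaves, and the Hecke action is translated into a convolution-type action. Harmonic analysis on the finite abelian group $\Pic^0(X_n)$ decomposes the relevant convolution algebra, with characters of $\Pic^0(X_n)$ (modulo the Frobenius action) labelling the eigenspaces; the primitive characters are exactly those that produce genuine rank-$n$ eigenforms. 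Multiplicity one would then follow once one checks that non-Frobenius-conjugate primitive characters give distinct systems of Hecke eigenvalues.

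\textbf{Main obstacle.} The substantive difficulty lies in this last verification: it is a concrete realization of function-field Langlands for $\GL_n$ over $X$. Concretely, one has to compute the Hecke eigenvalues of the eigenform attached to a primitive $\rho\in\wX{n}$ and show that they separate Frobenius orbits. This is precisely the content of Sections~6 and~7 of the paper and is what ultimately underlies the explicit formula $T_n^{\trho}=\sum_d T_{(n,nd)}^{\trho}$ appearing in the main theorem.
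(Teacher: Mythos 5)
First, a point of logic: the paper does not prove this theorem at all --- it is quoted from Piatetski-Shapiro and Shalika as an external input, and everything downstream (the Langlands correspondence of Theorem~\ref{T:Frobenius action}, the identification of the eigenforms $T_n^{\trho}$, the square-matrix argument in Step $1_n$) \emph{uses} it. So any proof you offer must be independent of Sections 6--8.

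Your part (i) is a reasonable sketch of the standard argument: $\AFc_{n,d}$ is finite-dimensional, the Green form is positive definite on finitely supported functions, and a commuting family of normal operators on a finite-dimensional Hermitian space is simultaneously diagonalizable. The one point you should not wave at is the claim that the adjoint $\He_\tau^\vee$ is again a Hecke operator $\He_{\tau^*}$: what the Hopf property gives directly is $\He_\tau^\vee g=\sum_i\overline{(\tau,\theta_i)}\,g_i$ read off from $\Delta g$, i.e.\ the ``dual Hecke operator'' of the paper's final remark, and one must check separately that these lie in (or at least commute with) the image of $\HH_X^\tor$. This is true but is exactly where the content of normality sits.

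Part (ii), however, has a genuine gap and is moreover circular. The circularity: the statement that cusp eigenforms of rank $n$ are labelled by primitive characters of $\Pic^0(X_n)$ modulo Frobenius, and that distinct Frobenius orbits give distinct Hecke eigenvalue systems, is Theorem~\ref{T:Frobenius action} combined with Theorem~\ref{T:cusp forms}; in the paper these are consequences of Lafforgue's theorem \emph{together with} the multiplicity one theorem you are trying to prove (multiplicity one is needed even to assert that the eigenforms span $\AFc_{n,d}$ and that the change-of-basis matrix $C$ in Step $1_n$ is square). The concrete wrong step: the Hecke action does \emph{not} transport through the Kuleshov--Geigle--Lenzing equivalence $\Cc_{d/n}\simeq\Cc_\infty$ to a convolution action on torsion sheaves. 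A Hecke modification $0\to\V\to\V'\to\tau\to0$ changes the degree, hence the slope, so it is not a morphism internal to $\Cc_{d/n}$ and the equivalence says nothing about it; this is precisely why the paper has to compute $\He_{\O_x^{\oplus l}}$ on the $T_{(n,n)}^{\tsigma}$ by hand via the coproduct and the Green form (Lemma~\ref{L:action of O_x on T}, Proposition~\ref{P:action of T_0N on cusp eign}). Nor is the Hecke algebra acting on rank-$n$ forms the group algebra of $\Pic^0(X_n)$ --- it is the full torsion Hall algebra, a polynomial algebra on the $T_{(0,r),x}$ --- so ``harmonic analysis on $\Pic^0(X_n)$'' cannot by itself decompose the action; the passage from characters of $\HH_X^\tor$ to characters of $\Pic^0(X_n)$ is the Langlands correspondence, not Fourier analysis on a finite abelian group. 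If you want a legitimate proof of multiplicity one you must follow the Whittaker-model/Rankin--Selberg route of the cited references; it cannot be recovered from the elliptic-curve structure theory of this paper without begging the question.
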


\begin{defin} An automorphic form $f\in \AFc_n$ is called a cusp eigenform if $f$ is an eigenvector for all the Hecke operators.
\end{defin}

The multiplicity one theorem says that the cusp eigenforms span the space of cusp forms and that they have different eigenvalues under the Hecke operators.

\subsection{}\label{def Rankin Selberg} 
Let $f\in\AFc_n$ be a cusp eigenform. For $x\in X$ a closed point and $i=1,\dots,n$ we denote by $z_{i,x}(f)$ the unique (up to permutation) complex numbers that verify for every $l=1,\dots,n$
\[
 \He_{\O_x^{\oplus l}}(f) = q_x^{l(n-l)/2}e_l(z_{1,x}(f),\dots,z_{n,x}(f))f
\]
where $e_l$ is the $l$-th elementary symmetric function. The numbers $z_{i,x}(f),i=1,\dots,n, x\in X$ are called the Hecke eigenvalues of $f$.

If $f$ and $g$ are two cusp eigenforms of rank $n$ and $m$ respectively then their Rankin-Selberg $L$-function is defined by the formula:
\[
L(f,g,t):=\prod_{x\in X}\prod_{i=1}^n\prod_{j=1}^m(1-z_{i,x}(f)^{-1}z_{j,x}(g)t^{|x|})^{-1}.
\]

We summarize their most important properties in the following theorem:

\begin{thm}[see \cite{JPS}] Let $f$ and $g$ be cusp eigenforms. If $f\neq g$ then $L(f,g,t)=1$ and if $f=g$ then $L(f,f,t)$ converges (for $|t|<q^{-1}$) to a rational function.
\end{thm}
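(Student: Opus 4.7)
The plan is to translate the question to the Galois side via the Langlands correspondence for function fields and $\GL_n$ (Drinfel'd for $n=2$, Lafforgue in general), and then to exploit the very low genus of $X$ through a cohomological computation. By the correspondence, a cusp eigenform $f$ of rank $n$ is matched with an irreducible $\qlb$-local system $\sigma_f$ on $X$ of rank $n$ whose geometric Frobenius eigenvalues at every closed point $x$ coincide with the Hecke eigenvalues $z_{1,x}(f),\dots,z_{n,x}(f)$. Matching the local Euler factors and invoking the Grothendieck--Lefschetz trace formula yields
\[
L(f,g,t) = \prod_{i=0}^{2} \det\bigl(1 - t\,\Frob \mid H^i(X_{\fqb}, \sigma_f^\vee \otimes \sigma_g)\bigr)^{(-1)^{i+1}},
\]
so $L(f,g,t)$ is automatically a rational function in $t$.

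Assume first $f\neq g$, so that $\sigma_f\not\cong\sigma_g$ as local systems, and decompose $\sigma_f^\vee\otimes\sigma_g=\bigoplus_i\tau_i$ into irreducible summands. Schur's lemma shows that the trivial local system occurs in $\sigma_f^\vee\otimes\sigma_g$ if and only if $\sigma_f\cong\sigma_g$, hence by our hypothesis every $\tau_i$ is a non-trivial irreducible local system. Consequently $H^0(X_{\fqb},\tau_i)=0$ (no monodromy invariants) and by Poincar\'e duality also $H^2(X_{\fqb},\tau_i)=0$. The key ingredient specific to the elliptic curve setting is that $\chi(X,\tau_i)=(2-2g)\rk(\tau_i)=0$ vanishes since $g=1$, which then forces $H^1(X_{\fqb},\tau_i)=0$ as well. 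Therefore $L(\tau_i,t)=1$ for every $i$ and $L(f,g,t)=\prod_i L(\tau_i,t)=1$.

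If instead $f=g$, then the trivial local system appears in $\sigma_f^\vee\otimes\sigma_f$ exactly once (by Schur), contributing the zeta function $\zeta_X(t)=(1-\alpha t)(1-\beta t)/[(1-t)(1-qt)]$ of $X$, where $|\alpha|=|\beta|=\sqrt{q}$ by the Riemann hypothesis for curves. This is manifestly rational, and its only poles (at $t=1$ and $t=q^{-1}$) lie on or outside the boundary of the disc $|t|<q^{-1}$, whence the required convergence. The remaining irreducible summands of $\sigma_f^\vee\otimes\sigma_f$ contribute factors of $1$ exactly as in the previous paragraph.

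The main obstacle is the invocation of the Langlands correspondence, a highly non-trivial input; an alternative, more self-contained route would be the classical Rankin--Selberg unfolding via Whittaker functions as in \cite{JPS}. The cohomological approach above is particularly transparent in the present setting precisely because it is the vanishing of the Euler characteristic $\chi(X,\tau_i)=0$, forced by $g=1$, that is responsible for the striking triviality $L(f,g,t)=1$ when $f\neq g$.
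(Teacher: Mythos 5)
The paper does not actually prove this theorem: it is quoted from \cite{JPS}, and only the sharper statement $L(f,f,t)=\zeta_{X_n}(t^n)$ (Proposition~\ref{P:value L func}) is proved, by a direct character computation resting on Lemma~\ref{L:L-function-trivial}. Your Galois-side strategy via Grothendieck's cohomological formula is a legitimate alternative and is morally the same computation, but as written it has a genuine gap.

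The flawed step is ``every $\tau_i$ is a non-trivial irreducible local system, consequently $H^0(X_{\fqb},\tau_i)=0$''. The group $H^0(X_{\fqb},\tau)$ consists of invariants under the \emph{geometric} fundamental group $\pi_1(\ov X)$ only, so it vanishes exactly when $\tau$ has no geometrically trivial constituent; a local system can be irreducible and non-trivial over $\ff_q$ yet geometrically trivial, namely the rank-one systems $\lambda$ pulled back from $\Spec(\ff_q)$ (unramified twists of the constant sheaf). For such $\lambda$ one has $\dim H^0=\dim H^2=1$ and $\dim H^1=2$, and the contribution is $\zeta_X(\lambda t)\neq 1$. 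These constituents really occur here: by Mackey, $\sigma_f^\vee\otimes\sigma_f\supseteq \Ind_{\pi_1(X_n)}^{\pi_1(X)}(\mathbf 1)=\bigoplus_{\lambda^n=1}\lambda$, which is why the correct value is $L(f,f,t)=\prod_{\lambda^n=1}\zeta_X(\lambda t)=\zeta_{X_n}(t^n)$, not $\zeta_X(t)$ as your last paragraph asserts (the discrepancy is invisible only for $n=1$). The same gap threatens the case $f\neq g$: Schur's lemma excludes the trivial $\pi_1(X)$-summand but not a geometrically trivial one, i.e.\ it does not exclude $\sigma_g\cong\sigma_f\otimes\lambda$ with $\lambda\neq 1$ unramified --- and for such a pair $L(f,g,t)=\zeta_{X_n}(\lambda^nt^n)\neq 1$, so the asserted conclusion would actually fail. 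What saves the statement is the normalization of cusp eigenforms used in the paper (characters of $\Pic^0(X_n)$ extended trivially to $\Pic(X_n)$, eigenforms taken modulo the $\C^\times$-action), under which $f\neq g$ does force $\sigma_f^\vee\otimes\sigma_g$ to have no geometrically trivial constituent; this must be stated and used. A smaller point: $\sigma_f^\vee\otimes\sigma_g$ need not be semisimple a priori, so one should argue with Jordan--H\"older constituents, using that both the Euler product and the cohomological formula are multiplicative in short exact sequences. With these repairs the argument goes through, and your Euler-characteristic observation $\chi=(2-2g)\rk=0$ is indeed the correct explanation of why the answer is so clean in genus one.
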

\begin{rem} The analogue result for an arbitrary curve $X$ is that if $f\neq g$ then $L(f,g,t)$ converges to a polynomial and if $f=g$ then $L(f,f,t)$ converges to a rational function.
\end{rem}

We will need a more precise statement about the function $L(f,f,t)$, namely:
\begin{prop}\label{P:value L func} If $f$ is a cusp eigenform of rank $n$ then \[L(f,f,t)=\zeta_{X_n}(t^n)\]
where the equality should be understood in the sense that the $L$-function converges in some region to the rational function defined on the right hand side.
\end{prop}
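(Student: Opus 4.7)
The plan is to identify $L(f,f,t)$ with a product of $L$-functions of characters on the cover $X_n$ via the Langlands correspondence, and then to exploit a vanishing result peculiar to elliptic curves. First I would invoke the explicit Langlands correspondence for unramified cusp eigenforms on $\GL_n$ over the elliptic curve $X$, as worked out in Sections~6 and~7: up to the natural $\C^\times$-action this attaches to $f$ a primitive character $\rho\in\wX{n}$ and an $n$-dimensional irreducible Galois representation $\sigma_f=\Ind_{K_{X_n}}^{K_X}\rho$, where $\rho$ is viewed as an id\`ele class character of $K_{X_n}$ via geometric class field theory. Under the Satake correspondence, the local factors $\prod_{i,j}(1-z_{i,x}(f)^{-1}z_{j,x}(f)t^{|x|})^{-1}$ defining $L(f,f,t)$ agree with the local $L$-factors of the Galois representation $\End(\sigma_f)=\sigma_f\otimes\sigma_f^\vee$.

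Next, the Mackey / projection formula applied to the cyclic Galois extension $K_{X_n}/K_X$ yields
\[
 \End(\sigma_f)=\bigoplus_{\gamma\in\Gal(K_{X_n}/K_X)}\Ind_{K_{X_n}}^{K_X}(\rho^\gamma\rho^{-1}),
\]
and combined with the standard identity $L_{K_X}(\Ind_{K_{X_n}}^{K_X}\chi,t)=L_{K_{X_n}}(\chi,t^n)$ this produces
\[
 L(f,f,t)=\prod_{\gamma\in\Gal(K_{X_n}/K_X)}L_{K_{X_n}}(\rho^\gamma\rho^{-1},t^n).
\]
The summand at $\gamma=1$ already contributes $L_{K_{X_n}}(\mathbf{1},t^n)=\zeta_{X_n}(t^n)$, the announced value, so the crucial remaining task is to show that every factor indexed by $\gamma\ne 1$ is identically~$1$. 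Primitivity of $\rho$ forces $\rho^\gamma\ne\rho$, so $\rho^\gamma\rho^{-1}$ is a non-trivial character of $\Pic^0(X_n)$; the associated rank-one $\qlb$-local system $\mathcal{L}$ on $X_n$ corresponds to a non-trivial unramified abelian \'etale cover, hence is geometrically non-trivial, which forces $H^0(X_{n,\fqb},\mathcal{L})=H^2(X_{n,\fqb},\mathcal{L})=0$. Because $X_n$ is an elliptic curve its topological Euler characteristic vanishes, so Grothendieck-Ogg-Shafarevich gives $H^1(X_{n,\fqb},\mathcal{L})=0$, and the Grothendieck-Lefschetz trace formula then produces $L_{K_{X_n}}(\rho^\gamma\rho^{-1},T)\equiv 1$. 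Assembling the factors yields $L(f,f,t)=\zeta_{X_n}(t^n)$.

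The main obstacle I anticipate is the first step: identifying $\sigma_f$ with the induced representation $\Ind_{K_{X_n}}^{K_X}\rho$ rests on the explicit Langlands correspondence for elliptic curves developed in Sections~6 and~7. Once that input is available the remaining manipulations are formal, and the vanishing $L(\chi,T)=1$ for a non-trivial character of an elliptic curve is precisely the genus-$1$ phenomenon that makes the statement so clean, in harmony with the paper's earlier remark that $L(f,g,t)=1$ whenever $f\ne g$.
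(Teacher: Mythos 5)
Your proof is correct, and it lands on exactly the same structural decomposition that the paper uses, namely
\[
L(f,f,t)=\zeta_{X_n}(t^n)\cdot\prod_{j=1}^{n-1}L\bigl(\rho\cdot(\rho\circ\Frob_X^{j})^{-1},t^n\bigr),
\]
with every non-trivial twist contributing trivially; the two arguments differ only in how each ingredient is obtained. For the decomposition, the paper does not pass through $\End(\Ind\rho)$, Mackey theory and inductivity of $L$-functions: it substitutes the explicit Hecke eigenvalues $z_{(i-1)n/d_x+l,x}(f)=\epsilon_x^l\rho(\O_{X_n}(x_i))^{d_x/n}$ from Theorem~\ref{T:Frobenius action} directly into the Euler product and regroups the factors by hand (the roots of unity $\epsilon_x$ collapse the local factor at $x$ into factors indexed by pairs of points of $X_n$ above $x$ --- which is your Mackey decomposition made concrete). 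For the key vanishing, where you use geometric non-triviality of the rank-one local system, Grothendieck--Ogg--Shafarevich on a curve of Euler characteristic zero, and the trace formula, the paper instead proves the self-contained Lemma~\ref{L:L-function-trivial}: it takes $\log L(\rho_j,t)$, rewrites the degree-$N$ coefficient as a character sum over $\Pic^0$ of the base using surjectivity of the norm map (Lemma~\ref{L:norm-surjective}), and concludes by orthogonality of characters. Your route is more conceptual and makes the genus-one mechanism transparent; the paper's is elementary and needs no cohomological input. The one point to state carefully in your write-up is the one you already touch on: what guarantees geometric non-triviality of the local system is non-triviality of $\rho\cdot(\rho\circ\Frob_X^{j})^{-1}$ on $\Pic^0(X_n)$ (not merely on $\Pic(X_n)$), and this is precisely what primitivity of $\rho$, as defined via $\wX{n}$, delivers.
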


This result is probably well known but we couldn't find a reference for it and hence we included an ad-hoc proof that uses the precise determination of the Hecke eigenvalues. We defer the proof to Section \ref{S:computations Rankin-Selberg}.

\subsection{} Let $f$ be a cusp eigenform of rank $n$. Define its associated generating series with coefficients in $\HH_X^\vvec$ to be:
\[
 E_f(z):=\sum_{\V\in\Bun_n} f(\V)[\V]z^{\deg{\V}}.
\]

If $\chi_f:\HH_X^\tor\to\C$ is the eigenvalue of $f$ then Kapranov considered in \cite{Kap} the following generating series:
\[
 \psi_f(z):=\sum_{\tau\in\Tor_X}\ov{\chi_f}([\tau])a_{\tau} z^{|\tau|}[\tau]\in \HH_X^\tor[[z]]
\]
where for a torsion sheaf $\tau\in\Tor_X$ we denoted by $a_\tau = \#\Aut(\tau)$ and by $|\tau|$ the degree of $\tau$.

Observe that we cannot multiply $E_f(z)$ by $E_g(z)$ or by $\psi_g(z)$ for two cusp eigenforms because of the infinite summands in the coefficients. However, we can multiply $E_f(z_1)$ by $E_g(z_2)$ or by $\psi_g(z_2)$.

\begin{thm}[\cite{Kap}, Theorem 3.3]\label{T:rel E_f,E_g,psi_f} Let $f$ and $g$ be two cusp eigenforms of rank $n$ respectively $m$. Then we have the following commutation relations:
 \begin{enumerate}
  \item For any coherent sheaf $\kF\in\Coh(X)$ the coefficient of $[\kF]$ in the products $E_f(z_1)\cdot E_g(z_2)$, $E_f(z_1)\cdot\psi_g(z_2)$ and $\psi_f(z_1)\cdot E_g(z_2)$ is a power series in $z_1,z_2$ which converges for $|z_1|>>|z_2|$ to a rational function.
  \item These rational functions satisfy the following functional equations 
\begin{equation}\label{E:commutation E_f and E_g}
E_f(z_1)\cdot E_g(z_2) = \ds\frac{L(f,g,z_2/qz_1)}{L(f,g,z_2/z_1)}E_g(z_2)\cdot E_f(z_1) 
\end{equation}

\begin{equation}\label{E:commutation E_f and psi_g}
E_f(z_1)\cdot \psi_g(z_2) = \ds\frac{L(f,g,q^{m/2-1}z_2/z_1)}{L(f,g,q^{m/2}z_2/z_1)}\psi_g(z_2)\cdot E_f(z_1).   
\end{equation}
  \item 
\[
\Delta(\psi_f(z)) = \psi_f(z)\otimes\psi_f(z)
\]

\[
 \Delta(E_f(z)) = E_f(z)\ot 1 +\psi_f(q^{-n/2}z)\ot E_f(z)
\]
\end{enumerate}
\end{thm}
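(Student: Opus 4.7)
My plan is to establish the coproduct formulas (3) first, since they are the structural input; (2) then follows from the Hopf compatibility of product and coproduct, and (1) is immediate from the rational closed forms appearing in (2). For $\Delta(\psi_f(z))=\psi_f(z)\otimes\psi_f(z)$, expanding $\Delta([\tau])$ via the Hall coproduct and comparing coefficients of $[\tau_1]\otimes[\tau_2]$ reduces the claim to multiplicativity of the character $\chi_f:\HH_X^\tor\to\C$, which is exactly the Hecke eigenform property of $f$.

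The coproduct of $E_f(z)$ rests on cuspidality together with rank considerations. In $\Delta([\V])$ for $\V$ of rank $n$, subsheaves $\kG\subseteq\V$ are automatically torsion-free, so the sum splits by $\rank\kG\in\{0,1,\dots,n\}$. The extreme cases $\rank\kG=0$ and $\rank\kG=n$ give $E_f(z)\otimes 1$ and a tor$\otimes$vec contribution respectively. For the intermediate cases, using the Hall-algebra factorization of $[\kF]$ in terms of $[\kF_\vvec]$ and $[\kF_\tor]$ (available because $\Ext(\kF_\vvec,\kF_\tor)=0$), the pure vec$\otimes$vec component of $\Delta(f)$ is killed by cuspidality via the Hopf identity $(\Delta f,a\otimes b)=(f,a\cdot b)=0$, and the residual torsion factors are absorbed into the boundary tor$\otimes$vec term by iterating the Hopf identity together with Lemma~\ref{L:Delta(cusp)=cusp}. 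The surviving tor$\otimes$vec coefficient is pinned down by the pairing $(\Delta f,[\tau]\otimes[\V])=(f,[\tau]\cdot[\V])$; Proposition~\ref{P:Hecke action and mult with torsion} rewrites $[\tau]\cdot[\V]=v^{n|\tau|}\He_\tau([\V])$ modulo non-vector-bundle terms, and then $\He_\tau(f)=\chi_f(\tau)f$ extracts precisely $\psi_f(q^{-n/2}z)\otimes E_f(z)$, the shift $q^{-n/2}$ coming from the factor $v^{n|\tau|}$.

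With (3) in hand, (2) follows from iterated use of the Hopf pairing $(ab,c)=(a\otimes b,\Delta c)$. For \eqref{E:commutation E_f and psi_g} the computation is essentially immediate: $\Delta(\psi_g)$ is grouplike, so commuting $E_f(z_1)$ past $\psi_g(z_2)$ reduces to evaluating $\psi_g$ on the torsion portion of $\Delta(E_f)$ from (3), yielding the stated $L$-ratio at once. For \eqref{E:commutation E_f and E_g} one iterates the coproducts of both $E_f$ and $E_g$; all the combinatorics descends to the torsion side, where summing the local Hecke eigenvalues $z_{i,x}(f)^{-1}z_{j,x}(g)$ at each closed point $x\in X$ produces a geometric series of the form $\prod_{i,j}(1-z_{i,x}(f)^{-1}z_{j,x}(g)t^{|x|})^{-1}$, and the Euler product over $x$ reproduces the global $L(f,g,\cdot)$. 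The gap between $z_2/z_1$ and $z_2/qz_1$ in numerator versus denominator arises from the interaction of the Euler form $v^{-\<\cdot,\cdot\>}$ in the Hall product with the $q^{-n/2}$ shift in $\Delta(E_f)$. The rationality in (1) is then immediate from the rationality of the individual $L$-factors (Proposition~\ref{P:value L func} for $f=g$ and \cite{JPS} for $f\neq g$).

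\textbf{Main obstacle.} The most delicate step is \eqref{E:commutation E_f and E_g}: verifying that the combinatorial sum over exact sequences $0\to\V_2\to\kH\to\V_1\to 0$ assembles cleanly into the Euler product over $X$ with the correct $q$-shift. The cleanest route is to pair both sides against arbitrary test elements $c\in\HH_X$ using iterated coproducts from (3) and then invoke non-degeneracy of the Green form; the error-prone point is tracking the exact power of $q$, which arises from the combined contribution of the Euler form in the Hall product and the shift $q^{-n/2}$ already present in $\Delta(E_f)$.
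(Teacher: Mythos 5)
First, a point of reference: the paper does not prove this statement at all — it is imported verbatim (up to conventions) from \cite{Kap}, Theorem 3.3 — so there is no internal proof to measure you against; I can only assess your sketch on its own terms. Your treatment of part (3) is sound and is the standard argument: grouplikeness of $\psi_f(z)$ reduces to multiplicativity of $\chi_f$, which holds because $\He_{\tau_1\cdot\tau_2}=\He_{\tau_1}\circ\He_{\tau_2}$; and for $E_f(z)$, cuspidality (Lemma~\ref{L:Delta(cusp)=cusp}) forces $\Delta(E_f(z))-E_f(z)\otimes 1$ into $\HH_X^{\tor}\otimes\HH_X^{\vvec}$, after which pairing against all $[\tau]\otimes[\kV]$ and invoking Proposition~\ref{P:Hecke action and mult with torsion} together with the eigenvalue property correctly produces the coefficient $\psi_f(q^{-n/2}z)$, the shift coming from $v^{n|\tau|}$ as you say.

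The genuine gap is in your claim that part (2) ``follows from the Hopf compatibility of product and coproduct'' given (3). Since $E_f$ and $E_g$ lie in the \emph{same} half of the bialgebra, the Hopf property gives no a priori relation between $E_f(z_1)E_g(z_2)$ and $E_g(z_2)E_f(z_1)$; all it does is convert \eqref{E:commutation E_f and E_g} into the statement that, for every $\kH$, the two constant terms $(E_f(z_1)\otimes E_g(z_2),\Delta[\kH])$ and $(E_g(z_2)\otimes E_f(z_1),\Delta[\kH])$ — i.e.\ weighted counts of rank-$m$ versus rank-$n$ sub-bundles of $\kH$ — agree up to the $L$-ratio. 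Proving \emph{that} is the entire content of the theorem: it requires the local lattice/Hecke computation at each closed point $x$ (the Hall-algebra form of the Gindikin--Karpelevich intertwining calculation) which actually produces the Euler factors $(1-z_{i,x}(f)^{-1}z_{j,x}(g)t^{|x|})^{\pm1}$ and the $q$-shift between numerator and denominator. You correctly name the shape of the answer and honestly flag this as the main obstacle, but the sketch contains no argument for it, and iterating the coproducts of $E_f$ and $E_g$ will not supply it. A secondary issue: deriving (1) ``from'' the rationality of the $L$-factors in (2) is circular unless you first establish that the coefficient of $[\kF]$ converges somewhere in $|z_1|\gg|z_2|$; this needs an a priori bound on sub-bundles of fixed rank and bounded degree (here available from Atiyah's classification), not the functional equation.
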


\begin{rem}
The above equalities should not be understood as equalities of formal power series. Their meaning is that whenever we evaluate the terms at some coherent sheaf $\kF\in\Coh(X)$ the resulting power series will be convergent (in different regions!) to the same rational function.
\end{rem}

\begin{rem}
A word of warning: we use the (co)product opposite to the one used by Kapranov in~\cite{Kap} and we do not need to consider the extended Hall algebra because the symmetric Euler form is trivial. This is why, at the first sight, the above formulas might look different from the ones in~[loc.cit.].
\end{rem}

Let us write $E_f(z) = \sum_{d\in\ZZ} E_{f,d}z^d$.
Using the fact that the Rankin-Selberg $L$-functions converge to rational functions we can clear out the denominators in the formula \ref{E:commutation E_f and E_g} by multiplying with appropriate polynomials. Kapranov proved (see~\cite{Kap}, Theorem 3.5.6) that in fact this new formula can be interpreted as an equality of formal power series and comparing the coefficients of $z_1^iz_2^j$ for $i,j\in\ZZ$ gives valid relations between the elements $E_{f,d},E_{g,e}\in \HH_X$ for $d,e\in\ZZ$. 

\subsection{}
In \cite{Kap}, for a cusp eigenform $f$ of rank $n$, the following power series was considered:
\[
 a_f(z) = \sum_{d\ge 1}a_{f,d}z^d = \log(\psi_f(z)).
\]

In a similar way we obtain from the formula \ref{E:commutation E_f and psi_g} valid relations between the elements $E_{f,d},a_{g,d}\in \HH_X$.

\vspace{0.1in}

Let us consider the subalgebra $\HH_X^\circ$ of $\HH_X$ generated by $E_{f,d}$ and $a_{f,e}$ where $f$ runs over the cusp eigenforms and $d\in\ZZ,e\in\ZZ_+$.

Kapranov asked in~\cite{Kap} if the above relations suffice to give a presentation of $\HH_X^\circ$. More precisely, let $\widetilde\HH$ be an algebra generated by the symbols $\tilde E_{f,d},\tilde a_{f,e}$ where $f$ runs over the cusp eigenforms and $d\in\ZZ,e\in\ZZ_+$ subject to the relations obtained from Theorem~\ref{T:rel E_f,E_g,psi_f} as described above. We have an obvious surjective algebra morphism $\pi_X:\widetilde\HH\to\HH_X^\circ$. The problem is to determine its kernel. The presumably new relations satisfied by $E_{f,d}$ are also called in the literature higher rank relations between the residues of the Eisenstein series. 

If $X$ is the projective line, in \cite{Kap} it is proved that $\pi_X$ is an isomorphism and moreover that this presentation of $\HH_X^\circ$ gives an isomorphism of $\HH_X^\circ$ with a certain positive part of the quantum loop group $U_\nu(\widehat{\sl_2})$.

\vspace{0.1in}

For $X$ an elliptic curve O. Schiffmann has addressed in~\cite{S4} the similar problem for the spherical Hall algebra. Namely, for the cusp eigenform 
\[
f=\sum_{d\in\ZZ}\sum_{\kL\in\Pic^d(X)}[\kL]
\]
he considered the subalgebra $\U_X^+$ of $\HH_X$ generated by $E_{f,d},a_{f,e},d\in\ZZ,e\in\ZZ_+$ 
and the (abstract) algebra $\widetilde\UU$ generated by the symbols $\tilde E_{f,d},\tilde a_{f,e}$ subject to the relations deduced from the functional equations~\ref{E:commutation E_f and E_g} and \ref{E:commutation E_f and psi_g}. He proved that the kernel of the map $\pi_X:\widetilde\UU\to\U_X^+$ is generated by the cubic relations:
\[
 [[E_{f,l+1},E_{f,l-1}],E_{f,l}]=0,\, \forall l\in\ZZ.
\]

As a corollary of our main results combined with~\cite{S4} we obtain the same description of the higher rank relations satisfied by the residues of the Eisenstein series for all the cusp eigenforms.

\subsection{}\label{SS:strength mult 1}  In~\cite{Kap} Section 3.8 it is conjectured that the elements $a_{f,d}$, where $f$ is a cusp eigenform and $d\in\ZZ_+$, are algebraically independent over $\C$. This conjecture can be viewed as a certain strengthening of the multiplicity one theorem.

We can give a positive answer to this question using the description of the cusp eigenforms and of their coproduct. 

\section{The main results}
\subsection{}
In order to state our main results we need to introduce another family of automorphic forms. Namely, for $n\ge 1$ and for $\trho\in\wXF{n}$ we define 
\[
T_{n}^{\tilde\rho}:=\sum_{d\in\ZZ}T_{(n,nd)}^{\tilde\rho}.
\]

The following theorem gives the structure of the space of cusp forms on $X$:

\begin{thm}\label{T:cusp forms} For any integer $n\ge1$ we have:
\begin{enumerate}
\item The space $\AFc_{n,d}$ is zero unless $n|d$.
\item A basis for $\AFc_{n,dn}$ is given by $\{T_{n,dn}^{\tilde\rho},\tilde\rho\in\P_n\}$
\item For any primitive character $\tilde\rho\in\P_n$ the automorphic form $T_{n}^{\tilde\rho}$ is a cusp eigenform.
\end{enumerate}
\end{thm}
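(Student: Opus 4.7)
I would handle the three parts together, with part~(3) doing almost all of the work. By construction $T_{(n,nd),x}=\gamma\cdot T_{(0,n),x}$ for some $\gamma\in\SL_2(\ZZ)$ sending $(0,n)\mapsto(n,nd)$; since the $\SL_2(\ZZ)$-action on $\DH_X$ is realised by Seidel--Thomas autoequivalences of $\mathcal D^b(\Coh(X))$ and the Kuleshov/Geigle--Lenzing theorem identifies these with the equivalences $\Cc_\infty\simeq\Cc_d$, each $T_{(n,nd),x}$ is supported on semistable bundles of rank $n$ and degree $nd$. Summing over $x$ with weight $\tilde\rho(x)$ and over $d$ yields a well-defined automorphic form $T_n^{\tilde\rho}\in\AF_n$.

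The first step in proving the eigenform property is to reduce the Hecke computation to a commutator computation in $\HH_X$. Lemma~\ref{L:Hecke op primitive torsion} identifies the Hecke operator attached to a primitive torsion element $\tau$ with the (renormalised) bracket $[\tau,-]$ on vector bundles, and by Corollary~\ref{C:basis primitive torsions} the family $\{T_{(0,e)}^{\tilde\chi}\mid e\ge 1,\tilde\chi\in\wXF{e}\}$ spans the primitives of $\HH_X^{\tor}$. It therefore suffices to compute $[T_{(0,e)}^{\tilde\chi},T_{(n,nd)}^{\tilde\rho}]$ and show that, summed over $d\in\ZZ$, the result is a scalar multiple of $T_n^{\tilde\rho}$ shifted in degree by $e$. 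The relations of Definition~\ref{D:entire form}(2), once transported into $\U_X^{\tilde\rho}$, evaluate this bracket to a torsion-twisted element that, after an elementary rewriting in terms of the averages $T_{(n,n(d+e/n)),x}$, becomes $T_n^{\tilde\rho}$ times a scalar depending only on how $\tilde\chi$ pairs against $\tilde\rho$. Matching that scalar with the Satake parameter prescribed by the Langlands correspondence yields the eigenform property. Cuspidality comes for free from the same picture: $T_{(0,n),x}$ is primitive in $\HH_X^\tor$, so $\Delta(T_{(0,n),x})=T_{(0,n),x}\ot 1+1\ot T_{(0,n),x}$; transporting by $\gamma$ and summing produces a coproduct whose components all lie in $\HH_X^{\vvec}\ot\HH_X^\tor+\HH_X^\tor\ot\HH_X^{\vvec}$, which is precisely the cuspidality condition.

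Parts~(1) and~(2) then follow by counting. Given~(3), the family $\{T_{n,nd}^{\tilde\rho}\}_{\tilde\rho\in\P_n}$ sits inside $\AFc_{n,nd}$ and its members have pairwise distinct Hecke eigenvalues, because distinct Galois orbits of primitive characters of $\Pic^0(X_n)$ produce distinct norm functions on $|X|$. So they are linearly independent. That they span $\AFc_{n,nd}$ and that $\AFc_{n,d}=0$ for $n\nmid d$ follow from the multiplicity-one theorem, which identifies $\dim\AFc_{n,d}$ with the number of Hecke eigenvalue systems supported in rank $n$ and degree $d$, combined with the unramified Langlands correspondence over an elliptic curve, which parameterises these eigenvalue systems by $\P_n$. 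The restriction to degrees divisible by $n$ reflects the fact that tensoring with $\O(x_0)$ provides an equivalence $\Cc_\mu\simeq\Cc_{\mu+1}$ that shifts degree by $n$ while fixing $\tilde\rho$, so the generating series $\sum_d T_{n,nd}^{\tilde\rho} z^{nd}$ of each eigenform only has non-zero components in degree $n\ZZ$.

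\textbf{Main obstacle.} The delicate step is the explicit bracket computation and its identification with the expected Langlands parameter. Structurally the $\SL_2(\ZZ)$-action makes the argument clean, but the action is by algebra (not coalgebra) automorphisms of $\DH_X$, so both $\Delta(T_{(n,nd)}^{\tilde\rho})$ and the Hecke eigenvalues have to be extracted by explicit manipulations of the underlying torsion-sheaf averages and of the characters $\tilde\rho,\tilde\chi$---the content announced for Sections~6 and~7, on which the rest of the argument rests.
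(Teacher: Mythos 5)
There are genuine gaps here, and the two main ones are a circularity and a step that fails outright. First, the eigenform property: you propose to compute $[T_{(0,e)}^{\tilde\chi},T_{(n,nd)}^{\tilde\rho}]$ using ``the relations of Definition~\ref{D:entire form}(2), once transported into $\U_X^{\tilde\rho}$.'' But the fact that $\U_X^{\tilde\rho}$ satisfies those relations is exactly Theorem~\ref{T:str of Hall}(1), which in the paper is proved \emph{after} (and using) the identification of $T_n^{\tilde\rho}$ with a cusp eigenform; you cannot take it as input. The paper's actual route is the reverse: the Langlands correspondence (Theorem~\ref{T:Frobenius action}) supplies an abstractly existing eigenform $f_{\tilde\rho}$ with \emph{known} Hecke eigenvalues; one expands $f_{\tilde\rho,d}=\sum_{\tilde\sigma}c_{\tilde\rho,\tilde\sigma}T_{(n,nd)}^{\tilde\sigma}$ in the already-established basis of $\AFc_{n,nd}$ and pins down the coefficient matrix $C$ by computing $(\He_{\O_x^{\oplus l}}(f_{\tilde\rho,1}),\O(2x_0)^{\oplus n})$ in two ways (Corollary~\ref{C:Hecke eigenvalues} versus Lemma~\ref{L:action of O_x on T}), getting $CA=\alpha^{-1}A$ with $A$ of maximal rank. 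The bracket formula you want is then a \emph{consequence} (Step $2_n$), not an ingredient. Second, cuspidality does not ``come for free'': as you yourself note in your final paragraph, the $\SL_2(\ZZ)$-action is only by algebra automorphisms of $\DH_X$, so you cannot transport the primitivity of $T_{(0,n),x}$ in $\HH_X^{\tor}$ through $\gamma$ to conclude anything about $\Delta(T_{(n,nd)}^{\tilde\rho})$ in $\HH_X$. The paper instead proves cuspidality through the $\SL_2(\ZZ)$-invariant \emph{pairing} (Theorem~\ref{T:base AFc(n+1,0)}), and even that requires first identifying $(\HH_X^{\le n})^{\ss}[n+1,0]$ concretely via the PBW structure of the lower-rank twisted spherical Hall algebras --- which is why the whole argument must be run as an interleaved induction on $n$ (Steps $1_n$--$6_n$), a scaffolding your proposal does not set up.

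Part (1) has a gap of its own: the equivalence $-\otimes\O(x_0)$ shows that the degree-support of each eigenform is a union of cosets of $n\ZZ$, not that it is contained in $n\ZZ$; an eigenform could a priori have nonzero components in every degree. The paper proves $\AFc_{n+1,d}=0$ for $n+1\nmid d$ by showing $\HH_X^{\vvec}[n+1,d]\subset\HH_X^{\le n}$, which again leans on the generators of the twisted spherical Hall algebras and Corollary~\ref{C:prim char} (every character is a norm of a primitive one of strictly smaller degree when $\gcd(n+1,d)<n+1$). Your part (2), granting (3) and the correct dimension count from multiplicity one plus Langlands, is essentially the paper's argument.
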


For the structure of the twisted spherical Hall algebras and for the whole Hall algebra we have the following result:

\begin{thm}\label{T:str of Hall} For any integers $n,m\ge 1$ and any two different primitive characters $\trho\in\P_n$ and $\tsigma\in\P_m$ we have:
\begin{enumerate} 
 \item The twisted spherical Hall algebra $\U_X^{\tilde\rho}$ is isomorphic to $\E_{\sigma,\ov\sigma}^n$
 \item The algebras $\U_X^{\tilde\rho}$ and $\U_X^{\tilde\sigma}$ commute with each other 
 \item The Hall algebra $\HH_X$ decomposes into a commutative (restricted) tensor product: 
\[
 \HH_X\simeq \underset{\tilde\rho\in\P}{{\bigotimes}'} \U_X^{\tilde\rho,+}
\]
(isomorphism of bialgebras). In particular we have that
\[
\DH_X\simeq  \underset{\tilde\rho\in\P}{{\bigotimes}'} \U_X^{\tilde\rho}
\]
(isomorphism of algebras).

\end{enumerate}
\end{thm}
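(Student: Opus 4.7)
For part (1) of Theorem 5.2, my plan is to imitate the Burban--Schiffmann proof that $\U_X \simeq \E_{\sigma,\bar\sigma}^1$, verifying that the generators $T_{n\x}^{\Norm_n^{n\delta(\x)}(\tilde\rho)}$ of $\U_X^{\tilde\rho}$ satisfy the two families of relations in Definition 3.16. Relation (1) (commutativity for collinear $\x, \x'$) is immediate: both generators then lie in the Hall algebra of a common semistable category $\Cc_\mu$, which is commutative since $\Cc_\mu \simeq \Cc_\infty$ by Atiyah's theorem. Relation (2) (the empty-triangle commutator) is the substantial part. I would use the $SL_2(\mathbb{Z})$-symmetry $\gamma \cdot T_\x^{\tilde\rho} = T_{\gamma\cdot\x}^{\tilde\rho}$ to reduce to the case where $n\x$ is a torsion class, then explicitly compute $[T_{(0,n)}^{\tilde\rho}, T_{n\y}^{\tilde\rho}]$ using Lemma 2.10 and the Hecke-operator formulas of Sections 6--7. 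The constant $c_{n\delta(\y)}$ should emerge from the Rankin--Selberg $L$-function $L(T_n^{\tilde\rho}, T_n^{\tilde\rho}, t) = \zeta_{X_n}(t^n)$ (Proposition 4.5), and the $\theta_{\x+\y}$ terms from the coproduct formulas of Theorem 4.9 applied to $T_n^{\tilde\rho}$ together with Lemma 2.8.

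For part (2) the claim is that for distinct primitive characters $\tilde\rho \in \P_n$, $\tilde\sigma \in \P_m$ the algebras $\U_X^{\tilde\rho}$ and $\U_X^{\tilde\sigma}$ centralize one another. For collinear $\x, \x'$ this again follows from commutativity of the Hall algebra of a common $\Cc_\mu$. For non-collinear $\x, \x'$, $SL_2(\mathbb{Z})$ reduces the question to a commutator of the shape $[T_{(0,d)}^{\tilde\rho}, T_{m\y}^{\tilde\sigma}]$, which by Lemma 2.10 equals (up to a power of $v$) the Hecke action of a primitive torsion element on the cusp eigenform $T_m^{\tilde\sigma}$. By the explicit Langlands correspondence for elliptic curves in Sections 6--7, the Hecke eigenvalues of $T_m^{\tilde\sigma}$ at a closed point $x$ are expressed through $\tilde\sigma(x)$, so expanding $T_{(0,d)}^{\tilde\rho} = \sum_x \tilde\rho(x) T_{(0,d),x}$ produces a weighted sum of the form $\sum_x \tilde\rho(x)\tilde\sigma(x)^{\pm 1}$, which vanishes by orthogonality of inequivalent primitive characters on the relevant $\Pic^0(X_k)$. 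This character-orthogonality cancellation, together with the explicit identification of the Hecke eigenvalues of $T_m^{\tilde\sigma}$ in terms of $\tilde\sigma$, is the hardest step and is what Sections 6 and 7 are designed to provide.

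For part (3), parts (1) and (2) combine to define an algebra homomorphism $\Phi : \bigotimes'_{\tilde\rho \in \P} \U_X^{\tilde\rho,+} \to \HH_X$; the restricted tensor product makes sense precisely because the factors centralize each other. Surjectivity follows from Proposition 2.3 (cusps plus $\HH_X^{\tor}$ generate $\HH_X$), Theorem 5.1 (all cusp eigenforms are of the form $T_n^{\tilde\rho}$ with $\tilde\rho \in \P$), and Corollary 3.12 (every non-primitive character is a norm of a primitive one, so the associated torsion generator is absorbed into the appropriate factor $\U_X^{\tilde\rho,+}$). For injectivity, I would combine the convex-path basis of each $\U_X^{\tilde\rho,+} \simeq \E_{\sigma,\bar\sigma}^{n,+}$ provided by part (1) and Burban--Schiffmann's Lemma 5.6 with Atiyah's classification of coherent sheaves on $X$, and carry out a graded dimension count in $\K_0'(X)$. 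Bialgebra compatibility with the decomposition is inherited from the coproduct formulas of Theorem 4.9 for cusp eigenforms. The statement for $\DH_X$ then follows formally by symmetrising between positive and negative halves.
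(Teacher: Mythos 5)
Your overall strategy is the one the paper follows (check the relations of $\E_{\sigma,\ov\sigma}^n$ via the $\SL_2(\ZZ)$ symmetry and the Hecke/Langlands computations of Sections 6--7, kill the mixed commutators by character orthogonality, then assemble the tensor decomposition), but there are two concrete gaps. First, in part (1) verifying the relations only produces a \emph{surjective} algebra map $\E_{\sigma,\ov\sigma}^n\to\U_X^{\trho}$; the substance of the isomorphism is injectivity, which requires knowing that the convex-path monomials $T_{n\p}^{\trho}$ are linearly independent inside $\HH_X$. In the paper this is exactly the content of the PBW-type decomposition (Proposition~\ref{P:PBW for twisted sph}), which in turn rests on Lemma~\ref{L:generators for the twisted spherical} and on an analogue of Lemma 4.9 of \cite{BS}. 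You never establish this, yet in part (3) you invoke ``the convex-path basis of each $\U_X^{\trho,+}$ provided by part (1)'', so the gap propagates. (Note also that a naive ``graded dimension count in $\K_0'(X)$'' cannot close it: the graded pieces $\HH_X[0,d]$ are infinite dimensional, so one must refine by slope and by support, which is what the paper's reduction to the slope-$\infty$ part and the algebraic independence of linearly independent primitives accomplishes.)

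Second, in part (2) your reduction ``$\SL_2(\ZZ)$ reduces the question to $[T_{(0,d)}^{\trho},T_{m\y}^{\tsigma}]$, which by Lemma~\ref{L:Hecke op primitive torsion} is the Hecke action on the cusp eigenform $T_m^{\tsigma}$'' only works when $\delta(\y)=1$, i.e.\ when $T_{m\y}^{\tsigma}$ really is a homogeneous component of the cusp eigenform; for $\delta(\y)>1$ the element $T_{m\y}^{\Norm_m^{m\delta(\y)}(\tsigma)}$ is not cuspidal and Proposition~\ref{P:action of T_0N on cusp eign} does not apply to it. One must first cut the generating set of $\U_X^{\tsigma}$ down to $\{T_{(\pm m,0)}^{\tsigma},\,T_{(0,md)}^{\Norm_m^{m|d|}(\tsigma)}\}$ --- again Lemma~\ref{L:generators for the twisted spherical}, whose proof is a genuine induction using the Drinfel'd relations and the coproduct of Step $3_n$ --- and only then check commutation on these generators (torsion against torsion and slope-$0$ against slope-$0$ being free, the mixed case being Step $2_n$). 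With that lemma in hand both your part (2) and the injectivity in part (3) go through essentially as in the paper's Lemma~\ref{L:sphericals commute} and Corollary~\ref{C:str of Hall prod of sphericals}.
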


The proofs (see Section~\ref{S:proof main results}) will be by induction on $n$ and we will deal with both theorems at the same time: using the result of Theorem~\ref{T:str of Hall} for $r<n$ we prove Theorem~\ref{T:cusp forms} for $r=n$ and then we proceed to prove the case $r=n$ of Theorem~\ref{T:str of Hall}, etc.

As an easy corollary of the proof of these theorems (see Step $3_n$ in Section~\ref{S:proof main results}) we get the following corollary (see Section~\ref{SS:strength mult 1} and~\cite{Kap} Conjecture 3.8.5):
\begin{cor}\label{C:strength mult 1} The (nonzero) elements $a_{f,d}$, where $f$ runs over the set of cusp eigenforms\footnote{as usual we only consider the cusp eigenforms up to the $\C^\times$ action} and $d\ge 1$, are algebraically independent.
\end{cor}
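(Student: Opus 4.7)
The plan is to reduce, via the structural Theorem~\ref{T:str of Hall}, the question of algebraic independence in $\HH_X$ to algebraic independence of a specific family of commuting generators inside each twisted spherical Hall algebra $\U_X^{\trho,+}$, where the convex-path presentation of~\cite{BS} makes the claim transparent.

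First I would identify in which factor of the restricted tensor product each $a_{f,d}$ lives. Since $\Delta(\psi_f(z)) = \psi_f(z)\otimes\psi_f(z)$ by Theorem~\ref{T:rel E_f,E_g,psi_f}(3), the coefficients of $a_f(z)=\log\psi_f(z)$ are primitive elements of the (commutative) subalgebra $\HH_X^\tor$, and by Corollary~\ref{C:basis primitive torsions} they are $\C$-linear combinations of the $T_{(0,e)}^{\tsigma}$. For $f = T_n^\trho$ with $\trho\in\P_n$, I would then use the explicit description of the Hecke eigenvalues of $f$ (Sections~6--7, via the Langlands correspondence for elliptic curves) to show that $a_{f,d}$ is, up to a non-zero scalar, the generator $T_{(0,nd)}^{\Norm_n^{nd}(\trho)}$ of $\U_X^{\trho,+}$; in particular $a_{f,d}\in\U_X^{\trho,+}\cap\HH_X^\tor$, and any two $a_{g,e}$'s commute since $\HH_X^\tor$ is commutative.

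Next I would exploit the presentation $\U_X^{\trho,+}\simeq \E_{\sigma,\ov\sigma}^n$ from Theorem~\ref{T:str of Hall}(1). Under this isomorphism $a_{f,d}$ corresponds, up to a non-zero scalar, to the generator $t_{(0,nd)}$, and all these generators lie on a single ray through the origin, hence pairwise commute by relation~(1) of Definition~\ref{D:entire form}. The PBW-type basis $\{t_\p : \p\in\Conv^+\}$ of~\cite{BS} Lemma~5.6 shows that the monomials $t_{(0,nd_1)}\cdots t_{(0,nd_r)}$ with $d_1\ge\cdots\ge d_r\ge 1$ are $\C$-linearly independent, which is exactly the algebraic independence of $\{t_{(0,nd)}\}_{d\ge 1}$ in the commutative subalgebra they generate, and hence of $\{a_{f,d}\}_{d\ge 1}$ for the single cusp eigenform $f$.

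Finally, to pass from a single eigenform to the whole family, I would invoke the bialgebra isomorphism $\HH_X\simeq\bigotimes'_{\trho\in\P}\U_X^{\trho,+}$ of Theorem~\ref{T:str of Hall}(3). Since distinct factors commute, the commutative subalgebra generated by all the $a_{f,d}$ is a tensor product over $\C$ of the commutative polynomial subalgebras produced by the previous step in each $\U_X^{\trho,+}$, and algebraic independence is preserved under tensor product of polynomial algebras. The main obstacle is the identification in the first step: pinning down that $a_{f,d}$ for $f=T_n^\trho$ lands in $\U_X^{\trho,+}$ and equals, up to scalar, $T_{(0,nd)}^{\Norm_n^{nd}(\trho)}$. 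This hinges on the Langlands-type Hecke eigenvalue computations of Sections~6--7; once this identification is available, the remaining steps are formal.
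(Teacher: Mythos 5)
Your proof is correct, and its decisive step --- the identification, via the coproduct computation of Step $3_n$, of $a_{f,d}$ for $f=T_n^{\trho}$ with a nonzero multiple of $T_{(0,nd)}^{\Norm_n^{nd}(\trho)}$ --- is exactly the one the paper points to. Where you diverge is in how you then deduce algebraic independence of these torsion elements: you pass through the presentation $\U_X^{\trho,+}\simeq\E_{\sigma,\ov\sigma}^n$, the convex-path PBW basis (Corollary~\ref{C:PBW basis convex paths for spherical}), and the restricted tensor factorization of Theorem~\ref{T:str of Hall}(3). The paper's implicit route is more economical: the $a_{f,d}$ are primitive elements of the commutative, cocommutative bialgebra $\HH_X^{\tor}$, they are linearly independent because the characters $\Norm_n^{nd}(\trho)$ attached to distinct pairs $(\trho,d)$ are pairwise distinct (Lemma~\ref{L:prim char}, Lemma~\ref{L:scalar prod T_rho}), and in characteristic zero a linearly independent set of primitive elements in such a bialgebra is automatically algebraically independent --- precisely the argument carried out for the set $\Sigma$ inside the proof of Lemma~\ref{L:sphericals commute}. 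Your version is valid but somewhat top-heavy: the tensor decomposition you invoke is itself proved using that very independence statement (it is the injectivity of $m^{(\infty)}$ in Lemma~\ref{L:sphericals commute}), so while there is no circularity --- the structural theorem is established independently of the corollary --- you are re-deriving as an output something that already appears as an ingredient one level down. One cosmetic point: under the isomorphism $t_\x\mapsto T_{n\x}^{\Norm_n^{n\delta(\x)}(\trho)}$ the element $a_{f,d}$ corresponds to $t_{(0,d)}$ rather than $t_{(0,nd)}$; this does not affect your argument, since either family lies on a single ray and the PBW basis applies verbatim.
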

\begin{rem}
A proof of a reformulation of this result (see~\cite{Kap} Reformulation (3.8.6)) can be given using Proposition~\ref{P:value L func}.
\end{rem}

\subsection{} The statement about the higher functional equations for the Eisenstein series requires a few more preparations.
%To state the announced result concerning the higher functional equations for the Eisenstein series we need some more notations.

Put $\chi_n(z,w) = (z-\sigma^n w)(z-\overline{\sigma}^n w)(z-(\sigma\overline{\sigma})^{-n} w)$ and $\chi_{-n}(z,w) = -\chi_n(w,z)$.  

Following ~\cite{S4} we consider the formal power series 
\[
\TT_1(z) = \sum_{d\in\ZZ} u_{(1,d)}z^d
\]
\[
 \TT_0^+(z) = 1+\sum_{l\ge 1} \theta_{(0,l)}z^l
\]
For a formal power series $A = \sum_{d\in\ZZ} a_d z^d$ the residue operator is defined by $\Res_z(A) = a_{-1}$. We define $\Res_{z,y,w}$ to be a successive application of residue operators with respect to the variables $w,y,z$.

\begin{defin}
 Let $\tUU^n_{\sigma,\ov\sigma}$ be the algebra generated by the Fourier coefficients of $\TT_1(z)$ and $\TT_0^+(z)$ subject to the relations:
\[
 \TT_0^+(z)\TT_0^+(w) = \TT_0^+(w)\TT_0^+(z)
\]
\[
 \chi_n(z,w)\TT_0^+(z)\TT_1(w) = \chi_{-n}(z,w)\TT_1(w)\TT_0^+(z)
\]
\[
 \chi_n(z,w)\TT_1(z)\TT_1(w) = \chi_{-n}(z,w)\TT_1(w)\TT_1(z)
\]
\[
 \mathrm{Res}_{z,y,w}[(zyw)^m(z+w)(y^2-zw)\TT_1(z)\TT_1(y)\TT_1(w)] = 0,\,\forall m\in\ZZ.
\]
\end{defin}
Put $\alpha_i:=(1-\sigma^i)(1-\overline{\sigma}^i)(1-(\sigma\overline{\sigma})^{-i})/i$. Introduce the elements $u_{(0,i)}, i\ge 1,$ via the following formula:
\[
 \log(\TT_0^+(z)) = \sum_{i\ge 1} n\alpha_{ni}u_{(0,i)} z^i
\]

\begin{thm}\label{T:higher Eis rels} Let $\trho\in\P_n$. 
The map $\Psi:\tUU^n_{\sigma,\ov\sigma} \to \U^{\trho,+}$ defined by 
\[
 u_{(1,i)}\mapsto (v^{-1}-v)\alpha_{n}^{-1} T_{(n,ni)}^{\trho}
\]
\[
 u_{(0,i)}\mapsto (v^{-1}-v)\alpha_{ni}^{-1} T_{(0,ni)}^{\trho}
\]
is an isomorphism of algebras.
\end{thm}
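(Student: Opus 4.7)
The plan is to deduce Theorem~\ref{T:higher Eis rels} by combining two previously established results: the structural isomorphism $\U_X^{\trho,+}\simeq \E^{n,+}_{\sigma,\ov\sigma}$ of Theorem~\ref{T:str of Hall}(1), and Schiffmann's cubic presentation of the untwisted spherical Hall algebra from~\cite{S4}. I would first verify that $\Psi$ is well defined by checking the four defining relations of $\tUU^n_{\sigma,\ov\sigma}$ on the specified images, and then separately establish surjectivity and injectivity.

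For the relations themselves, the commutativity $\TT_0^+(z)\TT_0^+(w)=\TT_0^+(w)\TT_0^+(z)$ is immediate since all $T^\trho_{(0,ni)}$ lie in the commutative torsion subalgebra $\HH_X^{\tor}$. The two quadratic relations, between $\TT_0^+$ and $\TT_1$ and between $\TT_1$ and itself, are precisely Kapranov's functional equations \eqref{E:commutation E_f and E_g}--\eqref{E:commutation E_f and psi_g} applied to the cusp eigenform $f=T_n^\trho$ provided by Theorem~\ref{T:cusp forms}: one uses Proposition~\ref{P:value L func} to identify $L(T_n^\trho,T_n^\trho,t)=\zeta_{X_n}(t^n)$, and expanding $\zeta_{X_n}$ in terms of the Frobenius eigenvalues $\sigma^n,\ov\sigma^n,(\sigma\ov\sigma)^{-n}$ yields exactly the polynomials $\chi_n(z,w)$ appearing in the statement. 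The rescaling factors $\alpha_n$ and $(v^{-1}-v)$ in the definition of $\Psi$ are dictated by matching the leading coefficients on both sides.

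The cubic relation is the genuinely new input and will be the main obstacle. For $n=1$ and $\trho$ trivial this is the main result of~\cite{S4}, where it is proved by a delicate residue computation in the Hall algebra. For general $\trho\in\P_n$ I would argue as follows: under the isomorphism of Theorem~\ref{T:str of Hall}(1) the elements $T^\trho_\x$ are identified with the generators $t_\x$ of $\E^n_{\sigma,\ov\sigma}$, and the algebra $\E^n_{\sigma,\ov\sigma}$ of Definition~\ref{D:entire form} differs from $\E^1_{\sigma^n,\ov\sigma^n}$ only by a uniform rescaling of the structure constants $c_i$ (the power $n$ appears inside the exponentials and inside $c_{n\delta(\y)}$). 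Hence the cubic identity of~\cite{S4} for the generators $t_{(1,i)}$ of $\E^{1,+}_{\sigma^n,\ov\sigma^n}$ transports to an analogous cubic relation among the $t_{(1,ni)}$ of $\E^{n,+}_{\sigma,\ov\sigma}$, with Frobenius parameters replaced by $\sigma^n,\ov\sigma^n$, which is precisely what defines the polynomial $\chi_n$. The delicate point I would check carefully is that this rescaling does not spoil the exact numerical coefficients in the residue relation.

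With well-definedness in hand, surjectivity of $\Psi$ follows by observing that iterated commutators of the $T^\trho_{(n,ni)}$'s against the $T^\trho_{(0,nj)}$'s recover, after induction on the rank, all generators $T^\trho_{n\x}$ of $\U_X^{\trho,+}$ with $\x\in\Z^+$, in parallel with the $\SL_2(\ZZ)$-induction argument of~\cite{BS}, Section~5. Injectivity I would obtain by a Poincar\'e--Birkhoff--Witt-style dimension count: the four relations of $\tUU^n_{\sigma,\ov\sigma}$ already force a spanning set indexed by convex positive paths, whose graded dimensions cannot exceed those of $\U_X^{\trho,+}\simeq\E^{n,+}_{\sigma,\ov\sigma}$ given by the basis $\{t_\p\mid \p\in\Conv^+\}$ of Lemma 5.6 of~\cite{BS}. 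Surjectivity then forces equality of graded dimensions and hence injectivity, completing the proof.
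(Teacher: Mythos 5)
Your proposal is correct and follows essentially the same route as the paper, whose proof consists precisely of combining Theorem~\ref{T:str of Hall}(1), Kapranov's functional equations (Theorem~\ref{T:rel E_f,E_g,psi_f}(2)), Proposition~\ref{P:value L func}, and Theorem~4 of~\cite{S4}, and then unwinding the definitions. The rescaling check you flag as delicate is exactly the ``unwinding'' step (note that $\tUU^n_{\sigma,\ov\sigma}$ coincides with $\tUU^1_{\sigma^n,\ov\sigma^n}$ on the nose, since $\chi_n$ and $n\alpha_{ni}$ are the $n=1$ quantities evaluated at $\sigma^n,\ov\sigma^n$), so your elaboration is consistent with what the paper leaves implicit.
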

\begin{proof}
It follows by unwinding the definitions and by using Theorem~\ref{T:str of Hall} (1), Theorem~\ref{T:rel E_f,E_g,psi_f} (2), Proposition~\ref{P:value L func} and \cite{S4} Theorem 4.
\end{proof}

\section{Frobenius eigenvalues and actions of Hecke operators}
In this section we compute the Frobenius eigenvalues for an irreducible $l$-adic representation of the fundamental group of $X$ and the action of the Hecke operators on the corresponding cusp eigenforms. These results together with the computations from the next section form the technical core of the paper.

\subsection{Frobenius eigenvalues}\label{S:Langlands correspondence}

We will work out in this subsection the $l$-adic representations involved in the unramified Langlands correspondence for an elliptic curve. We will determine the Frobenius eigenvalues in terms of some character of a Picard group.

Let us first outline the big lines that we will follow below to determine de Frobenius eigenvalues.

So let $V$ be an irreducible $l$-adic representation of $\pi_1(X)$ of dimension $n\ge 1$. Since $X$ is an elliptic curve its geometric fundamental group $\pi_1(\ov X)$ is abelian and hence the restriction of $V$ to $\pi_1(\ov X)$ is a sum of $n$ characters. Now the Galois group $\Gal(\ov\ff_q/\ff_q)$ permutes transitively these characters. It follows that the Galois group $\Gal(\ov\ff_q/\ff_{q^n})$ acts trivially on them and hence the restriction of $V$ to $\pi_1(X_n)$ is also a sum of characters and moreover the representation $V$ is an induced representation from a character, say $\rho$, of $\pi_1(X_n)$. From abelian class field theory we know that the characters of $\pi_1(X_n)$ are the same as the characters of $\Pic(X_n)$ and hence the irreducible representations of $\pi_1(X)$ of dimension $n$ are classified by (some) characters of the Picard group $\Pic(X_n)$. Denote by $\rho'$ the character of $\Pic(X_n)$ that corresponds to $\rho$ by class field theory. Using the above description of $V$ as an induced representation from $\rho$, we can determine the eigenvalues of $\Frob_x$ (the Frobenius conjugacy class associated to $x$) in terms of $\rho'$ and the points of $X_n$ that sit over $x$.

\vspace{0.1in}

Recall that we denoted  by $X_k$ the extension of $X$ to $\ff_{q^k}$ and by $\overline X$ the extension of $X$ to $\overline\ff_q$.

For every positive integer $k\ge 1$ denote by $G_k = \pi_1(X_k)$ the algebraic \'etale fundamental group and by $\overline{G} = \pi_1(\overline{X})=\pi_1^{\mathrm{geom}}(X)$ the geometric \'etale fundamental group. We also set $G:=G_1$. We have exact sequences of groups
\[
1\to \ov G\to G\to \pi_1(\ff_q)=\widehat\ZZ\to 1
\]
\[
1\to G_k\to G \to \Gal(\ff_{q^k}/\ff_q)=\ZZ/k\ZZ\to 1
\]
where we view the abelian groups $\widehat\ZZ$ and $\ZZ/k\ZZ$ multiplicatively.
The group $\hat\ZZ$ is generated (topologically) by the absolute Frobenius automorphism of the field $\ov\ff_q: t\mapsto t^q$.

\begin{rem} A crucial remark is that for an elliptic curve the geometric fundamental group $\pi_1(\overline{X})$ is abelian.
\end{rem}

Let $(V,\rho)$ be an irreducible $l$-adic representation of $G$ of dimension $n$. By this we mean that $V$ is a finite dimensional vector space over $\ov\Q_l$ and $\rho:G\to \GL(V)$ is a continuous morphism where $V$ is given the $l$-adic topology. Moreover we require the morphism $\rho$ to be actually defined over some finite extension of $\Q_l$.

Since $\ov G$ is an abelian group, the restriction of the representation $V$ to $\overline{G}$ is a sum of characters, say $V|_{\ov G}=\rho_{(1)}\oplus\dots\oplus\rho_{(n)}$ (see Lemma~\ref{L:restr of rep over Gbar are semisimple}). The (Galois) group $\widehat\ZZ$ permutes these characters and since $V$ is irreducible it permutes them transitively. This means that the action of $\widehat\ZZ$ factorizes through the finite quotient $\ZZ/n\ZZ$ and therefore the representation $V$ restricted to $G_n$ will also be a sum of characters. For convenience we will also denote them by $\rho_{(1)},...,\rho_{(n)}$. 
 
% % So the situation is as follows: $V$ is an irreducible representation of the group $G$ such that when restricted to $G_n$ it becomes a sum of characters and moreover the group $G/G_n=\ZZ/n\ZZ=\Gal(\ff_{q^n}/\ff_q)$ permutes them transitively. It is quite clear now
It follows from the above that $V\simeq \Ind_{G_n}^G(\rho_{(1)})$ (see for example \cite{Serre} Ch. 7 Prop. 19).

Let $x\in X$ be a closed point. Attached to this point we have a well defined conjugacy class\footnote{give a short short definition of it; add class field th} $\Frob_x$ in $G$. We are interested in knowing the eigenvalues of this conjugacy class acting on the representation $V$.

Denote by $f=|x|$ the degree of $x$ and put $d=\gcd(n,f)$ and  $m=\lcm(n,f)$.

Choose a point $x'\in X_d$ that sits above $x\in X$ and a point $x''\in X_n$ that sits above $x'\in X_d$. 
We have that $k(x)=\ff_{q^f}$ (by the definition of the degree), $k(x') = \ff_{q^f}$ and $k(x'') = \ff_{q^m}$.

Associated to these three points we have three Frobenius conjugacy classes: $\Frob_x\in G,\Frob_{x'}\in G_d,\Frob_{x''}\in G_n$.

We know from \cite{CF} (pag. 166, Prop. 3.2) that $\Frob_x = \Frob_{x'}$ and that $\Frob_{x'}^{n/d} = \Frob_{x'}^{m/f} = \Frob_{x''}$ where the equalities are understood as equalities of conjugacy classes.

Let us give names to the maps we are going to work with:
\[
1\to G_n\to G\stackrel{p}{\to}\ZZ/n\ZZ = \{1,\f,...,\f^{n-1}\}\to 1
\]

\[
1\to G_d\to G \stackrel{p_1}{\to}\ZZ/d\ZZ = \{1,\ov\f,\ov\f^2,...,\ov\f^{d-1}\}\to 1
\]

\[
1\to G_n\to G_d\stackrel{p_2}{\to} \ZZ/(n/d)\ZZ=\{1,\f^d,\f^{2d},...,\f^{n-d}\}\to 1
\]

\begin{lemma} Let $x\in X$ be a point of degree $d'$. Then $\Frob_x\not\in G_n$ for any $n>1$ such that $\gcd(n,d')=1$.
\end{lemma}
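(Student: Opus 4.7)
The plan is to chase $\Frob_x$ through the quotient maps of the fundamental group exact sequences.

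First I would recall the basic description of $\Frob_x$: for a closed point $x \in X$ of degree $d'$, the residue field $k(x)$ equals $\ff_{q^{d'}}$, and the conjugacy class $\Frob_x \subset G$ is characterized by the fact that its image under the structural map $G = \pi_1(X) \to \pi_1(\Spec \ff_q) = \widehat{\ZZ}$ is $\f^{d'}$, where $\f$ denotes the arithmetic Frobenius (the topological generator of $\widehat{\ZZ}$).

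Next I would use the exact sequence
\[
1 \to G_n \to G \xrightarrow{p} \Gal(\ff_{q^n}/\ff_q) = \ZZ/n\ZZ \to 1
\]
already recorded in the text. The map $p$ factors through $G \to \widehat{\ZZ} \to \ZZ/n\ZZ$, so $p(\Frob_x)$ is the class of $d' \bmod n$ (written additively, or $\f^{d'} \bmod n$ multiplicatively).

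The conclusion is then immediate: $\Frob_x \in G_n$ if and only if $p(\Frob_x) = 1$, i.e. if and only if $n \mid d'$. Under our hypothesis $\gcd(n,d') = 1$ with $n > 1$, we have $n \nmid d'$, hence $p(\Frob_x) \neq 1$ and therefore $\Frob_x \notin G_n$. There is no real obstacle here — the statement is essentially a bookkeeping consequence of the compatibility between the Frobenius at a closed point and the arithmetic Frobenius of the base field.
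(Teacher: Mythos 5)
Your proof is correct and is essentially the paper's argument in streamlined form: the paper works with a finite \'etale cover $Y\to X_n\to X$ and the residue field extension $k(x')/k(x)$, which is exactly the statement that the image of $\Frob_x$ under $G\to\Gal(\ff_{q^n}/\ff_q)$ is $\f^{d'}$, nontrivial since $n\nmid d'$. Your version even yields the sharper fact that $\Frob_x\in G_n$ if and only if $n\mid d'$, which is what the paper extracts anyway in the subsequent Lemma~\ref{lem_frob_power}.
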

\begin{proof} Let $Y$ be a finite, \'etale cover of $X_n$. We will prove that $\Frob_x\not\in\Aut_{X_n}(Y)$. Let $x'\in X_n$ be a point above $x$ and let $y\in Y$ be a point above $x'$.
By definition $\Frob_x\in\Aut_X(Y)$ and is the (canonical) generator of $\Gal(k(y)/k(x))$. We have that $\Gal(k(x')/k(x))$ has degree $n>1$ because $\gcd(n,d')=1$. Therefore $\Frob_x\in\Gal(k(y)/k(x))$ will not fix the field $k(x')$, so it can not be in the group $\Aut_{X_n}(Y)$. By passing to the limit we see that $\Frob_x\not\in\pi_1(X_n)$.
\end{proof}

\begin{rem}In the above proof we used that the cover was not ramified in order to pass back and forth from $\Aut_X(Y)$ to $\Gal(k(y)/k(x))$.\end{rem}

\begin{lemma}\label{lem_frob_power} Let $x\in X$ be a point of degree $f$ and let $n$ be a positive integer. Denote by $d=\gcd(f,n)$. Then the image of $\Frob_x$ by the map $\pi_1(X)\to \ZZ/n\ZZ=\Gal(\ff_{q^n}/\ff_q)$ is the $d$-th power of a generator.
\end{lemma}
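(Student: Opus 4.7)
The plan is to trace the image of $\Frob_x$ through the quotient $\pi_1(X) \twoheadrightarrow \widehat{\ZZ} \twoheadrightarrow \ZZ/n\ZZ$, identify it concretely, and then reduce the statement to an elementary cyclic-group fact.

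First, by the very definition of the Frobenius conjugacy class, $\Frob_x$ acts on the residue field $k(x) = \ff_{q^f}$ as $t \mapsto t^{q^f}$. Its image in $\widehat{\ZZ} = \Gal(\overline{\ff}_q/\ff_q)$ is therefore the $f$-th power of the topological generator $t \mapsto t^q$. Using the notation of the excerpt (so that $\f$ is the image in $\ZZ/n\ZZ$ of this topological generator, hence a generator of the cyclic group $\ZZ/n\ZZ$), I conclude that $\Frob_x$ maps to $\f^f$.

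Next, I would show that this element is of the required form. Writing a prospective generator as $g = \f^k$ with $\gcd(k, n) = 1$, the equation $g^d = \f^f$ becomes $kd \equiv f \pmod n$, which in view of $d \mid f$ simplifies to $k \equiv f/d \pmod{n/d}$. Put $k_0 := f/d$. The identity $\gcd(f, n) = d$ translates exactly to $\gcd(k_0, n/d) = 1$. So the whole lemma reduces to the question of lifting $k_0 \in (\ZZ/(n/d)\ZZ)^\times$ to an element of $(\ZZ/n\ZZ)^\times$, i.e.\ to the surjectivity of the reduction map
\[
(\ZZ/n\ZZ)^\times \twoheadrightarrow (\ZZ/(n/d)\ZZ)^\times.
\]

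The main (and only) obstacle, if it deserves the name, is this surjectivity. It is an elementary consequence of the Chinese Remainder Theorem: factor $n$ and $n/d$ into prime powers, observe that on each factor the map $(\ZZ/p^a\ZZ)^\times \to (\ZZ/p^b\ZZ)^\times$ with $b \le a$ is surjective, and recombine. Choosing any such lift $k$ and setting $g := \f^k$ then produces a generator of $\ZZ/n\ZZ$ with $g^d = \f^{kd} = \f^{k_0 d} = \f^f$, which is the image of $\Frob_x$, completing the proof.
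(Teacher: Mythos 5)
Your proof is correct, but it takes a different route from the paper's. The paper never computes the image of $\Frob_x$ in $\widehat\ZZ$ explicitly: it uses the identity $\Frob_x=\Frob_{x'}$ for $x'\in X_d$ above $x$ (so $\Frob_x\in G_d$) together with the preceding covering-space lemma, applied to the extension $1\to G_n\to G_d\to\ZZ/(n/d)\ZZ\to1$ and the coprimality $\gcd(f/d,n/d)=1$, to conclude that $\Frob_x$ maps to a generator of $\ZZ/(n/d)\ZZ$, hence to the $d$-th power of a generator of $\ZZ/n\ZZ$. You instead identify the image directly as $\f^{f}$ via the residue-field degree (the standard compatibility of $\Frob_x$ with the structure map to $\Spec\ff_q$), and then reduce the assertion to the surjectivity of $(\ZZ/n\ZZ)^\times\to(\ZZ/(n/d)\ZZ)^\times$. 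Your route gives strictly more information (the exact exponent $f$, not just that it is $d$ times a unit) and is more self-contained, since it does not invoke the previous lemma; on the other hand it does commit to the normalization of $\Frob_x$ (arithmetic versus geometric Frobenius), though this is harmless here since $\f^{-f}$ is a $d$-th power of a generator exactly when $\f^{f}$ is. Note also that the final elementary step you prove carefully --- lifting a unit mod $n/d$ to a unit mod $n$ --- is in fact silently used in the paper's last sentence as well (passing from ``generator of $\ZZ/(n/d)\ZZ$'' to ``$d$-th power of a generator of $\ZZ/n\ZZ$''), so you have made explicit a point the paper glosses over.
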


\begin{proof} We will need to use the following commutative diagram:
\[
\xymatrix{
& \ZZ/d\ZZ\ar[r]^{=} & \ZZ/d\ZZ \ar[r]^{=\qquad} & \{1,\ov\f,\ov\f^2,...,\ov\f^{d-1}\}\\
G_n\ar[r] & G\ar[r]\ar[u] & \ZZ/n\ZZ\ar[u]\ar[r]^{=\qquad} & \{1,\f,\f^2,...,\f^{n-1}\}\\
G_n\ar[u]^{=}\ar[r] & G_d\ar[u]\ar[r] & \ZZ/(n/d)\ZZ\ar[u] \ar[r]^{=\qquad} & \{1,\f^d,...,\f^{(n-1)d}\}\\
}\]
We know that $\Frob_x\in\pi_1(X_d)$. Now, since $\gcd(f/d,n/d)=1$ we are in the situation of the previous lemma and it follows that $\Frob_x\not\in G_{dh}$ for any $h$ such that $\gcd(h,n/d)=1$. This implies that the image of $\Frob_x$ by the map $G_d\stackrel{p_2}{\to} \ZZ/(n/d)\ZZ$ is a generator, hence the image of $\Frob_x$ by the map $G\stackrel{p}{\to} \ZZ/n\ZZ$ is the $d$-th power of a generator.
\end{proof}

Now let us continue with the determination of the Frobenius eigenvalues.
As $\Frob_x=\Frob_x'\in G_d$ we see that $p_1(\Frob_x) = 1$.
Moreover, since $p(\Frob_x)\in \ZZ/n\ZZ$ is the $d$-th power of a generator (Lemma~\ref{lem_frob_power}), we can suppose that $p(\Frob_x) = \f^d$. Therefore $p_2(\Frob_x) = \f^d$.
The groups $G_n,G_d$ fit in a short exact sequence of the form:
\[
1\to G_n\to G_d\stackrel{p_2}{\to} \ZZ/(n/d)=\{1,\f^d,\dots,\f^{n-d}\}\to 1
\]
and it is clear that $G_n$ together with $\Frob_x$ generate the group $G_d$.

By the transitivity of the induction we can write $V = \Ind_{G_d}^G(\Ind_{G_n}^{G_d}(\rho_1))$.
Let's denote by $V_d:=\Ind_{G_n}^{G_d}(\rho_1)$. We will first describe the action of $\Frob_x=\Frob_{x'}$ on this representation.

Since $p_2(\Frob_x)=\f^d$ we can write down explicitly the module $V_d$. As a vector space it is given by $\qqb_lv\oplus \qqb_l\Frob_x v\oplus \qqb_l \Frob_x^2v\oplus...\oplus \qqb_l\Frob_x^{n/d-1}v$ where the vectors $\Frob_x^iv$,  $i=0,...,n/d-1$ are a basis of $V_d$ and $v$ is a basis for the 1 dimensional representation $\rho_{(1)}$.
The module structure is given as follows: if $h\in G_n$ then $h\Frob_x^j\cdot \Frob_x^iv = \rho_1(\Frob_x^{-i-j}h\Frob_x^{i+j})\Frob_x^{i+j} v$ and $\Frob_x^{n/d}v = \rho_{(1)}(\Frob_{x''})v$.
 
From the above description we obtain that the element $\Frob_x$ acts on $V_d$ by the following $n/d\times n/d$ matrix (in the chosen basis):
\[
D_1:=\begin{pmatrix} 
0 & 1 & 0 & 0 & ... & 0\\
0 & 0 & 1 & 0 & ... & 0\\
\vdots\\
0 & 0 & 0 & 0 & ... & 1\\
\rho_1(\Frob_{x''}) & 0 & 0 & 0 & ... & 0\\
\end{pmatrix}
.\]

Let us now proceed to the determination of the action of $\Frob_x$ on $V$.

Remember that we chose a rational point $x_0$ on the curve $X$. The Frobenius $\Frob_{x_0}\in G$ is sent (see Lemma \ref{lem_frob_power}) by the map $p$ to $\f\in\ZZ/n\ZZ$. It is clear from Galois theory that $\Frob_{x_0}$ permutes transitively the points of $X_n$ that sit above $x$ and hence it permutes (by conjugation) the Frobenius elements associated to these points. Namely, let $x_1',\dots,x_d'\in X_d$ be the points of $X_d$ that sit above $x$ and let $x_1'',\dots,x_d''\in X_n$ be the points that sit over $x_1',\dots,x_d'$ (observe here that there's only one point of $X_n$ that sits over each $x_i'\in X_d$ ).

The Frobenius $\Frob_{x_0}$ permutes transitively the sets $\{x'_i,i=1,\dots,d\}$, $\{x_i'', i=1,\dots,d\}$ and hence by conjugation it also permutes transitively the set $\{\Frob_{x_i''}, i=1,\dots,d\}$. By relabeling if necessary we can suppose that $\Frob_{x_0}$ permutes the set $\{\Frob_{x_i''}, i=1,\dots,d\}$ in the order $\Frob_{x_1''}\to \Frob_{x_2''}\to\dots\to\Frob_{x_d''}\to\Frob_{x_1''}$.

Now we are ready to describe explicitly the representation $V$. As a vector space we can write it $V=V_d\oplus \Frob_{x_0}V\oplus...\oplus\Frob_{x_0}^{d-1}V$. The action of $G$ is given as follows:
if $h\Frob_{x_0}^j$ is a general element of $G$, where $h\in G_d$, and if $w\in V_d$ is an arbitrary vector then
$h\Frob_{x_0}^j\cdot \Frob_{x_0}^iw = \Frob_{x_0}^{i+j}(\Frob_{x_0}^{-i-j}h\Frob_{x_0}^{i+j})\cdot w$ where the last $\cdot$ means the action of $G_d$ on $V_d$. Observe that $\Frob_{x_0}^d\in G_d$ and therefore it acts on $V_d$.

In particular, the action of $\Frob_x$ on $V$ is given by the following block matrix (in the above chosen basis):
\[
\begin{pmatrix}
D_1 & 0 & 0 & ...& 0\\
0 & D_2 & 0 & ...& 0\\
\vdots\\
0 & 0 & 0 & ... & D_d
\end{pmatrix}
\]
where each block-matrix $D_i$ is of the form:
\[D_i =\begin{pmatrix}
0 & 1 & 0 & 0 & ... & 0\\
0 & 0 & 1 & 0 & ... & 0\\
\vdots\\
0 & 0 & 0 & 0 & ... & 1\\
\rho_{(1)}(\Frob_{x_i''}) & 0 & 0 & 0 & ... & 0\\
\end{pmatrix}
\]
It is easy to see now that the characteristic polynomial of $\Frob_x$ acting on $V$ is given by $\prod_{i=1}^d(T^{n/d}-\rho_{(1)}(\Frob_{x_i''}))$.

Class field theory tells us that there is an injective group homomorphism 
\[
\Pic(X_n)\hookrightarrow \pi_1(X_n)^{\mathrm{ab}}
\]
with dense image (see~\cite{A-T} page 59 or~\cite{CF} Chapter VII Section 5.5). Moreover, this homomorphism is normalized such that the line bundle $\O_{X_n}(-y)$ is sent to $\Frob_y\in\pi_1(X_n)$ for any $y\in X_n$. By construction this morphism commutes with the action of $\Gal(\ff_{q^n}/\ff_q)$ on each side.

Therefore we deduce that a continuous character of $\pi_1(X_n)$ over $\qqb_l$ is the same as a continuous character of $\Pic(X_n)$ over $\qqb_l$ and moreover that the isotropy groups of the corresponding characters are the same under the action of $\Gal(\ff_{q^n}/\ff_q)$. The continuous characters of $\Pic(X_n)$ (or $\pi_1(X_n)$) which have trivial isotropy group are called primitive (of degree $n$). We denote the set of primitive characters of $\Pic(X_n)$ modulo the action of the Galois group $\Gal(\ff_{q^n}/\ff_q)$ by $\tP_n$. Remark here that our previous notion of primitive character
was just for the group $\wX{n}$ but actually a character of $\Pic(X_n)$ is primitive if and only if its restriction to $\Pic^0(X_n)$ is primitive.

It is clear from the above that if we start with a primitive character of $\Pic(X_n)$ we can associate to it an irreducible $l$-adic representation of $\pi_1(X)$ of dimension $n$ and moreover this application becomes a bijection modulo the group $\Gal(\ff_{q^n}/\ff_q)$ that acts on the characters.

The Langlands correspondence (see~\cite{Laff} or~\cite{Fr1} Section 2.4 Theorem 1) asserts that for each $n\ge 1$ there exists a bijection between the unramified cusp eigenforms of rank $n$ on $X$ and the irreducible $l$-adic representations of the fundamental group $\pi_1(X)$. Moreover, this bijection satisfies the crucial rigidity property that the Hecke eigenvalues are equal to the Frobenius eigenvalues.
  
Putting all together we thus obtain:
\begin{thm}\label{T:Frobenius action} (a) There exists a bijection
\[
\left\{
\text{rank } n \text{ cusp eigenforms on }X
\right\}\simeq
\tP_n.
\]
(b) Let $f$ be a cusp eigenform on $X$ of rank $n$. Let $x\in X$ be a point of degree $d'$ and let $d:=(n,d')$. Denote by $x_1'',\dots,x_d''$ the points of $X_n$ that sit above $x$. Then the Hecke eigenvalues at the point $x$ are given by the roots of the polynomial
\[
\mathrm{char}(\Frob_x|_V)(T)=\prod_{i=1}^d \left(T^{n/d}-\rho(\O_{X_n}(-x_i''))\right)
\]
where $\rho$ is the unique (up to the Galois action) primitive character of $\Pic(X_n)$ determined by the bijection from (a).
\end{thm}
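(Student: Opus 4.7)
The plan is to combine three ingredients: (i) the explicit induction description of $n$-dimensional irreducible $l$-adic representations of $\pi_1(X)$ worked out above, (ii) the class field theory dictionary between characters of $\pi_1(X_n)$ and characters of $\Pic(X_n)$, and (iii) Lafforgue's Langlands correspondence (with its rigidity: Hecke eigenvalues match Frobenius eigenvalues).

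For part (a), I would first argue that the assignment $\rho\mapsto V_\rho:=\mathrm{Ind}_{\pi_1(X_n)}^{\pi_1(X)}(\rho)$ gives a bijection between $\tP_n$ and isomorphism classes of irreducible $n$-dimensional continuous $l$-adic representations of $\pi_1(X)$. The construction above shows every such $V$ is of this form for some character $\rho_{(1)}$ of $\pi_1(X_n)$, because $\pi_1(\ov X)$ is abelian (since $X$ is an elliptic curve) so $V|_{\pi_1(\ov X)}$ is a sum of characters, and the transitivity of the $\widehat{\ZZ}$-action forces $V|_{\pi_1(X_n)}$ to be a sum of characters whose $\Gal(\ff_{q^n}/\ff_q)$-orbit has maximal size $n$; this last property is exactly primitivity. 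Two such inductions $V_\rho$ and $V_{\rho'}$ are isomorphic iff $\rho$ and $\rho'$ lie in the same Galois orbit by Mackey's criterion, so $V$ is determined by an element of $\tP_n$. Transporting along the (Galois-equivariant) class field theory isomorphism $\Pic(X_n)^\wedge \simeq \pi_1(X_n)^{\mathrm{ab},\wedge}$ recalled above (normalized so that $\Frob_y\leftrightarrow \kO_{X_n}(-y)$) and invoking Lafforgue's theorem identifying rank-$n$ cusp eigenforms on $X$ with irreducible $n$-dimensional $l$-adic representations of $\pi_1(X)$ then produces the claimed bijection.

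For part (b), the explicit block-matrix description of $\Frob_x$ acting on $V_\rho$ carried out just before the theorem statement gives $\mathrm{char}(\Frob_x|_V)(T)=\prod_{i=1}^d(T^{n/d}-\rho_{(1)}(\Frob_{x_i''}))$, where $x_1'',\dots,x_d''$ are the preimages of $x$ in $X_n$ and $d=\gcd(n,|x|)$. Rewriting $\rho_{(1)}(\Frob_{x_i''})=\rho(\kO_{X_n}(-x_i''))$ using the class field theory normalization turns the characteristic polynomial into exactly the formula displayed in the statement. The rigidity property of the Langlands correspondence (equality of local $L$-factors, equivalently matching of Hecke and Frobenius eigenvalues at every closed point) then identifies the Hecke eigenvalues of the cusp eigenform $f$ corresponding to $\rho$ with the eigenvalues of $\Frob_x$ on $V_\rho$, yielding (b).

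The main obstacle is a matter of bookkeeping rather than substance: one must keep careful track of the several identifications involved, especially the Galois-equivariance of class field theory, the choice of base point $x_0$ used to trivialize $\Pic^d(X_n)\simeq\Pic^0(X_n)$, and the sign in $\Frob_y\leftrightarrow\kO_{X_n}(-y)$. All the hard analytic input (Lafforgue's theorem, class field theory for function fields, the explicit structure of $\pi_1(\ov X)$ for an elliptic curve) is invoked as a black box; the contribution here is to package the result in the explicit form needed for the Hall-algebra applications in Sections 7 and 8.
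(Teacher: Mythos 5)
Your proposal is correct and follows essentially the same route as the paper: the explicit description of $V$ as $\Ind_{\pi_1(X_n)}^{\pi_1(X)}(\rho)$ using the abelianness of $\pi_1(\ov X)$, the block-matrix computation of $\mathrm{char}(\Frob_x|_V)$, the Galois-equivariant class field theory dictionary with the normalization $\Frob_y\leftrightarrow\O_{X_n}(-y)$, and Lafforgue's correspondence with its Hecke--Frobenius eigenvalue matching. The only cosmetic difference is that you cite Mackey's criterion explicitly for the injectivity of $\rho\mapsto V_\rho$ modulo the Galois action, where the paper leaves this implicit.
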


\begin{cor}\label{C:Hecke eigenvalues} Keeping the notation of (b) above, we have, for every $l=1,\dots,n$:
\begin{eqnarray*}
\He_{\O_x^{\oplus l}}(f)& = &q_x^{l(n-l)/2}e_l(z_{1,x}(f),\dots,z_{n,x}(f))f\\
& = &
\left\{
\begin{array}{ll}
0,&\text{if } \frac nd \not| \,l\\
(-1)^{l+l'}q_x^{l(n-l)/2}e_{l'}(\rho(\O_{X_n}(-x_1'')),\dots,\rho(\O_{X_n}(-x_d'')))f, & \text{if }l=l'\frac nd
\end{array}
\right.\end{eqnarray*}
where $e_k$ is the $k$-th elementary symmetric polynomial.
\end{cor}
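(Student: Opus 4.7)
The plan is to deduce the corollary directly from Theorem~\ref{T:Frobenius action}(b) combined with the defining formula of the Hecke eigenvalues from Subsection~\ref{def Rankin Selberg}. Conceptually, the statement amounts to an identity between elementary symmetric polynomials in two related sets of variables (the $n$ Hecke eigenvalues $z_{i,x}(f)$ on one side, and the $d$ values $\rho(\O_{X_n}(-x_i''))$ on the other), so the argument is a single polynomial manipulation with no further geometric input.

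Concretely, I would set $a_i:=\rho(\O_{X_n}(-x_i''))$ for $i=1,\dots,d$, so that by Theorem~\ref{T:Frobenius action}(b) the Hecke eigenvalues $z_{1,x}(f),\dots,z_{n,x}(f)$ are precisely the $n$ roots (with multiplicity) of
\[
P(T)=\prod_{i=1}^{d}\bigl(T^{n/d}-a_i\bigr).
\]
The next step is to expand $P(T)$ in two ways. On one hand, writing $P(T)$ via its roots gives $P(T)=\sum_{l=0}^{n}(-1)^l\,e_l(z_{1,x}(f),\dots,z_{n,x}(f))\,T^{n-l}$. On the other hand, expanding the product over $i$ directly yields $P(T)=\sum_{l'=0}^{d}(-1)^{l'}\,e_{l'}(a_1,\dots,a_d)\,T^{n-l'(n/d)}$. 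Comparing the coefficient of each $T^{n-l}$ shows at once that $e_l(z_{1,x}(f),\dots,z_{n,x}(f))$ vanishes unless $n/d\mid l$, and that for $l=l'(n/d)$ one has $e_l(z_{1,x}(f),\dots,z_{n,x}(f))=(-1)^{l+l'}e_{l'}(a_1,\dots,a_d)$. Inserting this into the normalization $\He_{\O_x^{\oplus l}}(f)=q_x^{l(n-l)/2}\,e_l(z_{1,x}(f),\dots,z_{n,x}(f))\,f$ recalled in Subsection~\ref{def Rankin Selberg} produces exactly the formula claimed in the corollary.

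I do not foresee a real obstacle: once Theorem~\ref{T:Frobenius action} is in hand the whole thing is bookkeeping with a coefficient comparison of two expansions of one and the same polynomial. The only mildly subtle point is the sign $(-1)^{l+l'}$, which arises from the mismatched parities of $l$ (the index on the eigenvalues side) and $l'$ (the index on the $a_i$ side) in the two expansions of $P(T)$.
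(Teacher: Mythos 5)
Your derivation is correct and is exactly the intended one: the paper states this as an immediate corollary of Theorem~\ref{T:Frobenius action}(b) with no written proof, and the only content is the coefficient comparison between the two expansions of $\prod_{i}(T^{n/d}-a_i)$ that you carry out. The sign bookkeeping $(-1)^{l}e_l(z)=(-1)^{l'}e_{l'}(a)$, hence $e_l(z)=(-1)^{l+l'}e_{l'}(a)$, is right.
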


\subsection{} In this subsection we compute the action of the Hecke operators associated to the elements $T_{(0,r)}^{\tilde\sigma}$, where $\tsigma\in\widetilde{X}$ is an arbitrary character, on the cusp eigenforms. See Section~\ref{S:def of T_r,d} for the definition of $T_{(0,r),x}$ and $T_{(0,r)}^{\tilde\sigma}$.

Recall that for any $x\in X$ we have an isomorphism $\Psi_{\kk_x}:\HH_{\Tor_x}\to\Lambda_t\mid_{t=v^{2|x|}}$ which sends $\O_x^{\oplus l}$ to $q_x^{-l(l-1)/2}e_l$ where $e_l$ is the $l$-th elementary symmetric function.

Fix $\trho\in\tP_n$ a primitive character and let $f=f_\trho$ be the corresponding cusp eigenform (see Theorem~\ref{T:Frobenius action}).

Consider the map $\Phi_{x,f}:\Lambda_t\mid_{t=v^{2|x|}}\to\C$ that sends $x_i$ to $q_x^{(n-1)/2}z_{i,x}(f)$ if $i=1,\dots,n$ and to $0$ if $i>n$.
Then we have that 
\[
\He_{\O_x^{\oplus l}}(f) = \Phi_{x,f}(\Psi_{\kk_x}(\O_x^{\oplus l}))f.
\]

Since the elements $T_{(0,r|x|),x}\in\HH_{\Tor_x}$ correspond to $[r|x|]/r p_r \in\Lambda_t\mid_{t=v^{2|x|}}$ we obtain
\[
\He_{T_{(0,r|x|),x}}(f) = \frac{[r|x|] \Phi_{x,f}(p_r)}{r} f.
\]

Using Theorem~\ref{T:Frobenius action} and the elementary formula 
\[
\sum_{l=0}^{k-1}\epsilon^{li}=\left\{\begin{array}{l}
0,\textrm{ if } k\not|i,\\
k, \textrm{ if } k|i
\end{array}\right.
\] where $\epsilon$ is a primitive $k$-th root of unity, we obtain:

\[
\Phi_{x,f}(p_r) = \left\{
\begin{array}{l}
\ds q_x^{r(n-1)/2}\frac{n}{d_x}\sum_{i=1}^{d_x}\rho(\O_{X_n}(-x'_i))^{rd_x/n},\textrm{ if } rd_x/n\in\ZZ\\
0,\textrm{ otherwise}
\end{array}
\right.
\]
where $d_x=\gcd(n,|x|)$.

We need the following easy lemma:
\begin{lemma}\label{L:Norm of x'' from N to n} Let $N\ge 1$ be an integer and $x$ be a point of $X$ such that $n$ and $|x|$ divide $N$. Let also $x'$ be a point of $X_n$ that sits above $x$ and $x''$ be a point of $X_N$ that sits above $x'$. Then we have:
\[
\Norm_n^N(\O_{X_N}(x'')) = \O_{X_n}(x')^{(N/n)/(|x|/d_x)}.
\]
\end{lemma}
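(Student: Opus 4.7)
The plan is to work with divisors on $X_N$ and then descend to $\Pic(X_n)$. Set $k:=N/n$ and $r:=|x|/d_x$. Using the definition of the norm,
\[
\Norm_n^N(\O_{X_N}(x''))=\bigotimes_{i=0}^{k-1}(\Fr_{X,N}^*)^{ni}(\O_{X_N}(x'')),
\]
and the commutative diagram of Section~\ref{ss:frobenius on characters} (which says that $\Fr_{X,N}^*$ on line bundles corresponds to the action of $\Fr_X$ on points), I would rewrite each factor as $\O_{X_N}(\frob_N^{ni}(x''))$, where $\frob_N$ denotes the action of $\Fr_X$ on $|X_N|$.

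Next I would compute the size of the orbit of $x''$ under the subgroup $\langle\frob_N^n\rangle$. Using the identification $|X_N|=X(\ov\ff_q)/\Fr_X^N$, represent $x''$ by a geometric point $\bar x\in X(\ov\ff_q)$; since $x''$ lies over $x$ and $|x|\mid N$, the point $\bar x$ has $\ff_q$-degree exactly $|x|$. A short unwinding shows that $\frob_N^{ni}(x'')=x''$ iff $\Fr_X^{ni-Nj}(\bar x)=\bar x$ for some integer $j$, i.e.\ iff $|x|\mid ni-Nj$; since $|x|\mid N$ this simplifies to $|x|\mid ni$, and hence to $(|x|/d_x)\mid i$. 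Therefore the minimal such positive $i$ is $r=|x|/d_x$, so the orbit of $x''$ has cardinality $r$, and the $k$ points $\frob_N^{ni}(x'')$, $i=0,\dots,k-1$, cycle through this orbit each with multiplicity $k/r$.

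For the final step, observe that $\pi:X_N\to X_n$ is finite étale with Galois group $\langle\frob_N^n\rangle=\Gal(\ff_{q^N}/\ff_{q^n})$, so the fiber $\pi^{-1}(x')$ is exactly the $\langle\frob_N^n\rangle$-orbit of $x''$, each point occurring with multiplicity one. Hence
\[
\bigotimes_{j=0}^{r-1}\O_{X_N}(\frob_N^{nj}(x''))=\pi^*\O_{X_n}(x'),
\]
and combining with the multiplicity count,
\[
\bigotimes_{i=0}^{k-1}(\Fr_{X,N}^*)^{ni}(\O_{X_N}(x''))=\pi^*\bigl(\O_{X_n}(x')^{k/r}\bigr)\in\Pic(X_N).
\]
Since the right hand side of the norm formula is $\Fr_{X,N}^n$-invariant and descends uniquely via Galois descent to the element of $\Pic(X_n)$ whose $\pi^*$ recovers it, one concludes $\Norm_n^N(\O_{X_N}(x''))=\O_{X_n}(x')^{k/r}=\O_{X_n}(x')^{(N/n)/(|x|/d_x)}$, as claimed.

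The main obstacle is the orbit count in the second paragraph; once $r=|x|/d_x$ is established, the remaining steps are routine translations between divisors and line bundles together with a standard Galois-descent argument (Hilbert 90) as in Lemma 1.2.1 of~\cite{HN}.
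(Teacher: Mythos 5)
Your proof is correct; the paper states this lemma without proof ("we need the following easy lemma"), so there is no argument of the author's to compare against. The key orbit count — that the stabilizer of $x''$ in $\Gal(\ff_{q^N}/\ff_{q^n})=\langle\frob_N^n\rangle$ is generated by $\frob_N^{n|x|/d_x}$, so the $N/n$ Frobenius translates appearing in the norm traverse the étale fiber $\pi^{-1}(x')$ exactly $(N/n)/(|x|/d_x)$ times — is right, and the final descent step via the injectivity of $\pi^*:\Pic(X_n)\to\Pic(X_N)$ (Lang/Hilbert 90, as in Lemma 1.2.1 of \cite{HN}) correctly pins down the answer in $\Pic(X_n)$.
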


Denoting by $N:=r|x|$ and using the above lemma we have:

\begin{eqnarray*}
\Phi_{x,f}(p_r) & = & \left\{
\begin{array}{l}
\ds q^{N(n-1)/2}\frac{n}{d_x}\sum_{i=1}^{d_x}\frac{d_x}{|x|}\sum_{X_N\ni x''\to x_i'}\rho(\Norm_n^N(\O_{X_N}(-x''))), \textrm{ if } rd_x/n\in\ZZ\\
0,\textrm{ otherwise}
\end{array}
\right.\\
& = & 
\left\{
\begin{array}{l}
\ds q^{N(n-1)/2}\frac{n}{|x|}\sum_{X_N\ni x''\to x}\rho(\Norm_n^N(\O_{X_N}(-x''))),\textrm{ if } rd_x/n\in\ZZ\\
0,\textrm{ otherwise}
\end{array}
\right.
\end{eqnarray*}

Observe now that for a positive integer $N$ the condition $|x|$ and $n$ divide $N$ is equivalent to $(N/|x|)/(n/d_x)\in\ZZ$.

Fix a positive integer $N$ such that $n$ divides $N$ and let $x\in X$ be such that $|x|$ also divides $N$. From the above we have:

\[
\He_{T_{(0,N),x}}(f) = \frac{[N]}{N}q^{N(n-1)/2}n\sum_{X_N\ni x''\to x}\Norm_n^N(\rho)(\O_{X_N}(-x''))
\]

Let $\tilde\sigma\in\widetilde{X}$ be a character of order $N$. We have:
\begin{eqnarray*}
\He_{T_{(0,N)}^{\tilde\sigma}}(f) & = & 
\frac{[N]n}{N}q^{N(n-1)/2}\sum_{\stackrel{x\in X}{|x|\mid N}}\sum_{X_N\ni x''\to x}\Norm_n^N(\rho)(\O_{X_N}(-x''))\cdot\\ 
& & \qquad\qquad \cdot\frac1{|x|}\sum_{X_N\ni y''\to x}\sigma(\O_{X_N}(y''))f\\
& = & \frac{[N]n}{N^2}q^{N(n-1)/2}\sum_{i=0}^{N-1}\sum_{x''\in X(\ff_{q^N})}\Norm_n^N(\rho)(\O_{X_N}(-x''))\cdot\\
& & \qquad\qquad\cdot \Fr_{X,N}^i(\sigma)(\O_{X_N}(x''))f\\
& = & \frac{[N]n}{N^2}q^{N(n-1)/2}\left|X(\ff_{q^N})\right|\sum_{i=0}^{N-1}\<\Fr_{X,N}^i(\sigma), \Norm_n^N(\rho)\>f\qquad\qquad (*)
\end{eqnarray*}
where for two characters $\chi_1,\chi_2$ of a group $A$ we denoted by \[
\<\chi_1,\chi_2\>:=\frac1{|A|}\sum_{x\in A}\chi_1(x)\chi_2(-x)
\] their scalar product as characters.

Recall that $\rho$ was a primitive character and so, by Lemma~\ref{L:norm-surjective}, we deduce that 
\[
\Fr_{X,N}^i(\Norm_n^N(\rho)) = \Norm_n^N(\rho) \textrm{ if an only if } n|i.
\]
Applying this to formula (*) we obtain the following proposition:

\begin{prop}\label{P:action of T_0N on cusp eign} Under the assumptions of this subsection we have:
\[
\He_{T_{(0,N)}^{\tilde\sigma}}(f) = \left\{
\begin{array}{ll}
0,&\textrm{ if } \tilde\sigma\neq\Norm_n^N(\tilde\rho)\\
\ds\frac{[N]}{N}q^{N(n-1)/2}|X(\ff_{q^N})|f, &\textrm{ if }\tilde\sigma = \Norm_n^N(\tilde\rho)
\end{array}
\right.
\]
\end{prop}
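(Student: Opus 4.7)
The calculation in the excerpt has already reduced the problem, essentially modulo the final combinatorial step, to the identity
\[
\He_{T_{(0,N)}^{\tilde\sigma}}(f) = \frac{[N]n}{N^2}q^{N(n-1)/2}|X(\ff_{q^N})|\sum_{i=0}^{N-1}\<\Fr_{X,N}^i(\sigma), \Norm_n^N(\rho)\>f,\tag{$*$}
\]
so my plan is to finish from $(*)$ by a short character-orthogonality argument. The inner product $\langle\cdot,\cdot\rangle$ is the standard Hermitian pairing on characters of the finite abelian group $\Pic^0(X_N)$ (extended by triviality on $\Pic(X_N)$), so $\<\Fr_{X,N}^i(\sigma),\Norm_n^N(\rho)\> = 1$ if $\Fr_{X,N}^i(\sigma) = \Norm_n^N(\rho)$ and $0$ otherwise.

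First case: $\tilde\sigma\neq\Norm_n^N(\tilde\rho)$. Then for no $i\in\{0,\dots,N-1\}$ does $\Fr_{X,N}^i(\sigma)$ equal $\Norm_n^N(\rho)$ (otherwise their Frobenius orbits would coincide), so every term of the sum in $(*)$ vanishes and we obtain $0$ as required.

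Second case: $\tilde\sigma=\Norm_n^N(\tilde\rho)$. Then there exists $i_0$ with $\Fr_{X,N}^{i_0}(\sigma)=\Norm_n^N(\rho)$, and for $i\in\{0,\dots,N-1\}$ the condition $\Fr_{X,N}^i(\sigma)=\Norm_n^N(\rho)$ is equivalent to $\Fr_{X,N}^{i-i_0}(\Norm_n^N(\rho))=\Norm_n^N(\rho)$. Here is the key input: because $\rho$ is a primitive character of degree $n$, Lemma~\ref{L:norm-surjective} (invoked just before the proposition) tells us that $\Fr_{X,N}^{j}(\Norm_n^N(\rho))=\Norm_n^N(\rho)$ if and only if $n\mid j$. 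Hence the number of $i\in\{0,\dots,N-1\}$ contributing $1$ to the sum is exactly $N/n$, and substituting into $(*)$ gives
\[
\He_{T_{(0,N)}^{\tilde\sigma}}(f) = \frac{[N]n}{N^2}q^{N(n-1)/2}|X(\ff_{q^N})|\cdot\frac{N}{n}\,f = \frac{[N]}{N}q^{N(n-1)/2}|X(\ff_{q^N})|\,f,
\]
which is the stated value.

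The only real obstacle, once $(*)$ has been derived, is to verify the primitivity-based criterion $\Fr_{X,N}^{j}(\Norm_n^N(\rho))=\Norm_n^N(\rho)\iff n\mid j$; but this is precisely the content of the lemma appealed to (which is in turn a consequence of the characterization in Lemma~\ref{L:prim char} of primitive characters as those not lying in the image of a proper relative norm), so the proof is essentially a book-keeping exercise.
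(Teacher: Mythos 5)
Your proof is correct and is essentially the paper's own argument: the text derives $(*)$ and then concludes exactly by noting that $\Fr_{X,N}^i(\Norm_n^N(\rho))=\Norm_n^N(\rho)$ iff $n\mid i$ (a consequence of the injectivity of $\Norm_n^N$ on characters, dual to the surjectivity in Lemma~\ref{L:norm-surjective}, together with the primitivity of $\rho$), so that the orthogonality sum contributes $0$ or $N/n$ terms. Your write-up just makes the counting explicit; the only cosmetic slip is the parenthetical tracing the stabilizer criterion back to Lemma~\ref{L:prim char}, whereas it really follows from norm surjectivity plus the definition of primitivity as having a full Frobenius orbit.
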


\begin{rem}
Observe that the constant appearing in the above formula does not depend on the cusp-eigenform $f$. We will exploit this later on to prove that all the twisted spherical Hall algebras associated to primitive characters of degree $n$ are isomorphic to a specialization of the universal spherical algebra $\E_{\sigma,\ov\sigma}^n$.
\end{rem}

\subsection{Computations of the Rankin-Selberg $L$-functions}\label{S:computations Rankin-Selberg}
Using the above results we are able now to give a proof of Proposition \ref{P:value L func}.

We begin with a lemma:

\begin{lemma}\label{L:L-function-trivial} Let $Y$ be an elliptic curve over the field $\ff_{q}$ with $y_0$ a rational point (the origin) and let $\rho$ be a character of $\Pic(Y)$ whose restriction to $\Pic^0(Y)$ is nontrivial.
Then the $L$-function 
\[
L(\rho,t):=\prod_{y\in Y}(1-\rho(\O_Y(y))t^{|y|})
\]
is identically equal to 1.
\end{lemma}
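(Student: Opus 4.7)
The plan is to take logarithms, reinterpret the resulting sums as sums of a single character over the finite group $\Pic^0(Y_n)$, and conclude by orthogonality of characters. First I would formally expand
\[
\log L(\rho,t)=\sum_{y\in|Y|}\log\bigl(1-\rho(\O_Y(y))t^{|y|}\bigr)=-\sum_{n\ge 1}\frac{t^n}{n}\,S_n,\qquad S_n:=\sum_{\substack{y\in|Y|\\|y|\mid n}}|y|\,\rho(\O_Y(y))^{n/|y|},
\]
which reduces the lemma to showing $S_n=0$ for every $n\ge 1$.

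Next I would rewrite $S_n$ as a sum over $Y(\ff_{q^n})$. A closed point $y\in|Y|$ of degree $d\mid n$ splits in $Y_n$ into $d$ distinct $\ff_{q^n}$-rational points $\tilde y_1,\dots,\tilde y_d$ forming a single $\Fr_Y$-orbit of size $d$, and unwinding the definition $\Norm_1^n(\mathcal{L})=\bigotimes_{i=0}^{n-1}(\Fr_{Y,n}^*)^i\mathcal{L}$ yields the identity
\[
\Norm_1^n(\O_{Y_n}(\tilde y_i))=\O_Y(y)^{n/d},\qquad i=1,\dots,d,
\]
so $\rho(\O_Y(y))^{n/d}=\Norm_1^n(\rho)(\O_{Y_n}(\tilde y_i))$. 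Grouping the summands of $S_n$ by Frobenius orbits (each contributing $d$ equal copies of $\rho(\O_Y(y))^{n/d}$) then produces
\[
S_n=\sum_{\tilde y\in Y(\ff_{q^n})}\Norm_1^n(\rho)(\O_{Y_n}(\tilde y)).
\]

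Finally, I would use the origin $y_0$ to identify $Y(\ff_{q^n})\simeq\Pic^0(Y_n)$ via $\tilde y\mapsto\O_{Y_n}(\tilde y-y_0)$, and invoke the convention $\Norm_1^n(\rho)(\O_{Y_n}(y_0))=\rho(\O_Y(y_0))^n=1$, to rewrite
\[
S_n=\sum_{L\in\Pic^0(Y_n)}\Norm_1^n(\rho)(L).
\]
The key point, which is the main substantive obstacle, is to know that $\Norm_1^n(\rho)|_{\Pic^0(Y_n)}$ is a \emph{nontrivial} character: this follows from surjectivity of the relative norm $\Norm_1^n:\Pic^0(Y_n)\to\Pic^0(Y)$ (a form of Lang's theorem, supplied by Lemma~\ref{L:norm-surjective}) combined with the hypothesis that $\rho|_{\Pic^0(Y)}$ is nontrivial. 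Once this is established, orthogonality of characters on the finite abelian group $\Pic^0(Y_n)$ forces $S_n=0$, completing the proof.
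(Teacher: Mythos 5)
Your proposal is correct and follows essentially the same route as the paper: expand the logarithm, regroup the degree-$n$ coefficient as $\sum_{\tilde y\in Y(\ff_{q^n})}\rho(\Norm_1^n(\O_{Y_n}(\tilde y)))$, translate by $y_0$ to a sum over $\Pic^0(Y_n)$, and kill it using surjectivity of the norm (Lemma~\ref{L:norm-surjective}) together with nontriviality of $\rho|_{\Pic^0(Y)}$. The only cosmetic difference is that the paper sums the fibers of the norm to reduce to $\sum_{\kL\in\Pic^0(Y)}\rho(\kL)=0$, whereas you observe directly that the pulled-back character on $\Pic^0(Y_n)$ is nontrivial and apply orthogonality there.
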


\begin{proof}
Recall that on $\ov Y$ we have the Frobenius action $\Frob_Y$ which satisfies $Y(\ff_{q^n})={\ov Y}^{\Frob_Y^n}$. By Lemma \ref{L:norm-surjective} we know that the map $\Norm_n:\Pic^0(Y_n)\to \Pic^0(Y)$ is surjective.
Therefore we have:
\begin{eqnarray*}
\log L(\rho,t) & = & -\sum_{y\in Y}\log(1-\rho(\O_Y(y))t^{|y|})\\
& = & \sum_{y\in Y}\sum_{k\ge 1}\rho(\O_Y(y))^k t^{k|y|}/k\\
& = & \sum_{n\ge 1}\frac{t^n}{n} \sum_{d\mid n}\sum_{y\in Y, |y|=d} d\rho(\O_{Y}(y))^{n/d}\\
& = & \sum_{n\ge 1}\frac{t^n}n\sum_{y'\in Y(\ff_{q^n})} \rho(\Norm_n(\O_{Y_n}(y')))\\
& = & \sum_{n\ge 1}\frac{t^n}n\rho(\O_Y(y_0))^n\sum_{y'\in Y(\ff_{q^n})}\rho(\Norm_n(\O_{Y_n}(y'-y_0)))\\
& = & \sum_{n\ge 1}\frac{t^n}n\rho(\O_Y(y_0))^n\sum_{\kL\in\Pic^0(Y_n)}\rho(\Norm_n(\kL))\\
& = & \sum_{n\ge 1}\frac{t^n}{n}\rho(\O_Y(y_0))^n \frac{\# Y_n(\ff_{q^n})}{\#Y(\ff_q)} \sum_{\kL\in\Pic^0(Y)}\rho(\kL)\\
& = & \underbrace{\left(\sum_{\kL\in\Pic^0(Y)}\rho(\kL)\right)}_{=0}\left(\sum_{n\ge 1}\frac{t^n}{n}\rho(\O_Y(y_0))^n \frac{\# Y_n(\ff_{q^n})}{\#Y(\ff_q)}
\right)\\
& = & 0
\end{eqnarray*}

where the last equality holds because $\rho|_{\Pic^0(Y)}$ is a non-trivial character.
\end{proof}

\begin{proof}(of Proposition \ref{P:value L func})
Let $f$ be a cusp eigenform of rank $n$. Denote by $\rho\in\Pic^0(X_n)$ the corresponding primitive character (see Theorem~\ref{T:Frobenius action}).
Let $x\in X$ be a closed point of degree $|x|$. Denote by $d_x:=\gcd(n,|x|)$.

The Hecke eigenvalues are given by (see Theorem~\ref{T:Frobenius action}): 
\[
z_{(i-1)n/d_x+l,x}(f)=\epsilon_x^l\rho(\O_{X_{n}}(x_i))^{d_x/n}
\]
for $i=1,\dots,d_x$ and $l=0,\dots, n/d_x-1$ where $\epsilon_x$ is a $n/d_x$-th primitive root of $1$.

Therefore the Rankin-Selberg $L$-function associated to the pair $(f,f)$ (see subsection \ref{def Rankin Selberg} for the definition) is:
\begin{eqnarray*}L(f,f,t)&:=&\prod_{x\in X}\prod_{\stackrel{i,l}{j,k}}(1-\epsilon_x^{l-k}\rho(x_i-x_j)^{d_x/n}t^{|x|})^{-1}\\
& = & \prod_{x\in X} \prod_{i,j=1}^{d_x}(1-\rho(x_i-x_j)t^{|x|n/d_x})^{-n/d_x}\\
& = & \prod_{x\in X} \prod_{i=1}^{d_x}\prod_{j=0}^{n-1}(1-\rho(x_i-\Frob_X^j (x_i))t^{|x|n/d_x})^{-1}\\
& = & \prod_{x\in X_n}\prod_{j=0}^{n-1}(1-\rho(x-\Frob_X^j(x))t^{|x|n})^{-1}\\
& = & \zeta_{X_n}(t^n)\cdot\prod_{j=1}^{n-1}\prod_{x\in X_n}(1-\rho(x-\Frob_X^j(x))t^{|x|n}))^{-1}
\end{eqnarray*}

where for a closed point $x\in X$ we denoted by $x_1,...,x_{d_x}$ the closed points of $X_n$ lying over $x$.

We can apply Lemma \ref{L:L-function-trivial} to the curve $Y=X_n$ and to the characters $\rho_j:=\rho\circ(Id-\Frob_X^j),j=1,\dots,n-1$. Note that since $\rho$ is primitive we have that each $\rho_j$ is a nontrivial character on $\Pic^0(Y)$.

It follows that $L(f,f,t)=\zeta_{X_n}(t^n)$. 
\end{proof}

\section{Some computations}\label{computations}

We grouped together in this Section several technical lemmas that we will need for the proofs of the main results.

\subsection{}
Recall that for a torsion sheaf $\tau$ we denote by $\V(\tau)$ the universal extension of $\tau$ and $\O$. By definition, this means that $\V(\tau)$ fits into an exact sequence:
\[
0\to \O\ot\Ext(\tau,\O)^*\to V(\tau)\to \tau\to 0
\]
and the class of the extension corresponds to $Id\in\End(\Ext(\tau,\O))$. 
We know from Atiyah's theorem that $\V(\tau)$ is semistable and moreover that we can obtain all the semistable sheaves of slope 1 by this process.

\begin{lemma}\label{hecke_action} Let $\tau$ be a torsion sheaf of degree $n$.
If $\V$ is a semistable sheaf that fits in an exact sequence 
\[
0\to \V\to \O^{\oplus n}\to \tau \to 0
\]
then $\V$ is isomorphic to $\V(\tau)^\vee$.
\end{lemma}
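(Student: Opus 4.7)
The plan is to dualize the given short exact sequence and identify $\V^\vee$ with the universal extension $\V(\tau)$. As a subsheaf of $\O^{\oplus n}$, $\V$ is torsion-free and hence locally free on the smooth curve $X$, of rank $n$ and degree $-n$ (so $\mu(\V)=-1$). Applying $\mathbf{R}\Hom(-,\O_X)$ to the sequence, and using that $\V,\O^{\oplus n}$ are locally free while $\mathbf{R}\Hom(\tau,\O_X)\simeq\Ext^1(\tau,\O_X)[-1]$ (since the torsion sheaf $\tau$ is Cohen--Macaulay of codimension $1$ on $X$), the associated long exact sequence of cohomology sheaves collapses to
\[
0\to\O^{\oplus n}\to\V^\vee\to\tau'\to 0, \qquad \tau':=\Ext^1(\tau,\O_X).
\]
A local computation on the DVR $\O_{X,x}$ shows $\tau'\simeq\tau$ as a coherent sheaf, so $\deg\tau'=n$ and, by Serre duality with $\omega_X\simeq\O_X$, $\dim\Ext^1(\tau',\O_X)=n$.

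I next show that this is the universal extension, i.e.\ that its class $\xi\in\Ext^1(\tau',\O^{\oplus n})\simeq\Hom(\Ext^1(\tau',\O_X)^*,\kk^n)$ is an isomorphism. Applying $\Hom(-,\O_X)$ and using $\Hom(\tau',\O_X)=0$, this amounts to the injectivity of the connecting map $\partial:\Hom(\O^{\oplus n},\O_X)\to\Ext^1(\tau',\O_X)$ (a map between two $n$-dimensional spaces, hence automatically an isomorphism if injective), which is in turn equivalent to the vanishing $\Hom(\V^\vee,\O_X)=0$. By reflexivity of $\V$, $\Hom(\V^\vee,\O_X)\simeq\Hom(\O_X,\V)$, and the latter vanishes by semistability: any nonzero morphism $\O_X\to\V$ would have image whose saturation is a line subbundle of $\V$ of nonnegative degree, contradicting $\mu(\V)=-1$.

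Consequently $\V^\vee\simeq\V(\tau')\simeq\V(\tau)$ (the second isomorphism because $\V(-)$ depends only on the isomorphism class of its torsion quotient), and dualizing once more yields $\V\simeq\V(\tau)^\vee$. The only real obstacle is the universal-extension reformulation; the rest is formal Grothendieck--Serre bookkeeping, and the unique use of the semistability hypothesis is to produce the vanishing $\Hom(\O_X,\V)=0$ that encodes universality.
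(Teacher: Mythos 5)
Your proof is correct and is essentially the paper's argument run in the dual direction: the paper uses the semistability vanishing $\Hom(\O,\V)=0$ to normalize the given surjection $\O^{\oplus n}\to\tau$ into the evaluation map $\ev\colon\O\otimes\Hom(\O,\tau)\to\tau$ and then recognizes $\V(\tau)^\vee$ as $\ker(\ev)$ by dualizing the universal-extension sequence, whereas you dualize the given sequence first and use the same vanishing (via $\Hom(\V^\vee,\O)\simeq\Hom(\O,\V)$) to check that $0\to\O^{\oplus n}\to\V^\vee\to\mathcal{E}xt^1(\tau,\O)\to0$ has nondegenerate class, i.e.\ is the universal extension. The essential input --- semistability of $\V$ of slope $-1$ forcing $H^0(\V)=0$ --- and the identification $\mathcal{E}xt^1(\tau,\O)\simeq\tau$ are common to both.
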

\begin{proof}

% Let us fix $e_1,...,e_n$ a $k=\ff_q$-basis of $\Hom_\O(\O,\tau)$.
% 
% We can make the following identifications: \[A:=\Hom(\O^{\oplus n},\tau)\equiv \Hom(\O\ot\Hom(\O,\tau),\tau) \equiv \End(\Hom(\O,\tau) = \End(\Gamma(\tau))=:B.\]
% 
% With this identification, a morphism $\varphi=(s_1,...,s_n)\in A$ goes into $B$ as the matrix that sends $e_i$ into $s_i$. If $\varphi$, as an element of $B$, is not an isomorphism then, by conjugating it, we can suppose that $s_1=0$. This implies that $\ker(\varphi)$ contains $\O$ which has slope 0. But if $\ker(\varphi)$ were semistable it would be forced to have the slope $-1$, contradiction with the definition of semistability.
% 
% Therefore $\varphi$ is an isomorphism in $B$. By conjugating it (we don't change the isomorphism class of its kernel) we can suppose it is the identity and then we can write $\varphi$ as being the natural morphism 
% \[\O\ot\Hom(\O,\tau)\to \tau.\]

If $0\to\V\to\O^{\oplus n}\stackrel{f}{\to}\tau$ is an exact sequence then, by applying the functor $\Hom(\O,-)$, we have a surjective linear map $f^\#:k^n\to\Hom(\O,\tau)\simeq k^n$. If $f^\#$ had a non-zero kernel then $\O\subseteq\ker(f)=\V$ which is a contradiction since $\V$ is semistable of negative slope. It follows that $f^\#$ is an isomorphism and therefore we can conjugate $f$ by a suitable automorphism of $\O^{\oplus n}$ such that
\[
\V\simeq\ker(\ev:\O\otimes\Hom(\O,\tau)\to\tau).
\]

If we look at the sequence $0\to \O^{\oplus n}\to \V(\tau)\to \tau\to 0$ and apply $\mathcal{H}om(-,\O)$ we obtain:

\[
0\to 0\to \V(\tau)^\vee\to \O^{\oplus n}\stackrel{\psi}{\to} \mathcal{E}xt (\tau,\O)=\tau \to 0.
\]

By the above we get that $\V(\tau)^\vee\simeq\V$.
\end{proof}

As a corollary of the proof we get the following Hall numbers~:

\begin{cor}\label{C:comp Hall numbers} If $x$ is a closed point and $\lambda$ a partition such that $|x||\lambda|=n$ then:
\[P_{\O_x^{(\lambda)},\V(\O_x^{(\lambda)})^\vee}^{\O^{\oplus n}} = \ds |GL_r(q)|\cdot a_{\V(\O_x^{(\lambda)})}.\]
\end{cor}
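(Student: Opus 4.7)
The plan is to recast the Hall number as a product
\[
P_{\O_x^{(\lambda)},\V(\O_x^{(\lambda)})^\vee}^{\O^{\oplus n}}\;=\;a_{\V(\O_x^{(\lambda)})^\vee}\cdot N,
\]
where $N$ is the number of surjections $p:\O^{\oplus n}\twoheadrightarrow \O_x^{(\lambda)}$ whose kernel is isomorphic to $\V(\O_x^{(\lambda)})^\vee$; each such surjection contributes $a_{\V(\O_x^{(\lambda)})^\vee}$ distinct exact sequences via the choice of an isomorphism $\V(\O_x^{(\lambda)})^\vee\xrightarrow{\sim}\ker(p)$. Since dualization is an auto-equivalence on the category of locally free sheaves, $a_{\V(\O_x^{(\lambda)})^\vee}=a_{\V(\O_x^{(\lambda)})}$, so the task reduces to proving $N=|\GL_n(q)|$.

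The next step is a cohomological criterion: for a surjection $p$ as above, $\ker(p)\simeq \V(\O_x^{(\lambda)})^\vee$ if and only if the induced map $H^0(p):H^0(\O^{\oplus n})\to H^0(\O_x^{(\lambda)})$ is an isomorphism. The direction $(\Rightarrow)$ is immediate: $\V(\O_x^{(\lambda)})^\vee$ is semistable of slope $-1$ on the elliptic curve, so it has vanishing $H^0$, and the long exact sequence in cohomology forces $H^0(p)$ to be injective between two $\ff_q$-vector spaces of the same dimension $n$, hence bijective. The converse is essentially the content of Lemma~\ref{hecke_action}: the vanishing $H^0(\ker p)=0$ rules out $\O\subseteq \ker(p)$, and the argument given there identifies $\ker(p)$ with $\ker(\mathrm{ev}:\O\otimes H^0(\O_x^{(\lambda)})\to \O_x^{(\lambda)})=\V(\O_x^{(\lambda)})^\vee$.

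Finally one reduces to a counting problem over $\ff_q$. A morphism $p:\O^{\oplus n}\to \O_x^{(\lambda)}$ is uniquely determined by the $n$-tuple of sections $(s_1,\ldots,s_n)\in H^0(\O_x^{(\lambda)})^n$ obtained as images of the canonical generators. Since $\dim_{\ff_q}H^0(\O_x^{(\lambda)})=|x|\cdot|\lambda|=n$, the condition that $H^0(p)$ be an isomorphism translates to requiring that $(s_1,\ldots,s_n)$ form an ordered $\ff_q$-basis of $H^0(\O_x^{(\lambda)})$. Any such tuple automatically generates the stalk $(\O_x^{(\lambda)})_x$ as an $\O_{X,x}$-module (by Nakayama, since the $s_i$ already span it modulo $\mathfrak{m}_x$), so $p$ is automatically a sheaf-level surjection. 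The number of ordered bases of $\ff_q^n$ is $|\GL_n(q)|$, giving $N=|\GL_n(q)|$ and completing the count. The only delicate ingredient is the cohomological characterisation of when the kernel lies in the desired isomorphism class; once this is in hand the remainder of the proof is elementary linear algebra over $\ff_q$.
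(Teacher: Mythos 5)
Your proof is correct and follows essentially the same route as the paper, which obtains this count as a byproduct of the proof of Lemma~\ref{hecke_action}: surjections $\O^{\oplus n}\twoheadrightarrow\O_x^{(\lambda)}$ with kernel $\V(\O_x^{(\lambda)})^\vee$ are exactly those inducing an isomorphism on global sections (the map $f^\#$ there), giving $|\GL_n(q)|$ of them, and the factor $a_{\V(\O_x^{(\lambda)})^\vee}=a_{\V(\O_x^{(\lambda)})}$ accounts for the choices of the injection. You merely make explicit the converse implication (that $H^0(p)$ an isomorphism forces $\ker p\simeq\V(\O_x^{(\lambda)})^\vee$), which the paper leaves implicit, and you correctly read the paper's $|\GL_r(q)|$ as the typo for $|\GL_n(q)|$.
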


\begin{cor}\label{C:Hecke tau applied to O^n}
  Let $\tau$ be a torsion sheaf on $X$ of degree $n$. Then 
\[
 (\He_{\tau}({\O^{\oplus n}}))^\ss = P_{\O^{\oplus n}, \tau}^{\V(\tau)} {\V(\tau)}
\]
where by the exponent $(\,\,)^\ss$ we indicate the semistable part.
\end{cor}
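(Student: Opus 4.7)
My plan is to compute $\He_\tau(\O^{\oplus n})$ through its Hall-algebra realisation and then identify the semistable part by classifying extensions of $\tau$ by $\O^{\oplus n}$.

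I would start with Proposition~\ref{P:Hecke action and mult with torsion}: since $\tau\in\HH_X[0,n]$ and $\O^{\oplus n}\in\HH_X[n,0]^\vvec$, and $\langle\tau,\O^{\oplus n}\rangle=-n^2$, the Hecke operator rewrites as
\[
\He_\tau(\O^{\oplus n})=v^{-n^2}\pi^\vvec(\tau\cdot\O^{\oplus n})=\sum_{\kH}\frac{P^{\kH}_{\tau,\O^{\oplus n}}}{a_\tau\, a_{\O^{\oplus n}}}\,[\kH],
\]
where $\kH$ ranges over isomorphism classes of vector bundles of rank $n$ and degree $n$; the coefficient of $[\kH]$ counts sub-bundles of $\kH$ isomorphic to $\O^{\oplus n}$ with quotient $\tau$.

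Next I would classify extensions $0\to \O^{\oplus n}\to \kH\to\tau\to 0$ by their class. Since $\omega_X\simeq \O$, Serre duality gives $\dim\Ext^1(\tau,\O)=n$, so such classes are encoded by $\xi\in\End(\kk^n)\simeq \Hom(\kk^n,\Ext^1(\tau,\O))\simeq \Ext^1(\tau,\O^{\oplus n})$. Splitting $\kk^n=\ker(\xi)\oplus K'$ and using additivity of $\Ext$ under direct sums yields
\[
\kH_\xi\simeq\bigl(\ker(\xi)\otimes\O\bigr)\oplus E',
\]
where $E'$ is the extension of $\tau$ by $K'\otimes\O$ of class $\xi|_{K'}$. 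If $r:=\rk(\xi)<n$ then $\kH_\xi$ admits $\O^{\oplus(n-r)}$ as a direct summand, hence as a quotient of slope $0<1=\mu(\kH_\xi)$, violating semistability of $\kH_\xi$ (and in the degenerate case $r=0$ the middle term is not even a vector bundle and is killed by $\pi^\vvec$). Hence only invertible $\xi$ can contribute to the semistable part; for $\xi\in\GL(\kk^n)$ one can absorb $\xi$ into an automorphism of the subsheaf $\O^{\oplus n}$, reducing to the universal extension and giving $\kH_\xi\simeq \V(\tau)$.

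Putting the steps together, the only semistable isomorphism class appearing in $\He_\tau(\O^{\oplus n})$ is $[\V(\tau)]$, with coefficient the Hall number from the first step, yielding the claimed identity. The main obstacle is the direct-sum decomposition of $\kH_\xi$ when $\xi$ has a nontrivial kernel; once this is in hand, the quotient-criterion argument for non-semistability of $\kH_\xi$ and the identification of the invertible case with $\V(\tau)$ follow immediately from the universal property of $\V(\tau)$.
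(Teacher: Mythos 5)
Your argument is correct, and it reaches the conclusion by a route that is genuinely different from (in fact Serre--dual to) the one the paper follows. The paper derives this corollary from Lemma~\ref{hecke_action}, which analyzes the \emph{sub}-bundle configuration $0\to\V\to\O^{\oplus n}\to\tau\to0$: there one applies $\Hom(\O,-)$ to the surjection $\O^{\oplus n}\to\tau$, uses that a semistable $\V$ of negative slope contains no copy of $\O$ to force the induced map $\kk^n\to\Hom(\O,\tau)$ to be an isomorphism, and concludes $\V\simeq\V(\tau)^\vee$; passing from that statement to the corollary (which concerns \emph{over}-bundles $0\to\O^{\oplus n}\to\kH\to\tau\to0$) implicitly requires a dualization step using $\mathcal{E}xt^1(\tau,\O)\simeq\tau$ and the compatibility of duality with semistability. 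You instead work directly with the over-bundle configuration, classifying the extensions by their class $\xi\in\Ext(\tau,\O^{\oplus n})$ and stratifying by the rank of $\xi$; the splitting-off of $\O^{\oplus(n-r)}$ for non-full-rank $\xi$, the slope obstruction to semistability, and the identification of the full-rank case with the universal extension are all sound, and you correctly (if implicitly) rely on the fact, quoted by the paper from Atiyah/Kuleshov, that $\V(\tau)$ itself is semistable. Your approach buys a self-contained proof of the statement as used, at the cost of the linear-algebra bookkeeping around $\xi$: strictly, $\Ext(\tau,\O\otimes W)\simeq\Ext(\tau,\O)\otimes W$, so ``$\ker(\xi)$'' should be a complement in $W$ of the image of the induced map $\Ext(\tau,\O)^*\to W$ (or the annihilator of the induced map $W^*\to\Ext(\tau,\O)$); only its dimension $n-\rank(\xi)$ matters, so this is a presentational wrinkle, not a gap. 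Finally, note that the coefficient you obtain, $P^{\V(\tau)}_{\tau,\O^{\oplus n}}/(a_\tau a_{\O^{\oplus n}})$, is the one actually forced by the paper's definition of $\He_\tau$; the corollary's displayed coefficient $P^{\V(\tau)}_{\O^{\oplus n},\tau}$ is a normalization slip in the paper (read literally with its conventions it would count sequences $0\to\tau\to\V(\tau)\to\O^{\oplus n}\to0$ and vanish), and only the nonvanishing of the coefficient is used later, in Lemma~\ref{L:normalization of cusp eigenforms}.
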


\begin{lemma}\label{L:dual of V(tau)} Let $\tau$ be a torsion sheaf on $X$. If we denote by $\ominus\tau$ the (point by point) opposite of $\tau$ in the group law then we have an isomorphism
\[\V(\tau)^\vee\otimes\O(2x_0)\simeq\V(\ominus\tau).\]
\end{lemma}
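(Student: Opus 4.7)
The strategy is to identify both sides as images of $\ominus\tau$ under the Kuleshov--Geigle--Lenzing equivalence $\V(\cdot)\colon\Cc_\infty\xrightarrow{\sim}\Cc_1$. I would compare $\V(\tau)^\vee\otimes\O(2x_0)$ and the pullback $i^*\V(\tau)$ under the group-law inversion $i=[-1]\colon X\to X$, and show each coincides with $\V(\ominus\tau)$.

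\emph{Step 1: $i^*\V(\tau)\simeq\V(\ominus\tau)$.} Since $i$ fixes the origin $x_0$, we have $i^*\O\simeq\O$; and $i^*$ is an exact autoequivalence of $\Coh(X)$, so it induces an isomorphism $\Ext(\tau,\O)\simeq\Ext(\ominus\tau,\O)$ transporting the identity class to the identity class. Applying $i^*$ to the defining universal extension
\[
0\to\O^{\oplus n}\to\V(\tau)\to\tau\to 0\qquad(n=\deg\tau)
\]
thus yields the universal extension of $\ominus\tau$ by $\O^{\oplus n}$, hence $i^*\V(\tau)\simeq\V(\ominus\tau)$ by the universal property.

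\emph{Step 2: $\V(\tau)^\vee\otimes\O(2x_0)\simeq i^*\V(\tau)$.} Both assignments $V\mapsto V^\vee\otimes\O(2x_0)$ and $V\mapsto i^*V$ are $k$-linear involutive autoequivalences of $\Cc_1$: they preserve rank and slope $1$, and each squares to the identity. Transporting both through the equivalence $\V(\cdot)$, we obtain two involutive autoequivalences of $\Cc_\infty$, and Step~1 identifies the second with $[-1]^*$. To check the same for the first, reduce via Atiyah's theorem to indecomposables $\tau=\O_y^{(r)}$, noting that both functors commute with direct sums. In the base case $r=1$, $|y|=1$, the bundle $\V(\O_y)=\O(y)$ is a line bundle and one computes directly
\[
\V(\O_y)^\vee\otimes\O(2x_0)=\O(2x_0-y)=\O(\ominus y)=\V(\O_{\ominus y}),
\]
by definition of the group-law inverse. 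An induction on $r$ (and on $|y|$) using the iterated-extension presentation of $\O_y^{(r)}$, plus exactness and naturality of dualization and twisting, extends this to all indecomposables.

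\emph{Main obstacle.} The delicate part is extending the base case in Step 2 to all indecomposables. Matching determinants gives a necessary condition (an immediate check using $\det\V(\tau)=\det\tau$) but is insufficient, since $\Cc_1$ contains non-isomorphic indecomposables of the same determinant by Atiyah's classification. The cleanest robust approach is to use the local identification $\mathcal{E}xt^1(\tau,\O)\simeq\tau$ (valid on a smooth curve with $\omega_X\simeq\O$) to dualize the defining sequence into $0\to\V(\tau)^\vee\to\O^{\oplus n}\to\tau\to 0$, tensor with $\O(2x_0)$ (using $\tau\otimes\O(2x_0)\simeq\tau$ for torsion sheaves), and construct an explicit isomorphism between the resulting extension and the inversion-pullback of the universal extension for $\tau$; verifying naturality of this construction in $\tau$ is where the real work lies.
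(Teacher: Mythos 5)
Your overall route is the paper's: reduce to indecomposable torsion sheaves $\O_y^{(r)}$, check the line-bundle case $\V(\O_y)^\vee\otimes\O(2x_0)=\O(2x_0-y)=\O(\ominus y)$, and climb up by length $r$. The packaging of the right-hand side as $i^*\V(\tau)\simeq\V(\ominus\tau)$ (your Step 1) is correct and clean. The gaps are both in Step 2. The first is the treatment of closed points of degree $d>1$: your base case only covers $|y|=1$, and "induction on $|y|$" is not available, because $\O_y$ is a \emph{simple} object of $\Cc_\infty$ for every closed point $y$ -- it is not an iterated extension of lower-degree torsion sheaves -- and $\V(\O_y)$ is then an indecomposable bundle of rank $d$, not a line bundle, so the divisor computation does not apply. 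The paper resolves this by base-changing to $\ov X$ (invoking \cite{BS} Prop.~A.1: two sheaves on $X$ are isomorphic iff their pullbacks to $\ov X$ are), after which $\O_x\otimes_{\ff_q}\ov\ff_q=\bigoplus_i\O_{x_i}$ and every relevant point has degree one. You need this reduction, or some substitute, before your base case does any work.

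The second gap is the induction on $r$, which you correctly flag as the crux but do not carry out, and your fallback in the "main obstacle" paragraph is not well-posed: dualizing and twisting produces $0\to\V(\tau)^\vee\otimes\O(2x_0)\to\O(2x_0)^{\oplus n}\to\tau\to0$, while the inversion-pullback of the universal extension is $0\to\O^{\oplus n}\to\V(\ominus\tau)\to\ominus\tau\to0$; these are extensions of different objects by different objects, so there is no "isomorphism of extensions" to construct between them. What is actually needed is an exactness-plus-uniqueness statement, and this is precisely what the paper supplies: the functor $R(-)=\ker\bigl(\Hom(\O,-)\otimes\O\to-\bigr)$ is exact on torsion sheaves and satisfies $R(\tau)=\V(\tau)^\vee$, which gives the exact sequence $0\to\V(\O_y^{(r-1)})^\vee\to\V(\O_y^{(r)})^\vee\to\V(\O_y)^\vee\to0$ together with the fact that $\V(\O_y^{(r)})^\vee$ is the \emph{unique indecomposable} such extension; Atiyah's classification over $\ov X$ then closes the induction. (Relatedly, note that $V\mapsto V^\vee\otimes\O(2x_0)$ is contravariant, so it is an anti-equivalence of $\Cc_1$, not an autoequivalence; this is harmless for the object-level claim but means the "naturality" you appeal to must be handled with the variance reversed.)
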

\begin{proof} It is enough to prove this result when $\tau=\O_x^{(r)}$ for some point $x\in X$ and some integer $r\ge 1$. Denote by $d$ the degree of $x$ and by $x_1,...,x_d$ the points of $\ov X$ that lie over $x$.

If we have now two sheaves $\kF,\kG$ on $X$ such that their pullback to $\ov X$ are isomorphic then by \cite{BS} Proposition A.1 we see that the sheaves $\kF$ and $\kG$ are actually isomorphic over $X$. So in order to prove the needed result we can pull everything back to $\ov X$ and work there.

We obviously have that $\O_x\otimes_{\ff_q}\ov\ff_q=\O_{x_1}\oplus \dots \oplus \O_{x_d}$. Therefore $\V(\O_x^{(r)})\otimes_{\ff_q}\ov\ff_q = \V(\O_{x_1}^{(r)}) \oplus \dots\oplus \V(\O_{x_d}^{(r)})$.

Hence it is enough to prove that for any closed point $y$ of $\ov X$ we have the following isomorphism of sheaves on $\ov X$: 
$\V(\O_{y}^{(r)})^\vee\otimes\O(2x_0)\simeq \V(\O_{\ominus y}^{(r)})$.

For this we will need to use the fact that $\V(\O_y^{(r)})^\vee$ is the only indecomposable sheaf that fits in an exact sequence of the form
\[
0\to\V(\O_y^{(r-1)})^\vee\to\V(\O_y^{(r)})^\vee\to\V(\O_y)^\vee\to 0 \tag{*}
\]

This in turn can be proved by observing that the functor 
\[
R(-):=\ker(\Hom(\O,-)\otimes \O\stackrel{\ev}{\to}-)
\]
is exact on the category of torsion sheaves and that $R(\tau) = \V(\tau)^\vee$.
 
It is immediate that $\V(\O_y)=\O(y)$ and that $\V(\O_y)^\vee\otimes\O(2x_0)\simeq\O(\ominus y)=\V(\O_{\ominus y})$.

By induction and using the short exact sequence ($*$) we can easily see that the vector bundle $\V(\O_x^{(r)})^\vee \otimes \O(2x_0)$ is isomorphic to $\V(\O_{\ominus y}^{(r)})$.
\end{proof}

Let $n\ge 1$ be an integer, $x$ be a closed point of degree $d$ on $X$ and $\lambda$ a partition such that $|x||\lambda|=n$. Let $\tilde\rho\in\widehat X$ be a character.

\begin{lemma}\label{L:action of O_x on T} With the above hypothesis we have
\[\<\O_x^{(\lambda)}\bullet T_{(n,n)}^{\tilde\rho}\,,\,\O^{\oplus n}(2x_0)\>=v^{n^2}\frac{[n] }{n}\frac{|x| n_{u_x}(l(\lambda)-1)}{a_{\O_x^{(\lambda)}}}\rho(\ominus x)\]
where $u_x=q_x^{-1/2}$ and $n_{u}(l):=\prod_{i=1}^l(1-u^{-2i})$.
\end{lemma}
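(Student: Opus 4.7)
The plan is to reduce the pairing to a single explicit term via the $\SL_2(\ZZ)$-symmetry and then compute that term directly. First I would write $T_{(n,n)}^{\tilde\rho} = \sum_{y\in X} \tilde\rho(y)\, T_{(n,n),y}$ and exploit the fact that the $\SL_2(\ZZ)$-action on $\DH_X$ carries $T_{(0,n),y}$ to $T_{(n,n),y}$; at the level of underlying objects this action restricts to the Kuleshov--Geigle--Lenzing equivalence $\Cc_\infty \simeq \Cc_1$ given by $\tau \mapsto \V(\tau)$. Combined with the explicit formula for $T_{(0,n),y}$, this should yield
\[
T_{(n,n),y} \;=\; \frac{[n]\,|y|}{n} \sum_{|\mu|=n/|y|} n_{u_y}(l(\mu)-1)\, \V(\O_y^{(\mu)}),
\]
valid when $|y|$ divides $n$. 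By bilinearity the pairing splits into a double sum over $(y,\mu)$ of terms of the form $\tilde\rho(y)\cdot\tfrac{[n]|y|}{n}n_{u_y}(l(\mu)-1)\cdot \langle \O_x^{(\lambda)} \bullet \V(\O_y^{(\mu)}),\, \O^{\oplus n}(2x_0)\rangle$.

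Next I would identify exactly which pair $(y,\mu)$ contributes. The Green pairing picks out the coefficient of $[\O^{\oplus n}(2x_0)]$ in the Hall product $\O_x^{(\lambda)} \cdot \V(\O_y^{(\mu)})$, which counts short exact sequences $0 \to \V(\O_y^{(\mu)}) \to \O^{\oplus n}(2x_0) \to \O_x^{(\lambda)} \to 0$. Tensoring such an extension with $\O(-2x_0)$ produces an extension $0 \to \V(\O_y^{(\mu)})(-2x_0) \to \O^{\oplus n} \to \O_x^{(\lambda)} \to 0$; Lemma~\ref{hecke_action} forces the sub-bundle to be $\V(\O_x^{(\lambda)})^\vee$, and Lemma~\ref{L:dual of V(tau)} then gives $\V(\O_y^{(\mu)}) \simeq \V(\O_{\ominus x}^{(\lambda)})$. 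Since $\V(\cdot)$ is an equivalence of categories, this uniquely determines $y = \ominus x$ and $\mu = \lambda$.

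For this surviving term, the Euler form $\langle (0,n),(n,n)\rangle = -n^2$ produces the prefactor $v^{n^2}$, and the tensoring bijection just used together with Corollary~\ref{C:comp Hall numbers} gives the Hall number $P^{\O^{\oplus n}(2x_0)}_{\O_x^{(\lambda)},\V(\O_{\ominus x}^{(\lambda)})} = |GL_n(q)| \cdot a_{\V(\O_x^{(\lambda)})}$. Since duality and twisting by a line bundle preserve automorphism groups, $a_{\V(\O_{\ominus x}^{(\lambda)})} = a_{\V(\O_x^{(\lambda)})}$ and $a_{\O^{\oplus n}(2x_0)} = |GL_n(q)|$, so after dividing the Hall product coefficient by $a_{\O^{\oplus n}(2x_0)}$ the factors of $|GL_n(q)|$ and $a_{\V(\O_x^{(\lambda)})}$ cancel, leaving $\langle \O_x^{(\lambda)}\bullet \V(\O_{\ominus x}^{(\lambda)}),\, \O^{\oplus n}(2x_0)\rangle = v^{n^2}/a_{\O_x^{(\lambda)}}$. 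Multiplying through by $\tilde\rho(\ominus x)=\rho(\ominus x)$ (see Definition~\ref{D:Trho}) and the coefficient $\tfrac{[n]|x|}{n}n_{u_x}(l(\lambda)-1)$ yields the claimed formula.

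The main subtlety I expect is the precise identification of $T_{(n,n),y}$ in the first display: one needs to verify that the algebra automorphism induced by the appropriate element of $\SL_2(\ZZ)$ transports the power-sum expansion of $T_{(0,n),y}$ into the expected combination of $\V(\O_y^{(\mu)})$'s without introducing spurious scalars or shifts, using the description of this action via the Seidel--Thomas spherical twists and the compatibility of the relevant twist with the Kuleshov--Geigle--Lenzing equivalence on the semistable subcategories.
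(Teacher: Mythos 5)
Your proof is correct and is essentially the computation the paper performs: the paper applies the Hopf property of Green's form to rewrite the pairing via $\Delta(\O(2x_0)^{\oplus n})$ and then uses Corollary~\ref{C:comp Hall numbers} and Lemma~\ref{L:dual of V(tau)} to isolate the single surviving term $\O_x^{(\lambda)}\ot\V(\O_{\ominus x}^{(\lambda)})$, which is exactly the dual formulation of your direct count of extensions $0\to\V(\O_y^{(\mu)})\to\O^{\oplus n}(2x_0)\to\O_x^{(\lambda)}\to 0$. The subtlety you flag at the end --- that the $\SL_2(\ZZ)$ element carrying $(0,n)$ to $(n,n)$ acts on semistable objects by $\tau\mapsto\V(\tau)$ with no extra scalars --- is relied upon just as implicitly in the paper's proof (it is needed there to read off the coefficient of $[\V(\O_{\ominus x}^{(\lambda)})]$ in $T_{(n,n)}^{\trho}$), so you are not assuming anything beyond what the paper itself uses.
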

\begin{proof}
By Corollary \ref{C:comp Hall numbers}, Lemma \ref{L:dual of V(tau)} and by the definition of the coproduct we know that 
\[
\Delta(\O(2x_0)^{\oplus n}) = v^{n^2}\frac{a_{\O_x^{(\lambda)}}a_{\V(\O_x^{(\lambda)})^\vee}}{a_{\O^{\oplus n}}}\frac{|GL_n(q)|}{a_{\O_x^{(\lambda)}}}\O_x^{(\lambda)}\ot \V(\O_{\ominus x}^{(\lambda)})+\dots
\]
where the dots mean that all the other semistable terms that appear in the coproduct contain on the first position sheaves which are orthogonal (w.r.t. Green's form) to $\O_x^{(\lambda)}$.

Therefore, by the Hopf property of Green's form, we get: \begin{eqnarray*}\label{formula_hecke_op}
\ds\<\O_x^{(\lambda)}\bullet T_{(n,n)}^\rho\,,\,\O(2x_0)^{\oplus n}\> &=& 
\<\O_x^{(\lambda)}\otimes T_{(n,n)}^\rho\,,\,\Delta(\O(2x_0)^{\oplus n})\>\\
& = & v^{n^2}\frac{[n] }{n}\frac{|x| n_{u_x}(l(\lambda)-1)}{a_{\O_x^{(\lambda)}}}\rho(\ominus x).
\end{eqnarray*}\qedhere
\end{proof}

\section{Proofs of the main results}\label{S:proof main results}
\subsection{}
In this Section we will prove the main results of this paper, namely Theorem~\ref{T:cusp forms} and Theorem~\ref{T:str of Hall}. As we said before the proof will be by induction and the argument will be quite roundabout. For this reason we summarize the main steps that we will follow:

Step $1_n$: Prove that $\left\{T_{n}^{\tilde\rho}:\tilde\rho\in\P_n\right\}$ are the cusp eigenforms corresponding to the characters $\tilde\rho\in\P_n$ (see Theorem~\ref{T:Frobenius action}).

Step $2_n$: For a primitive character $\tilde\rho\in\P_n$ and for a character $\tilde\sigma\in\widehat X$ of degree $N$ such that $n|N$ we have the formula:
\[
[T_{(0,N)}^{\tilde\sigma},T_{(n,0)}^{\tilde\rho}] =\left\{
\begin{array}{ll}
 \ds v^N\frac{[N]}{N}|X(\ff_{q^N})|T_{(n,N)}^{\tilde\rho}& \textrm{ if }\tilde\sigma = \Norm_n^N(\tilde\rho)\\
 0,& \textrm{ otherwise}
\end{array}
\right.
\]

Step $3_n$: Prove the formula for the coproduct:
\[
\Delta(T_{(n,0)}^{\tilde\rho}) = T_{(n,0)}^{\tilde\rho}\ot 1+\sum_{d\ge 0}\theta_{d}^{\tilde\rho}\ot T_{(n,-nd)}^{\tilde\rho}
\]
where the coefficients $\theta_{d}^{\tilde\rho}$ are given by equating (formally) the following two series:
\[
\sum_{d\ge 0}\theta_{d}^{\tilde\rho}s^d = \exp\left(n(v^{-1}-v)\sum_{l\ge 1}T_{(0,nl)}^{\Norm_n^{nl}(\tilde\rho)}s^l\right).
\]

For $\x\in\Z$ define the elements $\theta_\x^\trho$ by putting $\theta_{(0,d)}^\trho:=\theta_d^\trho$ for $d\ge 0$ and in general using the $\SL_2(\ZZ)$ action.

Step $4_n$: Describe the structure of $\U_X^{\tilde\rho}$. More precisely we prove that there exists a natural isomorphism of algebras $\U_X^{\trho}\simeq \E_{\sigma,\ov\sigma}^n$ where $\sigma,\ov\sigma$ are the eigenvalues of the Frobenius on $\mathrm{H}^1_{\mathrm{et}}(\ov X,\overline\Q_l)$.

Step $5_n$: Prove that $\AFc_{n+1,d}=0$ if $n+1\nmid d$.

Step $6_n$: A basis for $\AFc_{(n+1,0)}$ is given by $\left\{T_{(n+1,0)}^{\tilde\sigma}:\tilde\sigma\in\P_{n+1}\right\}$.
%\item[Step $7_n$\footnote{this is step $1_{n+1}$}] Prove that $\left\{\sum_{d\in \ZZ}T_{(n,dn)}^{\tilde\sigma}:\tilde\sigma\in\P_{n+1}\right\}$ are the cusp eigenforms corresponding to the characters $\tilde\sigma\in\P_{n+1}$.

\vspace{0.1in}

The crucial step in the proof is the Step $1_n, n\ge 1$. The proof of this uses the Langlands correspondence and the computation of the Hecke eigenvalues for the cusp form associated to $\tilde\rho$ in terms of the character $\tilde\rho$.

\subsection{} Case $n=0$. Observe that in this case only the statement from the step 6 is not vacuous. The proof is obvious. See also Corollary \ref{C:basis primitive torsions}.

\subsection{} We can proceed to prove the steps $1_n,\dots,6_n$ for general $n\ge 1$.
\vspace{0.1in}
So let us suppose that we've proved Steps $1_m,...,6_m$ for all integers $0\le m<n$.

\textbf{\textit{Proof (of Step $1_n$)}}
From the Step $6_{n-1}$ a basis of $\AFc_{(n,0)}$ is given by $\{T_{(n,0)}^{\trho}:\trho\in\P_n\}$.

For each $\trho\in\P_n$ consider $f_{\trho}$ the associated cusp eigenform (see Theorem~\ref{T:Frobenius action}). From the Step $5_{n-1}$ we can write $f_{\trho} = \sum_d\in\ZZ f_{\trho,d}$ where $f_{\trho,d}$ is of degree $nd$. Therefore, using Lemma~\ref{L:normalization of cusp eigenforms}, we can normalize $f_{\trho}$ such that $f_{\trho}(\O^{\oplus n})=1$.

Recall that because we only consider characters on the zeroth component of the Picard group the associated cusp eigenforms verify the property $f_{\trho} = f_\trho\otimes_\O \O_X(dx_0),\forall d\in\ZZ$ and therefore $f_{\trho,d} = f_{\trho,0}\otimes_\O \O_X(dx_0)$.

From the step $6_{n-1}$ we know that a base for the space $\AFc_{(n,nd)}$ is given by the elements $T_{(n,nd)}^{\trho},\trho\in\P_n$ for any integer $d\in\ZZ$. Thus there exist constants $c_{\trho,\tsigma}$ such that $f_{\trho,d} = \sum_{\tsigma}c_{\trho,\tsigma}T_{(n,nd)}^{\tsigma},\forall d\in\ZZ$.

Let $x\in X$ be a closed point of degree $|x|$ such that $|x|$ divides $n$. We set $l=n/|x|$.

Consider the Hecke operator $\He_{\O_x^{\oplus l}}$ that acts on the cusp form $f_{\trho,1}$ and on the elements $T_{(n,n)}^{\tsigma}$.
From the eigenform property of $f_\trho$ and from the Corollary~\ref{C:Hecke eigenvalues} we have:
\[
(\He_{\O_x^{\oplus l}}(f_{\trho,1}),\O_X(2x_0)^{\oplus n}) = (-1)^{l+1}\frac1{|\GL_n(q)|}q^{n(n-l)/2}|x|\trho(\ominus x)
\]

On the other hand, by Lemma~\ref{L:action of O_x on T}, we have
\begin{eqnarray*}
(\He_{\O_x^{\oplus l}}(f_{\trho,1}),\O_X(2x_0)^{\oplus n}) & = & \sum_{\tsigma} c_{\trho,\tsigma}(\He_{\O_x^{\oplus l}}(T_{(n,n)}^\tsigma),\O_X(2x_0)^{\oplus n})\\
& = & \frac{[n]}{n}|x|\frac{n_{u_x}(l-1)}{|\GL_l(q_x)|}\sum_\tsigma c_{\trho,\tsigma}\tsigma(\ominus x)\\
& = & \frac{[n]}n |x| (-1)^{l-1}\frac{q^{-n(l-1)/2}}{q^n-1} \sum_\tsigma  c_{\trho,\tsigma}\tsigma(\ominus x)
\end{eqnarray*}

Equating the two expressions gives:
\begin{eqnarray*}
\trho(\ominus x) & = & \frac{[n]}{n}\frac{q^{-n(l-1)/2}q^{-n(n-l)/2}}{q^n-1}|\GL_n(q)|\sum_\tsigma c_{\trho,\tsigma}\tsigma(\ominus x)\\
& = & \frac{[n]}n q^{-n(n-1)/2}\prod_{i=1}^{n-1}(q^n-q^i)\sum_\tsigma c_{\trho,\tsigma}\tsigma(\ominus x)\\
& = & (-1)^{n-1}\frac{[n]}n n_v(n-1) \sum_\tsigma c_{\trho,\tsigma}\tsigma(\ominus x)
\end{eqnarray*}

Denote by $\alpha:=(-1)^{n-1}\frac{[n]}n n_v(n-1)$, by $C$ the matrix $(c_{\trho,\tsigma})_{\trho,\tsigma\in\P_n}$ and by $A$ the matrix $(\trho(x))_{\trho,x}$ where $\trho$ runs over the primitive characters in $\P_n$ and $x$ runs over the closed points of $X$ such that $|x|$ divides $n$.

What we have proved above can be rewritten in the following matrix form:
\[
CA = \alpha^{-1} A.
\]
Let us make the following simple but crucial observation: the matrix $A$ has maximal rank. Indeed, this is equivalent to the fact that the lines of the matrix, i.e. $(\trho(x))_{x}, \trho\in\P_n$, are linearly independent. But this is easily seen to be true from the definition of $\trho(x)$ and from the fact that $\trho$ are primitive.

As $C$ is a square matrix we have therefore that $C=\alpha^{-1} \mathrm{I}$, where $\mathrm{I}$ is the identity matrix. This implies that $f_{\trho,0} = \alpha^{-1}T_{(r,0)}^\trho$, or in other words that $T_r^\trho$ is, up to multiplication by a nonzero constant, the cusp eigenform associated to the character $\trho$. 

%\vspace{0.1in}
\subsection{\textit{Proof (of Step $2_n$)}}

This follows from the fact that $T_n^{\trho}$ is the cusp eigenform associated to $\trho$ combined with Proposition~\ref{P:action of T_0N on cusp eign}.

\subsection{\textit{Proof (of Step $3_n$)}} Fix a character $\trho\in\P_n$.
From Lemma~\ref{L:Delta(cusp)=cusp} and step $1_n$ we can write:
\[
\Delta(T_{(n,0)}^\trho) = T_{(n,0)}^\trho\ot 1+\sum_{d\ge 0}\sum_{\tsigma\in\P_n} \theta_{d}^\tsigma \ot T_{(n,-nd)}^\tsigma
\]
where $\theta_{d}^\tsigma\in\HH_X[0,nd]$.

Let us first prove that $\theta_d^\tsigma=0$ for $\tsigma\neq\trho$. Using the Hopf property of the Green form we have:
\begin{eqnarray*}
\|\theta_d^\tsigma\|^2\|T_{(n,-nd)}^\tsigma\|^2 & = & (\Delta(T_{(n,0)}^\trho),\theta_d^\tsigma\ot T_{(n,-nd)}^\tsigma)\\
& = & (T_{(n,0)}^\trho,\theta_d^\tsigma\cdot T_{(n,-nd)}^\tsigma)\\
& = & \alpha (T_{(n,0)}^\trho,T_{(n,0)}^\tsigma) \\
& = & 0
\end{eqnarray*}
for some $\alpha\in\C$ and where the last equality follows from  Lemma~\ref{L:scalar prod T_rho}.
Since $T_{(n,-nd)}^\tsigma$ is non-zero we deduce that $\theta_d^\tsigma=0$ for all characters $\tsigma\neq\trho$ and all integers $d>0$.

Let us consider the generating series 
\[
T_n^\trho(s):=\sum_{d\in\ZZ} T_{(n,nd)}^\trho s^d
\textrm{ and}
\]
\[
\theta^\trho(s) = \sum_{d\ge 0}\theta_d^\trho s^d.
\]
We easily deduce the following formula:
\[
\Delta(T_n^\trho(s)) = T_n^\trho(s)\ot 1+\theta^\trho(s)\ot T_n^\trho(s).
\]
From the associativity of $\Delta$ we get that $\Delta(\theta^\trho(s)) = \theta^\trho(s)\ot\theta^\trho(s)$.

Consider the generating series $a^\trho(s) := \log(\theta^\trho(s))$. By the above formula for the coproduct of $\theta^\trho(s)$ we have that $\Delta(a^\trho(s)) = a^\trho(s)\ot 1+1\ot a^\trho(s)$. In other words, if we write $a^\trho(s) = \sum_{d\ge 1}a^\trho_d s^d$, the elements $a^\trho_d\in \HH_X[0,nd]$ are primitive for the coalgebra structure.

Using Corollary~\ref{C:basis primitive torsions} we can write every element $a^\trho_d$ as a linear combination of $T_{(0,nd)}^\txi$, say
\[
 a^\trho_d = \sum_{\txi\in\wXF{nd}}\alpha_d^\txi T_{(0,nd)}^\txi.
\]
We would like to prove that only the element $T_{(0,nd)}^{\Norm_n^{nd}(\trho)}$ appears in $a^\trho_d$ with a nonzero coefficient.

Suppose that it is not the case. Choose the minimal $d>0$ such that there exists a character $\tsigma\in\wXF{nd},\tsigma\neq\Norm_n^{nd}(\trho)$ with $\alpha_d^\tsigma\neq 0$.

Then we have $(\theta_d^\trho,T_{(0,nd)}^\tsigma) = (a_d^\trho,T_{(0,nd)}^\tsigma)\neq 0$ where the  inequality follows from the minimality of $d$ and the orthogonality of the $T_{(0,nd')}^\txi$ for different $\txi\in\widetilde X,d'\ge 0$ (see Lemma~\ref{L:scalar prod T_rho}).

Hence we have the following equalities:
\begin{eqnarray*}
0\neq (\theta_d^\trho,T_{(0,nd)}^\tsigma)\|T_{(n,-nd)}^\trho\|^2 & = & (\Delta(T_{(n,0)}^\trho),T_{(0,nd)}^\tsigma\ot T_{(n,-nd)}^\trho)\\
& = & (T_{(n,0)}^\trho,T_{(0,nd)}^\tsigma\cdot T_{(n,-nd)}^\trho)\\
& = & 0
\end{eqnarray*}
the last equality follows by Proposition~\ref{P:action of T_0N on cusp eign}. Contradiction.

Therefore $a_d^\trho = \alpha_d^\trho T_{(0,nd)}^{\Norm_n^{nd}(\trho)}$. We now need to identify the coefficient $\alpha_d^\trho$.

To this end fix $d$ and let $x\in X$ be a closed point of degree $nd$. The torsion sheaf $[\O_x]$ appears in $\theta_d^\trho$ only through $a_d^\trho$ because any monomial in the elements $a_{d'}^\trho, d'<d$ is orthogonal to $[\O_x]$. We then have the following equality:
\[
(\theta_d^\trho,[\O_x]) = (a_d^\trho,[\O_x]) = \frac{\alpha_d^\trho}{q_x-1}\frac{[nd]}{nd}|x|\Norm_n^{nd}(\trho)(x).\tag{*}
\]

On the other hand we have
\begin{eqnarray*}
(\theta_d^\trho,[\O_x])\|T_{(n,-nd)}^\trho\|^2 & = &
(\Delta(T_{(n,0)}^\trho),[\O_x]\ot T_{(n,-nd)}^\trho)\\
 & = & (T_{(n,0)}^\trho, [\O_x]\cdot T_{(n,-nd)}^\trho) \\
& \overset{Cor.~\ref{C:Hecke eigenvalues}}{=} & v^{n^2d}q_x^{(n-1)/2}n\ov{\trho(\ominus x)} \,\|T_{(n,0)}^\trho\|^2\\
& = & nv^{nd}\trho(x)\, \|T_{(n,0)}^\trho\|^2
\label{eq:}
\end{eqnarray*}

Putting this together with equation (*) and using Lemma~\ref{L:rho(x)=Norm(rho)(x)} we obtain
\[
\alpha_d^\trho = n(v^{-1}-v).
\]

All in all we have that the elements $\theta_d^\trho$ are given by equating the formal coefficients of the following two series:
\[
1+\sum_{d\ge 1}\theta_d^\trho s^d = \exp\left(n(v^{-1}-v)\sum_{d\ge 1}T_{(0,nd)}^{\Norm_n^{nd}(\trho)} s^d\right)
\]
and this is exactly what we wanted to prove.\qed

\subsection{\textit{Proof (of Step $4_n$)}} We want to prove that there is a natural isomorphism of algebras
\[
  \UU_{\sigma,\ov\sigma}^n \simeq \U_X^\trho 
\]
given by 
\[
 t_\x \mapsto T_{n\x}^{\Norm_n^{n\delta(\x)}(\trho)}.
\]

The proof is identical to the proof of Theorem 5.4 in~\cite{BS} once we show that the algebra $\U_X^{\trho,+}$ has a PBW-type decomposition. More precisely we want to prove the following proposition:

\begin{prop}\label{P:PBW for twisted sph} The multiplication maps:
\[
m:\underset{\mu\in\Q\cup\{\infty\}}{\overrightarrow{\bigotimes}'} \U_X^{\trho,\pm,(\nu)}\to \U_X^{\trho,\pm}
\]
\[
m:\underset{\mu\in\Q\cup\{\infty\}}{\overrightarrow{\bigotimes}'} \U_X^{\trho,+,(\nu)}\ot \underset{\mu\in\Q\cup\{\infty\}}{\overrightarrow{\bigotimes}'} \U_X^{\trho,-,(\nu)}\to\U_X^\trho
\] 
induce isomorphisms of vector spaces.
\end{prop}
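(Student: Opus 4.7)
The plan is to mimic the argument of \cite{BS} Theorem 5.4 for the untwisted case, the main new ingredient being the twisted coproduct formula of Step~$3_n$. I would proceed in three stages. First, I would observe that each slope component $\U_X^{\trho,\pm,(\nu)}$ is a genuine subalgebra of $\U_X^{\trho,\pm}$: the $\SL_2(\ZZ)$-action on $\DH_X$ allows one to transport the question to the case $\nu=\infty$, where by Corollary~\ref{C:basis primitive torsions} the relevant generators $T_{(0,nk)}^{\Norm_n^{nk}(\trho)}$, $k\geq 1$, are primitive elements of the commutative algebra $\HH_X^\tor\simeq\bigotimes'_{x\in X}\HH_{X,x}^\tor$ and hence span a commutative polynomial subalgebra.

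For the injectivity of both multiplication maps, I would embed the twisted spherical Hall algebra into the ambient Hall algebra and invoke Atiyah's theorem together with the vanishings $\Hom(\Cc_\mu,\Cc_\nu)=0=\Ext(\Cc_\nu,\Cc_\mu)$ for $\mu>\nu$. These imply that the slope-ordered multiplication map
\[
\underset{\nu\in\Q\cup\{\infty\}}{\overrightarrow{\bigotimes}'}\HH_X^{(\nu)}\ \to\ \HH_X^\vvec
\]
is already a linear isomorphism on the ambient Hall algebra side, and restricting to the subspaces $\U_X^{\trho,\pm,(\nu)}$ sitting in each slope factor yields the desired injectivity. The same argument applied to the Drinfel'd double gives the injectivity of the second map, using the triangular decomposition $\DH_X\simeq\HH_X^+\otimes\HH_X^-$.

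The main obstacle is surjectivity, that is, rewriting an arbitrary product of generators of $\U_X^{\trho,\pm}$ as a linear combination of slope-ordered products. My approach would be to verify directly in $\U_X^\trho$ the defining relations of the universal algebra $\UU_{\sigma,\ov\sigma}^n$ of Definition~\ref{D:entire form}. Relation (1) reduces via the $\SL_2(\ZZ)$-action to commutativity at slope $\infty$, which was noted above. For the quadratic relation (2), I would compute the commutator $[T_\y^{\tnu}, T_\x^{\tchi}]$, where $\tchi=\Norm_n^{n\delta(\x)}(\trho)$ and $\tnu=\Norm_n^{n\delta(\y)}(\trho)$, by applying a suitable $\SL_2(\ZZ)$-element mapping $\x$ to a vector of slope $\infty$ and then using the Hopf property of Green's form together with the twisted coproduct formula of Step~$3_n$; matching the resulting expression coefficient by coefficient against the expected one $\epsilon_{\x,\y} c_{n\delta(\y)}\theta_{\x+\y}/(n(\nu^{-1}-\nu))$ identifies the constant via a Hall-algebra scalar product computation analogous to those in Step~$3_n$. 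Granting these relations, one obtains a surjective algebra morphism $\UU_{\sigma,\ov\sigma}^n\twoheadrightarrow\U_X^{\trho}$, and the combinatorial PBW theorem for $\UU_{\sigma,\ov\sigma}^n$ from \cite{BS} Lemma~5.6 (whose proof depends only on the two defining relations and carries over verbatim to arbitrary $n$) provides an upper bound on the size of $\U_X^\trho$ matching the lower bound coming from the injectivity established above. The surjectivity for the second map then follows by combining the $\pm$ statements with the triangular decomposition of the Drinfel'd double.
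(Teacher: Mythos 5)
Your injectivity argument is fine and is essentially the one the paper relies on: the slope-ordered multiplication on the ambient Hall algebra is a linear isomorphism because the Harder--Narasimhan filtration splits, and one restricts to the subspaces $\U_X^{\trho,\pm,(\nu)}$. The problem is with surjectivity, where your route is circular. You propose to first verify relations (1)--(2) of $\UU^n_{\sigma,\ov\sigma}$ inside $\U_X^{\trho}$ and then invoke the convex-path straightening of $\UU^n_{\sigma,\ov\sigma}$. But that straightening needs relation (2) for arbitrary $\delta(\y)$ (already the non-convex word $t_{(2,0)}t_{(0,1)}$ requires the case $\delta(\y)=2$), whereas the computations available at this stage (Step $2_n$, Step $3_n$, the Drinfel'd double relations) only give commutators in which the vector-bundle-type factor has $\delta=1$, i.e.\ is a component of the cusp eigenform $T_n^{\trho}$. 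For instance $[T^{\Norm_n^{2n}(\trho)}_{(2n,0)},T^{\Norm_n^{nl}(\trho)}_{(0,nl)}]$ is the Hecke operator $\He_{T_{(0,nl)}}$ applied to an element which is \emph{not} a cusp eigenform component, so Proposition~\ref{P:action of T_0N on cusp eign} does not apply; and computing it via the double relations would require $\Delta(T_{(2n,0)}^{\Norm_n^{2n}(\trho)})$, which Step $3_n$ does not provide. In the paper (as in \cite{BS}) the general relation (2) --- that is, Theorem~\ref{T:str of Hall}(1) --- is a \emph{consequence} of this PBW proposition, not an ingredient of its proof.

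The paper's actual argument avoids this by first proving Lemma~\ref{L:generators for the twisted spherical}: $\U_X^{\trho,+}$ is already generated by $T_{(n,-nd)}^{\trho}$ and $T_{(0,nd)}^{\Norm_n^{nd}(\trho)}$, $d\ge 0$ (an induction on the rank using the $\SL_2(\ZZ)$ action and only $\delta=1$ commutators). With this reduced generating set, straightening a word only ever requires commuting a torsion generator past some $T_{(n,nk)}^{\trho}$, which is exactly Step $2_n$; this yields $\U_X^{\trho,+}\subseteq m\bigl(\HH_X^{\vvec}\ot\U_X^{\trho,+,(\infty)}\bigr)$ as in \cite{BS} Lemma 4.9, and one concludes by rotating with $\SL_2(\ZZ)$ and peeling off one slope at a time. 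To repair your write-up you must either insert this generators lemma and follow that route, or supply an independent proof of relation (2) for $\delta(\y)>1$. A smaller point: the linear-independence half of \cite{BS} Lemma 5.6 uses the Hall-algebra realization, so it does not ``depend only on the two defining relations''; you only need the spanning half, but you should say so explicitly.
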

\begin{proof}
For the proof we will use the following lemma:
\begin{lemma}\label{L:generators for the twisted spherical} Let $\trho\in\P_n$ be a primitive character. Then the algebra $\U_X^{\trho}$ is generated by the following set of elements 
\[
\left\{T_{(\pm n,0)}^\trho, T_{(0,nd)}^{\Norm_n^{n|d|}(\trho)} \mid d\in\ZZ\right\}.
\]
Similarly, the algebra $\U_X^{\trho,+}$ is generated by
\[
 \left\{T_{(n,-nd)}^\trho, T_{(0,nd)}^{\Norm_n^{nd}(\trho)} \mid d\ge 0\right\}.
\]

\end{lemma}
\begin{proof}
We only prove the first statement the second one being completely analogous.

Denote for the moment by $A$ the subalgebra of $\U_X^\trho$ generated by the set from the statement. The goal is to prove that $A=\U_X^\trho$. As the algebra $\U_X^\trho$ is generated by the elements $T_{n\x}^{\Norm_n^{n\delta(\x)}(\trho)},\x\in\Z$ it is enough to prove that these generators belong to $A$.

First observe that all the elements of the form $T_{(\pm n,nd)}^\trho,d\in\ZZ$ are in $A$. Indeed, this follows from the step 2 and from Drinfel'd's relations in the double together with the coproduct formula of step 3.

We now argue by induction. Fix an integer $k$ and suppose that we have proved that $T_{(rn,dn)}^{\Norm_n^{n\delta(r,d)}(\trho)}\in A$ for all $(r,d)\in\Z$ with $|r|<|k|$.

Let us consider the element $T_{\z}^{\Norm_n^{n\delta(\z)}(\trho)}$ for some point $\z=(k,l)\in \Z$. Take $\x=(r,d)\in\Z$ to be the closest point to the segment $[(0,0),(k,l)]$ in $\Z$ which is not proportional to $(k,l)$ and such that the triangle $\Delta_{\x,\z-\x}$ is positively oriented.

From the choice of $\x$ we have that $\delta(\x)=1$ and $\delta(\z-\x)=1$. We can find a matrix $\gamma$ in $\SL_2(\ZZ)$ such that $\gamma\cdot\x=(1,e)$ and $\gamma\cdot(\z-\x) = (-1,f)$ where $e,f$ are some integers verifying $e+f=\delta(\z)>0$\footnote{here we use the fact that triangle $\Delta_{\x,\z-\x}$ is positively oriented}. We have $\gamma\cdot \z=(0,e+f)$.

Now $\gamma$ induces an automorphism of $\U_X^\rho$ (which, a priori, might not leave $A$ stable!) that sends $T_{n\x}^{\trho}$ to $T_{(n,en)}^\trho$, $T_{n(\z-\x)}^\trho$ to $T_{(-n,fn)}^\trho$ and $T_{n\z}^{\Norm_n^{n\delta(\z)}(\trho)}$ to $T_{(0,n\delta(\z))}^{\Norm_n^{n\delta(\z)}}$.

From the Drinfel'd's relations in the double and from the computation of the coproduct of $T_{(n,0)}^\trho$ we deduce that the commutator of $T_{(n,en)}^\trho$ and $T_{(-n,fn)}^\trho$ is a linear combination of monomials in the elements $T_{(0,tn)}^{\Norm_n^{tn}(\trho)}$ where $0<t\le e+f$, and moreover that $T_{(0,n(e+f))}^{\Norm_n^{n(e+f)}(\trho)}$ appears with non-zero coefficient. 

Now pulling everything back with the automorphism induced by $\gamma^{-1}$ we obtain that the commutator of $T_\x^{\trho}$ and $T_{\z-\x}^\trho$ is a linear combination of monomials in the elements $T_{nt\z_0}^{\Norm_n^{nt}(\trho)}$ for $0<t\le \delta(\z)$, where $\z_0=\z/\delta(\z)$, and moreover that $T_{n\z}^{\Norm_n^{n\delta(\z)}(\trho)}$ appears with non-zero coefficient. Using the induction hypothesis we deduce therefore that $T_{n\z}^{\Norm_n^{n\delta(\z)}(\trho)}$ belongs to $A$ and this finishes the proof of the lemma.
\end{proof}
As an immediate corollary of this lemma combined with the formula in step 3 we obtain that:
\begin{cor}\label{C:U_Xrho is a coalgebra}
The algebras $\U_X^{\trho,+},\U_X^\trho$ are stable by the coproduct.
\end{cor}
\begin{cor}(of the Lemma, cf. \cite{BS} Corollary 4.7)\label{C:U_X is the drinfeld double}
The algebra $\U_X^\trho$ is isomorphic to the Drinfeld double of $\U_X^{\trho,+}$ and the multiplication map $m:\U_X^{\trho,+}\otimes\U_X^{\trho,-}\to\U_X^\trho$ induces an isomorphism of vector spaces.
\end{cor}

Let's get back to the proof of the Proposition~\ref{P:PBW for twisted sph}.
We first want to prove that the multiplication map:
\[
m:\HH_X^\vvec\ot \U_X^{\trho,+,(\infty)}\to \HH_X
\]
contains $\U_X^{\trho,+}$ in its image. The proof of this statement is identical to the proof of \cite{BS} Lemma 4.9 and so we skip it. We conclude as in \cite{BS} Theorem 4.8 using the $\SL_2(\ZZ)$ symmetry.
\end{proof}

Recall from~\cite{BS} Section 5 the definition of convex paths in $\Z$. We denote by $\Conv^+$ the set of positive paths. For a primitive character $\trho\in\P_n$ we define for every convex path $\p=(\x_1,\dots,\x_r)\in \Conv$ the element:
\[
 T_{n\p}^{\trho}:=T_{n\x_1}^{\Norm_n^{n\delta(\x_1)}(\trho)}\cdot\ldots\cdot T_{n\x_r}^{\Norm_n^{n\delta(\x_r)}(\trho)}.
\]
The following is an immediate corollary of the Proposition~\ref{P:PBW for twisted sph}:
\begin{cor}\label{C:PBW basis convex paths for spherical}  A $\C$-basis of $\U_X^{\trho}$, where $\trho\in\P_n$, is given by:
\[
\left\{T_{n\p}^\trho, \p\in \Conv^+\right\}.
\]
\end{cor}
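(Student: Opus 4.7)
The corollary is a direct consequence of the PBW-type decomposition of Proposition~\ref{P:PBW for twisted sph} combined with the algebra isomorphism $\U_X^{\trho}\simeq \UU_{\sigma,\ov\sigma}^n$ established in Step~$4_n$. My strategy would be to reduce the statement, via this isomorphism, to a basis statement in $\UU_{\sigma,\ov\sigma}^n$, which in turn is proved by assembling basis statements for each slope-subalgebra.

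First, for a fixed slope $\mu=q/p$ (with $\gcd(p,q)=1$, $\x_0^\mu:=(p,q)\in\Z^+$), the positive slope-$\mu$ component $\U_X^{\trho,+,(\mu)}$ is by definition generated by the elements $T_{nr\x_0^\mu}^{\Norm_n^{nr}(\trho)}$ for $r\ge 1$. Relation~(1) in Definition~\ref{D:entire form}, together with the isomorphism of Step~$4_n$, shows that these generators pairwise commute, so the slope-subalgebra is a quotient of the polynomial algebra they generate. Using the universal presentation $\UU_{\sigma,\ov\sigma}^n$ and the same reasoning as in \cite{BS} Lemma~5.6 (applied to the subalgebra generated by the $t_{r\x_0^\mu}$), one checks that in fact $\U_X^{\trho,+,(\mu)}$ is freely generated by them; hence a $\C$-basis is given by ordered products $T_{nr_1\x_0^\mu}^{\Norm_n^{nr_1}(\trho)}\cdots T_{nr_k\x_0^\mu}^{\Norm_n^{nr_k}(\trho)}$ where $(r_1\ge\cdots\ge r_k)$ runs over all partitions.

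Second, assembling across slopes: Proposition~\ref{P:PBW for twisted sph} gives a vector-space isomorphism
\[
m:\underset{\mu\in\Q\cup\{\infty\}}{\overrightarrow{\bigotimes}'}\U_X^{\trho,+,(\mu)}\;\xrightarrow{\sim}\;\U_X^{\trho,+},
\]
with the ordered tensor product taken in increasing slope order. A positive convex path $\p=(\x_1,\dots,\x_r)\in\Conv^+$ is, by definition, a sequence of non-zero vectors of non-decreasing slope, considered up to permutation of consecutive vectors of equal slope; this is exactly the combinatorial data of choosing, for each slope, a partition $\lambda^{(\mu)}$ (encoding the multiplicities $r_i$ along $\x_0^\mu$), with only finitely many non-trivial. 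Under this correspondence, the element $T_{n\p}^\trho$ matches precisely the tensor of the slope-$\mu$ PBW monomials chosen above, multiplied in slope-increasing order. The first step therefore identifies $\{T_{n\p}^\trho\mid\p\in\Conv^+\}$ with a basis of the right-hand side of the displayed isomorphism, proving the corollary.

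\textbf{Main obstacle.} The one delicate point is verifying that each slope-subalgebra $\U_X^{\trho,+,(\mu)}$ is genuinely free (polynomial) in its primitive generators, not just commutative. This requires ruling out any hidden polynomial relations among the $T_{nr\x_0^\mu}^{\Norm_n^{nr}(\trho)}$; one does this by transferring the question to $\UU_{\sigma,\ov\sigma}^n$ via Step~$4_n$ and invoking the argument of \cite{BS} Lemma~5.6, which proceeds by constructing an auxiliary faithful representation (e.g.\ on a Fock-type space) distinguishing the monomials. Once this freeness is established, the convex-path basis follows formally from the restricted tensor decomposition.
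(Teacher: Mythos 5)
Your argument is correct and takes essentially the same route as the paper, which presents this corollary as an immediate consequence of Proposition~\ref{P:PBW for twisted sph}: positive convex paths encode, slope by slope, exactly the PBW monomials of the ordered restricted tensor product. The one point you flag as delicate---freeness of each slope component---is resolved in the paper by the observation (used in the proof of Lemma~\ref{L:sphericals commute}) that the generators $T_{(0,nd)}^{\Norm_n^{nd}(\trho)}$ are linearly independent primitive elements of the commutative, cocommutative bialgebra $\HH_X^{(\infty)}$ and hence algebraically independent, the other slopes following by the $\SL_2(\ZZ)$ action; this is a slightly more direct justification than passing through $\UU_{\sigma,\ov\sigma}^n$ and the faithful-representation argument of \cite{BS}, but both are valid.
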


\subsection{\textit{Proof (of Step $5_n$)}} The goal is to prove that $\AFc_{n+1,d}=0$ if $n+1\nmid d$.

We know from Proposition~\ref{P:cusp perp} that $\AFc_{n+1,d} = \HH_X^\vvec[n+1,d]\cap(\HH_X^{\le n})^\perp$.

We will prove that $\HH_X^\vvec[n+1,d]\subset \HH_X^{\le n}$ if $n+1\nmid d$. It is enough to treat only the case $0<d<n+1$ because $\HH_X^{\le n}$ is stable by tensor product with $\O_X(x_0)$.

Denote by $\nu = d/(n+1)\in\Q$. As the Harder-Narasimhan filtration splits it is clear that all the non-semistable sheaves in $\HH^\vvec[n+1,d]$ are already in $\HH_X^{\le n}$. Therefore we only need to deal with the semistable part $\HH_X^\vvec[n+1,d]\cap\HH_X^{(\nu)}$

Recall that the Hall algebra $\HH_X^{(\nu)}$ of semistable sheaves of slope $\nu$ is isomorphic to $\HH_X^\tor$ (by Atiyah's theorem) and a set of generators is given by $T_{l\x_0,x},x\in X, l\ge 1, |x|\mid l$, where $\x_0 = (r',d'), r'\ge 1$ such that $\mu(\x_0) = \nu$ and $\delta(\x_0)=1$.

Therefore a basis for the space $\HH_X[n+1,d]\cap\HH_X^{(\nu)}$ is given by 
\[
 \left\{T_{l_1\x_0,x_1}\cdots T_{l_k\x_0,x_k}\mid k\ge 1, l_i\ge 1, r'\sum_{i=1}^k l_i=n+1, x_i\in X, |x_i|\mid l_i \right\}
\]
If $l\x_0\neq (n+1,d), lr'\le n$ then obviously the elements $T_{l\x_0,x},x\in X$ are already in $\HH_X^{\le n}$ hence so is any product of them. We are left then to prove that the elements $T_{(n+1,d),x},x\in X$ are also in $\HH_X^{\le n}$. For this, observe that any such element can be written as a linear combination of elements of the form $T_{(n+1,d)}^\tsigma,\tsigma\in\wXF{k}$ where we denoted by $k:=\gcd(n+1,d)$. 

So let $\tsigma\in\wXF{k}$ be a character. By Corollary~\ref{C:prim char} there exists a primitive character $\trho\in\wXF{k'}$, where $k=k'e, e\ge 1$, such that $\tsigma = \Norm_{k'}^k(\trho)$. 
By the definition of the algebra $\U_X^{\trho,+}$ we have that $T_{(n+1,d)}^\tsigma = T_{k'(e\x_0)}^{\Norm_{k'}^k(\trho)}\in\U_X^{\trho,+}$, and from  Lemma~\ref{L:generators for the twisted spherical} we have that $\U_X^{\trho,+}\subset\HH_X^{\le k'}$. 
Now, as $n+1\nmid d$, we have that $k'\le k<n+1$ and hence $\U_X^{\trho,+}\subset \HH_X^{\le n}$. We conclude that $T_{(n+1,d)}^\tsigma\in\HH_X^{\le n}$ for any $\tsigma\in\wXF{k}$ and therefore that $\AFc_{(n+1,d)}=0$.

\subsection{\textit{Proof (of Step $6_n$)}} We have arrived at the final step of the proof. We want to prove the following statement:
\begin{thm}\label{T:base AFc(n+1,0)}
 A base for $\AFc_{n+1,0}$ is given by $\{T_{(n+1,0)}^\trho,\trho\in\P_{n+1}\}$.
\end{thm}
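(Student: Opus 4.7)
I would combine a Langlands-theoretic dimension count with the spanning analysis already carried out in Step $5_n$, leaving cuspidality as the one genuinely new point to verify.

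First, by Theorem~\ref{T:Frobenius action} the rank-$(n{+}1)$ cusp eigenforms correspond to Galois orbits of primitive characters $\P_{n+1}$, and by multiplicity one they form a basis of $\AFc_{n+1}$. Step $5_n$ gives $\AFc_{n+1,d} = 0$ unless $(n{+}1) \mid d$, while tensoring with $\O_X(x_0)$ is a Hecke-equivariant isomorphism $\AFc_{n+1, d} \simeq \AFc_{n+1, d+n+1}$. Thus every cusp eigenform has a nonzero degree-$0$ component, and $\dim \AFc_{n+1, 0} = |\P_{n+1}|$.

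Next, I would rerun the analysis from the proof of Step $5_n$ in degree $0$. Non-semistable contributions to $\HH_X^\vvec[n+1, 0]$ already lie in $\HH_X^{\le n}$, since the Harder-Narasimhan filtration splits on an elliptic curve; by Atiyah's equivalence, the semistable slope-$0$ part is generated by products $T_{l_1(1,0), x_1}\cdots T_{l_k(1,0), x_k}$ with $\sum l_i = n+1$; products with $k\ge 2$ factors are in $\HH_X^{\le n}$; the remaining single-factor generators $T_{(n+1, 0), x}$ regroup via character decomposition into $\{T_{(n+1, 0)}^\tsigma : \tsigma \in \wXF{n+1}\}$, and any non-primitive $\tsigma = \Norm_{k'}^{n+1}(\trho')$ with $\trho' \in \P_{k'}$, $k' < n+1$, satisfies $T_{(n+1, 0)}^\tsigma \in \U_X^{\trho', +} \subset \HH_X^{\le k'} \subset \HH_X^{\le n}$ by Corollary~\ref{C:prim char} and Lemma~\ref{L:generators for the twisted spherical}. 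Hence $\{[T_{(n+1, 0)}^\trho] : \trho \in \P_{n+1}\}$ spans the quotient $\HH_X[n+1, 0] / \HH_X^{\le n}[n+1, 0]$; combining with the orthogonal decomposition of Proposition~\ref{P:cusp perp} and the dimension count above, it is in fact a basis of the quotient, and so $\{T_{(n+1, 0)}^\trho\}_{\trho \in \P_{n+1}}$ is linearly independent modulo $\HH_X^{\le n}$.

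The final and principal step is to show each $T_{(n+1, 0)}^\trho$ lies in $\AFc_{n+1, 0}$, i.e.\ is orthogonal to $\HH_X^{\le n}$; equivalently, by the Hopf property, $\Delta T_{(n+1, 0)}^\trho$ has no component in $\HH_X^\vvec \otimes \HH_X^\vvec$. This is the main technical obstacle. My plan is to compute $\Delta T_{(n+1, 0), x}$ directly, using that $T_{(n+1, 0), x}$ is the $\SL_2(\ZZ)$-translate in $\DH_X$ of the primitive element $T_{(0, n+1), x}$ supported at a single closed point $x$ (for $|x|\mid n+1$); classify the sub-objects of the associated semistable slope-$0$ bundles via the Atiyah/Seidel-Thomas equivalence; and show that after forming $\sum_x \trho(x)\,\Delta T_{(n+1, 0), x}$, all vector-bundle-$\otimes$-vector-bundle contributions cancel by primitivity of $\trho$, through the same character-orthogonality mechanism already exploited in Proposition~\ref{P:action of T_0N on cusp eign}. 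Once cuspidality is in hand, we obtain $|\P_{n+1}|$ linearly independent cuspidal elements in an $|\P_{n+1}|$-dimensional space, which yields the basis claim.
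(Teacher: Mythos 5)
Your outline correctly isolates cuspidality of the $T_{(n+1,0)}^\trho$ as the decisive point, and your spanning analysis modulo $\HH_X^{\le n}$ essentially reproduces the paper's. But the decisive point is exactly what you do not prove: you propose to compute $\Delta T_{(n+1,0),x}$ directly by classifying subobjects of semistable slope-$0$ bundles and then exhibit a cancellation, which is a plan rather than an argument, and a much harder route than necessary. The paper's proof is short: for a monomial $T_{(l_1,0)}^{\tsigma_1}\cdots T_{(l_r,0)}^{\tsigma_r}$ spanning $(\HH_X^{\le n})^{\ss}[n+1,0]$, the $\SL_2(\ZZ)$-invariance of the Green form gives
\[
(T_{(n+1,0)}^\trho,\,T_{(l_1,0)}^{\tsigma_1}\cdots T_{(l_r,0)}^{\tsigma_r})
=(T_{(0,n+1)}^\trho,\,T_{(0,l_1)}^{\tsigma_1}\cdots T_{(0,l_r)}^{\tsigma_r})
=(\Delta^{(r)}(T_{(0,n+1)}^\trho),\,T_{(0,l_1)}^{\tsigma_1}\ot\cdots\ot T_{(0,l_r)}^{\tsigma_r}),
\]
which vanishes for $r>1$ because $T_{(0,n+1)}^\trho$ is a \emph{primitive element of the torsion coalgebra}, and for $r=1$ because $\tsigma_1=\Norm_{n_1}^{n+1}(\trho_1)$ with $\trho_1\in\P_{n_1}$, $n_1\le n$, cannot equal the primitive $\trho$ (Lemma~\ref{L:scalar prod T_rho}). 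No geometric classification of subbundles is needed; the whole computation is transported to the torsion part. Without this (or a completed version of your direct computation), the theorem is not established.

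A second, smaller gap: you assert $\dim\AFc_{n+1,0}=|\P_{n+1}|$ from the Langlands bijection and multiplicity one. Multiplicity one concerns the Hecke module $\AFc_{n+1}$ as a whole; to deduce the dimension of the degree-$0$ graded piece you must show that the degree-$0$ components $f_{\trho,0}$ of eigenforms attached to \emph{distinct} orbits in $\P_{n+1}$ are linearly independent (and that eigenforms in the same $\C^\times$-orbit have proportional degree-$0$ parts). This requires, e.g., introducing the degree-preserving operators $g\mapsto \He_{\O_x^{\oplus(n+1)/|x|}}(g)\ot\O_X(-x_0)$ and checking their eigenvalue systems separate the orbits; it is fillable but not automatic. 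Note also that even with the correct dimension count in hand, it only shows that $\AFc_{n+1,0}$ maps isomorphically onto the quotient $\HH_X^{\ss}[n+1,0]/(\HH_X^{\le n})^{\ss}[n+1,0]$; it does not show that the specific representatives $T_{(n+1,0)}^\trho$ lie in $\AFc_{n+1,0}$ rather than differing from cuspidal elements by something in $\HH_X^{\le n}$. So the dimension count cannot be used to bypass the cuspidality check.
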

\begin{proof}
As in the proof of the previous step we write the space $\AFc_{n+1,0} = \HH_X[n+1,0]\cap (\HH_X^{\le n})^\perp$. Moreover we can restrict our attention to the semistable part 
\[
\HH_X^\ss[n+1,0]=\HH_X[n+1,0]\cap\HH_X^{(0)}
\] of $\HH_X[n+1,0]$ since the cuspidals are supported on semistable vector bundles. 

Now from Atiyah's theorem we know that $\HH_X^{(0)}$ is isomorphic to $\HH_X^{(\infty)}$ and hence that it is a polynomial algebra in the elements $T_{(l,0),x},x\in X, |x|\mid l$. Therefore we can write any element of $\HH_X^{\ss}[n+1,0]$ as a linear combination of monomials of the form $T_{(l_1,0)}^{\tsigma_1}\dots T_{(l_r,0)}^{\tsigma_r}$ with $l_i\ge 1, \sum_i l_i=n+1$ and $\tsigma_i = \Norm_{n_i}^{l_i}(\trho_i)$ for some $\trho_i\in\P_{n_i}$. 

From the above description it is clear that only the linear combinations of $T_{(n+1,0)}^{\tsigma}$, where $\tsigma = \Norm_k^n(\trho)$ for some $\trho\in\P_k$, could be cuspidals since all the other monomials are already in $\HH_X^{\le n}$.

If $\tsigma = \Norm_{k}^{n+1}(\trho)$ for some $\trho\in\P_k, k\le n$ then, from Lemma~\ref{L:generators for the twisted spherical} it follows that $T_{(n+1,0)}^\tsigma\in U_X^{\trho}\subset \HH_X^{\le n}$.

We are left to check that indeed the elements $T_{(n+1,0)}^\trho,\trho\in\P_{n+1}$ are cuspidal. For this it is enough to check that they are orthogonal to $(\HH_X^{\le n})^\ss[n+1,0]$. From the above discussion the elements of this last vector space are linear combination of monomials of the form
$T_{(l_1,0)}^{\tsigma_1}\dots T_{(l_r,0)}^{\tsigma_r}$ with $l_i\ge 1, \sum_i l_i=n+1$ and $\tsigma_i = \Norm_{n_i}^{l_i}(\trho_i)$ for some $\trho_i\in\P_{n_i}$ such that  $n_i\le n$. 

Let $\trho\in\P_{n+1}$ and consider a monomial $T_{(l_1,0)}^{\tsigma_1}\dots T_{(l_r,0)}^{\tsigma_r}$ as above. We have by the $\SL_2(\ZZ)$ symmetry and the Hopf property of the Green form:
\begin{eqnarray*}
 (T_{(n+1,0)}^\trho,T_{(l_1,0)}^{\tsigma_1}\dots T_{(l_r,0)}^{\tsigma_r})&
= & (T_{(0,n+1)}^\trho,T_{(0,l_1)}^{\tsigma_1}\dots T_{(0,l_r)}^{\tsigma_r})\\
& = & (\Delta^{(r)}(T_{(0,n+1)}^\trho),T_{(0,l_1)}^{\tsigma_1}\ot\dots \ot T_{(0,l_r)}^{\tsigma_r})\\
& = & \left\{
  \begin{array}{ll}
   0, &\textrm{ if } r>1 \textrm{ because } T_{(0,n+1)}^\trho \textrm{ is primitive}\\
   0, &\textrm{ if } r=1 \textrm{ because } \tsigma_1\neq \trho
  \end{array}
\right.
\end{eqnarray*}
This proves that $T_{(n+1,0)}^\trho$, where $\trho\in\P_{n+1}$, are cuspidal and hence the theorem follows.
\end{proof}

All the steps outlined at the beginning of the Section are now proved. 

\subsection{End of the proofs} We are ready now to finish the proofs of the main results Theorem~\ref{T:cusp forms}, Theorem~\ref{T:str of Hall} and Theorem~\ref{T:higher Eis rels}.

\textit{Proof of Theorem~\ref{T:cusp forms}}
Steps 1,5,6 provide the proofs of the statements of the theorem.\qed

For the proof of the theorem~\ref{T:str of Hall} we need another few lemmas:

\begin{lemma}\label{L:sphericals commute} The subalgebras $\U_X^{\tsigma}$ for $\tsigma\in\P_k$, $k\le n$, centralize each other. Moreover, there exists an isomorphism of algebras:
\[
 \bigotimes_{\substack{\tsigma\in\P_k \\ 1\le k\le n}}\U_X^{\tsigma,+}\to \HH_X^{\le n}.
\]
\end{lemma}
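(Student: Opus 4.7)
The plan is to prove commutation and bijectivity of the multiplication map in parallel, mimicking the approach of~\cite{BS}. To establish $[\U_X^{\trho},\U_X^{\tsigma}]=0$ for $\trho\in\P_k$, $\tsigma\in\P_{k'}$ lying in distinct Frobenius orbits, $k,k'\le n$, Corollary~\ref{C:U_X is the drinfeld double} reduces the claim to commutation of the generators $a:=T_{k\x}^{\Norm_k^{k\delta(\x)}(\trho)}$ and $b:=T_{k'\y}^{\Norm_{k'}^{k'\delta(\y)}(\tsigma)}$ in $\DH_X$ for $\x,\y\in\Z^*$. The $\SL_2(\ZZ)$-action, under which each $\U_X^\trho$ is stable, allows us to treat two cases: if $\x,\y$ are collinear, we move them to the torsion axis so that $a,b\in\HH_X^\tor$, which is commutative; otherwise we may assume $\x\in\Z^+$ and $\y\in\Z^-$ and invoke the Drinfel'd double relation
\[
b_{(1)}^-a_{(2)}^+\bigl(b_{(2)},\overline{a_{(1)}}\bigr)=a_{(1)}^+b_{(2)}^-\bigl(b_{(1)},\overline{a_{(2)}}\bigr).
\]
By Step $3_k$ together with its $\SL_2(\ZZ)$-translate, the non-trivial factors of $\Delta(a)$ and $\Delta(b)$ belong respectively to the subalgebras generated by $\{T_{(0,kl)}^{\Norm_k^{kl}(\trho)}\}_{l\ge 1}$ and $\{T_{(0,k'l)}^{\Norm_{k'}^{k'l}(\tsigma)}\}_{l\ge 1}$. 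An elementary verification based on Lemma~\ref{L:prim char} shows that $\Norm_k^d(\trho)$ and $\Norm_{k'}^d(\tsigma)$ lie in distinct Frobenius orbits of $\wXF{d}$ for every common multiple $d$ of $k,k'$; combined with the orthogonality of $T_{(0,d)}^\cdot$ attached to distinct orbits (Lemma~\ref{L:scalar prod T_rho}), every Green pairing appearing in the above identity vanishes, whence $b^-a^+=a^+b^-$.

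For surjectivity of the multiplication map onto $\HH_X^{\le n}$, recall that $\HH_X^{\le n}$ is generated by $\HH_X^\tor$ together with the vector bundles of rank $\le n$. Any torsion element is a linear combination of elements $T_{(0,d)}^{\tilde\xi}$ with $\tilde\xi\in\wXF{d}$; Corollary~\ref{C:prim char} lets us write $\tilde\xi=\Norm_k^d(\trho)$ for a unique primitive $\trho\in\P_k$ with $k\mid d$, so that $T_{(0,d)}^{\tilde\xi}\in\U_X^{\trho,+}$ by definition. For a vector bundle of rank $m\le n$, Proposition~\ref{P:cusps generate H_X} together with Step $6_{m-1}$ expresses it as a product of cusp eigenforms of rank $\le m$ and torsion sheaves; each such eigenform is $T_r^\trho$ for some $\trho\in\P_r$, $r\le n$, which lies in $\U_X^{\trho,+}$.

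Finally, for injectivity I would appeal to the PBW decomposition. By Corollary~\ref{C:PBW basis convex paths for spherical} each $\U_X^{\trho,+}$ has a $\C$-basis indexed by $\Conv^+$, and together with the commutation established above, ordered products form a spanning set for $\bigotimes_{\trho,\,k\le n}\U_X^{\trho,+}$. Projecting onto $\ZZ^2$-graded pieces and decomposing semistable parts by slope via Atiyah's theorem reduces linear independence of this spanning set to the statement that $\{T_{(0,d)}^{\Norm_k^d(\trho)}\mid \trho\in\P_k,\,k\mid d\}$ is a basis of the space of primitive torsion elements of degree $d$; this follows from Corollary~\ref{C:basis primitive torsions} by Fourier inversion on the finite abelian group $\wXF{d}$. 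The principal obstacle is the commutation step, where one must carefully verify that characters arising in the middle coproduct terms of distinct $\U_X^\trho$'s always lie in different Frobenius orbits, so that Green's pairing annihilates all cross terms.
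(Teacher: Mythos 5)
Your surjectivity and injectivity arguments are essentially the paper's (PBW decomposition, reduction by $\SL_2(\ZZ)$ to the slope-$\infty$ component, and linear independence of the primitive torsion elements attached to distinct Frobenius orbits of characters — though to pass from linear independence of the primitives to injectivity of $m^{(\infty)}$ you still need the standard fact that in a commutative cocommutative bialgebra in characteristic $0$ a linearly independent set of primitives is algebraically independent). The problem is in the commutation step.

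The gap is your appeal to ``Step $3_k$ together with its $\SL_2(\ZZ)$-translate'' to control $\Delta(a)$ for $a=T_{k\x}^{\Norm_k^{k\delta(\x)}(\trho)}$ with $\x\in\Z^+$ arbitrary. The $\SL_2(\ZZ)$ action on $\DH_X$ is only by \emph{algebra} automorphisms; it does not intertwine the coproduct, which is rigidly tied to the torsion/vector-bundle $t$-structure. Step $3_k$ computes $\Delta\bigl(T_{(k,kd)}^{\trho}\bigr)$ only, using cuspidality of $T_k^{\trho}$; for a generator of rank $\ge 2k$ (say $\x=(2,q)$) the coproduct has middle terms in all intermediate ranks, not just torsion $\otimes$ bundle, and nothing in the paper (or in your argument) determines them. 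Consequently the Drinfel'd double relation cannot be evaluated for such a pair $(a,b)$, and the claim that all cross Green pairings vanish by character orthogonality is unsupported — the pairings involve these unknown middle terms, which are not of the form $T_{(0,d)}^{\txi}$. The way around this, which is what the paper does, is to first shrink the generating set: Lemma~\ref{L:generators for the twisted spherical} shows $\U_X^{\trho}$ is generated by $T_{(\pm n_1,0)}^{\trho}$ and $T_{(0,n_1d)}^{\Norm_{n_1}^{n_1|d|}(\trho)}$ alone. For these, torsion--torsion and bundle--bundle commutators are trivial (commutativity of $\HH_X^{(\infty)}$ and $\HH_X^{(0)}$), the mixed commutator $[T_{(0,N)}^{\tsigma},T_{(n_1,0)}^{\trho_1}]$ vanishes by the Hecke eigenvalue computation of Proposition~\ref{P:action of T_0N on cusp eign} (Step $2$), and the remaining cases follow from these via the Drinfel'd relations. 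You should route your argument through that lemma rather than through the full set of generators $T_{k\x}$, $\x\in\Z^*$.
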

\begin{proof}
Let $\trho_1\in\P_{n_1},\trho_2\in\P_{n_2}$ be two \textit{different} primitive characters with $n_1,n_2\le n$. We will prove that the algebras $\U_X^{\trho_1}$ and $\U_X^{\trho_2}$ commute and this will imply the commutativity of their positive parts.

Using Lemma~\ref{L:generators for the twisted spherical} it is enough to prove that the following two sets of generators commute: 
\[
\left\{T_{(\pm n_1,0)}^{\trho_1},T_{(0,n_1d)}^{\Norm_{n_1}^{n_1|d|}(\trho_1)} \mid d\in\ZZ\right\}
\]
\[
\left\{T_{(\pm n_2,0)}^{\trho_2},T_{(0,n_2d)}^{\Norm_{n_2}^{n_2|d|}(\trho_2)} \mid d\in\ZZ\right\}
\]
What is immediate is that the torsion (resp. vector bundle) generators from one set commute with the torsion (resp. vector bundle) generators from the other set since the algebra $\HH_X^{(\infty)}=\HH_X^\tor$ (resp. $\HH_X^{(0)}$) is commutative.

The commutation of $T_{(n_1,0)}^{\trho_1}$ and $T_{(0,n_2d)}^{\Norm_{n_2}^{n_2d}(\trho_2)}$ for $d\in\ZZ_{\ge 0}$ follows from Proposition~\ref{P:action of T_0N on cusp eign} (cf. Step $2_n$). The other commutations follow from this and Drinfel'd's relations in the double.

It is clear that the subalgebras $\U_X^{\tsigma,+},\tsigma\in\P_k,k\le n$ generate $\HH_X^{\le n}$ (cf. Proposition~\ref{P:cusps generate H_X}). So we have a surjective homomorphism of algebras:
\[
 m:\bigotimes_{\substack{\tsigma\in\P_k \\ 1\le k\le n}}\U_X^{\tsigma,+}\to \HH_X^{\le n}
\]
given by the multiplication. We need to prove that the morphism $m$ is injective. For this we will use Proposition~\ref{P:PBW for twisted sph}. We have an isomorphism of vector spaces induced by the multiplication:
\[
\bigotimes_{\substack{\tsigma\in\P_k \\ 1\le k\le n}}\underset{\mu\in\Q\cup\{\infty\}}{\overrightarrow{\bigotimes}'} \U_X^{\tsigma,+,(\nu)}
\to
\bigotimes_{\substack{\tsigma\in\P_k \\ 1\le k\le n}}\U_X^{\tsigma,+}
\]
so, by using the fact that the subalgebras $\U_X^{\tsigma,+}$ commute one with each other, we get:
\[
 \underset{\mu\in\Q\cup\{\infty\}}{\overrightarrow{\bigotimes}'}\left(\bigotimes_{\substack{\tsigma\in\P_k \\ 1\le k\le n}} \U_X^{\tsigma,+,(\nu)}\right)
 \to
 \underset{\mu\in\Q\cup\{\infty\}}{\overrightarrow{\bigotimes}'} \HH_X^{\le n, (\nu)}
 \hookrightarrow
 \underset{\mu\in\Q\cup\{\infty\}}{\overrightarrow{\bigotimes}'} \HH_X^{(\nu)}
\]
So the injectivity of the morphism $m$ is implied by the injectivity of the multiplication maps:
\[
 m^{(\nu)}:\bigotimes_{\substack{\tsigma\in\P_k \\ 1\le k\le n}} \U_X^{\tsigma,+,(\nu)}\to \HH_X^{(\nu)}
\]
for every $\nu\in\Q\cup\{\infty\}$.

Using the $\SL_2(\ZZ)$ invariance it is enough to prove that $m^{(\infty)}$ in injective.
Observe here that each factor $\U_X^{\tsigma,+,(\infty)}$ is in fact a commutative bialgebra and moreover that they mutually commute. Therefore the morphism $m^{(\infty)}$ is a homomorphism of algebras. 

Observe also that each of the algebras $\U_X^{\tsigma,+,(\infty)}$, where $\sigma\in\P_k$, has a set of generators (as an algebra) consisting of primitive elements:
\[
 \Sigma_\tsigma:=\left\{T_{(0,kd)}^{\Norm_k^{kd}(\tsigma)}:d\ge 1\right\}
\]
Moreover the set 
\[
\Sigma = \coprod_{\tsigma\in\P_k,k\le n} \Sigma_\tsigma
\]
is linearly independent in $\HH_X^{(\infty)}$ because the characters, being different, are linearly independent.

Now let us recall the following result: in a commutative and cocommutative bialgebra $(C,\cdot,\Delta)$ over a field of characteristic 0, a set of primitive elements is algebraically independent if and only if it is linearly independent. 

It is easy to see that the injectivity of $m^{(\infty)}$ is equivalent to the algebraic independence of $\Sigma$ inside $\HH_X^{(\infty)}$. Since $\Sigma$ consists of primitive elements, from the above result we get that $m^{(\infty)}$ is injective if and only if $\Sigma$ is a linearly independent set in $\HH_X^{(\infty)}$. But this we've already seen to be true, hence the result follows.
\end{proof}

\begin{cor}\label{C:str of Hall prod of sphericals} The multiplication maps 
\[
\sideset{}{^\prime}{\bigotimes}_{\trho\in\P} \U_X^{\trho,+} \to \HH_X
\]
\[
\sideset{}{^\prime}{\bigotimes}_{\trho\in\P} \U_X^{\trho} \to \DH_X
\]
are isomorphism of algebras.
\end{cor}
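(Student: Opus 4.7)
The plan is to deduce both isomorphisms from Lemma~\ref{L:sphericals commute} by passage to the limit in $n$, and for the double case to combine this with Corollary~\ref{C:U_X is the drinfeld double}.

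\emph{First isomorphism.} I would note first that since any element of $\HH_X$ is a finite linear combination of classes $[\F]$ with $\F$ of some rank $r$, and $[\F]\in \HH_X^{\le r}$, we have $\HH_X=\bigcup_{n\ge 0}\HH_X^{\le n}$. Each $\P_k$ is finite because $\Pic^0(X_k)$ is, so the restricted tensor product is by construction the directed colimit
\[
\sideset{}{^\prime}{\bigotimes}_{\trho\in\P}\U_X^{\trho,+}=\varinjlim_n\;\bigotimes_{\substack{\tsigma\in\P_k\\1\le k\le n}}\U_X^{\tsigma,+}.
\]
Lemma~\ref{L:sphericals commute} identifies the $n$-th term on the right with $\HH_X^{\le n}$ via the multiplication map, compatibly with the transition maps as $n$ grows, and taking colimits yields the desired isomorphism of algebras.

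\emph{Second isomorphism.} Corollary~\ref{C:U_X is the drinfeld double} gives a vector space isomorphism $\U_X^{\trho,+}\otimes \U_X^{\trho,-}\simeq \U_X^{\trho}$, and Lemma~\ref{L:sphericals commute} implies that $\U_X^{\trho}$ and $\U_X^{\tsigma}$ commute for $\trho\neq\tsigma$; in particular $\U_X^{\trho,+}$ and $\U_X^{\tsigma,-}$ commute for $\trho\neq\tsigma$. Consequently the multiplication defines an algebra homomorphism
\[
\sideset{}{^\prime}{\bigotimes}_{\trho\in\P}\U_X^{\trho}\longrightarrow \DH_X,
\]
and reordering the positive and negative factors identifies its source, as a vector space, with
\[
\Bigl(\sideset{}{^\prime}{\bigotimes}_{\trho\in\P}\U_X^{\trho,+}\Bigr)\otimes\Bigl(\sideset{}{^\prime}{\bigotimes}_{\trho\in\P}\U_X^{\trho,-}\Bigr),
\]
while the target is $\HH_X^+\otimes \HH_X^-$ as a vector space by the very definition of the Drinfel'd double. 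The first isomorphism, applied both to $\HH_X^+$ and (by symmetry, or by the Chevalley-type involution swapping positive and negative parts of $\DH_X$) to $\HH_X^-$, now forces bijectivity factor by factor.

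The main technical point to verify is that the cross relations in the Drinfel'd double are compatible with the tensor decomposition, i.e.\ that grouping positive and negative generators by primitive character $\trho$ respects Drinfel'd's defining relations. This is precisely what the commutation of $\U_X^{\trho,+}$ with $\U_X^{\tsigma,-}$ for $\trho\ne\tsigma$ ensures, so once the first isomorphism is in hand no further obstacle remains.
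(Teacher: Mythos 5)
Your proof is correct and follows essentially the same route as the paper, which simply deduces the first isomorphism from Lemma~\ref{L:sphericals commute} by exhausting $\HH_X$ by the subalgebras $\HH_X^{\le n}$, and the second from that lemma together with the triangular decomposition of Corollary~\ref{C:U_X is the drinfeld double}. You have merely spelled out the colimit bookkeeping and the reordering of positive and negative factors, which the paper leaves implicit.
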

\begin{proof}
The first isomorphism follows from the Lemma~\ref{L:sphericals commute} and the second is a corollary of the lemma plus the Corollary~\ref{C:U_X is the drinfeld double}.
\end{proof}

% \begin{lemma}\label{L:sphericals are orthogonal} Let $\trho\in\P_n$ and $\tsigma\in\P_m$ be two different primitive characters. Then the algebras $\U_X^\trho,\U_X^\tsigma$ are orthogonal\todo{need to take out the unit element for the statement to be true} with respect to the Green form.
% \end{lemma}
% \begin{proof}
% It is enough to prove the claim for the positive parts $\U_X^{\trho,+},\U_X^{\tsigma,+}$.
% 
% Using Lemma~\ref{L:scalar product of products of elements} and Proposition~\ref{P:PBW for twisted sph} the claim reduces to prove that the algebras $\U_X^{\trho,+,(\nu)}$ and $\U_X^{\tsigma,+,(\nu)}$ are orthogonal for every $\nu\in\Q\cup\{\infty\}$. But then using the $\SL_2(\ZZ)$ symmetry we can restrict ourselves to $\nu = \infty$. In this case the orthogonality of the algebras $\U_X^{\trho,+,(\infty)}$ and $\U_X^{\tsigma,+,(\infty)}$ follows immediately from the orthogonality of the characters $\trho,\tsigma$ (since they are different).
% \end{proof}

We are now ready to give the proof of \textit{Theorem~\ref{T:str of Hall}}:

The statement (1) follows from the Step 4. Statements (2) and (3) follow from Lemma~\ref{L:sphericals commute} and Corollary~\ref{C:str of Hall prod of sphericals}.
\qed

\appendix
\section{}
\begin{lemma}\label{L:proof prim char} A character $\rho\in\wX{n}$ is primitive if and only if there does not exist a character $\chi\in\wX{d}, d<n, d|n$,  such that $\rho=\Norm_d^n(\chi)$.
\end{lemma}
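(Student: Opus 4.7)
I would dispatch the two implications separately, reducing the nontrivial direction to a cardinality comparison inside $\wX{n}$.

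For the easy direction: if $\rho=\Norm_d^n(\chi)$ for some $\chi\in\wX{d}$ with $d\mid n$ and $d<n$, then I claim $\rho$ is fixed by $\Fr_{X,n}^d$, so its $\Fr_{X,n}$-orbit has cardinal at most $d<n$ and $\rho$ is not primitive. Using the explicit formula for $\Norm_d^n$ one computes
\[
\Fr_{X,n}^d\bigl(\Norm_d^n(\chi)\bigr)(\L)=\chi\Bigl(\bigotimes_{i=0}^{n/d-1}(\Fr_{X,n}^*)^{d(i+1)}(\L)\Bigr),
\]
and since $(\Fr_{X,n}^*)^n=\id$ on $\Pic^0(X_n)$ (because $\Fr_{X,n}$ generates $\Gal(X_n/X)$, which has order $n$), the right-hand side is a cyclic shift of the factors appearing in $\Norm_d^n(\L)$, hence equals $\Norm_d^n(\chi)(\L)$.

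For the converse, assume $\rho$ is non-primitive, let $d$ be the cardinal of its $\Fr_{X,n}$-orbit, and set $H:=\langle\Fr_{X,n}^d\rangle\subset\Gal(X_n/X)$, a cyclic group of order $n/d$. Then $d\mid n$, $d<n$, and $\rho\in\wX{n}^H$. Producing a $\chi\in\wX{d}$ with $\rho=\Norm_d^n(\chi)$ is equivalent to showing $\rho\vert_{\ker(\Norm_d^n)}=1$, i.e.\ that the subgroup $\mathrm{Im}(\Norm_d^n)\subseteq\wX{n}$ coincides with $\wX{n}^H$. The inclusion $\mathrm{Im}(\Norm_d^n)\subseteq\wX{n}^H$ is exactly the content of the previous paragraph, so it suffices to verify that the two subgroups have the same order.

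By Lemma~\ref{L:norm-surjective} the map $\Norm_d^n:\Pic^0(X_n)\to\Pic^0(X_d)$ is surjective, hence the dual map on characters is injective and $|\mathrm{Im}(\Norm_d^n)|=|\wX{d}|=|\Pic^0(X_d)|$. On the other hand, Pontryagin duality gives $|\wX{n}^H|=|\Pic^0(X_n)_H|$; the identity $|M^H|=|M_H|$ for a finite cyclic group acting on a finite module rewrites this as $|\Pic^0(X_n)^H|$, and Galois descent (\cite{HN}, Lemma~1.2.1), already used in Section~\ref{ss:frobenius on characters} to define the norm, identifies $\Pic^0(X_n)^H$ with $\Pic^0(X_d)$. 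The two cardinalities match, the inclusion must be an equality, and the lemma follows. The only substantive ingredient here is Lemma~\ref{L:norm-surjective}; all the other steps are formal manipulations with duality of finite abelian groups and the trivial Herbrand quotient for cyclic groups acting on finite modules.
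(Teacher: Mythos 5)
Your proof is correct and follows essentially the same route as the paper's: establish the inclusion of the image of the dual norm map in the Frobenius-fixed characters, then force equality by a cardinality count whose only substantive input is the surjectivity of the norm on Picard groups (Lemma~\ref{L:norm-surjective}). The only cosmetic differences are that the paper first reduces to the case $d=1$ by extension of scalars and counts $\wX{n}^{\Fr_{X,n}}$ via the image of $\Fr_{X,n}^*-\mathrm{Id}$, whereas you treat general $d$ directly and count via coinvariants and Galois descent.
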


\begin{proof} One direction is obvious: namely, if $\rho$ is primitive then it cannot be equal to $\Norm_d^n(\chi)$ for any $\chi\in\wX{d}$ since the later has the orbit under Frobenius of cardinal $d<n$ which contradicts the primitivity of $\rho$.

Conversely, by an extension of scalars it is enough to prove that if $\rho\in\wX{n}$ is fixed by the Frobenius $\Fr_{X,n}$ then there exists a character $\chi\in\wX{}$ such that $\rho = \Norm_1^n(\chi)$. 

In order to simplify the notation and to avoid confusion we will denote (solely in this proof) by $N:=\Norm_1^n:\Pic^0(X_n)\to\Pic^0(X)$ and by $\hat{N}:=\Norm_1^n:\wX{}\to\wX{n}$.
The problem can be restated as follows: the image of the map $\hat N$ is exactly $\wX{n}^{\Fr_{X,n}}$. It is easy to see by the definition that the image of $\hat N$ is indeed contained in $\wX{n}^{\Fr_{X,n}}$. It is therefore enough to prove that $\mathrm{Im}(\hat N)$ and $\wX{n}^{\Fr_{X,n}}$ have the same cardinal.
We have that 
\[
\left|\wX{n}^{\Fr_{X,n}}\right| = \left|\Pic^0(X_n)\right|/\left|\mathrm{Im}(\Fr_{X,n}^*-\mathrm{Id})\right| = |\Pic^0(X)|.
\]

Let $\xi\in\ker\hat N$. Now Lemma~\ref{L:norm-surjective} (applied for $Y=\Pic^0(X)$) says that $N$ is surjective. Therefore we have $\hat{N}(\xi)=1$ is equivalent to $\xi(\Pic^0(X))=1$ which means that $\xi$ is the trivial character. We deduce that $\hat{N}$ is injective and hence $\mathrm{Im}(\hat N) = |\Pic^0(X)|$ which finishes the proof.
\end{proof}

\begin{lemma}\label{L:rho(x)=Norm(rho)(x)}
Let $x\in X$ be a point of degree $N$ and let $n|N$ be a positive integer. Let also $\trho\in\wXF{n}$ be a character. Then:
\[
\trho(x) = \Norm_n^N(\trho)(x).
\]
\end{lemma}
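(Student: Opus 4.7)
The plan is to unwind the two sides using the explicit averaging formulas and to recognise the claimed equality as a regrouping of the sum over the fibres of the projection $X_N\to X_n$.

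First I would expand the left hand side using Definition~\ref{D:Trho}. Since $n\mid N=|x|$, one has $\gcd(|x|,n)=n$, so for any $x'\in X_n$ lying above $x$,
\[
\trho(x)=\frac{1}{n}\sum_{i=0}^{n-1}\rho\bigl((\Fr_{X,n}^*)^{i}(\O_{X_n}(x'))\bigr).
\]
The commutative diagram of Paragraph~\ref{ss:frobenius on characters} shows that $\Fr_{X,n}^*$ corresponds to the Frobenius action $\frob_n$ on closed points of $X_n$, so $(\Fr_{X,n}^*)^{i}(\O_{X_n}(x'))=\O_{X_n}(\frob_n^{i}(x'))$. Since the $\frob_n$-orbit of $x'$ consists of exactly the $n$ distinct closed points of $X_n$ mapping to $x$, the sum rewrites as
\[
\trho(x)=\frac{1}{n}\sum_{\substack{y'\in X_n\\ y'\mapsto x}}\rho(\O_{X_n}(y')).
\]

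Next I would expand the right hand side in the same way. Since $\Norm_n^N(\trho)\in\wXF{N}$ and $|x|=N$ gives $\gcd(|x|,N)=N$, the analogous formula applied to any $x''\in X_N$ above $x$ yields
\[
\Norm_n^N(\trho)(x)=\frac{1}{N}\sum_{\substack{y''\in X_N\\ y''\mapsto x}}\rho\bigl(\Norm_n^N(\O_{X_N}(y''))\bigr),
\]
after using the defining relation $\Norm_n^N(\rho)(\L)=\rho(\Norm_n^N(\L))$. Lemma~\ref{L:Norm of x'' from N to n} is tailor-made for this situation: with $d_x=\gcd(n,N)=n$ and $|x|=N$ the exponent $(N/n)/(|x|/d_x)$ simplifies to $1$, so $\Norm_n^N(\O_{X_N}(y''))=\O_{X_n}(y')$, where $y'$ denotes the image of $y''$ under the natural projection $X_N\to X_n$.

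Finally, I would regroup the right hand sum according to the image in $X_n$. Every closed point $y'$ of $X_n$ above $x$ has residue field $\ff_{q^N}$, so the fibre of $X_N\to X_n$ over $y'$ consists of exactly $N/n$ distinct $\ff_{q^N}$-rational points of $X_N$. Consequently
\[
\Norm_n^N(\trho)(x)=\frac{1}{N}\cdot\frac{N}{n}\sum_{\substack{y'\in X_n\\ y'\mapsto x}}\rho(\O_{X_n}(y'))=\trho(x),
\]
which is the desired equality. There is no genuine obstacle here: the argument is essentially bookkeeping, the only non-trivial input being the already-proved Lemma~\ref{L:Norm of x'' from N to n}, which collapses the norm on a line bundle $\O_{X_N}(y'')$ down to $\O_{X_n}(y')$ and thereby makes the averaging constants $1/n$ and $1/N$ match after counting the $N/n$ points of each fibre.
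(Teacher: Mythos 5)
Your proof is correct and follows essentially the same route as the paper: expand both sides as averages over the fibres of $X_n\to X$ and $X_N\to X$, collapse the norm of $\O_{X_N}(y'')$ to $\O_{X_n}(y')$, and match the counting constants $1/N\cdot N/n = 1/n$. The only cosmetic difference is that the paper re-derives the special case of Lemma~\ref{L:Norm of x'' from N to n} inline (as the tensor product over Frobenius translates), whereas you invoke that lemma directly.
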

\begin{proof}
Let $x_1,\dots,x_n\in X_n$ be the points that sit over $x$ and let $x_i^j, j=1,\dots,N/n$ be the points of $X_N$ that sit over $x_i$ for $i=1,\dots,n$. By definition we have:
\begin{eqnarray*}
\Norm_n^N(\trho)(x) & = & \frac1N \sum_{i,j}\Norm_n^N(\rho)(\O_{X_N}(x_i^j))\\
& = & \frac1N\sum_{i,j}\rho\left(\otimes_{k=0}^{N/n-1} (\Frob_{X,N}^{*})^{kn} \O_{X_N}(x_i^j) \right)\\
& = &\frac1N\sum_{i=1}^n\sum_{j=1}^{N/n}\rho\left(\otimes_{k=0}^{N/n-1}\O_{X_N}(x_i^{j+k})\right)\\
& = & \frac1N\sum_{i=1}^n\sum_{j=1}^{N/n}\rho(\O_{X_n}(x_i))\\
& = & \frac1N\frac Nn\sum_{i=1}^n\rho(\O_{X_n}(x_i))\\
& = & \trho(x)
\end{eqnarray*}
\end{proof}

\begin{lemma}\label{L:scalar prod T_rho}
Let $\trho\in\P_n$ and $\tsigma\in\wXF{n}$ be two characters. Then we have:
\[
(T_{(n,nd)}^\trho,T_{(n,nd)}^\tsigma) = \left\{
\begin{array}{ll}
	0 & ,\,\mathrm{ if }\,\,\tsigma\neq\trho\\
	\ds\frac{v^n[n]}{(v^{-1}-v)n^2}|X(\ff_{q^n})| & ,\,\mathrm{ if }\,\,\tsigma=\trho
\end{array}
\right.
\]
\end{lemma}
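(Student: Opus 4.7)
The plan is to reduce the pairing to one inside $\HH_X^\tor$ via the $\SL_2(\ZZ)$-invariance of Green's form, and then evaluate a character sum on $\Pic^0(X_n)$ in which the primitivity of $\trho$ enforces the Kronecker delta.

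Pick $\gamma \in \SL_2(\ZZ)$ with $\gamma\cdot(0,n) = (n,nd)$; such a $\gamma$ exists because both vectors have $\gcd$ equal to $n$. The $\SL_2(\ZZ)$-action on $\DH_X$ is induced by Seidel--Thomas derived autoequivalences of $\kD^b(\Coh(X))$, which preserve $\Aut$-groups of objects and therefore preserve the basis-diagonal form $([\kF],[\kG]) = \delta_{\kF,\kG}/a_\kF$. Since $T_{\x,x} = \gamma\cdot T_{(0,n),x}$ is $\SL_2(\ZZ)$-covariant in $\x$ while the coefficients $\trho(x)$ do not involve $\x$, I deduce $(T_{(n,nd)}^\trho, T_{(n,nd)}^\tsigma) = (T_{(0,n)}^\trho, T_{(0,n)}^\tsigma)$. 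Expanding the right-hand side, and using that $T_{(0,n),x}$ is supported on torsion sheaves at the single closed point $x$ while torsion sheaves at distinct closed points are orthogonal in $(\,,\,)$, the double sum collapses to the diagonal $\sum_{x:|x|\mid n}\trho(x)\overline{\tsigma(x)}(T_{(0,n),x},T_{(0,n),x})$. By Proposition~\ref{P:classical Hall & Macd}(3) applied with $F_r = \Psi_{\kk_x}^{-1}(p_r)$, $u_x = v^{|x|}$ and the identity $v^{-n} - v^n = [n](v^{-1} - v)$, a short computation gives $(T_{(0,n),x}, T_{(0,n),x}) = \frac{[n]\,|x|\,v^n}{n(v^{-1}-v)}$. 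Hence the pairing equals $\frac{v^n[n]}{n(v^{-1}-v)}\,S$ with $S := \sum_{x:|x|\mid n} |x|\,\trho(x)\,\overline{\tsigma(x)}$.

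The heart of the argument is evaluating $S$. Pick representatives $\rho,\sigma\in\wX{n}$ of $\trho,\tsigma$. For $|x|\mid n$ the average in Definition~\ref{D:Trho} specialises to $\trho(x) = \frac{1}{|x|}\sum_{y \text{ above } x}\rho(\O_{X_n}(y))$, the sum ranging over the $|x|$ $\ff_{q^n}$-rational points of $X_n$ above $x$; an analogous expression holds for $\tsigma(x)$. Substituting these into $S$, reindexing the resulting triple sum by a single rational point $y_1\in X_n(\ff_{q^n})$ together with its Frobenius translates $y_2 = \Fr_{X,n}^j(y_1)$, using $\bigsqcup_{x:|x|\mid n}\{y\text{ above }x\} = X_n(\ff_{q^n})$, the periodicity identity $\sum_{j=0}^{|x|-1} = \frac{|x|}{n}\sum_{j=0}^{n-1}$, and the bijection $X_n(\ff_{q^n})\cong\Pic^0(X_n)$ from Subsection~\ref{ss:beginning}, I expect to arrive at
\[
S = \frac{1}{n}\sum_{j=0}^{n-1}\sum_{\kL\in\Pic^0(X_n)}\bigl(\rho\cdot\overline{\sigma\circ(\Fr_{X,n}^*)^j}\bigr)(\kL).
\]
Orthogonality of characters on the finite group $\Pic^0(X_n)$ annihilates each inner sum unless $\rho = \sigma\circ(\Fr_{X,n}^*)^j$, in which case it equals $|\Pic^0(X_n)| = |X(\ff_{q^n})|$. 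Primitivity of $\trho$ means the Frobenius orbit of $\rho$ has length exactly $n$, so at most one $j\in\{0,\dots,n-1\}$ contributes, and one does precisely when $\tsigma=\trho$. Therefore $S = \frac{|X(\ff_{q^n})|}{n}\delta_{\tsigma,\trho}$, and assembling the factors yields the formula in the lemma.

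The main potential obstacle is the reindexing producing the displayed expression for $S$: one has to carefully track the size and period of Frobenius orbits in $X_n$ above $x$, the $1/|x|$ normalization in $\trho(x)$, and the direction of the Frobenius action on $\Pic^0(X_n)$ recorded in the commutative square of Subsection~\ref{ss:frobenius on characters}. Once $S$ is in that form, character orthogonality together with primitivity of $\trho$ finish the calculation mechanically.
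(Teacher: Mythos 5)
Your proof is correct and follows essentially the same route as the paper's: reduce to the torsion part by $\SL_2(\ZZ)$-invariance of Green's form, compute $(T_{(0,n),x},T_{(0,n),x})=\frac{v^n[n]\,|x|}{n(v^{-1}-v)}$ from Proposition~\ref{P:classical Hall \& Macd}(3), and then evaluate the weighted character sum over points of $X_n(\ff_{q^n})$ using orthogonality and the primitivity of $\trho$ (the paper packages this last step as $\frac{|X(\ff_{q^n})|}{n^2}\sum_{i,j}\langle\Fr_{X,n}^i\rho,\Fr_{X,n}^j\sigma\rangle$, which is equivalent to your single sum over $j$). No gaps.
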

\begin{proof}
By the $SL_2(\ZZ)$ invariance the computation of the scalar product reduces to the computation of the scalar product of $T_{(0,n)}^\trho$ and $T_{(0,n)}^\tsigma$ in $H_X^\tor$.

For a point $x\in X$ of degree $d$ such that $d\mid n$ we have from Proposition~\ref{P:classical Hall & Macd} and from the definition of $T_{(0,n),x},x\in X$:
\[
(T_{(0,n),x},T_{(0,n),x}) = \frac{v^n[n]d}{(v^{-1}-v)n}.
\]
Now taking a sum over all points $x$ we have:
\begin{eqnarray*}
(T_{(0,n)}^\trho,T_{(0,n)}^\tsigma) & = &\frac{v^n[n]}{(v^{-1}-v)n} \sum_{d|n}d\sum_{x\in X, |x|=d}\trho(x)\ov{\tsigma(x)}
\end{eqnarray*}

Let us compute the sum which appears on the right hand side. For this we will use the following notation: for a point $x\in X$ and $x'\in X_n$ we write $x'\to x$ if $x'$ sits above $x$.
\begin{eqnarray*}
\sum_{d|n}d\sum_{\substack{x\in X\\ |x|=d}}\trho(x)\ov{\tsigma(x)} & = &
\frac1{n^2} \sum_{d|n}\sum_{\substack{x\in X\\ |x|=d}} \sum_{\substack{x'\to x\\x'\in X_n}} \sum_{i=0}^{n-1}\Fr_{X,n}^i(\rho)(\O_{X_n}(x'))\cdot\\
& & \qquad\cdot \sum_{j=0}^{n-1}\ov{\Fr_{X,n}^j(\sigma)(\O_{X_n}(x'))}\\
& = & \frac1{n^2}\sum_{x'\in X(\ff_{q^n})}\sum_{i=0}^{n-1}\Fr_{X,n}^i(\rho)(\O_{X_n}(x')) \cdot \\
& & \qquad\cdot \sum_{j=0}^{n-1}\ov{\Fr_{X,n}^j(\sigma)(\O_{X_n}(x'))}\\
% & = & \frac1{n^2}\sum_{\kL\in\Pic^0(X_n)}\sum_{i,j=0}^{n-1}\Fr_{X,n}^i(\rho)(\kL) \cdot \overline{\Fr_{X,n}^j(\sigma)(\kL)}\\
& = & \frac{|X(\ff_{q^n})|}{n^2}\sum_{i,j=0}^{n-1}\<\Fr_{X,n}^i(\rho),\Fr_{X,n}^j(\sigma)\>\\
& = & \left\{\begin{array}{ll}
	\ds\frac{|X(\ff_{q^n})|}n & \mathrm{ if }\, \trho=\tsigma\\
	0 & \mathrm{ otherwise}
\end{array}\right.
\end{eqnarray*}
Putting all together we get the announced formula.
\end{proof}

\begin{lemma}\label{L:normalization of cusp eigenforms} Let $f$ be a (non zero) cusp eigenform of rank $n$. Suppose we can write $f=\sum_{d\in\ZZ}f_{dn}$ where each $f_{dn}$ is supported on vector bundles of degree $nd$. Then $f(\O^{\oplus n})\neq 0$ and therefore we can renormalize $f$ such that $f(\O^{\oplus n})=1$.
\end{lemma}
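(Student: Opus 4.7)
The plan is to argue by contrapositive: assume $f(\O^{\oplus n}) = 0$ and deduce $f = 0$.

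First, I would exploit the Hecke eigenform property evaluated at $\O^{\oplus n}$. For every torsion sheaf $\tau$,
\[
0 \;=\; \lambda_\tau f(\O^{\oplus n}) \;=\; \He_\tau(f)(\O^{\oplus n}) \;=\; \sum_{\kV \subseteq \O^{\oplus n},\,\O^{\oplus n}/\kV \simeq \tau} f(\kV).
\]
Specialize to $\tau$ of degree $n$, so the subsheaves $\kV$ have rank $n$ and degree $-n$. Cuspidality of $f$ annihilates all non-semistable contributions, and by Lemma~\ref{hecke_action} every semistable $\kV$ appearing in the sum is isomorphic to $\V(\tau)^\vee$. The kernel of the evaluation map $\O^{\oplus n} = \O\otimes\Hom(\O,\tau) \to \tau$ provides at least one such semistable subsheaf, so the total count is strictly positive, forcing $f(\V(\tau)^\vee) = 0$ for every torsion sheaf $\tau$ of degree $n$. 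Combining the Geigle-Lenzing/Kuleshov equivalence $\Cc_\infty \xrightarrow{\sim} \Cc_1,\ \tau \mapsto \V(\tau)$, with the isomorphism $\V(\tau)^\vee \otimes \O(2x_0) \simeq \V(\ominus\tau)$ of Lemma~\ref{L:dual of V(tau)} shows that the bundles $\V(\tau)^\vee$ exhaust all semistable vector bundles of rank $n$ and slope $-1$ as $\tau$ varies over torsion sheaves of degree $n$. Since cusp forms are supported on semistable sheaves, the degree $-n$ component $f_{-n}$ must vanish.

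To propagate $f_{-n} = 0$ to $f \equiv 0$ I would invoke multiplicity one. The operator $\Theta\colon g \mapsto \bigl(\kV \mapsto g(\kV \otimes \O(-x_0))\bigr)$ commutes with every Hecke operator, because torsion sheaves are invariant under tensoring by line bundles. Hence $\Theta f$ is a cusp eigenform with exactly the same Hecke eigenvalues as $f$, and the Piatetski-Shapiro/Shalika multiplicity one theorem yields $\Theta f = c\,f$ for some nonzero scalar $c$. Iterating gives $f(\kV \otimes \O(k x_0)) = c^{-k} f(\kV)$ for every $k \in \ZZ$; applied to an arbitrary $\kV$ of rank $n$ and degree $nd$ with the choice $k = -d-1$, this transports the value to a bundle of degree $-n$, where $f$ vanishes. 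Hence $f(\kV) = 0$ for all $\kV$, so $f \equiv 0$, contradicting the hypothesis $f \neq 0$.

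The main obstacle is the sweeping statement in the first paragraph that every semistable rank $n$ bundle of slope $-1$ arises as $\V(\tau)^\vee$ for some torsion $\tau$ of degree $n$; this is where the Geigle-Lenzing/Kuleshov classification combined with the duality of Lemma~\ref{L:dual of V(tau)} is indispensable.
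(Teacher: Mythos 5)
Your argument is correct, but it runs in the direction opposite to the paper's. The paper proves the statement directly: since $f\neq 0$ forces $f_n\neq 0$ (the components $f_{dn}$ are proportional translates of one another by the eigenform property), one picks a single torsion sheaf $\tau$ of degree $n$ with $f(\V(\tau))\neq 0$ and uses the \emph{adjoint} Hecke operator together with the Hopf property of the Green form,
\[
\overline{\chi(\tau)}\,(f,\O^{\oplus n}) \;=\; (f,\He_\tau(\O^{\oplus n})) \;=\; P_{\tau,\O^{\oplus n}}^{\V(\tau)}\,(f,\V(\tau))\;\neq\;0,
\]
so only the positivity of one Hall number and the existence of one good $\tau$ are needed. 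You instead apply the Hecke operator downward at $\O^{\oplus n}$, landing on the slope $-1$ semistables $\V(\tau)^\vee$; this obliges you to prove the exhaustion statement that \emph{every} semistable bundle of rank $n$ and degree $-n$ is some $\V(\tau)^\vee$ (via Kuleshov and Lemma~\ref{L:dual of V(tau)}) and then to propagate the vanishing of $f_{-n}$ to all of $f$. Both steps are sound, but two remarks: your operator $\Theta$ is nothing other than the Hecke operator $\He_{\O_{x_0}^{\oplus n}}$ itself (a surjection from a rank $n$ bundle $\kV$ onto $\O_{x_0}^{\oplus n}$ must be the restriction to the fibre at $x_0$, with kernel $\kV(-x_0)$), so the eigenform property already gives $\Theta f=\zeta f$ and the appeal to multiplicity one is superfluous; and the nonvanishing of the scalar $c$ deserves a word — it holds because $\Theta$ is invertible, so $\Theta f=0$ would force $f=0$ against the hypothesis. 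In sum, the paper's route buys economy (one witness $\tau$, no classification of slope $-1$ semistables, no multiplicity one), while yours buys an argument that stays entirely on the function side, never invoking the Green pairing or adjoint operators.
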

\begin{proof}
Let us first make a small remark. By the eigenform property it follows that $f_{dn} = \zeta^d f_0\ot_\O \O_X(dx_0)$, where $\zeta$ is the value by which $\He_{\O_{x_0}^{\oplus n}}$ acts on $f$, and therefore all the terms $f_{dn}$ are completely determined by $f_0$ and $\zeta$. In particular, if $f_n=0$ then $f=0$.

Let $\tau$ be a torsion sheaf of degree $n$ such that $f(\V(\tau))\neq 0$. Such a sheaf must exist since all the semistables of slope 1 and rank $n$ are of the form $\V(\tau)$ for some $\tau$ and $f_n$ is supported on the semistables and is non zero.

We have the following equalities:
\begin{eqnarray*}
\overline{\chi(\tau)}(f,\O^{\oplus n}) & = & (\He_\tau^{\vee}(f),\O^{\oplus n}) \\
& = & (f,\He_\tau(\O^{\oplus n}))\\
& = & (f,(\He_\tau(\O^{\oplus n}))^\ss)\\
& \overset{Cor.~\ref{C:Hecke tau applied to O^n}}{=} & P_{\tau,\O^{\oplus n}}^{\V(\tau)}(f,\V(\tau))\\
& \neq &  0
\end{eqnarray*}
where we denoted by $\chi(\tau)$ the constant by which $\He_\tau$ acts on $f$. Therefore $f(\O^{\oplus n})\neq 0$ and this is what we wanted to prove.
\end{proof}

\begin{rem}
We have used here the dual Hecke operators $\He_\tau^\vee$ which are just adjoints with respect to the Green product of the Hecke operators. For more details see \cite{Kap} Section 2.6.
\end{rem}

\begin{lemma}\label{L:norm-surjective} If $Y$ is an elliptic curve over $\ff_q$ then the norm map $$\Norm_n: Y(\ff_{q^n})\to Y(\ff_q)$$ is surjective for every $n$.
\end{lemma}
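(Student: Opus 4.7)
My plan is the following. Via the isomorphism $\Phi_n: \Pic^0(Y_n) \simeq Y(\ff_{q^n})$ fixed in Paragraph~\ref{ss:beginning}, which by the commutative diagram of Paragraph~\ref{ss:frobenius on characters} intertwines $\Fr_{Y,n}^*$ with the Frobenius action $\Fr$ on $Y(\ff_{q^n})$, and which on the target sends $\Pic^0(Y)$ to $Y(\ff_q) = Y(\ff_{q^n})^{\Fr}$, the norm map $\Norm_n$ is identified with the group homomorphism
\[
N: Y(\ff_{q^n}) \to Y(\ff_q), \qquad N(y) = y \oplus \Fr(y) \oplus \dots \oplus \Fr^{n-1}(y).
\]
Its image is automatically $\Fr$-fixed (since $\Fr \circ N = N$ on $Y(\ff_{q^n})$, using $\Fr^n = \mathrm{id}$) and hence does land in $Y(\ff_q)$. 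The question thus reduces to showing that $N$ is surjective.

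The key step is to identify $\ker N$ with the image of $\Fr - 1$ acting on $Y(\ff_{q^n})$. The inclusion $(\Fr - 1) Y(\ff_{q^n}) \subseteq \ker N$ is immediate from the telescoping identity $N \circ (\Fr - 1) = \Fr^n - 1 = 0$ on $Y(\ff_{q^n})$. For the reverse inclusion, given $a \in \ker N$, I would invoke Lang's theorem applied to the connected commutative algebraic group $Y$ over $\ff_q$ to produce $b \in Y(\overline{\ff}_q)$ satisfying $\Fr(b) \ominus b = a$. Then
\[
\Fr^n(b) \ominus b = \sum_{i=0}^{n-1} \Fr^i(a) = N(a) = 0,
\]
so $b$ actually lies in $Y(\overline{\ff}_q)^{\Fr^n} = Y(\ff_{q^n})$, exhibiting $a$ as $(\Fr - 1)b$ inside $Y(\ff_{q^n})$.

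A straightforward cardinality count finishes the argument. The kernel of $\Fr - 1$ acting on $Y(\ff_{q^n})$ is exactly $Y(\ff_q)$, so $|(\Fr - 1) Y(\ff_{q^n})| = |Y(\ff_{q^n})|/|Y(\ff_q)|$. Combining with the previous paragraph,
\[
|\mathrm{Im}(N)| = |Y(\ff_{q^n})|/|\ker N| = |Y(\ff_q)|,
\]
and together with the inclusion $\mathrm{Im}(N) \subseteq Y(\ff_q)$ this forces equality. The only non-formal input in the argument is Lang's theorem for the elliptic curve $Y$, which supplies the surjectivity of $\Fr - 1$ on $Y(\overline{\ff}_q)$; this is the sole step that uses more than bookkeeping, and it is a classical result available off the shelf.
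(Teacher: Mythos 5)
Your proof is correct, but it takes a genuinely different route from the paper's. The paper works directly with $\Norm_n$ as a self-map of $\ov Y$: it is a homomorphism of abelian varieties with finite kernel (the kernel sits inside $Y(\ff_{q^n})$), hence surjective on $\ov\ff_q$-points, and then a short descent observation ($\Frob_Y\circ\Norm_n=\Norm_n$ forces any preimage of an $\ff_q$-point to satisfy $\Frob_Y^n(y)=y$, i.e.\ to lie in $Y(\ff_{q^n})$) finishes the argument. You instead apply the same geometric input (surjectivity of a nonconstant endomorphism of $\ov Y$ on closed points) to $\Frob_Y-1$ rather than to $\Norm_n$ --- this is exactly Lang's theorem for the abelian variety $Y$ --- and use it to prove the stronger statement $\ker N=(\Frob_Y-1)Y(\ff_{q^n})$, i.e.\ the vanishing of the Tate cohomology group $\hat H^{-1}(\Gal(\ff_{q^n}/\ff_q),Y(\ff_{q^n}))$, before concluding by a cardinality count. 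Your telescoping identities and the counting are all correct (and the count $|\ker(\Frob_Y-1)|=|Y(\ff_q)|$ is legitimate since $\Frob_Y-1$ restricted to $Y(\ff_{q^n})$ has kernel exactly $Y(\ff_q)$). What each approach buys: the paper's is shorter and, as it remarks, applies verbatim to any abelian variety; yours yields in addition the exact kernel of the norm (a Hilbert-90-type statement) and the index $|\ker N|=|Y(\ff_{q^n})|/|Y(\ff_q)|$, which is in the same spirit as the cardinality computation the paper performs separately in Lemma~\ref{L:proof prim char} on the character side.
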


\begin{proof} It is enough to prove that the map $\Norm_n:\ov Y\to \ov Y$ is surjective (on the closed points). Indeed, if we prove this it follows that there is some $N$ such that $\Norm_n(Y(\ff_{q^N})\supseteq Y(\ff_q)$. For a point $y\in Y(\ff_{q^N})$ we have that $\Norm_n(y) = y\oplus \Frob_Y(y)\oplus\dots\oplus\Frob_Y^{n-1}(y)$ and therefore if $\Norm_n(y)\in Y(\ff_q)$ it follows that $\Frob_Y\circ\Norm_n(y)=\Norm_n(y)$ or in other words $y=\Frob_Y^n(y)$. This is equivalent to $y$ being in $Y(\ff_{q^n})$. 

Now the map $\Norm_n:\ov Y\to \ov Y$ is a homomorphism of elliptic curves. It follows that its image is either a point or the entire curve. Obviously it cannot be a point because the kernel of $\Norm_n$ is contained in $Y(\ff_{q^n})$ and therefore it is finite. Hence the map $\Norm_n:\ov Y\to\ov Y$ is surjective.
\end{proof}
\begin{rem} The same result with the same proof holds for any abelian variety over $\ff_q$.
\end{rem}

\begin{lemma}\label{L:scalar product of products of elements}
Let $a_i\in \HH_X^{(\nu_i)}, i=1,\dots, r$ and $b_j\in\HH_X^{(\nu'_j)},j=1,\dots,s$ be homogeneous elements, where $\nu_i,\nu'_j\in\Q\cup\{\infty\}$ are such that $\nu_1 < \ldots < \nu_r$ and $\nu'_1 < \ldots < \nu'_s$. Then we have 
\[
(a_1\dots a_r,b_1...b_s) = \left\{
\begin{array}{ll}
0, & \mathrm{if }\, (\nu_1,\dots,\nu_r)\neq (\nu'_1,\dots,\nu'_s)\\
(a_1,b_1)(a_2,b_2)\dots(a_r,b_r), & \mathrm{ otherwise }
\end{array}\right.
\]
\end{lemma}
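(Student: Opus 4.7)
By sesquilinearity of Green's form we may assume that each $a_i=[\F_i]$ and each $b_j=[\G_j]$ with $\F_i\in\Cc_{\nu_i}$ and $\G_j\in\Cc_{\nu'_j}$ semistable. The argument then proceeds in three stages: compute each product explicitly on the left of the pairing, use Atiyah's theorem to obtain the vanishing statement, and finally verify the factorization by a bookkeeping computation in the Euler form.

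First, recall from Section~1 that for $\mu<\mu'$ one has both $\Ext(\Cc_\mu,\Cc_{\mu'})=0$ and $\Hom(\Cc_{\mu'},\Cc_\mu)=0$. In particular, if $\F\in\Cc_\nu$ and $\G\in\Cc_{\nu'}$ with $\nu<\nu'$, then every extension $0\to\G\to\H\to\F\to 0$ splits, so $[\F]\cdot[\G]=v^{-\langle\F,\G\rangle}\,g^{\F\oplus\G}_{\F,\G}\,[\F\oplus\G]$, and an easy calculation using $\Hom(\G,\F)=0$ shows $g^{\F\oplus\G}_{\F,\G}=1$. Iterating this argument (at each step the new factor $\F_i$ has strictly smaller slope than the accumulated direct sum, so all extensions split and the Hall number is $1$), one obtains
\[
[\F_1]\cdots[\F_r]=v^{-\sum_{i<j}\langle\F_i,\F_j\rangle}\,[\F_1\oplus\cdots\oplus\F_r],
\]
and similarly for the $[\G_j]$'s.

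Next, by Atiyah's theorem the decomposition of a coherent sheaf into semistable summands is essentially unique, so $\F_1\oplus\cdots\oplus\F_r\simeq\G_1\oplus\cdots\oplus\G_s$ forces the multisets of slopes to coincide; since both sequences are strictly increasing, this gives $r=s$, $\nu_i=\nu'_i$ and $\F_i\simeq\G_i$. In all other cases the scalar product vanishes for support reasons, which is the first case of the statement.

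Finally, in the matching case, Green's form gives
\[
\bigl([\F_1]\cdots[\F_r],[\G_1]\cdots[\G_r]\bigr)=\frac{v^{-2\sum_{i<j}\langle\F_i,\F_j\rangle}}{a_{\F_1\oplus\cdots\oplus\F_r}}.
\]
The main (mild) obstacle is to evaluate the denominator. Since $\Hom(\F_j,\F_i)=0$ for $j>i$, every element of $\Aut(\F_1\oplus\cdots\oplus\F_r)$ is lower-triangular with respect to the ordering; the diagonal contributes $\prod_i a_{\F_i}$ and the strictly lower-triangular part contributes $\prod_{i<j}|\Hom(\F_i,\F_j)|$. For $i<j$, the vanishing $\Ext(\F_i,\F_j)=0$ combined with Riemann--Roch yields $\dim\Hom(\F_i,\F_j)=\langle\F_i,\F_j\rangle$, hence $|\Hom(\F_i,\F_j)|=q^{\langle\F_i,\F_j\rangle}=v^{-2\langle\F_i,\F_j\rangle}$. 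Therefore $a_{\F_1\oplus\cdots\oplus\F_r}=\prod_i a_{\F_i}\cdot v^{-2\sum_{i<j}\langle\F_i,\F_j\rangle}$, and the twisting factors cancel precisely, leaving $\prod_i(1/a_{\F_i})=\prod_i([\F_i],[\G_i])=\prod_i(a_i,b_i)$, as claimed.
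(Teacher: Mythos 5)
Your proof is correct, and it takes a genuinely different route from the paper's. The paper argues structurally: it first uses the $\SL_2(\ZZ)$ symmetry of $\DH_X$ to reduce to the case $\nu_r=\infty$, then applies the Hopf property $(ab,c)=(a\ot b,\Delta(c))$ of the Green form together with the slope ordering to split off the top factor, obtaining $(a_1\cdots a_r,b_1\cdots b_s)=(a_r,b_s)(a_1\cdots a_{r-1},b_1\cdots b_{s-1})$, and concludes by induction. You instead compute everything explicitly: the vanishing $\Ext(\Cc_{\nu},\Cc_{\nu'})=\Hom(\Cc_{\nu'},\Cc_{\nu})=0$ for $\nu<\nu'$ shows that $[\F_1]\cdots[\F_r]$ is a single shifted basis vector $v^{-\sum_{i<j}\<\F_i,\F_j\>}[\F_1\oplus\cdots\oplus\F_r]$, uniqueness of the decomposition into semistable summands gives the vanishing case, and the triangular description of $\Aut(\F_1\oplus\cdots\oplus\F_r)$ (using $\dim\Hom(\F_i,\F_j)=\<\F_i,\F_j\>$ for $i<j$, which follows from the Ext-vanishing) makes the powers of $v$ cancel exactly. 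Your argument is more elementary and self-contained (it needs neither the coproduct, nor the Hopf compatibility of the pairing, nor the $\SL_2(\ZZ)$ action on the double) and yields as a by-product the closed formula for the ordered products; the paper's argument is shorter and avoids the automorphism count, at the cost of invoking heavier structural input. The only cosmetic remark is that you use the subobject-counting Hall number $g_{\F,\G}^{\H}$, whereas the paper normalizes with the count $P_{\F,\G}^{\H}$ of short exact sequences; the translation $P_{\F,\G}^{\H}=g_{\F,\G}^{\H}\,a_\F a_\G$ is standard but worth a word.
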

\begin{proof}
Let us denote temporarily by $\alpha$ the value of the scalar product we want to compute.
By the $\SL_2(\ZZ)$ symmetry we can suppose that $\nu_r=\infty$. It is clear that if $\nu'_s<\infty$ then $\alpha=0$. Hence we can suppose that $\nu'_s = \infty$. In this case it is easy to see using the ordering of the slopes and the Hopf property of the Green form that:
\[
 \alpha = (a_r,b_s)(a_1\cdots a_{r-1},b_1\cdots b_{s-1})
\]
By induction we obtain that $\alpha$ is 0 if $(\nu_1,\dots,\nu_r)\neq (\nu'_1,\dots,\nu'_s)$ and $\alpha$ equals $(a_1,b_1)\cdots (a_r,b_r)$ otherwise.
\end{proof}

\begin{lemma}\label{L:restr of rep over Gbar are semisimple}
Let $G:=\pi_1(X)$ where $X/\ff_q$ is an elliptic curve and let $\ov G:=\pi_1(\ov X)$. Let also $V$ be an irreducible continuous representation of $G$ over $\ov\Q_l$. Then the restriction of $V$ to $\ov G$ is a direct sum of characters.
\end{lemma}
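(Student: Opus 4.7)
I would use a standard Clifford-theoretic argument, which relies on two ingredients: the normality of $\overline G$ in $G$ and the abelianness of $\overline G$. Since $X$ is an elliptic curve, the geometric fundamental group $\overline G = \pi_1(\overline X)$ is abelian (it sits inside the product of Tate modules of $X$), and it is normal in $G$ as the kernel of the canonical projection $G \to \pi_1(\ff_q) = \widehat{\ZZ}$.

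The main step is to show that $V$, viewed as a $\overline G$-module by restriction, is semisimple. For this I would look at the socle $\mathrm{soc}_{\overline G}(V) \subseteq V$, i.e. the sum of all irreducible $\overline G$-subrepresentations. Since $V$ is finite-dimensional, any strictly descending chain of non-zero $\overline G$-submodules of $V$ terminates, so $V$ admits a minimal non-zero $\overline G$-submodule; such a submodule is automatically irreducible, hence $\mathrm{soc}_{\overline G}(V) \neq 0$. The key observation is that $\mathrm{soc}_{\overline G}(V)$ is $G$-stable: for $g \in G$ and an irreducible $\overline G$-submodule $W \subseteq V$, the translate $g \cdot W$ is again $\overline G$-stable (using $g^{-1} h g \in \overline G$ for $h \in \overline G$) and irreducible as a $\overline G$-module (since conjugation by $g^{-1}$ gives a bijection between the $\overline G$-submodules of $g \cdot W$ and those of $W$). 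Since $V$ is $G$-irreducible and $\mathrm{soc}_{\overline G}(V)$ is a non-zero $G$-stable subspace, we conclude $\mathrm{soc}_{\overline G}(V) = V$, so $V|_{\overline G}$ is semisimple.

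To finish, since $\overline G$ is abelian and $\overline{\Q}_l$ is algebraically closed of characteristic zero, Schur's lemma forces every irreducible $\overline G$-subrepresentation appearing in the decomposition to be one-dimensional, i.e., a character. Hence $V|_{\overline G}$ is a direct sum of characters, as claimed.

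I do not expect a serious obstacle here: the argument is essentially a baby case of Clifford theory, and the only point requiring a small check is that the characters thus obtained are continuous, which is automatic because every subspace of a finite-dimensional $\overline{\Q}_l$-vector space is closed and the restriction of a continuous representation to a closed invariant subspace remains continuous.
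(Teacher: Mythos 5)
Your argument is correct and is essentially the same as the paper's: both proofs observe that $\ov G$ is abelian and normal, show the $\ov G$-socle of $V$ is a non-zero $G$-stable subspace (hence all of $V$ by irreducibility), and then apply Schur's lemma to conclude the irreducible constituents are characters. Your write-up is a bit more careful on the details (non-vanishing of the socle, the conjugation argument for $G$-stability, continuity), but there is no difference in method.
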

\begin{proof}
Since $X$ is an elliptic curve we have that $\ov G$ is an abelian group. By Schur's Lemma it follows that all the irreducible representations of $\ov G$ appearing in $V$ are of dimension 1. Denote by $V^\circ\subseteq V$ the socle of $V$ as a $\ov G$ module. On $V$ we have also an action of the Galois group $\Gal(\fqb/\ff_q)$ and this action permutes the irreducible representations of $\ov G$ appearing in $V$. It follows that $\Gal(\fqb/\ff_q)$ must leave $V^\circ$ stable by the definition of the socle (the sum of all the simple submodules). Therefore $V^\circ$ is a $G$-submodule of $V$. From the irreducibility of $V$ as a $G$-module we conclude that $V^\circ = V$ and therefore that $V$, as a $\ov G$ module, is a direct sum of 1-dimensional representations.
\end{proof}

\begin{center}\textsc{Acknoledgements}
\end{center}
This work is part of my PhD thesis. I would like to thank Olivier Schiffmann, my supervisor, for suggesting this interesting problem, for generously sharing his ideas with me and for all his patient help along the way. I also benefited from an exchange of emails between G. Laumon and my advisor concerning the Langlands correspondence for elliptic curves. I would also like to thank Michael Gr\"ochenig and Alexandre Bouayad for interesting discussions.

\end{document}